\renewcommand{\theequation}{\arabic{section}.\arabic{equation}}
\renewcommand{\d}{\, \mathrm{d}}
\renewcommand{\emph}[1]{\textit{#1}}
\newcommand{\Abs}[1]{\left\vert #1 \right\vert}
\newcommand{\abs}[1]{\vert #1 \vert}
\newcommand{\bigabs}[1]{\bigl\vert #1 \bigr\vert}
\newcommand{\Bigabs}[1]{\Bigl\vert #1 \Bigr\vert}
\newcommand{\biggabs}[1]{\biggl\vert #1 \biggr\vert}
\newcommand{\norm}[1]{\left\Vert #1 \right\Vert}
\newcommand{\fixednorm}[1]{\Vert #1 \Vert}
\newcommand{\bignorm}[1]{\bigl\Vert #1 \bigr\Vert}
\newcommand{\Bignorm}[1]{\Bigl\Vert #1 \Bigr\Vert}
\newcommand{\Sobnorm}[2]{\norm{#1}_{H^{#2}}}
\newcommand{\C}{\mathbb{C}}
\newcommand{\N}{\mathbb{N}}
\newcommand{\Z}{\mathbb{Z}}
\newcommand{\R}{\mathbb{R}}
\newcommand{\T}{\mathbb{T}}
\newcommand{\Innerprod}[2]{\left\langle \, #1 , #2 \, \right\rangle}
\newcommand{\innerprod}[2]{\langle \, #1 , #2 \, \rangle}
\newcommand{\biginnerprod}[2]{\bigl\langle \, #1 , #2 \, \bigr\rangle}
\newcommand{\japanese}[1]{\left\langle \, #1 \, \right\rangle}
\newcommand{\vertiii}[1]{{\left\vert\kern-0.25ex\left\vert\kern-0.25ex\left\vert #1 
    \right\vert\kern-0.25ex\right\vert\kern-0.25ex\right\vert}}
\newcommand{\define}[1]{\textbf{\emph{#1}}}
\DeclareMathOperator{\im}{Im}
\DeclareMathOperator{\re}{Re}
\DeclareMathOperator{\sgn}{sgn}
\DeclareMathOperator{\supp}{supp}
\DeclareMathOperator{\trace}{tr}
\newtheorem{theorem}{Theorem}
\newtheorem{proposition}{Proposition}
\newtheorem{lemma}{Lemma}
\newtheorem{corollary}{Corollary}
\theoremstyle{definition}
\newtheorem{definition}{Definition}
\theoremstyle{remark}
\newtheorem{remark}{Remark}
\title
[
	Stochastic Dirac-Klein-Gordon system 
]
{
	A conservative stochastic Dirac-Klein-Gordon system
}
\author[Dinvay]{Evgueni Dinvay}
\author[Selberg]{Sigmund Selberg}
\email{ Evgueni.Dinvay@uit.no }
\email{ Sigmund.Selberg@uib.no }
\address
{
	Department of Chemistry
	\\
    UiT The Arctic University of Norway
    \\
	PO Box 6050 Langnes
	\\
	N-9037 Tromsø
	\\
	Norway
}
\address
{
	Department of Mathematics
	\\
	University of Bergen
	\\
	PO Box 7803
	\\
	5020 Bergen
	\\
	Norway
}
\date{\today}
\subjclass[2010]{35Q53, 35Q60, 60H15} %, 35Q55, 35A01, 35Q20, 35G25, 35D99
\begin{document}

\begin{abstract}
Considered herein is a particular nonlinear dispersive stochastic system consisting of Dirac and Klein-Gordon equations. They are coupled by nonlinear terms due to the Yukawa interaction. We consider a case of homogeneous multiplicative noise that seems to be very natural from the perspective of the least action formalism. We are able to show existence and uniqueness of a corresponding Cauchy problem in Bourgain spaces. Moreover, the regarded model implies charge conservation, known for the deterministic analogue of the system, and this is used to prove a global existence result for suitable initial data.
\end{abstract}

\keywords{
	Dirac equation, Klein-Gordon equation, Yukawa interaction,
    multiplicative noise.
}
%%%%%%%%%%%%%%%%%%%%%%%%%%%%%%%%%%%%%%%%%%%%%%%%%%%%%%%%%%%%%%%%%%%%%%%%%%%%%
%%%%%%%%%%%%%%%%%%%%%%%%%%%%%%%%%%%%%%%%%%%%%%%%%%%%%%%%%%%%%%%%%%%%%%%%%%%%%
\maketitle
%%%%%%%%%%%%%%%%%%%%%%%%%%%%%%%%%%%%%%%%%%%%%%%%%%%%%%%%%%%%%%%%%%%%%%%%%%%%%
%%%%%%%%%%%%%%%%%%%%%%%%%%%%%%%%%%%%%%%%%%%%%%%%%%%%%%%%%%%%%%%%%%%%%%%%%%%%%
\tableofcontents

%%%%%%%%%%%%%%%%%%%%%%%%%%%%%%%%%%%%%%%%%%%%%%%%%%%%%%%%%%%%%%%%%%%%%%%%%%%%%
\section{Introduction}
%%%%%%%%%%%%%%%%%%%%%%%%%%%%%%%%%%%%%%%%%%%%%%%%%%%%%%%%%%%%%%%%%%%%%%%%%%%%%
\setcounter{equation}{0}
%%%%%%%%%%%%%%%%%%%%%%%%%%%%%%%%%%%%%%%%%%%%%%%%%%%%%%%%%%%%%%%%%%%%%%%%%%%%%

Consideration is given to the following 
Dirac-Klein-Gordon equations, one-dimensional in the space variable,
containing homogeneous multiplicative noise of the Stratonovich type,
\begin{equation}
\label{Stratonovich_Dirac}
\left \{
\begin{aligned}
	\left( -i \partial_t - i\alpha\partial_x + M\beta \right) \psi
	&=
	\phi \beta \psi
	+
	\beta \psi \xi_1
	,
	\\
	\left( \partial_t^2-\partial_x^2 + m^2 \right) \phi
	&=
	\psi^* \beta \psi
	+
	\phi \xi_2
	,
	\end{aligned}
\right.
\end{equation} 
with initial data
\begin{equation}\label{Data}
	\psi(0,x,\omega) = \psi_0(x,\omega)
	, \qquad
	\phi(0,x,\omega) = \phi_0(x,\omega)
	, \qquad
	\partial_t\phi(0,x,\omega) = \phi_1(x,\omega).
\end{equation}
The unknowns are random processes $\psi(t, x, \omega) \in \C^2$ and $\phi(t, x, \omega) \in \R$, for $t \ge 0$, $x \in \R$ and $\omega$ in a probability space $(\Omega,\mathcal F,\mathbb P)$. Here $m, M \geqslant 0$ are constants and
the $2 \times 2$ Dirac matrices $\alpha,\beta$ satisfy
\(
	\alpha = \alpha^*,
    \,
	\beta = \beta^*,
    \,
    \alpha^2 = \beta^2 = I
\)
and
\(
	\alpha \beta + \beta \alpha = 0.
\)
For simplicity we choose the particular representation
\[
	\alpha
	=
	\begin{pmatrix}
		1
		&
		0
		\\
		0
		&
		-1 
	\end{pmatrix}
	, \qquad
	\beta
	=
	\begin{pmatrix}
		0
		&
		1
		\\
		1
		&
		0 
	\end{pmatrix}
	.
\]
Let $W$ be the cylindrical Wiener process defined by
a complete orthonormal sequence
\(
	\{ e_k \} _{ k \in \mathbb N }
\)
in
\(
	L^2( \R, \R )
\)
and a sequence
\(
	\{ B_k \} _{ k \in \mathbb N }
\)
of independent real-valued Brownian motions on $(\Omega,\mathcal F,\{\mathcal F_t\}_{t \ge 0}, \mathbb P)$, where $\mathcal F_t$ is an associated filtration of $\mathcal F$.
We assume that the noise is of the form
\[
  \xi_j = \frac{d \mathcal W_j}{dt},
  \qquad
  \mathcal W_j = \mathfrak K_j W \qquad (j=1,2),
\]
where the $\mathfrak K_j$ are convolution operators
\begin{equation}
\label{convolution_operator}    
	\mathfrak K_jf(x)
	=
	\int_{ \R }
	\mathfrak k_j(x - y) f(y) dy
\end{equation}
with real-valued kernels $\mathfrak k_j \in H^{\sigma_j}( \R )$. Here $\sigma_j \ge 0$ will be chosen depending on the Sobolev regularity of the initial data.

Interpreting the stochastic integrals in the Stratonovich sense, we can then write \eqref{Stratonovich_Dirac} as
\begin{equation}\label{StratonovichDKG}
\left\{
\begin{gathered}
  d\psi = (-\alpha \partial_x - iM \beta) \psi \, dt + i\phi \beta \psi \, dt + i\beta \psi \mathfrak K_1 \circ dW,
  \\
  d\phi = \dot\phi \, dt,
  \\
  d\dot\phi = (\partial_x^2 - m^2) \phi \, dt + \psi^* \beta \psi \, dt + \phi \mathfrak K_2 \circ dW,
\end{gathered}
\right.
\end{equation}
where $\dot\phi = \partial \phi/\partial t$ and $\psi \mathfrak K_1$, $\phi \mathfrak K_2$ are understood as compositions of the convolution operators $\mathfrak K_1$, $\mathfrak K_2$ with the multiplication operators given by $\psi$, $\phi$. Thus
\[
  (\psi\mathfrak K_1)f(x)
  =
  \psi(x)\int_{ \R }
  \mathfrak k_1(x - y) f(y) dy
\]
and similarly for $\phi\mathfrak K_2$. Depending on the regularity of the initial data, the Sobolev regularity $\sigma_j$ of the kernel $\mathfrak k_j$ will be chosen so that the above compositions are Hilbert-Schmidt operators from $L^2(\R,\R)$ into suitable Sobolev spaces.

By introducing noise in the Stratonovich sense we respect two of the key physical
properties of the original deterministic Dirac-Klein-Gordon system:
the principle of least action and the conservation of the charge, $\int \Abs{\psi(t,x)}^2 \, dx$.

From an analysis perspective it is more convenient to work
with the It\^o stochastic integral. The It\^o form of the above system is
\begin{equation}\label{ItoDKG}
\left\{
\begin{gathered}
  d\psi = 
  (-\alpha \partial_x - iM \beta) \psi \, dt + i\phi \beta \psi \, dt
  - M_{\mathfrak K_1} \psi \, dt + i\beta \psi \mathfrak K_1 \, dW,
  \\
  d\phi = \dot\phi \, dt,
  \\
  d\dot\phi = (\partial_x^2 - m^2) \phi \, dt + \psi^* \beta \psi \, dt + \phi \mathfrak K_2 \, dW,
\end{gathered}
\right.
\end{equation}
where $M_{\mathfrak K_1} = (1/2) \norm{\mathfrak k_1}_{L^2}^2$. To see this, write \eqref{StratonovichDKG} in the abstract form
\[
  dX = AX dt + \mathcal N(X) dt + \mathcal M(X) \circ d W,
\]
where
\[
  X = \begin{pmatrix} \psi \\ \phi \\ \dot\phi \end{pmatrix},
  \qquad
  A = \begin{pmatrix} -\alpha \partial_x - iM\beta & 0 & 0 \\ 0 & 0 & 1 \\ 0 & \partial_x^2-m^2 & 0 \end{pmatrix},
  \qquad
  \mathcal M(X) = \begin{pmatrix} i\beta\psi \mathfrak K_1 \\ 0 \\ \phi \mathfrak K_2 \end{pmatrix}.
\]
Then, at least formally, the corresponding It\^o form is (see e.g.~\cite{Goodair2022})
\[
  dX = AX dt + \left( \mathcal N(X) + \frac12 \sum_k \mathcal M_k\left( \mathcal M_k(X) \right) \right) \, dt + \mathcal M(X) d W,
\]
where $\mathcal M_k(X) = \mathcal M(X)e_k$ and we calculate
\[
  \mathcal M_k\left( \mathcal M_k(X) \right)
  =
  \begin{pmatrix} (i \mathfrak K_1 e_k \beta)^2\psi \\ 0 \\ 0\end{pmatrix}
  =
  \begin{pmatrix} - (\mathfrak K_1 e_k)^2 \psi \\ 0 \\ 0\end{pmatrix}
\]
and note that for all $x \in \R$,
\[
  \sum_k \left( \mathfrak K_1e_k(x) \right)^2 = 
  \sum_k \Innerprod{\mathfrak k_1(x - \cdot)}{e_k}_{L^2}^2 =
  \norm{\mathfrak k_1}_{L^2}^2
\]
by Parseval's identity. Formally, this verifies the conversion from \eqref{StratonovichDKG} to \eqref{ItoDKG}.

Our aim is to prove existence and uniqueness for \eqref{ItoDKG}
with initial data $(\psi,\phi,\dot\phi)(0) = (\psi_0,\phi_0,\phi_1)$ in the spaces
\begin{equation}
\label{DataRegularity}
	\psi_0 \in L^2\left(\Omega,H^s \left( \R,\C^2 \right) \right)
	, \qquad
	\phi_0 \in L^2\left(\Omega,H^r( \R,\R )\right)
	, \qquad
	\phi_1 \in L^2\left(\Omega,H^{r-1}( \R, \R )\right)
	,
\end{equation}
for a certain range of Sobolev indices $s,r \in \R$.
In particular, using the charge conservation we will prove global existence when $s = 0$
and $1/4 < r < 1/2$,
under the additional assumption that $\psi_0 \in L^p \left( \Omega,L^2(\R) \right)$
for a sufficiently large $p \geq 4$, depending on $r$.

Assuming for the moment that $m > 0$, then by a rescaling we may take $m=1$. Applying the linear transformation $(\psi,\phi,\dot\phi) \mapsto (\psi_+,\psi_-,\phi_+,\phi_-)$ given by
\[
  \psi = \begin{pmatrix} \psi_+ \\ \psi_- \end{pmatrix},
  \qquad
  \phi=\phi_+ + \phi_-,
  \qquad
  \phi_\pm = \frac12 \left(\phi \pm \japanese{D_x}^{-1} i\dot\phi \right),
\]
where $D_x=-i\partial_x$ and $\japanese{\cdot} = \left( 1+\Abs{\cdot}^2 \right)^{1/2}$,
the Cauchy problem \eqref{ItoDKG}, \eqref{DataRegularity} then transforms to
\begin{equation}
\label{Ito-DKG-split}
\left\{
\begin{aligned}
    &-i d \psi_+ + D_x \psi_+ \, dt
	=
	-  M \psi_- \, dt
	+
	\phi \psi_- \, dt
	+
	\psi_- \mathfrak K_1 \, d W
	+
	iM_{\mathfrak K_1} \psi_+ \, dt,
    \\
    &- i d \psi_- - D_x \psi_- \, dt
	=
	-  M \psi_+ \, dt
	+
	\phi \psi_+ \, dt
	+
	\psi_+  \mathfrak K_1 \, d W
	+
	iM_{\mathfrak K_1} \psi_- \, dt,
    \\
    &-id \phi_+ + \japanese{D_x} \phi_+ dt = + \japanese{D_x}^{-1} \re \left( \overline{\psi_+} \psi_- \right) dt
    + \frac12 \japanese{D_x}^{-1} \phi \mathfrak K_2 \, dW,
    \\
    &-id \phi_- - \japanese{D_x} \phi_- dt = - \japanese{D_x}^{-1} \re \left( \overline{\psi_+} \psi_- \right) dt
    - \frac12 \japanese{D_x}^{-1} \phi \mathfrak K_2 \, dW,
\end{aligned}
\right.
\end{equation}
with
\begin{equation}
\label{Ito-DKG-split-data}
    \psi_\pm(0) = f_\pm \in L^2\left(\Omega,H^s(\R,\C)\right),
    \qquad
    \phi_\pm(0) = g_\pm \in L^2\left(\Omega,H^r(\R,\C)\right),
    \qquad
    \overline{g_+} = g_-.
\end{equation}
Here $\japanese{D_x}^{-1} \phi \mathfrak K_2$ is understood as a composition of operators. We remark that $\overline{\phi_+} = \phi_-$. Thus $\phi=\phi_+ + \overline{\phi_+}$ and it suffices to solve for $\psi_+$, $\psi_-$ and $\phi_+$. %

The deterministic Dirac-Klein-Gordon system has been extensively studied in space dimensions $d \le 3$. For $d=1$, the first global existence result was obtained in \cite{Chadam1973}, for $\psi_0 \in H^1(\R)$. This was improved to the charge class, that is, $\psi_0 \in L^2(\R)$, in \cite{Bournaveas2000}, by using space-time estimates of null form type. Both these results rely on the conservation of charge, of course. The complete null structure of the system, in dimensions $d \le 3$, was determined in \cite{Ancona_Foschi_Selberg2007}, and this opened the way for improvements in the low-regularity local well-posedness theory by using Bourgain's Fourier restriction norm spaces, see \cite{Ancona_Foschi_Selberg2007, Pecher2006, Selberg_Tesfahun,  Machihara2007, Machihara2010} and the references therein. Global existence in space dimension $d=2$ was proved in \cite{GP2010}, and for $d=3$ in \cite{Herr2014, Herr2016} for small data. Global existence below the charge in space dimension $d=1$ has been proved in \cite{Candy2012}. 

The optimal low-regularity result for the deterministic case in space dimension $d=1$ was obtained in \cite{Machihara2010}; it states that the problem is locally well posed for $s > -1/2$ and $\abs{s} \le r \le s+1$, and that this range is optimal since some form of ill-posedness holds outside it. In the present work we are primarily interested in getting a global result for $s=0$ in the presence of noise, and not so much in reaching the lowest possible regularity. Therefore, we restrict attention to the range $s > -1/4$, which corresponds to the results in \cite{Selberg_Tesfahun,  Machihara2007,Pecher2006} for the deterministic case. In those papers, the local existence proof is based on contraction in Bourgain spaces $X^{s,b}$, and that is also the approach we follow. However, the presence of noise introduces some further technical issues that have to be dealt with, including the fact that one has to work with $b<1/2$ (instead of $b > 1/2$ in the deterministic case), and that one has to introduce cutoffs to deal with the lack of uniform bounds with respect to the probabilistic variable $\omega$. 
In order to deal with these issues and obtain local well-posedness of the stochastic extension
\eqref{Stratonovich_Dirac}, we adopt techniques
developed in \cite{Bouard_Debussche2007} and based on analysis in Bourgain spaces.
To prove global existence we take advantage of the charge conservation,
\(
	\norm{\psi(t)} _{L^2} = \mathrm{const.}
	,
\)
and that is why we need a Stratonovich noise,
similar to one regarded in \cite{Bouard_Debussche1999}
with a nonlinear Schr\"odinger equation.
We work in Bourgain spaces of low time regularity ($b < 1/2$)
and so we need to extend product estimates proved in
\cite{Pecher2006, Selberg_Tesfahun}. Once these have been obtained, the local existence and uniqueness for \eqref{Ito-DKG-split} follows from an abstract framework for well-posedness of nonlinear dispersive PDE systems with homogeneous multiplicative noise, presented in Section \ref{Abstract-WP}.

As mentioned, we are motivated by ideas that were introduced in
\cite{Bouard_Debussche1999} and \cite{Bouard_Debussche2007}
to analyse the nonlinear Schr\"odinger equation (NLS) and the Korteweg-de Vries equation
with multiplicative noise.
They employ the truncation argument, and so do we.
Another approach worth mentioning
is the rescaling method developed in \cite{Barbu2014, Barbu2016} for stochastic NLS,
which was also used in study of scattering \cite{Herr2019}.
With this approach, the stochastic NLS is transformed to an equation with random coefficients.
This allows for pointwise estimations with respect to probability space, which in turn helps to avoid the use of cutoff estimates and provides a more general
%(in some sense: on the other hand they only consider a finite sequence of Brownian motions)
result on $L^2$ theory of stochastic NLS compared to \cite{Bouard_Debussche1999}.
This approach relies on a generalisation of Strichartz estimates for a perturbed Schr\"odinger operator \cite{Marzuola2008}.

In the classical field theory one can determine the equations of motion
by the principle of least action.
As we restrict ourselves to the one dimensional space,
the action is an integral functional
\(
    \mathcal S
%    (\psi, \phi)
    =
    \int
    \mathcal L
%    (\psi, \phi, t)
    dtdx
    ,
\)
where the Lagrangian density, depending on the field and time,
is defined by the physical system under consideration.
The deterministic analogue of Equations \eqref{Stratonovich_Dirac}
is related to a particular choice of
the density $\mathcal L(\psi, \phi, t)$,
as explained in
\cite{Bjorken1964, Yukawa1935}.
We recall very briefly the corresponding physical background
and show how the noise can be naturally introduced here.
In particle physics, the Yukawa interaction \cite{Yukawa1935}
explains how forces between nucleons are mediated
by massive particles called mesons.
Mathematically, this is described by the action integral
\(
    \mathcal S(\psi,\phi)
\)
defined by the Lagrangian density
\[
    \mathcal L(\psi,\phi)
    =
    \mathcal L_{\mathrm{Dirac}}(\psi)
    +
    \mathcal L_{\mathrm{meson}}(\phi)
    +
    \mathcal L_{\mathrm{Yukawa}}(\psi,\phi)
    .
\]
%on the Minkowski space-time.
Here $\psi$ is a spinor field (the fermion field) and
$\phi$ is a real scalar field (the meson field)
whose free-field dynamics are determined by the Lagrangians
\[
    \mathcal L_{\mathrm{Dirac}}(\psi)
    =
    \psi^*
    \left(
        i \partial_t + i \alpha \partial_x - M \beta
    \right)
    \psi
    , \qquad
    \mathcal L_{\mathrm{meson}}(\phi)
    =
    \frac 12 ( \partial_t \phi )^2
    -
    \frac 12 ( \partial_x \phi )^2
    -
    \frac12 m^2 \phi^2,
\]
corresponding to the free Dirac and Klein-Gordon equations. Here
$m,M \ge 0$ are masses and
$\psi^*$ denotes the complex conjugate transpose of $\psi$.
The interaction is determined by the Yukawa coupling term
\[
    \mathcal L_{\mathrm{Yukawa}}(\psi,\phi)
    =
    \phi \psi^* \beta \psi
    .
\]
The corresponding system of Euler-Lagrange equations
is the deterministic Dirac-Klein-Gordon system.
We introduce the noise by adding
%\textcolor{red}{(added dependence on $x$; noise depends on $x$ and $t$)}
\[
    \mathcal L_{\mathrm{noise}}(\psi, \phi, x, t)
    =
    \mathcal L_{\mathrm{Dirac}}^{\mathrm{noise}}(\psi, x,t)
    +
    \mathcal L_{\mathrm{meson}}^{\mathrm{noise}}(\phi, x,t)
    =
    \psi^* \beta \psi \xi_1
    +
    \frac 12 \phi^2 \xi_2
    .
\]
One can think about $\mathcal L_{\mathrm{Dirac}}^{\mathrm{noise}}$
as stochastic fluctuations of the initial Dirac potential
\(
    M \psi^* \beta \psi
    .
\)
Similarly,
$\mathcal L_{\mathrm{meson}}^{\mathrm{noise}}$
represents a noisy extension of the potential
\(
    m^2 \phi^2 / 2
\)
in the Klein-Gordon model.
Thus using the new Lagrangian density
\[
    \mathcal L(\psi, \phi, x, t)
    =
    \mathcal L_{\mathrm{Dirac}}(\psi)
    +
    \mathcal L_{\mathrm{meson}}(\phi)
    +
    \mathcal L_{\mathrm{Yukawa}}(\psi,\phi)
    +
    \mathcal L_{\mathrm{noise}}(\psi, \phi, x, t)
\]
one arrives at a stochastic variational principle
leading to the system \eqref{Stratonovich_Dirac}.
Note that the Stratonovich calculus obeys normal differentiation rules,
and so the derivation of the Euler-Lagrange equations
works out as in the deterministic case.

The paper is devoted to an analysis of existence and uniqueness of
a mild solution to the Cauchy problem for
 \eqref{Ito-DKG-split}
complemented with the initial data \eqref{Ito-DKG-split-data}.
It is organised as follows.
In the next section we introduce some preliminary notions
that will be used throughout the paper.
Section \ref{Main_result} provides the mild formulation
of the Cauchy problem and the statement of the main existence theorems.
Section \ref{Stochastic_integral} is devoted to an analysis
of the stochastic integrals we are dealing with.
Then in Section \ref{Bilinear_bounds} we prove bilinear estimates
necessary for treating nonlinear terms.
In Section \ref{Abstract-WP} we prove the local existence and uniqueness in an abstract setting.
As a result we obtain a local mild solution to
\eqref{Ito-DKG-split}, \eqref{Ito-DKG-split-data}.
Section \ref{Charge_conservation} is devoted to the proof
of charge conservation.
Finally, in Section \ref{Global_existence}
we prove existence of a global solution.
Proofs of some very technical results are left for the last three sections,
where we prove in general terms the so-called cutoff estimates.
The idea of making use of a Slobodeckij norm for this
comes from \cite{Bouard_Debussche2007}.
However, it turns out that the treatment should be more delicate
than the argument given in \cite{Bouard_Debussche2007}.

%%%%%%%%%%%%%%%%%%%%%%%%%%%%%%%%%%%%%%%%%%%%%%%%%%%%%%%%%%%%%%%%%%%%%%%%%%%%%
\section{Preliminaries}
\label{Prelims}
%%%%%%%%%%%%%%%%%%%%%%%%%%%%%%%%%%%%%%%%%%%%%%%%%%%%%%%%%%%%%%%%%%%%%%%%%%%%%
\setcounter{equation}{0}
%%%%%%%%%%%%%%%%%%%%%%%%%%%%%%%%%%%%%%%%%%%%%%%%%%%%%%%%%%%%%%%%%%%%%%%%%%%%%

First, we fix some general notational conventions.

As usual, the symbol $C$ will denote various positive constants, and its meaning can change from one instance to the next.

The characteristic function of a set $E$ will be denoted $\mathbb 1_E$. If $E$ is determined by some property $P$, say $E = \{ x \colon P(x) \}$, we will often use the convenient notation $\mathbb 1_{P(x)}$ for $\mathbb 1_E(x)$. If $E$ is a subset of a set $X$, and $f$ is a function defined on $E$, then by a slight abuse of notation we shall denote by $\mathbb 1_E f$ the extension of $f$ by zero outside $E$. We call this the \emph{trivial extension} (of $f$).

We will adhere to the following convention regarding restrictions of $\sigma$-algebras. Suppose that $\mathcal M$ is a $\sigma$-algebra and that $E \in \mathcal M$. Let $\mathcal M \vert_E$ be the $\sigma$-algebra on $E$ consisting of all sets $A \cap E$, where $A \in \mathcal M$. Then if $f \colon E \to H$ is $\mathcal M \vert_E$-measurable, we will simply say that it is $\mathcal M$-measurable. This is of course equivalent to saying that the trivial extension is $\mathcal M$-measurable.

We use the notation $a \wedge b = \min(a,b)$ for real numbers $a$ and $b$.

\subsection{Random variables}

We fix a filtered probability space $(\Omega,\mathcal F,\{\mathcal F_t\}_{t \ge 0},\mathbb P)$ admitting an independent sequence $\{B_k\}_{k \in \N}$ of one-dimensional Brownian motions. We write $\mathbb E(X) = \int_\Omega X(\omega) \, d\mathbb P(\omega)$ for $X \in L^1(\Omega)$.

A stochastic process $X(t)$, defined on a time interval $I = [S,T]$ or $I = [S,\infty)$, where $S \ge 0$, and taking values in a separable Hilbert space $H$, is said to be \emph{$H$-adapted} (or just \emph{adapted} if it is clear from the context which Hilbert space is meant) if $X(t)$ is $(\mathcal F_t,\mathcal B_H)$-measurable for all $t \in I$. In other words, $\innerprod{X(t)}{h}_H$ is $\mathcal F_t$-measurable for all $h \in H$. Here $\mathcal B_H$ denotes the Borel $\sigma$-algebra of $H$.

A process $Y(t)$ is a \emph{modification} of $X(t)$ if for each $t \in I$ we have $X(t)=Y(t)$ a.s.
We assume that $\mathcal F_0$ contains all sets in $\mathcal F$ with measure zero,
so that any modification of an adapted process is itself adapted.
Moreover, the filtration is supposed to be right-continuous,
i.e.
\(
    \bigcap_{s > t} \mathcal F_s = \mathcal F_t
\)
for any $t \geq 0$.

The process $X(t)$ is \emph{progressively measurable} if for each $t \in I$ the map $(s,\omega) \mapsto X(s,\omega)$, from $[S,t] \times \Omega$
into $H$,
is $\left(\mathcal B_{[S,t]} \otimes \mathcal F_t,\mathcal B_H \right)$-measurable.
Progressive measurability implies adaptedness (see \cite[Proposition 2.34]{Folland1999}), and the converse holds if the process has continuous paths (see \cite[Proposition 1.13]{Karatzas1991} or Lemma \ref{Stopping-lemma-2} below).

If $Z$ is some Banach space of functions from $[S,T]$ into $H$, we define
\begin{equation}\label{L2-prog}
  \mathbb L^2(\Omega,Z) = \left\{ u \in L^2(\Omega,Z) \colon \text{$u$ is progressively measurable} \right\}.
\end{equation}

\subsection{Stopping times}

A \emph{stopping time} is a random variable $\tau \colon \Omega \to [0,\infty]$ such that for all $t \ge 0$, the set
$\{ \tau \le t \} = \{ \omega \in \Omega \colon \tau(\omega) \le t \}$
is $\mathcal F_t$-measurable. Then also $\{ \tau < t \}$, $\{ \tau > t \}$ and $\{ \tau \ge t \}$ have this property, of course. Note that any constant $\tau \ge 0$ is a stopping time.

In the next three lemmas we establish some facts about stopping times.
We only consider strictly positive stopping times.

\begin{lemma}\label{Stopping-lemma-1}
Let $\tau \colon \Omega \to (0,\infty]$ be a stopping time. Then the set
\[
  E = \left\{ (s,\omega) \in [0,\infty) \times \Omega \colon 0 \le s < \tau(\omega) \right\}
\]
belongs to the product $\sigma$-algebra $\mathcal B_{[0,\infty)} \otimes \mathcal F$. For any $T > 0$, the set
\[
  E_T = E \cap \left( [0,T] \times \Omega \right)
\]
belongs to $\mathcal B_{[0,T]} \otimes \mathcal F_T$.
\end{lemma}

\begin{proof}
Let $A_n$ be the set of numbers $iT/2^n$, $i=0,\dots,2^n$. Then
$E_T = \bigcup_{n \in \N} \bigcup_{t \in A_n} [0,t] \times \{t < \tau\}$,
and of course $E = \bigcup_{N \in \N} E_N$.
\end{proof}

As a consequence of this lemma, if $u$ is a random variable defined on $E$ (so it is defined up to the time $\tau$), then it makes sense to ask whether $u$ is $\mathcal B_{[0,\infty)} \otimes \mathcal F$-measurable. That is, whether the trivial extension of $u$ has this property. Similarly, one can ask whether $u$ restricted to $E_t$ is $\mathcal B_{[0,t]} \otimes \mathcal F_t$-measurable for all $t \ge 0$, which amounts to progressive measurability of the trivial extension. The next lemma gives sufficient conditions for this to hold.

\begin{lemma}\label{Stopping-lemma-2}
Let $\tau \colon \Omega \to (0,\infty]$ be a stopping time, and let $E$ and $E_t$, for $t \ge 0$, be as in Lemma \ref{Stopping-lemma-1}. Let $u \colon E \to H$, where $H$ is a separable Hilbert space. Assume that $u$ has continuous paths, in the sense that
\begin{equation}\label{Stopping-lemma-2-1}
  \text{$t \mapsto u(t,\omega)$ is continuous on $[0,\tau(\omega))$, for each $\omega$},
\end{equation}
and assume that $u$ is adapted, in the sense that, for each $t \ge 0$ such that $\left\{ t < \tau \right\}$ is non-empty, we have
\begin{equation}\label{Stopping-lemma-2-2}
  \text{$\omega \mapsto u(t,\omega)$, defined for $\omega \in \{ t < \tau \}$, is $\mathcal F_t$-measurable.}
\end{equation}
Then $u \vert_{E_t}$ is $\mathcal B_{[0,t]} \otimes \mathcal F_t$-measurable for all $t \ge 0$. In other words, the trivial extension of $u$ is progressively measurable.
\end{lemma}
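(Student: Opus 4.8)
The plan is to exhibit $u|_{E_t}$ as a pointwise limit (off a set that causes no measurability trouble) of a sequence of functions that are manifestly $\mathcal B_{[0,t]} \otimes \mathcal F_t$-measurable, built from the adapted values of $u$ at dyadic times; measurability then passes to the limit since $H$ is separable (so $\mathcal B_H$ is generated by a countable family of balls, or equivalently weak and strong Borel structures coincide). Fix $t \ge 0$. For $n \in \N$ and $i = 0, 1, \dots, 2^n$ put $t_i^n = it/2^n$, and on $E_t$ define the dyadic approximation
\[
  u_n(s,\omega) = \sum_{i=1}^{2^n} \mathbb 1_{(t_{i-1}^n, t_i^n]}(s) \, \mathbb 1_{\{ t_{i-1}^n < \tau(\omega) \}} \, u\bigl(t_{i-1}^n \wedge \tfrac{s\,2^n}{t}\cdot\tfrac{t}{2^n},\,\omega\bigr),
\]
or more simply: on the slab $s \in (t_{i-1}^n, t_i^n]$ with $(s,\omega) \in E_t$, set $u_n(s,\omega) = u(t_{i-1}^n, \omega)$, which makes sense because $(s,\omega) \in E_t$ forces $s < \tau(\omega)$, hence $t_{i-1}^n < s < \tau(\omega)$ so $u$ is defined there. (At $s=0$ one may set $u_n(0,\omega)=u(0,\omega)$ on $\{0<\tau\}$, which does not affect measurability.)

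The first step is to check that each $u_n$, extended by zero off $E_t$, is $\mathcal B_{[0,t]} \otimes \mathcal F_t$-measurable. This is where Lemma \ref{Stopping-lemma-1} does the work: on the rectangle $(t_{i-1}^n, t_i^n] \times \{ t_{i-1}^n < \tau \}$ the function $u_n$ equals the composition of the projection $(s,\omega) \mapsto \omega$ with $\omega \mapsto u(t_{i-1}^n,\omega)$, and the latter is $\mathcal F_{t_{i-1}^n}$-measurable hence $\mathcal F_t$-measurable by \eqref{Stopping-lemma-2-2} (since $t_{i-1}^n \le t$ and $\{t_{i-1}^n < \tau\} \in \mathcal F_{t_{i-1}^n}$); the indicator of that rectangle lies in $\mathcal B_{[0,t]} \otimes \mathcal F_t$. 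A finite sum of such products is again $\mathcal B_{[0,t]} \otimes \mathcal F_t$-measurable. The trivial extension to $[0,t]\times\Omega$ only adds the set $E_t^c$, which is $\mathcal B_{[0,t]}\otimes\mathcal F_t$-measurable by Lemma \ref{Stopping-lemma-1}, so the extension remains measurable.

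The second step is to pass to the limit. For a fixed $(s,\omega) \in E_t$ we have $s < \tau(\omega)$, so for all $n$ large enough the dyadic point $t_{i-1}^n$ immediately to the left of $s$ satisfies $t_{i-1}^n < \tau(\omega)$ and $|s - t_{i-1}^n| \le t/2^n \to 0$; by the continuity hypothesis \eqref{Stopping-lemma-2-1}, $u_n(s,\omega) = u(t_{i-1}^n,\omega) \to u(s,\omega)$. Thus $u_n \to u|_{E_t}$ pointwise on $E_t$, and since the trivial extensions all vanish off $E_t$, the extensions converge pointwise on $[0,t]\times\Omega$ to the trivial extension of $u|_{E_t}$. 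A pointwise limit of $\mathcal B_{[0,t]}\otimes\mathcal F_t$-measurable maps into a separable metric space $H$ is $\mathcal B_{[0,t]}\otimes\mathcal F_t$-measurable, so $u|_{E_t}$ is $\mathcal B_{[0,t]}\otimes\mathcal F_t$-measurable, i.e. the trivial extension of $u$ is progressively measurable. The one point requiring a little care — and the only real obstacle — is making sure the approximants $u_n$ are genuinely well-defined everywhere on $E_t$ (this is exactly why the slab index $i-1$ rather than $i$ is used: $(s,\omega)\in E_t$ with $s>t_{i-1}^n$ forces $\tau(\omega)>t_{i-1}^n$, whereas it need not force $\tau(\omega)>t_i^n$) and that their measurability survives the trivial extension; both are handled by invoking Lemma \ref{Stopping-lemma-1} for the measurability of $E_t$ and of the dyadic sub-rectangles.
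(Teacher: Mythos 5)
Your proof is correct and follows essentially the same route as the paper's: a dyadic piecewise-constant-in-time approximation built from the adapted values of $u$, whose measurability follows from \eqref{Stopping-lemma-2-2} together with Lemma \ref{Stopping-lemma-1}, and whose pointwise convergence on $E_t$ follows from the path continuity \eqref{Stopping-lemma-2-1}. The only (immaterial) difference is that you sample at the left endpoint of each dyadic interval so that $u$ is literally defined at the sample point for $(s,\omega)\in E_t$, whereas the paper samples the trivially extended process at the right endpoint; both versions pass to the limit in the same way.
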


\begin{proof}
Let $U = \mathbb 1_{E} u$ be the trivial extension of $u$ to $[0,\infty) \times \Omega$. Then \eqref{Stopping-lemma-2-2} says that $U(t)$ is adapted for every $t \ge 0$. Now fix $t \ge 0$. For $n \in \N$ let $t_i = it/2^n$ for $i=0,\dots,2^n$, and define
\[
  U_n(s,\omega) =  U(0,\omega) \mathbb 1_{\{0\}}(s)
  + \sum_{i=1}^{2^n} U(t_i,\omega) \mathbb 1_{(t_{i-1},t_i]}(s)
  \qquad (0 \le s \le t, \omega \in \Omega).
\]
Then $U_n$ is $\mathcal B_{[0,t]} \otimes \mathcal F_t$-measurable by the adaptedness of $U$, and \eqref{Stopping-lemma-2-1} implies that $U_n$ converges pointwise to $U$ in $E_t$ (and therefore in $[0,t] \times \Omega$).
\end{proof}

\begin{lemma}\label{Stopping-lemma-3}
Let $\tau \colon \Omega \to (0,\infty]$ be a stopping time. Suppose that $f(t,\omega) \ge 0$ is defined for $\omega \in \Omega$ and $0 \le t < \tau(\omega)$, and that for each $\omega$,
\begin{equation}\label{Stopping-lemma-3-1}
  \text{$t \mapsto f(t,\omega)$ is continuous on $[0,\tau(\omega))$, and $f(0,\omega) =0$},
\end{equation}
and moreover that, for each $t \ge 0$ such that $\left\{ t < \tau \right\}$ is non-empty,
\begin{equation}\label{Stopping-lemma-3-2}
  \text{$\omega \mapsto f(t,\omega)$, defined for $\omega \in \{ t < \tau \}$, is $\mathcal F_t$-measurable.}
\end{equation}
For $R > 0$ define $\tau_R \colon \Omega \to (0,\infty]$ by
\[
  \tau_R(\omega) = \sup \left\{ t \in [0,\tau(\omega)) \colon \text{$f(s,\omega) < R$ for $0 \le s \le t$} \right\}.
\]
Then $\tau_R$ is a stopping time. Moreover, for each $\omega$,
\begin{equation}\label{StoppingTime1}
  \lim_{R \to \infty} \tau_R(\omega) = \tau(\omega),
\end{equation}
\begin{equation}\label{StoppingTime2}
  \text{$0 \le t \le \tau_R(\omega)$ and $t < \tau(\omega)$} \implies f(t,\omega) \le R
\end{equation}
and
\begin{equation}\label{StoppingTime3}
  \tau_R(\omega) < \tau(\omega) \implies f(\tau_R(\omega),\omega) = R.
\end{equation}
\end{lemma}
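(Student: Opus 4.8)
The plan is to verify the three listed properties of $\tau_R$ more or less directly from the definition, using the continuity hypothesis \eqref{Stopping-lemma-3-1} to pass between strict and non-strict inequalities, and the adaptedness hypothesis \eqref{Stopping-lemma-3-2} together with the right-continuity of the filtration to get measurability.

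First I would show that $\tau_R$ is a stopping time. The key is to rewrite the event $\{\tau_R \le t\}$, or more conveniently $\{\tau_R < t\}$ or $\{\tau_R \ge t\}$, in terms of countably many of the random variables $f(s,\cdot)$. Using continuity of $s \mapsto f(s,\omega)$ and $f(0,\omega)=0$, the condition ``$f(s,\omega) < R$ for all $0 \le s \le t$'' is equivalent to ``$f(s,\omega) < R$ for all rational $s \in [0,t]$'' (and automatically holds near $s=0$). Hence for $t < \tau(\omega)$ one has $\tau_R(\omega) > t$ iff $\sup_{s \in [0,t] \cap \Q} f(s,\omega) < R$, and one must also incorporate the constraint coming from $\tau$ itself, i.e. the set $\{t < \tau\}$, which is $\mathcal F_t$-measurable since $\tau$ is a stopping time. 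Writing things out, $\{\tau_R > t\} = \{t < \tau\} \cap \bigcap_{s \in [0,t]\cap\Q} \{f(s,\cdot) < R\}$, intersected appropriately; each factor is $\mathcal F_t$-measurable by \eqref{Stopping-lemma-3-2}, and a countable intersection stays in $\mathcal F_t$. To conclude that $\{\tau_R \le t\} \in \mathcal F_t$ I would either argue directly with this representation or use $\{\tau_R \ge t\} = \bigcap_{n} \{\tau_R > t - 1/n\} \in \mathcal F_t$ and then $\{\tau_R < t\} \in \mathcal F_t$, and finally invoke right-continuity of the filtration to handle the boundary, concluding $\{\tau_R \le t\} = \bigcap_{n}\{\tau_R < t+1/n\} \in \bigcap_{n} \mathcal F_{t+1/n} = \mathcal F_t$.

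Next, the limit \eqref{StoppingTime1}: the sets $\{t : f(s,\omega) < R \text{ for } 0\le s\le t\}$ are nested increasing in $R$, so $R \mapsto \tau_R(\omega)$ is nondecreasing and bounded above by $\tau(\omega)$, hence has a limit $\le \tau(\omega)$. For the reverse inequality, fix $\omega$ and any $t_0 < \tau(\omega)$; by continuity $f(\cdot,\omega)$ is bounded on the compact interval $[0,t_0]$, say by some $R_0$, so $\tau_R(\omega) \ge t_0$ for all $R > R_0$; letting $t_0 \uparrow \tau(\omega)$ gives $\lim_R \tau_R(\omega) \ge \tau(\omega)$. Property \eqref{StoppingTime2} is essentially the definition: if $0 \le t \le \tau_R(\omega)$ and $t < \tau(\omega)$, then $t$ lies in (the closure of) the set over which the supremum defining $\tau_R(\omega)$ is taken, so $f(s,\omega) < R$ for $s$ slightly less than $t$, and continuity upgrades this to $f(t,\omega) \le R$; one has to be slightly careful at the endpoint $t = \tau_R(\omega)$, which is exactly where the non-strict inequality $\le R$ (rather than $< R$) is needed. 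Finally \eqref{StoppingTime3}: if $\tau_R(\omega) < \tau(\omega)$, then $\tau_R(\omega)$ is an interior point of $[0,\tau(\omega))$, so by definition of the supremum there exist $t_n \downarrow \tau_R(\omega)$ with $f(t_n,\omega) \ge R$ (otherwise the sup would be strictly larger), while \eqref{StoppingTime2} gives $f(\tau_R(\omega),\omega) \le R$; continuity forces $f(\tau_R(\omega),\omega) = R$.

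The main obstacle, and the only genuinely delicate point, is the measurability of $\tau_R$: one must be careful that $f(s,\cdot)$ is only hypothesized to be defined and $\mathcal F_s$-measurable on the (possibly strictly smaller) set $\{s < \tau\}$, not on all of $\Omega$, so the countable-intersection argument has to be set up so that every set appearing is genuinely $\mathcal F_t$-measurable as a subset of $\Omega$ — this is where the convention on restricted $\sigma$-algebras from the Preliminaries, and the stopping-time property of $\tau$, are used. The use of right-continuity of the filtration to pass from $\{\tau_R < t\}$ to $\{\tau_R \le t\}$ is the other point to get right, though it is standard. Everything else is a routine real-analysis argument with continuous functions on intervals.
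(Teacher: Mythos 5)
Your overall structure matches the paper's (intersect with $\{t<\tau\}$ to handle the fact that $f(s,\cdot)$ lives only on $\{s<\tau\}$, detect the condition on rational times, then read off \eqref{StoppingTime1}--\eqref{StoppingTime3} from the definition), but there is a genuine error in the measurability step. The claimed equivalence ``$f(s,\omega)<R$ for all $0\le s\le t$ iff $f(s,\omega)<R$ for all rational $s\in[0,t]$'' is false: continuity only upgrades the rational information to $f\le R$ on $[0,t]$, and $f$ may touch the level $R$ at an irrational time $s_0\le t$ while staying strictly below $R$ at every rational time. For such an $\omega$ one has $\tau_R(\omega)\le s_0\le t$, yet $\omega$ belongs to your set $\{t<\tau\}\cap\bigcap_{s\in[0,t]\cap\mathbb Q}\{f(s,\cdot)<R\}$, so the identity you propose to ``write out'' for $\{\tau_R>t\}$ is wrong, and the measurability of $\tau_R$ is not established as stated. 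Interestingly, the sentence you wrote just before it, with $\sup_{s\in[0,t]\cap\mathbb Q}f(s,\omega)<R$, is correct (by compactness the pointwise strict bound on $[0,t]$ is equivalent to the strict bound on the supremum, and the sup over rationals equals the sup over $[0,t]$); the error is precisely in translating the event $\{\sup<R\}$ into a plain countable intersection of the events $\{f(s,\cdot)<R\}$. The fix is either to work directly with the event $\{\sup_{s\in\mathbb Q\cap[0,t]}f(s,\cdot)<R\}$, which is $\mathcal F_t$-measurable as a countable supremum of $\mathcal F_t$-measurable functions on $\{t<\tau\}$, or to do what the paper does: insert an $\varepsilon$-margin and write
\[
  \left\{ \tau_R > t \right\}\cap\left\{ \tau > t \right\}
  =
  \bigcup_{n=1}^{\infty}\;\bigcap_{s\in\mathbb Q\cap[0,t)}
  \left( \left\{ f(s,\cdot) < R - \tfrac1n \right\}\cap\left\{ \tau > t \right\} \right),
\]
whose two inclusions are then checked exactly by the compactness/continuity argument you sketched.

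Two smaller remarks. Once $\{\tau_R>t\}\in\mathcal F_t$ is known for every $t$, its complement $\{\tau_R\le t\}$ is automatically in $\mathcal F_t$, so your detour through $\{\tau_R\ge t\}$, $\{\tau_R<t\}$ and the right-continuity of the filtration is unnecessary (the paper does not use it). In \eqref{StoppingTime3}, the points where $f\ge R$ need not be the endpoints $t_n$ themselves: for each $t_n\in(\tau_R(\omega),\tau(\omega))$ there is some $s_n\le t_n$ with $f(s_n,\omega)\ge R$, and since $f<R$ strictly before $\tau_R(\omega)$ these $s_n$ lie in $[\tau_R(\omega),t_n]$, so $s_n\to\tau_R(\omega)$ and continuity together with \eqref{StoppingTime2} gives $f(\tau_R(\omega),\omega)=R$; this is a cosmetic repair, and the paper simply declares \eqref{StoppingTime2}--\eqref{StoppingTime3} immediate from the definition. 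Your arguments for \eqref{StoppingTime1} and \eqref{StoppingTime2} agree with the paper's.
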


\begin{proof}

Let $t \ge 0$.
Note that, since $\tau_R \le \tau$,
\[
    \left\{ \tau_R > t \right\}
    =
    \left\{ \tau_R > t \right\}
    \cap
    \left\{ \tau > t \right\}
    .
\]
Using \eqref{Stopping-lemma-3-1},
and the compactness of the interval $[0,t]$,
we write
\[
    \left\{ \tau_R > t \right\}
    \cap
    \left\{ \tau > t \right\}
    =
    \bigcup_{ n=1 }^\infty
    \bigcap_{ s \in \mathbb Q \cap [0, t) }
    \left(
        \left \{
            f(s,\cdot) < R - \frac 1n
        \right \}
        \cap
        \left\{ \tau > t \right\}
    \right)
    ,
\]
which is evidently $\mathcal F_t$-measurable.
Indeed, for any $\omega$ belonging to the set on the right hand side
there exists $n \in \mathbb N$ such that for any $s \in \mathbb Q \cap [0, t)$
we have
\(
    f(s, \omega) < R - 1/n
    .
\)
Since $t < \tau(\omega)$, the continuity now implies
\(
    f(s, \omega) \leqslant R - 1/n
\)
for any $s \in [0, t]$, hence
\(
    f(s, \omega) < R
\)
for $s$ in some larger interval $[0, t + \delta]$.
This implies $\tau_R(\omega) > t$
and $\omega$ belongs to the set on the left hand side.
Conversely, if $\omega$ belongs to  the set on the left hand side, then
\(
    f(s, \omega) < R
\)
for any $s \in [0, t]$.
Hence there exists $n \in \mathbb N$ such that
\(
    f(s, \omega) < R - 1/n
\)
for any $s \in [0, t]$,
and in particular,
for any $s \in \mathbb Q \cap [0, t)$.
Therefore $\omega$ belongs to the set on the right hand side as well.

If $0 < a < \tau(\omega)$, then taking $R > \sup_{0 \le s \le a} f(s,\omega)$ gives $\tau_R(\omega) \ge a$, proving \eqref{StoppingTime1}. Finally, the properties \eqref{StoppingTime2} and \eqref{StoppingTime3} are immediate from the definition of $\tau_R$, and this concludes the proof of the lemma.
\end{proof}

\subsection{Stochastic integrals}

In this section, let $K$ and $H$ be separable Hilbert spaces, with orthonormal bases $\{e_k\}$ and $\{f_j\}$, respectively.

We denote by $\mathcal L(K,H)$ the space of bounded linear operators from $K$ into $H$, with the operator norm, and by $\mathcal L_2(K,H)$ the class of Hilbert-Schmidt operators
\[
  \mathcal L_2(K,H) = \left\{ T \in \mathcal L(K,H) \colon \trace(T^*T) < \infty \right\},
\]
which is a separable Hilbert space with the norm and inner product
\[
  \norm{T}_{\mathcal L_2(K,H)} = \trace(T^*T)^{1/2} = \left( \sum_k \norm{Te_k}_H^2 \right)^{1/2},
  \qquad
  \innerprod{S}{T}_{\mathcal L_2(K,H)} = \trace(T^*S).
\]
One can think of Hilbert-Schmidt operators as infinite-dimensional matrices. Indeed, defining
\[
  T_{jk} = \innerprod{Te_k}{f_j}_H,
\]
then $T \mapsto \{ T_{jk} \}$ is an isometry from $\mathcal L_2(K,H)$ onto $l^2(\N \times \N)$.

For later use we note the fact that if $S \in \mathcal L(H,H')$, where $H'$ is a Hilbert space, and $T \in \mathcal L_2(K,H)$ then the composition $ST$ belongs to $\mathcal L_2(K,H')$ and
\begin{equation}\label{HScomposition}
  \norm{ST}_{\mathcal L_2(K,H')} \le \norm{S}_{\mathcal L(H,H')}  \norm{T}_{\mathcal L_2(K,H)}.
\end{equation}

Consider now the cylindrical Wiener process
\[
  W(t) = \sum_{k=1}^\infty B_k(t) e_k,
\]
where the sum is formal. Given $T > 0$, the $H$-valued It\^o integral of an adapted process 
\[
  F \in L^2\left([0,T] \times \Omega , \mathcal L_2(K,H) \right)
\]
is a natural generalisation of the $n$-dimensional It\^o integral. It can be defined by
\[
  \int_0^T F(t) \, dW(t)
  =
  \lim_{n \to \infty} \sum_{j,k=1}^n \left( \int_0^T F_{jk}(t) \, dB_k(t) \right) f_j,
\]
where the integrals on the right hand side are ordinary It\^o integrals and $F_{jk} = \innerprod{Fe_k}{f_j}_H$ are the matrix entries of $F$. The sum converges in $L^2(\Omega;H)$ and the It\^o isometry holds:
\begin{equation}\label{Ito}
  \mathbb E\left( \norm{\int_0^T F(t) \, dW(t)}_H^2 \right)
  = \mathbb E\left( \int_0^T \norm{F(t)}_{\mathcal L_2(K,H)}^2  \, dt \right).
\end{equation}
Moreover, the $H$-valued random variable $I(t,\omega) = \int_0^t F(s,\omega) \, dW(s,\omega)$ is adapted and we can assume that it has continuous paths, since it has a modification with this property. Further, $I$ is a martingale, so by Doob's maximal inequality (see, e.g., \cite{Gawarecki_Mandrekar}),
\begin{equation}\label{Maximal}
  \mathbb E\left( \sup_{0 \le t \le T} \norm{\int_0^t F(s) \, dW(s)}_H^2 \right)
  \le 4\mathbb E\left( \norm{\int_0^T F(s) \, dW(s)}_H^2 \right).
\end{equation}

\subsection{Function spaces} Let $d \in \N$.
First, $H^s \left( \R^d \right)$ denotes the usual Sobolev space with norm
\begin{equation}
\label{Sob-norm}
  \norm{f}_{H^s(\R^d)} = \left( \int_{\R^d} \japanese{\xi}^{2s} \bigabs{\widehat f(\xi)}^2 \, d\xi \right)^{1/2},
\end{equation}
where $\japanese{\xi} = \left( 1+\Abs{\xi}^2 \right)^{1/2}$ and the Fourier transform is defined by
\[
    \widehat f(\xi)
    = \mathcal F f(\xi) = \int_{\R^d} e^{-ix\xi} f(x) \, dx
    \qquad
    \left( \xi \in \R^d \right).
\]
Then the inverse transform is given by $\mathcal F^{-1} g(x) = (2\pi)^{-d} \int_{\R^d} e^{ix\xi} g(\xi) \, d\xi$, the Plancherel identity reads $\norm{\mathcal F f}_{L^2(\R^d)} = (2\pi)^{d/2} \norm{f}_{L^2(\R^d)}$, and we have $\widehat{fg} = (2\pi)^{-d} \widehat f * \widehat g$ and
$\widehat{f * g} = \widehat f \, \widehat g$
for $f, g \in L^2 \left( \R^d \right)$.

We recall the Sobolev product law (see Theorem 2.2 in \cite{DAncona2012})
\begin{equation}\label{Sob-product-1}
  \norm{fg}_{H^{-s_1}(\R^d)} \le C \norm{f}_{H^{s_2}(\R^d)}\norm{g}_{H^{s_3}(\R^d)},
\end{equation}
which holds for all Schwartz functions $f$ and $g$ on $\R^d$ if and only if $s_1,s_2,s_3 \in \R$ satisfy
\begin{equation}\label{Sob-product-2}
  s_1+s_2+s_3 \ge \frac{d}{2}
  \quad \text{and} \quad
  \min_{i \neq j}(s_i+s_j) \ge 0,
  \quad \text{which are not both equalities}.
\end{equation}

For $d=1$ and $0 < b < 1$ we will make use of the norm equivalence
(see \cite[Lemma 3.15]{McLean2000})
\begin{equation}\label{HbEquiv}
  \norm{f}_{H^b(\R)} \sim \left( \norm{f}_{L^2(\R)}^2 + \norm{f}_{S^b(\R)}^2 \right)^{1/2}.
\end{equation}
Here $\norm{f}_{S^b(\Omega)}$ denotes the Slobodeckij seminorm on an open set $\Omega \subset \R$,
\[
  \norm{f}_{S^b(\Omega)}^2 = \int_\Omega \int_\Omega \frac{\Abs{f(t)-f(r)}^2}{\Abs{t-r}^{1+2b}} \, dr \, dt.
\]
On any finite time interval $I=(S,T)$ there is a similar norm equivalence (see \cite[Theorem 4.1]{Haroske2008})
\begin{equation}\label{HbEquivI}
  \norm{f}_{H^b(I)} \sim_I \left( \norm{f}_{L^2(I)}^2 + \norm{f}_{S^b(I)}^2 \right)^{1/2},
\end{equation}
but with the caveat that the constants depend on the interval.
The norm on the left hand side is the restriction norm, defined as the infimum
of $\norm{g}_{H^b(\R)}$ taken over $g \in H^b(\R)$ with $g=f$ on $I$.

Second, given a function $h \in C \left( \R^d,\R \right)$,
we denote by $X^{s,b}_{h(\xi)} \left( \R \times \R^d \right)$ the Bourgain space with norm
\begin{equation}
\label{Bourgain-norm}
  \norm{u}_{X^{s,b}_{h(\xi)}}
  =
  \left( \int_{\R^d} \int_{\R} \japanese{\xi}^{2s} \japanese{\tau+h(\xi)}^{2b}
  \Abs{\widehat u(\tau,\xi)}^2 \, d\tau \, d\xi \right) ^{1/2}
  ,
\end{equation}
where $\widehat u = \mathcal F_{(t,x)} u$ is the space-time Fourier transform.
The restriction to a time-slab $(S,T) \times \R^d$ is denoted $X^{s,b}_{h(\xi)}(S,T)$,
and is equipped with the norm
\begin{equation}
\label{XRestNorm}
  \norm{u}_{X^{s,b}_{h(\xi)}(S,T)} = \inf \left\{ \norm{v}_{X^{s,b}_{h(\xi)}} \colon \text{$u(t)=v(t)$ for $t \in (S,T)$} \right\}
  .
\end{equation}

The symbol $\tau+h(\xi)$ in \eqref{Bourgain-norm} is associated to the linear PDE
$-i\partial_t u + h(D_x) u = 0$ with the group
\[
  S_{h(\xi)}(t) = e^{-ith(D_x)},
\]
whose action on $f(x)$ is given by $\mathcal F_x \left\{ S_{h(\xi)}(t) f \right\}(\xi) = e^{-ith(\xi)} \widehat f(\xi)$. Here we recall that $D_x = -i\partial_x$, so that $\widehat{D_xf}(\xi) = \xi \widehat f(\xi)$.
Note that we can also write \eqref{Bourgain-norm} as
\begin{equation}
\label{XNorm_alternative}
	\norm
	{
		u
	}
	_{ X_{h(\xi)}^{ s, b } }^2
	=
	\int
	\langle \xi \rangle ^{2s}
	\norm
    {
        e^{ it h(\xi) }
        \mathcal F_x u(t, \xi)
    } _{ H_t^b(\R) }^2
	d\xi
	.
\end{equation}

We now mention some well-known properties of Bourgain norms; we refer to \cite{Ginibre} and \cite{TaoBook} for more details.
First, we note the obvious conjugation property
\begin{equation}
\label{XRestNorm_conjugate}
	\norm
	{
		\overline u
	}
	_{ X_{ h(\xi) }^{ s, b }(S, T) }
	=
	\norm
	{
		u
	}
	_{ X_{ -h(-\xi) }^{ s, b }(S, T) }
\end{equation}
valid also on the whole line $\mathbb R$, of course.
By $L^2$ duality and Plancherel's theorem it is clear that
\begin{equation}\label{Xduality}
  \norm{u}_{X^{s,b}_{h(\xi)}}
  =
  (2 \pi)^{d + 1}
  \sup_{\norm{v}_{X^{-s,-b}_{h(\xi)}} = 1} \Abs{ \int_{\R^d} \int_{\R} u(t,x) \overline{v(t,x)} \, dt \,dx}
\end{equation}
and
\begin{equation}\label{Xduality2}
  (2 \pi)^{d + 1}
  \Abs{\int_{\R^d} \int_{\R} u(t,x) \overline{v(t,x)} \, dt \,dx}
  \le
  \norm{u}_{X^{s,b}_{h(\xi)}}\norm{v}_{X^{-s,-b}_{h(\xi)}}.
\end{equation}
Now let $\theta(t)$ be any smooth, compactly supported function.
From \eqref{XNorm_alternative} it is clear that
\begin{equation}\label{Bourgain-1}
    \norm{\theta(t) S_{h(\xi)}(t)f}_{X^{s,b}_{h(\xi)}} = \norm{\theta}_{H^b(\R)} \norm{f}_{H^s(\R^d)}
    \quad \text{for $b \in \R$}.
\end{equation}
Moreover,
\begin{equation}
\label{Bourgain-2}
    \norm{\theta(t) \int_0^t S_{h(\xi)}(t-t')F(t') \, dt'}_{X^{s,b}_{h(\xi)}}
    \leqslant
    C_b
    \left(
        \norm{\theta}_{H^b(\R)} + \norm{t \theta(t)}_{H_t^b(\R)}
    \right)
    \norm{F}_{X^{s,b-1}_{h(\xi)}}
    \quad
    \text{for $b > \frac12$}
    ,
\end{equation}
which by \eqref{XNorm_alternative} reduces to the inequality
\begin{equation}
\label{Bourgain-2-reduced}
    \norm{\theta(t) \int_0^t f(t') \, dt'}_{H^{b}_t(\R)}
    \leqslant
    C_b
    \left(
        \norm{\theta}_{H^b(\R)} + \norm{t \theta(t)}_{H_t^b(\R)}
    \right)
    \norm{f}_{H^{b-1}(\R)}
    \quad
    \text{for $b > \frac12$}
    .
\end{equation}
The latter can be proved by using Fourier inversion on $f$ as follows, cf. \cite{Ginibre}.
Assuming $f \in \mathcal S(\mathbb R)$
and using Plancherel's theorem we can rewrite the integral of $f$ as
\[
    \int_0^t f(t') \, dt'
    =
    \frac 1{2\pi}
    \int_{\mathbb R}
    \widehat f(\tau)
    \overline{ \mathcal F \left( \mathbb 1_{(0, t)} \right) (\tau) }
    \,
    d \tau
    =
    \frac 1{2\pi i}
    \int_{\mathbb R}
    \widehat f(\tau)
    \frac 1{\tau}
    \left(
        e^{it \tau} - 1
    \right)
    \,
    d \tau
    .
\]
Note that the right hand side here is well defined for any
$f \in H^{b - 1}(\mathbb R)$ with $b > 1/2$,
as we shall see below.
It serves as a definition for the left hand side.
We need to calculate the $L^2$-norm of
\[
    J(\lambda)
    =
    \langle \lambda \rangle ^b
    \mathcal F_t
    \left(
        \theta(t) \int_0^t f(t') \, dt'
    \right)
    (\lambda)
    =
    \frac{1}{2\pi i}
    \int_{\R}
    \widehat f(\tau)
    \frac { \langle \lambda \rangle ^b }{ \tau }
    \left(
        \widehat \theta( \lambda - \tau )
        -
        \widehat \theta( \lambda )
    \right)
    \, d\tau
    .
\]
At first we split this integral into integrals over
$|\tau| < 1$ and $|\tau| \geqslant 1$.
On the second domain we can just bound
\(
    |\tau| \geqslant \langle \tau \rangle / 2
\)
and then return to the integration over the whole line $\mathbb R$.
On the first domain we use the fundamental theorem of calculus for the difference
in brackets
\[
    \frac 1{ \tau }
    \left(
        \widehat \theta( \lambda - \tau )
        -
        \widehat \theta( \lambda )
    \right)
    =
    i
    \int_0^1
    \mathcal F_t( t \theta (t) )( \lambda - \mu \tau )
    \,
    d \mu
    .
\]
Hence
\[
    \Abs{J(\lambda)}
    \leqslant
    \frac{1}{2\pi}
    \int_0^1
    \int_{-1}^1
    \Abs{\widehat f(\tau)}
    \langle \lambda \rangle ^b
    \Abs{
        \mathcal F_t( t \theta (t) )( \lambda - \mu \tau )
    }
    \,
    d\tau
    d\mu
    +
    \frac{1}{\pi}
    \int_{\R}
    \Abs{\widehat f(\tau)}
    \frac
    {\langle \lambda \rangle ^b}
    { \langle \tau \rangle }
    \left(
        \Abs{\widehat \theta( \lambda - \tau )}
        +
        \Abs{\widehat \theta( \lambda )}
    \right)
    \, d\tau
    .
\]
In the first integral
we can bound
\(
    \japanese{\lambda}^b
    \lesssim
    \japanese{\lambda - \mu \tau}^b + 1
    \lesssim
    \japanese{\lambda - \mu \tau}^b
    ,
\)
so up to a constant its $L^2$-norm gives rise to the term
\[
    \norm{
        t \theta (t)
    }_{H_t^b}
    \int_{-1}^1
    \Abs{\widehat f(\tau)}
    \,
    d\tau
    \lesssim
    \norm{
        t \theta (t)
    }_{H_t^b}
    \int_{-1}^1
    \Abs{\widehat f(\tau)}
    \japanese{\tau}^{b - 1}
    \,
    d\tau
    \lesssim
    \norm{
        t \theta (t)
    }_{H_t^b}
    \norm{
        f
    }_{H^{b-1}}
\]
by Minkowski's integral inequality.
Now noticing
\(
    \japanese{\lambda}^b
    \lesssim
    \japanese{\lambda - \tau}^b + \japanese{\tau}^b
\)
the second integral can be split, up to a constant, into the following three integrals
\[
    \int_{\R}
    \Abs{\widehat f(\tau)}
    \frac
    {\langle \lambda \rangle ^b}
    { \langle \tau \rangle }
    \Abs{\widehat \theta( \lambda )}
    \, d\tau
    +
    \int_{\R}
    \Abs{\widehat f(\tau)}
    \frac
    {\langle \lambda - \tau \rangle ^b}
    { \langle \tau \rangle }
    \Abs{\widehat \theta( \lambda - \tau )}
    \, d\tau
    +
    \int_{\R}
    \Abs{\widehat f(\tau)}
    \langle \tau \rangle ^{b - 1}
    \Abs{\widehat \theta( \lambda - \tau )}
    \, d\tau
    =
    J_1 + J_2 + J_3
    .
\]
Here $J_3$ is a convolution,
hence
\[
    \norm{J_3}_{L^2}
    \leqslant
    \norm{ \widehat \theta }_{L^1}
    \norm{ \widehat f(\tau) \langle \tau \rangle ^{b - 1} }_{L^2}
    \leqslant
    \norm{ \langle \tau \rangle ^{- b} }_{L_{\tau}^2}
    \norm{ \theta }_{H^b}
    \norm f _{H^{b - 1}}
    ,
\]
where the $L^1$-norm was estimated by H\"older.
Similarly,
\[
    \norm{J_1}_{L^2}
    +
    \norm{J_2}_{L^2}
    \leqslant
    2
    \norm{ \widehat \theta(\lambda) \japanese{\lambda}^b }_{L^2}
    \norm{ \widehat f(\tau) \langle \tau \rangle ^{-1} }_{L^1}
    \leqslant
    2
    \norm{ \langle \tau \rangle ^{- b} }_{L_{\tau}^2}
    \norm{ \theta }_{H^b}
    \norm f _{H^{b - 1}}
\]
that finishes the proof of \eqref{Bourgain-2-reduced}.

From \eqref{Bourgain-1} and \eqref{Bourgain-2},
one immediately obtains the corresponding restriction norm inequalities
on any time interval $(0,T)$,
by choosing a bump function $\theta$ such that $\theta(t) = 1$ for $\abs{t} \le 1$.
If $T \in (0,1]$, we apply \eqref{Bourgain-1} and \eqref{Bourgain-2} with $\theta(t)$,
while if $T > 1$ we apply them with $\theta(t/T)$ instead of $\theta(t)$,
and use the fact that
\(
    \norm{\theta(t/T)}_{H^b_t}
    \leqslant
    \sqrt{T}
    \norm{\theta}_{H^{ \max(b, 0) }}
\)
and
\(
    \norm{t \theta(t/T)}_{H^b_t}
    \leqslant
    T^{3/2}
    \norm{t \theta(t)}_{H^{ \max(b, 0) }_t}
\)
for $T > 1$.
This gives
\begin{equation}\label{Bourgain-1b}
    \norm{S_{h(\xi)}(t)f}_{X^{s,b}_{h(\xi)}(0,T)} \le C_b \left(1+\sqrt{T}\right) \norm{f}_{H^s(\R^d)}
    \quad \text{for $b \in \R$ and $T>0$}
\end{equation}
and
\begin{equation}\label{Bourgain-2b}
    \norm{\int_0^t S_{h(\xi)}(t-t')F(t') \, dt'}_{X^{s,b}_{h(\xi)}(0,T)} \le
    C_b \left(1+ T^{3/2} \right) \norm{F}_{X^{s,b-1}_{h(\xi)}(0,T)}
    \quad
    \text{for $b > \frac12$ and $T>0$},
\end{equation}
where $C_b$ depends on $b$ but not on $T$.

Further, one has (see Lemma 2.11 in \cite{TaoBook})
\begin{equation}\label{Bourgain-3}
    \norm{u}_{X^{s,b}_{h(\xi)}(0,T)} \le C_{b,b'} T^{b'-b} \norm{u}_{X^{s,b'}_{h(\xi)}(0,T)}
    \quad
    \text{for $-\frac12 < b < b' < \frac12$ and $0 < T \le 1$}
\end{equation}
and
\begin{equation}
\label{Bourgain-4}
    b > \frac12 \implies X^{s,b}_{h(\xi)}(0,T) \hookrightarrow C\left( [0,T], H^s \right)
    \quad \text{with}
    \quad
    \sup_{0 \le t \le T} \norm{u(t)}_{H^s(\R^d)} \le C_b \norm{u}_{X^{s,b}_{h(\xi)}(0,T)},
\end{equation}
where the last inequality follows by applying the Sobolev embedding
$H^b(\R) \hookrightarrow L^\infty(\R)$, for $b > 1/2$,
to the function
\(
    t \mapsto e^{ith(\xi)} \widehat u(t,\xi)
\)
staying in
\(
    \norm{u(t)}_{H^s(\R^d)}^2
    =
    \int \japanese{\xi}^{2s} \Abs{e^{ith(\xi)} \widehat u(t,\xi)}^2 \, d\xi
    ,
\)
and recalling first \eqref{XNorm_alternative}, then \eqref{XRestNorm}.

We will also need the trivial fact that
\begin{equation}
\label{Bourgain-5}
    b \ge 0 \implies X^{s,b}_{h(\xi)}(0,T) \hookrightarrow L^2\left( [0,T], H^s \right)
    \quad \text{with}
    \quad
    \norm{u}_{L^2([0,T],H^s)} \le \norm{u}_{X^{s,b}_{h(\xi)}(0,T)}.
\end{equation}
In particular, this implies that the space
$\mathbb L^2 \left( \Omega, X^{s,b}_{h(\xi)} \right)$
is well-defined when $b \ge 0$, as in \eqref{L2-prog}.

Finally, we recall that for $-1/2 < b < 1/2$
the restriction norm on $H^b(S, T)$ is equivalent to the
$H^b(\R)$-norm 
of the trivial extension.
More precisely, for all $\phi \in H^{b}(\R)$ we have
\begin{equation}
\label{H_characteristic_norm}
    \norm{\phi}_{H^b(S, T)}
    \leqslant
    \norm{\mathbb 1_{(S, T)} \phi}_{H^b(\R)}
    \leqslant
    C_b \norm{\phi}_{H^b(S, T)}
    \quad
    \text{for $-\frac12 < b < \frac12$},
\end{equation}
where we emphasise that $C_b$ is independent of $(S,T)$.
Similarly, for all $u \in X_{h(\xi)}^{s,b}(\R \times \R^d)$,
\begin{equation}
\label{X_characteristic_norm}
    \norm{u}_{X^{s,b}_{h(\xi)}(S, T)}
    \leqslant
    \norm{\mathbb 1_{(S, T)} u}_{X^{s,b}_{h(\xi)}}
    \leqslant
    C_b \norm{u}_{X^{s,b}_{h(\xi)}(S, T)}
    \quad
    \text{for $-\frac12 < b < \frac12$.}
\end{equation}
Here $\mathbb 1_{(S, T)}(t)$ is the characteristic function
of the interval $(S, T)$. Note that the left inequalities in\eqref{H_characteristic_norm} and \eqref{X_characteristic_norm} hold trivially by the definition the restriction norm; for a proof of the right inequalities, by an argument relying on the Slobodeckij seminorm, see Lemma 4 in \cite{Candy2012} (alternatively one can use the Fourier transform, applying the ideas used to prove Lemma 3.2 in \cite{Tao2004}).

Combining \eqref{H_characteristic_norm} and \eqref{X_characteristic_norm} with \eqref{XNorm_alternative}, we get the restriction norm equivalence
\begin{equation}
\label{XRestNorm_alternative}
	\norm
	{
		u
	}
	_{ X_{h(\xi)}^{ s, b }(S,T) }
	\sim
	\left( \int
	\langle \xi \rangle ^{2s}
	\norm
    {
        e^{ it h(\xi) }
        \mathcal F_x u(t, \xi)
    } _{ H_t^b (S,T)}^2
	d\xi \right)^{1/2}
	\quad
    \text{for $-\frac12 < b < \frac12$},
\end{equation}
with constants depending on $b$ but not on $(S,T)$.
With an additional effort one can get a stronger result.

\begin{lemma}[Bourgain isometry]
\label{Bourgain_isometry_lemma}
    For any $s, b \in \mathbb R$, interval $(S, T)$
    and
    \(
        h \in C \left( \mathbb R^d, \mathbb R \right)
    \)
    we have the following
    \begin{equation}
    \label{strong_XRestNorm_alternative}
    	\norm u
    	_{ X_{h(\xi)}^{ s, b }(S,T) }
    	=
        \left( \int
        \langle \xi \rangle ^{2s}
        \norm
        {
            e^{ it h(\xi) }
            \mathcal F_x u(t, \xi)
        } _{ H_t^b (S,T)}^2
        d\xi \right)^{1/2}
        .
    \end{equation}
\end{lemma}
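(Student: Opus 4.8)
The plan is to upgrade the norm equivalence \eqref{XRestNorm_alternative} to an honest equality valid for all $s,b \in \mathbb R$, by reducing to the one-dimensional statement that the restriction norm on $H^b(S,T)$ equals the infimum of $\|g\|_{H^b(\mathbb R)}$ over extensions, and that this infimum \emph{equals} the norm of a particular canonical extension — but computed correctly, not via the trivial (characteristic-function) extension used in \eqref{H_characteristic_norm}. First I would observe, exactly as in the derivation of \eqref{XNorm_alternative} and \eqref{XRestNorm_alternative}, that for fixed $\xi$ the quantity $\|e^{ith(\xi)}\mathcal F_x u(t,\xi)\|_{H^b_t(S,T)}$ is by definition (see \eqref{XRestNorm}, \eqref{HbEquivI}, and the restriction-norm definition following \eqref{HbEquivI}) the infimum over all $H^b(\mathbb R)$-extensions of $t \mapsto e^{ith(\xi)}\mathcal F_x u(t,\xi)$; and $\|u\|_{X^{s,b}_{h(\xi)}(S,T)}$ is by \eqref{XRestNorm}, \eqref{XNorm_alternative} the infimum over $v \in X^{s,b}_{h(\xi)}$ agreeing with $u$ on $(S,T)$ of $\bigl(\int \langle\xi\rangle^{2s}\|e^{ith(\xi)}\mathcal F_x v(t,\xi)\|^2_{H^b_t(\mathbb R)}\,d\xi\bigr)^{1/2}$.

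The key step is therefore a \emph{measurable selection / simultaneous extension} argument: given $u$ on $(S,T)$, I must produce a single $v \in X^{s,b}_{h(\xi)}$ with $v = u$ on $(S,T)$ whose Fourier-side fibers $e^{ith(\xi)}\mathcal F_x v(t,\xi)$ realise, for a.e.\ $\xi$, the fiberwise infimum $\|e^{ith(\xi)}\mathcal F_x u(t,\xi)\|_{H^b_t(S,T)}$ — or at least come within $\varepsilon$ of it, uniformly and measurably in $\xi$. The clean way is to use a \emph{linear} bounded extension operator $\mathcal E \colon H^b(S,T) \to H^b(\mathbb R)$ that is \emph{norm-preserving}, i.e.\ $\|\mathcal E g\|_{H^b(\mathbb R)} = \|g\|_{H^b(S,T)}$ for every $g$; such an operator exists because $H^b(S,T)$ is by definition a quotient of the Hilbert space $H^b(\mathbb R)$, and the orthogonal complement of the kernel (the subspace of extensions of $0$) gives a canonical isometric lift. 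Applying this $\mathcal E$ fiberwise — $\widehat v(\tau,\xi) := e^{-ith(\xi)}\,\bigl[\mathcal E\bigl(e^{i\cdot h(\xi)}\mathcal F_x u(\cdot,\xi)\bigr)\bigr]^{\wedge}(\tau)$, with measurability in $\xi$ coming from the fact that $\mathcal E$ is a fixed bounded linear operator and $\xi \mapsto \mathcal F_x u(\cdot,\xi) \in H^b_t(S,T)$ is measurable — yields $v \in X^{s,b}_{h(\xi)}$ with $v = u$ on $(S,T)$ and $\|v\|_{X^{s,b}_{h(\xi)}} = \bigl(\int \langle\xi\rangle^{2s}\|e^{ith(\xi)}\mathcal F_x u(t,\xi)\|^2_{H^b_t(S,T)}\,d\xi\bigr)^{1/2}$. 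This proves "$\le$". The reverse "$\ge$" is immediate: for \emph{any} extension $v$ of $u$, restricting each fiber shows $\|e^{ith(\xi)}\mathcal F_x u(t,\xi)\|_{H^b_t(S,T)} \le \|e^{ith(\xi)}\mathcal F_x v(t,\xi)\|_{H^b_t(\mathbb R)}$ pointwise in $\xi$, so integrating gives the right-hand side of \eqref{strong_XRestNorm_alternative} $\le \|v\|_{X^{s,b}_{h(\xi)}}$, and taking the infimum over $v$ finishes it.

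The main obstacle I expect is the measurability bookkeeping in the fiberwise application of $\mathcal E$: one must check that $\xi \mapsto e^{i\cdot h(\xi)}\mathcal F_x u(\cdot,\xi)$ is a (strongly) measurable $H^b_t(S,T)$-valued map — which follows from $u \in X^{s,b}_{h(\xi)}(S,T)$ together with the norm formula, approximating $u$ by Schwartz functions if desired — and then that $\xi \mapsto \mathcal E\bigl(e^{i\cdot h(\xi)}\mathcal F_x u(\cdot,\xi)\bigr) \in H^b_t(\mathbb R)$ is measurable, which is automatic since $\mathcal E$ is continuous; finally that the resulting $\widehat v(\tau,\xi)$ is jointly measurable and defines a genuine element of $X^{s,b}_{h(\xi)}$, i.e.\ that Fubini applies to interchange the $\tau$- and $\xi$-integrations in the norm. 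A secondary subtlety, only relevant if one wants to avoid invoking the abstract Hilbert-space quotient structure, is constructing $\mathcal E$ concretely for $-1/2 < b < 1/2$ via the Slobodeckij seminorm (as hinted in \eqref{HbEquivI} and the discussion of \eqref{H_characteristic_norm}); but since the statement is claimed for all $b \in \mathbb R$, the abstract orthogonal-lift construction is both necessary and cleanest, and I would present that. Once $\mathcal E$ is in hand the rest is routine, so the proof is short.
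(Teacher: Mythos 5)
Your proposal is correct, but it takes a genuinely different route from the paper. The paper proves an abstract Bochner-space lemma, valid for any separable Banach space $G$, any closed subspace $G_0$ and any $1\le p<\infty$, identifying $L^p(Y,\mu;G)/L^p(Y,\mu;G_0)$ isometrically with $L^p(Y,\mu;G/G_0)$; the work there is a careful measurable near-optimal lifting argument (simple-function approximation, telescoping series, $\varepsilon$-optimal representatives constant on partition sets) to get surjectivity and the norm bound, after which a second lemma on induced quotient operators and a commutative diagram with $\mathcal T=e^{ith(\xi)}\mathcal F_x$ yield \eqref{strong_XRestNorm_alternative}. You instead exploit that $H^b(\mathbb R)$ is a Hilbert space and that the restriction norm is exactly the quotient norm by $G_0=\{g: g=0 \text{ on } (S,T)\}$: the orthogonal complement $G_0^\perp$ gives a \emph{linear}, norm-preserving minimal-extension operator $\mathcal E\colon H^b(S,T)\to H^b(\mathbb R)$, and applying $\mathcal E$ fiberwise in $\xi$ produces, with measurability for free from the boundedness and linearity of $\mathcal E$, a single extension $v$ of $u$ realising the fiberwise infima, which gives the nontrivial inequality; the other direction is the trivial fiberwise restriction bound. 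This buys a shorter and more concrete argument that completely bypasses the paper's measurable-selection machinery, at the price of being tied to the $L^2$/Hilbert setting (which is all that is needed here), whereas the paper's lemma is stated and proved in a generality ($L^p$, Banach-valued) it never uses. One point you pass over quickly, and which the paper does address explicitly (by checking the inclusions $\mathcal T(E_0)\subset L^2(\langle\xi\rangle^{2s}d\xi;G_0)$ and $\mathcal T^{-1}(L^2(\langle\xi\rangle^{2s}d\xi;G_0))\subset E_0$ on tensor products of Schwartz functions): for general $b$, in particular $b<0$, you must justify that "$v=u$ on $(S,T)\times\mathbb R^d$" is equivalent to a.e.-in-$\xi$ agreement of the fibers $e^{ith(\xi)}\mathcal F_x v(t,\xi)$ and $e^{ith(\xi)}\mathcal F_x u(t,\xi)$ on $(S,T)$; both your lower bound and the claim that your fiberwise-extended $v$ restricts to $u$ rest on this identification, so a density argument of the same kind should be recorded. (Also note the small notational slip where you define $\widehat v(\tau,\xi)$ with an $e^{-ith(\xi)}$ factor: the demodulation acts in the $t$-variable before the temporal Fourier transform.) With that verification added, your argument is a complete and somewhat more elementary proof of the lemma.
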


We find the proof of this lemma instructive and not completely straightforward,
so we put it in a separate section \ref{Bourgain_isometry}.
Moreover, we could not find it presented anywhere else.
As a matter of fact, even the weaker result \eqref{XRestNorm_alternative}
would serve all our needs below.
So Lemma \ref{Bourgain_isometry_lemma} together with its proof
given in Section \ref{Bourgain_isometry} can be regarded as a complementary material.

Another immediate consequence of \eqref{X_characteristic_norm} is that functions on adjacent time intervals can be glued together.

\begin{lemma}\label{Gluing-lemma}
Let $-1/2 < b < 1/2$.
Then there exists a constant $C_b$ such that if $u \in X^{s,b}_{h(\xi)}(t_0,t_1)$ and $v \in X^{s,b}_{h(\xi)}(t_1,t_2)$, where $t_0 < t_1 < t_2$, then the glued function
\[
  [u,v](t) = \begin{cases}
  u(t) &t_0 < t < t_1
  \\
  v(t) &t_1 < t < t_2
  \end{cases}
\]
belongs to $X^{s,b}_{h(\xi)}(t_0,t_2)$ and
\[
  \norm{[u,v]}_{X^{s,b}_{h(\xi)}(t_0,t_2)}
  \le C_b \left(
  \norm{u}_{X^{s,b}_{h(\xi)}(t_0,t_1)}
  +
  \norm{v}_{X^{s,b}_{h(\xi)}(t_1,t_2)}
  \right).
\]
\end{lemma}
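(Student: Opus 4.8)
The plan is to reduce the gluing statement to the characteristic‐function norm equivalence \eqref{X_characteristic_norm}, which already does almost all the work. First I would pass from the restriction norms to norms of trivial extensions: by the right‐hand inequalities in \eqref{X_characteristic_norm}, choose $\widetilde u \in X^{s,b}_{h(\xi)}(\mathbb R \times \mathbb R^d)$ extending $u$ from $(t_0,t_1)$ and $\widetilde v$ extending $v$ from $(t_1,t_2)$, but — more simply — just work directly with the trivial extensions $\mathbb 1_{(t_0,t_1)} u$ and $\mathbb 1_{(t_1,t_2)} v$, which by \eqref{X_characteristic_norm} satisfy
\[
  \norm{\mathbb 1_{(t_0,t_1)} u}_{X^{s,b}_{h(\xi)}} \le C_b \norm{u}_{X^{s,b}_{h(\xi)}(t_0,t_1)},
  \qquad
  \norm{\mathbb 1_{(t_1,t_2)} v}_{X^{s,b}_{h(\xi)}} \le C_b \norm{v}_{X^{s,b}_{h(\xi)}(t_1,t_2)}.
\]
The key observation is that the trivial extension of the glued function $[u,v]$ to all of $\mathbb R$ agrees, up to a single point $t=t_1$ of measure zero, with $\mathbb 1_{(t_0,t_1)} u + \mathbb 1_{(t_1,t_2)} v$. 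Hence $\mathbb 1_{(t_0,t_2)}[u,v] = \mathbb 1_{(t_0,t_1)} u + \mathbb 1_{(t_1,t_2)} v$ as elements of $X^{s,b}_{h(\xi)}$ (the Bourgain norm is defined via an $L^2$ integral and so is insensitive to modification on a null set).

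Next I would apply the triangle inequality in $X^{s,b}_{h(\xi)}$ and then the left‐hand inequality of \eqref{X_characteristic_norm} — equivalently, the definition of the restriction norm — to bound $\norm{[u,v]}_{X^{s,b}_{h(\xi)}(t_0,t_2)}$:
\[
  \norm{[u,v]}_{X^{s,b}_{h(\xi)}(t_0,t_2)}
  \le
  \norm{\mathbb 1_{(t_0,t_2)}[u,v]}_{X^{s,b}_{h(\xi)}}
  \le
  \norm{\mathbb 1_{(t_0,t_1)} u}_{X^{s,b}_{h(\xi)}}
  +
  \norm{\mathbb 1_{(t_1,t_2)} v}_{X^{s,b}_{h(\xi)}}
  \le
  C_b \left( \norm{u}_{X^{s,b}_{h(\xi)}(t_0,t_1)} + \norm{v}_{X^{s,b}_{h(\xi)}(t_1,t_2)} \right),
\]
which is the claimed estimate; membership of $[u,v]$ in $X^{s,b}_{h(\xi)}(t_0,t_2)$ follows from finiteness of the right side. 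One should also remark that $\mathbb 1_{(t_0,t_2)}[u,v]$ indeed lies in $X^{s,b}_{h(\xi)}(\mathbb R \times \mathbb R^d)$ so that the right inequality of \eqref{X_characteristic_norm} is applicable — but this is exactly the content of the sum decomposition above, since each summand is in that space by \eqref{X_characteristic_norm} applied to $u$ and $v$.

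I do not expect any genuine obstacle here; the only point requiring a line of care is the one I flagged, namely that the condition $-1/2 < b < 1/2$ is essential because it is precisely the range in which \eqref{X_characteristic_norm} holds (multiplication by $\mathbb 1_{(S,T)}$ is not bounded on $H^b$ for $\abs{b} \ge 1/2$), and that the point $t = t_1$ where the two pieces meet is harmless because it has Lebesgue measure zero and the Bourgain space does not see individual time slices. Everything else is the triangle inequality and bookkeeping of the constants, which combine into a single $C_b$ depending only on $b$ (via the constant in \eqref{X_characteristic_norm}) and not on $t_0, t_1, t_2$.
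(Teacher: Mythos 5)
Your argument is correct and is exactly the paper's proof: decompose $\mathbb 1_{(t_0,t_2)}[u,v] = \mathbb 1_{(t_0,t_1)}u + \mathbb 1_{(t_1,t_2)}v$ a.e., apply the triangle inequality in $X^{s,b}_{h(\xi)}$, and invoke both inequalities of \eqref{X_characteristic_norm} (valid precisely for $-1/2<b<1/2$). No further comment is needed.
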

\begin{proof} This follows from \eqref{X_characteristic_norm} and the triangle inequality since $\mathbb 1_{(t_0,t_2)} = \mathbb 1_{(t_0,t_1)} + \mathbb 1_{(t_1,t_2)}$ a.e.
\end{proof}

\subsection{Cutoffs and a modified Bourgain norm}

As usual with a multiplicative noise,
we have to truncate the nonlinearity in order to prove existence by iteration.
In the corresponding cutoffs we will use, for technical reasons,
not the restriction norm \eqref{XRestNorm}
but an equivalent norm, defined by

\begin{equation}
\label{Bourgain_Slobodeckij_norm}
	\norm
	{
		u
	}
	_{ \widetilde X_{h(\xi)}^{ s, b }(S, T) }^2
	=
	\int_{\R^d}
    \left(
        \frac 1{ (T - S)^{2b} }
        \int_S^T \Abs{ U(t, \xi) }^2 \, dt
        +
        \int_S^T \int_S^T
        \frac
        { | U(t, \xi) - U(r, \xi) |^2 }
        { | t - r |^{1 + 2b} }
        \, dr \, dt
    \right)
	d\xi
    ,
\end{equation}
where
\(
    U(t, \xi)
    =
    \langle \xi \rangle ^{s} e^{ it h(\xi) }
    \mathcal F_x u(t, \xi)
	.
\)
It is thoroughly studied below in Sections \ref{Mod-B-norm} and \ref{CutoffSection}.
Here it is crucial that the norm equivalence is uniform
with respect to the time interval to which we restrict.
To be precise, for any $T_0 > 0$ and $b \in (0,1/2)$ there exists a constant $C_{T_0,b}$
such that (see Lemma \ref{NormLemma})
\begin{equation}
    \label{ModEquivalence}
    C_{T_0,b}^{-1} \norm{u}_{X^{s,b}_{h(\xi)}(S,T)}
    \le
    \norm{u}_{\widetilde X^{s,b}_{h(\xi)}(S,T)}
    \le C_{T_0,b}
    \norm{u}_{X^{s,b}_{h(\xi)}(S,T)}
\end{equation}
for all $u \in X_{h(\xi)}^{s,b}(S,T)$ with $0 < S < T \le T_0$.

The idea of exploiting the Slobodeckij seminorm,
the double integral over $(S, T)$ in \eqref{Bourgain_Slobodeckij_norm},
comes from \cite{Bouard_Debussche2007}.
However, we point out that the factor $1/(T - S)^{2b}$
in front of the $L^2$-norm
turns out to be important to claim the uniform equivalence \eqref{ModEquivalence}; see Remark \ref{Cutoffremark} in Section \ref{CutoffSection} for a further discussion of this.
Moreover, the necessity of this factor becomes clear
when one calculates these norms on a concrete element,
say
\(
    u(t) = S_{ h(\xi) }(t) f,
\)
where $f \in H^s \left(\R^d\right)$.
In fact, by \eqref{strong_XRestNorm_alternative} it follows immediately that
\[
    \norm
    { S_{ h(\xi) }(t) f }
    _
    { X_{ h(\xi) }^{s,b}(0,T) }
    =
    \norm{1}_{H^b(0,T)}
    \norm{f}_{H^s}
\]
for all $T > 0$ and $b \in \R$, and it is easy to see that
\begin{equation}\label{Basic-Sob-norm}
    \norm{1}_{H^b(0,T)}
    \sim
    T^{1/2 - b}
    \quad
    \text{for $0 \leqslant b < \frac12$ and $0 < T \leqslant 1$.}
\end{equation}
Indeed, recalling \eqref{H_characteristic_norm}, we can calculate the equivalent norm
\[
    \norm{ \mathbb 1_{(0, T)} }_{H^b}^2
    =
    \int_{\R}
    \Abs{
        \int_0^T e^{-it\tau} dt
    }^2
    \japanese{\tau}^{2b} d\tau
    =
    T
    \int_{\R}
    \Abs{
        \frac 2{\tau}
        \sin \frac {\tau}2
    }^2
    \japanese{ \frac {\tau}T }^{2b} d\tau
    ,
\]
which is comparable to $T^2 + T^{1-2b} \sim T^{1-2b}$ (for $0 < T \le 1$ and $0 \le b < 1/2$) as one can see by splitting the last integration into $\abs{\tau} \le T$ and $\abs{\tau} > T$. 
On the other hand, it is easily seen from \eqref{Bourgain_Slobodeckij_norm}, with $U(t,\xi) =  \langle \xi \rangle ^{s} \widehat f(\xi)$, that \[
  \norm{S_{h(\xi)}(t) f}_{\widetilde X_{ h(\xi) }^{s,b}(0,T) } = T^{1/2-b} \norm{f}_{H^s}.
\]

For the modified restriction norm \eqref{Bourgain_Slobodeckij_norm} we have the following key estimates,
proved in Section \ref{CutoffSection}.
\begin{proposition}
\label{MainCutoffLemma}
    Let $T_0 > 0$ and $b \in (0,1/2)$. Let $\theta \colon \R \to \R$ be a smooth, compactly supported function and set $\theta_R(x) = \theta(x/R)$ for $R > 0$. Let $n \in \N$, and for $1 \le i \le n$ let $s_i \in \R$,  $h_i \in C(\R^d,\R)$ and $u_i,v_i \in X^{s_i,b}_{h_i(\xi)}(0,T_0)$. Then for $T \in (0,T_0]$, $R > 0$ and $1 \le j \le n$ we have the estimates
    \[
        \norm{
            \theta_R\left( \sum_{i=1}^n \norm{u_i}_{\widetilde X^{s_i,b}_{h_i(\xi)}(0,t)}^2 \right)
            u_j(t)
        }_{X^{s_j,b}_{h_j(\xi)}(0,T)}
        \leqslant
        C \sqrt{R}
        ,
    \]
    \[
        \norm{
            \theta_R\left( \sum_{i=1}^n \norm{u_i}_{\widetilde X^{s_i,b}_{h_i(\xi)}(0,t)}^2 \right)
            u_j(t)
            -
            \theta_R\left( \sum_{i=1}^n \norm{v_i}_{\widetilde X^{s_i,b}_{h_i(\xi)}(0,t)}^2 \right)
            v_j(t)
        }_{X^{s_j,b}_{h_j(\xi)}(0,T)}
        \leqslant
        C \sum_{i=1}^n \norm{u_i-v_i}_{X^{s_i,b}_{h_i(\xi)}(0,T)}
        ,
    \]
    where $C$ depends only on $b$, $T_0$ and $\theta$.
\end{proposition}

The modified norm \eqref{Bourgain_Slobodeckij_norm} can be formulated also for functions $\phi(t)$ depending only on the time variable $t$, and this gives cutoff estimates for functions in $H^b(0, T)$, $0 < b < 1/2$. See Section \ref{only-time-cutoff}.

%%%%%%%%%%%%%%%%%%%%%%%%%%%%%%%%%%%%%%%%%%%%%%%%%%%%%%%%%%%%%%%%%%%%%%%%%%%%%
\section{Main results}
\label{Main_result}
%%%%%%%%%%%%%%%%%%%%%%%%%%%%%%%%%%%%%%%%%%%%%%%%%%%%%%%%%%%%%%%%%%%%%%%%%%%%%
\setcounter{equation}{0}
%%%%%%%%%%%%%%%%%%%%%%%%%%%%%%%%%%%%%%%%%%%%%%%%%%%%%%%%%%%%%%%%%%%%%%%%%%%%%

We consider the mild form of \eqref{Ito-DKG-split}, which reads
\begin{multline}
\label{mild_psi}
	\psi_{\pm}(t)
	=
	S_{\pm\xi}(t) f_\pm
	-
	iM \int_0^t S_{\pm\xi}(t-\sigma)  \psi_{\mp}(\sigma) \, d\sigma
	+
	i \int_0^t  S_{\pm\xi}(t-\sigma) ( \phi\psi_{\mp})(\sigma) \, d\sigma
	\\
	+
	i \int_0^t S_{\pm\xi}(t-\sigma) \psi_\mp(\sigma) \mathfrak K_1 \, d W(\sigma)
	-
	M_{\mathfrak K_1} \int_0^t S_{\pm\xi}(t-\sigma) \psi_\pm(\sigma) \, d\sigma
\end{multline}
and
\begin{multline}
\label{mild_phi}
	\phi_+(t)
	=
	S_{+\japanese{\xi}}(t) g_+
	+
	i \int_0^t S_{+\japanese{\xi}}(t-\sigma) \japanese{D_x}^{-1}
    \re \left( \overline{\psi_+} \psi_- \right)(\sigma) \, d\sigma
	\\
	+
	\frac{i}{2} \int_0^t S_{+\japanese{\xi}}(t-\sigma) \japanese{D_x}^{-1}
    \phi(\sigma) \mathfrak K_2 \, dW(\sigma)
,
\end{multline}
where
\(
    \phi
    =
    \phi_+ + \overline{\phi_+}
    =
    2 \re \phi_+
    .
\)
We will look for solutions
\begin{equation}
\label{IterationSpace}
    \psi_\pm \in X_{\pm\xi}^{s,b}(0,T)
    , \qquad
    \phi_+ \in X_{+\japanese{\xi}}^{r,b}(0,T)
    ,
\end{equation}
where $b < 1/2$ is taken sufficiently close to $1/2$, depending on $s$ and $r$.
Note that by the conjugation property \eqref{XRestNorm_conjugate}
we have
\begin{equation}
\label{IterationSpace_conjugate}
    \overline{\phi_+} \in X_{ -\japanese{\xi} }^{r,b}(0,T)
    .
\end{equation}
It will be convenient to define
\begin{equation}\label{spaceHssr}
    \mathbf H^{(s,s,r)} = H^s(\R,\C) \times H^s(\R,\C) \times H^r(\R,\C)
\end{equation}
and
\begin{equation}\label{spaceXssr}
    \mathbf X^{(s,s,r),b}(0,T) = X_{+\xi}^{s,b}(0,T) \times X_{-\xi}^{s,b}(0,T) \times X_{+\japanese{\xi}}^{r,b}(0,T),
\end{equation}
with the product norms.

We now state our main results.

\begin{theorem}[Local existence]
\label{local_th}
Assume that $s,r \in \R$ satisfy
\[
  s > - \frac14, \qquad \Abs{s} \le r \le s+1, \qquad 0 < r < 1+2s.
\]
Assume further that the kernels $\mathfrak k_j$, defining the convolution operators $\mathfrak K_j$, have the regularity
\[
  \mathfrak k_1 \in H^{\Abs{s}}(\R,\R), \qquad \mathfrak k_2 \in H^{\max(0,r-1)}(\R,\R).
\]
Then for any $b < 1/2$ sufficiently close to $1/2$, the following holds. Assume that
\[
  (f_+,f_-,g_+) \in L^2\bigl( \Omega, \mathbf H^{(s,s,r)} \bigr)
  \quad \text{is $\mathcal F_0$-measurable}.
\]
Then there exists a stopping time $\tau \colon \Omega \to (0,\infty]$ and a random process
\[
  (\psi_+,\psi_-,\phi_+)(t) \in \mathbf H^{(s,s,r)} \quad \text{for $0 \le t < \tau$}
\]
such that for $0 < t < \tau$, \eqref{mild_psi} and \eqref{mild_phi} hold,
\[
  (\psi_+,\psi_-,\phi_+)(t) \colon \{ t < \tau \} \to \mathbf H^{(s,s,r)} \quad \text{is $\mathcal F_t$-measurable}
\]
and
\[
  (\psi_+,\psi_-,\phi_+) \in C\left( [0,t],\mathbf H^{(s,s,r)} \right) \cap \mathbf X^{(s,s,r),b}(0,t).
\]
Moreover, the solution is maximal in the sense that
\[
  \tau < \infty \implies \limsup_{t \nearrow \tau} \norm{(\psi_+,\psi_-,\phi_+)}_{\mathbf X^{(s,s,r),b}(0,t)} = \infty,
\]
and it is unique in the sense that if $(\Psi_+,\Psi_-,\Phi_+)$ is a solution with the same initial data, and satisfying the same assumptions but with a stopping time $\tau'$,
then almost surely
\[
  (\psi_+,\psi_-,\phi_+)(t) = (\Psi_+,\Psi_-,\Phi_+)(t)
  \quad \text{for $0 \le t < \min(\tau,\tau')$}.
\]
Further, if $s \ge 0$, then the charge is almost surely conserved:
\[
  \int_{\R} \left( \Abs{\psi_+(t,x)}^2 + \Abs{\psi_-(t,x)}^2 \right) \, dx
  = \int_{\R} \Abs{\psi_0(x)}^2 \, dx
  \quad \text{for $0 \le t < \tau$},
\]
where $\psi_0 = (f_+,f_-)$.
\end{theorem}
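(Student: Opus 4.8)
The plan is to obtain Theorem~\ref{local_th} from the abstract local well-posedness result of Section~\ref{Abstract-WP}. First I would recast \eqref{Ito-DKG-split}, \eqref{Ito-DKG-split-data} in the abstract form $dX = AX\,dt + \mathcal N(X)\,dt + \mathcal M(X)\,dW$ with $X=(\psi_+,\psi_-,\phi_+)$, where the group generated by $A$ is the diagonal $\bigl(S_{+\xi},\,S_{-\xi},\,S_{+\japanese\xi}\bigr)$ attached to the symbols $+\xi$, $-\xi$, $+\japanese\xi$; the drift $\mathcal N(X)$ collects the bilinear terms $\phi\psi_\mp=2\re(\phi_+)\psi_\mp$ and $\japanese{D_x}^{-1}\re(\overline{\psi_+}\psi_-)$ together with the bounded linear corrections $-M\psi_\mp$ and $iM_{\mathfrak K_1}\psi_\pm$; and the noise coefficient $\mathcal M(X)\in\mathcal L_2\bigl(L^2(\R,\R),\mathbf H^{(s,s,r)}\bigr)$ is built from the operator compositions $\psi_\mp\mathfrak K_1$ and $\frac12\japanese{D_x}^{-1}\phi\mathfrak K_2$ of \eqref{Ito-DKG-split}. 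The target space for the iteration is $\mathbb L^2\bigl(\Omega,\mathbf X^{(s,s,r),b}(0,T)\bigr)$ with $b<1/2$ chosen close to $1/2$, as in \eqref{IterationSpace}, \eqref{spaceXssr}.

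Next I would verify the hypotheses of the abstract theorem. These are: (i) the free-propagator bound \eqref{Bourgain-1b} and the Duhamel bound in the range $b<1/2$ (obtained from \eqref{Bourgain-1b}, \eqref{Bourgain-3} and the arguments of Section~\ref{Abstract-WP}); (ii) the bilinear product estimates
\[
  \norm{\phi\,\psi_\mp}_{X^{s,b-1}_{\pm\xi}}\le C\,\norm{\phi_+}_{X^{r,b}_{+\japanese\xi}}\norm{\psi_\mp}_{X^{s,b}_{\mp\xi}},
  \qquad
  \norm{\japanese{D_x}^{-1}(\overline{\psi_+}\psi_-)}_{X^{r,b-1}_{+\japanese\xi}}\le C\,\norm{\psi_+}_{X^{s,b}_{+\xi}}\norm{\psi_-}_{X^{s,b}_{-\xi}},
\]
valid precisely under $s>-1/4$, $\abs s\le r\le s+1$, $0<r<1+2s$ --- here one uses $\phi=\phi_++\overline{\phi_+}$ and the conjugation identity \eqref{XRestNorm_conjugate}, and these bounds extend the deterministic product laws of \cite{Pecher2006, Selberg_Tesfahun} to $b<1/2$, as carried out in Section~\ref{Bilinear_bounds}; (iii) the Hilbert--Schmidt bounds on the noise coefficient: by the Sobolev product law \eqref{Sob-product-1} the hypotheses $\mathfrak k_1\in H^{\abs s}$, $\mathfrak k_2\in H^{\max(0,r-1)}$ guarantee that $f\mapsto\psi_\mp\mathfrak K_1 f$ and $f\mapsto\frac12\japanese{D_x}^{-1}(\phi\,\mathfrak K_2 f)$ are Hilbert--Schmidt from $L^2(\R,\R)$ into $H^s$, resp.\ $H^r$, with norms controlled by $\norm{\psi_\mp}_{H^s}\norm{\mathfrak k_1}_{H^{\abs s}}$, resp.\ $\norm{\phi}_{H^r}\norm{\mathfrak k_2}_{H^{\max(0,r-1)}}$, which together with the stochastic-convolution estimates of Section~\ref{Stochastic_integral} (the It\^o isometry \eqref{Ito} and Doob's inequality \eqref{Maximal}) controls the stochastic term in $\mathbf X^{(s,s,r),b}(0,T)$; and (iv) the cutoff estimates of Proposition~\ref{MainCutoffLemma} together with the uniform norm equivalence \eqref{ModEquivalence}, which render the truncated drift globally Lipschitz in $\mathbb L^2\bigl(\Omega,\mathbf X^{(s,s,r),b}(0,T)\bigr)$ with constants independent of $\omega$ and of the restriction interval.

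With these inputs the argument proceeds as follows. For each $R>0$ I truncate the bilinear drift by the factor $\theta_R\!\left(\sum_{i=1}^3\norm{X^{(i)}}_{\widetilde X^{s_i,b}_{h_i(\xi)}(0,t)}^2\right)$ of Proposition~\ref{MainCutoffLemma}, where $(s_1,s_2,s_3)=(s,s,r)$ and $(h_1,h_2,h_3)=(+\xi,-\xi,+\japanese\xi)$; since the truncated problem has a Lipschitz nonlinearity with constants independent of the data, the Picard iteration converges in $\mathbb L^2\bigl(\Omega,\mathbf X^{(s,s,r),b}(0,T)\bigr)$ for $T$ sufficiently small, and iterating in time and gluing with Lemma~\ref{Gluing-lemma} yields a unique global solution $X_R$ of the truncated problem on any $[0,T_0]$. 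I then let $\tau_R$ be the first time at which $\sum_{i=1}^3\norm{X_R^{(i)}}_{\widetilde X^{s_i,b}_{h_i(\xi)}(0,t)}^2$ reaches $R$: by \eqref{ModEquivalence} this quantity is finite, continuous in $t$, vanishes at $t=0$ and is $\mathcal F_t$-adapted, so Lemma~\ref{Stopping-lemma-3} shows $\tau_R$ is a stopping time. Uniqueness forces $X_R=X_{R'}$ on $[0,\tau_R\wedge\tau_{R'})$ for $R<R'$, so the family is consistent; setting $\tau=\sup_R\tau_R$ and letting $X$ be the common value of the $X_R$ on $[0,\tau_R)$ gives the claimed process, which solves \eqref{mild_psi}, \eqref{mild_phi} on $[0,\tau)$ since $\theta_R\equiv1$ there. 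The $\mathcal F_t$-measurability of $X(t)$ on $\{t<\tau\}$ and the continuity of paths in $\mathbf H^{(s,s,r)}$ follow from the mild representation (continuity of the free flow and of the deterministic Duhamel term, and existence of a continuous modification of the stochastic convolution, cf.\ Section~\ref{Stochastic_integral}); membership in $\mathbf X^{(s,s,r),b}(0,t)$ for $t<\tau$ follows from $t<\tau_R$ for some $R$ and \eqref{ModEquivalence}; maximality is the usual dichotomy --- were the Bourgain norm to remain bounded as $t\nearrow\tau<\infty$, then by \eqref{ModEquivalence} the cutoff would be inactive for all large $R$, forcing $\tau_R=\tau$, whence $X$ extends continuously past $\tau$ and can be restarted there and glued via Lemma~\ref{Gluing-lemma}, contradicting the construction; and uniqueness of the full solution follows from uniqueness of the truncated ones by the same stopping-time comparison. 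Finally, conservation of charge for $s\ge0$ is proved separately by applying It\^o's formula to $t\mapsto\norm{\psi(t)}_{L^2}^2$, with the It\^o correction $M_{\mathfrak K_1}$ in \eqref{ItoDKG} exactly cancelling the quadratic-variation contribution of the noise; this computation is carried out in Section~\ref{Charge_conservation}.

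The step I expect to be the main obstacle is the combination of (ii)--(iv): the deterministic bilinear estimates, which in the usual theory are established for $b>1/2$, must be pushed below the threshold to $b<1/2$, a range forced on us by the multiplicative noise and by the need for the uniform Slobodeckij-type norm equivalence \eqref{ModEquivalence}; and one must simultaneously control the stochastic convolution and the cutoffs in this low-time-regularity regime. Orchestrating the contraction so that every estimate closes with constants independent of $\omega$ and of the restriction interval $(0,T)$ is the delicate part, even though the individual ingredients are supplied by the preceding sections.
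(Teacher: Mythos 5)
Your proposal follows essentially the same route as the paper: recast \eqref{Ito-DKG-split} in the abstract framework of Section \ref{Abstract-WP}, verify its hypotheses via Lemmas \ref{DKG-HS-lemma}, \ref{DKG-bilinear-lemma}, \ref{DKG-linear-lemma} (and \ref{DKG-high-reg-lemma} for the high-regularity assumption \eqref{N3}), solve the $\theta_R$-truncated problem by contraction in $\mathbb L^2\left(\Omega,\mathbf X^{(s,s,r),b}(0,T)\right)$ using Proposition \ref{MainCutoffLemma} and the stopping times of Lemma \ref{Stopping-lemma-3}, pass to $\tau=\sup_R\tau_R$ for maximality and uniqueness, and obtain charge conservation from It\^o's formula after the $P_\mu$-regularisation of Section \ref{Charge_conservation}. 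The only imprecisions are notational: the bilinear bounds should carry the stronger time exponent $-b$ of Lemma \ref{DKG-bilinear-lemma} on the left rather than $b-1$ (this is what allows \eqref{Bourgain-2b} at level $B=1-b-\varepsilon>\frac12$ together with \eqref{Bourgain-3} to produce the factor $T^{\varepsilon}$), and the Hilbert--Schmidt bound for the noise rests on the Parseval computation of Corollary \ref{Hilbert_Schmidt_operator_corollary} rather than on the Sobolev product law \eqref{Sob-product-1}.
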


This theorem is a consequence of the abstract well-posedness theory presented in Section \ref{Abstract-WP}.
The existence follows from Theorem \ref{Existence-Theorem} and the uniqueness from Theorem \ref{Uniqueness-theorem}. The necessary assumptions stated in Section \ref{Abstract-WP} are verified here on account of the bounds stated in Lemmas \ref{DKG-HS-lemma}, \ref{DKG-bilinear-lemma}, \ref{DKG-linear-lemma} and \ref{DKG-high-reg-lemma} below; see Section \ref{ConcreteCase} for the details. The charge conservation is proved in Section \ref{Charge_conservation}.

Using the charge conservation, we will then deduce the following global result.

\begin{theorem}[Global existence]
\label{global_th}
Let $s=0$ and $1/4 < r < 1/2$. Let $\max(r,1-2r) < b < 1/2$. Given initial data as in Theorem \ref{local_th},
we impose the additional condition $f_+,f_- \in L^p \left( \Omega,L^2 \right)$,
where
\[ 
  p \ge \max\left( 4,  \frac{2b + 2r - 1}{b + 2r - 1} \right)
\]
Then the solution in Theorem \ref{local_th} extends globally in time. That is, $\tau=\infty$.
\end{theorem}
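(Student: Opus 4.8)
The plan is to run the standard continuation-via-conservation-law argument, but carried out carefully in the stochastic, Bourgain-space setting. Suppose for contradiction that the maximal existence time $\tau$ is finite with positive probability, say on an event of positive measure we have $\tau \le T_0$ for some deterministic $T_0$. By Theorem \ref{local_th}, on this event $\limsup_{t \nearrow \tau} \norm{(\psi_+,\psi_-,\phi_+)}_{\mathbf X^{(0,0,r),b}(0,t)} = \infty$, so it suffices to show that this Bourgain norm stays finite up to any finite time, almost surely. The charge conservation (the last clause of Theorem \ref{local_th}, valid here since $s=0$) gives $\norm{\psi_\pm(t)}_{L^2} \le \norm{\psi_0}_{L^2}$ for all $t < \tau$, which controls the Dirac components in $L^\infty_t L^2_x$ but not yet in the $X^{0,b}$ norm, and says nothing directly about $\phi_+$. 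So the real work is a bootstrap: use the $L^2$ charge bound plus the bilinear estimates from Section \ref{Bilinear_bounds} (Lemmas \ref{DKG-bilinear-lemma}, \ref{DKG-linear-lemma}, \ref{DKG-high-reg-lemma}) and the stochastic-integral bounds (Lemma \ref{DKG-HS-lemma}) to close an a priori estimate on a time interval whose length depends only on the charge, then iterate.

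Concretely, I would fix a finite horizon $T_* < \infty$ and partition $[0,T_*]$ into subintervals of a small length $\delta$ to be chosen. On each subinterval $[t_k,t_{k+1}]$, treating $(\psi_+(t_k),\psi_-(t_k),\phi_+(t_k))$ as new initial data, I run the mild formulation \eqref{mild_psi}--\eqref{mild_phi} with the inhomogeneous estimates \eqref{Bourgain-1b}, \eqref{Bourgain-2b}, the time-gain \eqref{Bourgain-3} (legitimate since $b<1/2$), and the bilinear and stochastic bounds. The Dirac initial data at $t_k$ has $L^2$ norm bounded by the conserved charge, uniformly in $k$; the point is that because we can afford $b' - b > 0$ in \eqref{Bourgain-3} we gain a positive power of $\delta$ on the nonlinear and stochastic contributions. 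For the Klein-Gordon component $\phi_+$, whose $H^r$ norm is not conserved, the quadratic source $\japanese{D_x}^{-1}\re(\overline{\psi_+}\psi_-)$ is controlled via the bilinear estimate in terms of $\norm{\psi_\pm}_{X^{0,b}}^2$, which in turn is being controlled on the current subinterval; this feeds linearly into $\phi_+$, so $\norm{\phi_+}_{X^{r,b}}$ grows at most geometrically from interval to interval, hence stays finite over the finitely many steps needed to reach $T_*$. The stochastic terms require the It\^o isometry \eqref{Ito} and Doob's inequality \eqref{Maximal} together with the Hilbert-Schmidt bounds of Lemma \ref{DKG-HS-lemma}; here the moment hypotheses $f_\pm \in L^p(\Omega,L^2)$ enter, because to close the estimate on $\phi_+$ one needs not just $L^2(\Omega)$ but higher moments of $\norm{\psi_\pm}_{X^{0,b}}$ — the exponent $p \ge \max(4, (2b+2r-1)/(b+2r-1))$ is exactly what is forced by tracking how many factors of the Dirac norm appear (quadratically in the source, then squared again inside the $L^2(\Omega)$ estimate) and how the time-power from \eqref{Bourgain-3} trades against the number of iteration steps.

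The main obstacle, and the reason this is not immediate, is twofold. First, the lack of a conserved quantity for $\phi_+$ means the argument is genuinely a Gronwall-type iteration in the probabilistic variable rather than a pointwise-in-$\omega$ bound: one has to propagate an $L^p(\Omega)$ bound on the full Bourgain norm of the triple across $\sim T_*/\delta$ steps, and since $\delta$ is fixed by the charge (deterministically) but the constants in \eqref{ModEquivalence} and in the cutoff estimates of Proposition \ref{MainCutoffLemma} are uniform only on $[0,T_0]$, one must be sure the per-step growth factor is integrable and does not depend on $\omega$. Second, one has to reconcile the truncated (cutoff) equation used for local existence with the genuine solution: for $R$ large the stopping time $\tau_R$ from Lemma \ref{Stopping-lemma-3} satisfies $\tau_R \nearrow \tau$, and the a priori bound shows $\sup_{t \le T_*}\norm{(\psi_+,\psi_-,\phi_+)}_{\mathbf X^{(0,0,r),b}(0,t)}$ is finite in $L^p(\Omega)$, hence finite a.s., so $\tau_R = T_*$ for $R$ large a.s., forcing $\tau \ge T_*$; letting $T_* \to \infty$ gives $\tau = \infty$. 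Everything else — the bilinear estimates, the stochastic integral bounds, the norm equivalences — is already in hand from the earlier sections, so the proof is really a bookkeeping argument about how the moment exponent $p$ and the subinterval length interact, with the charge conservation supplying the one uniform bound that makes the bootstrap close.
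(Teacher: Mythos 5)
Your overall strategy (work with the truncated solutions, use charge conservation plus the blow-up criterion, and let $R\to\infty$ through the stopping times $\tau_R$) matches the paper's outer argument, but the core of your bootstrap has a genuine gap: it is circular in exactly the place where the paper has to introduce a new estimate. You propose to bound the Klein--Gordon source $\japanese{D_x}^{-1}\re(\overline{\psi_+}\psi_-)$ by $\norm{\psi_\pm}_{X^{0,b}}^2$ ``which is being controlled on the current subinterval,'' and conversely to control $\psi_\pm$ on each subinterval of a length $\delta$ ``depending only on the charge.'' Neither half works as stated. The Dirac bilinear term is $\phi\,\psi_\mp$, so the per-step contraction/absorption for $\psi_\pm$ needs $\delta^{\varepsilon}\norm{\phi_+}_{X^{r,b}}$ small; since $\norm{\phi_+}$ is not conserved (and is random), $\delta$ cannot be chosen from the charge alone, and in $L^2(\Omega)$ the product $\norm{\phi_+}_{X^{r,b}}\norm{\psi_\mp}_{X^{0,b}}$ cannot be estimated without either a pointwise-in-$\omega$ bound (unavailable without the cutoff bound \eqref{Cutoff-fact1}, which costs $\sqrt R$ and destroys uniformity in $R$) or higher moments of the \emph{solution}, which the hypotheses do not provide. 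Conversely, feeding $\norm{\psi}_{X^{0,b}}^2$ into $\phi_+$ and $\norm{\phi_+}$ back into $\psi$ is precisely the loop that does not close with the plain bilinear estimate of Lemma \ref{DKG-bilinear-lemma}.

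The paper breaks this loop with two ingredients absent from your plan. First, the $\phi_+^R$ bound \eqref{phi-R-bound} is made completely independent of the Dirac Bourgain norms: the source is estimated pointwise in time by the Sobolev product law $\norm{\psi_+\psi_-}_{H^{r-1}}\le C\norm{\psi_+}_{L^2}\norm{\psi_-}_{L^2}$ (this is where $r<1/2$ is used) together with charge conservation, with only $\abs{\Theta}\le1$ from the cutoff; the stochastic term is absorbed via Corollary \ref{X-cor} on short intervals, giving geometric growth and the $\norm{\psi_0}_{L^4(\Omega,L^2)}^2$ term (hence $p\ge4$). Second, for the Dirac part the paper does not iterate in time at all: it uses the refined null-form interpolation estimate (Lemmas \ref{DKG-null-form-lemma1}, \ref{Null-form-lemma2}, Corollary \ref{Null-form-corollary}), giving
\[
\norm{\Theta\phi^R\,\Theta\psi_\mp^R}_{X^{0,-b}_{\pm\xi}(0,T)}
\le C\,\norm{\phi_+^R}_{X^{r,b}}\,\norm{\psi_\mp^R}_{X^{0,b}}^{\mu}\,\norm{\psi_\mp^R}_{L^2_{t,x}}^{1-\mu},
\qquad \tfrac12-r=\mu b,\ \ 0<\mu<\tfrac12,
\]
so that after H\"older in $\omega$ (with the exponent $q=(2b+2r-1)/(b+2r-1)\le p$, which is where the second part of the moment condition comes from) the global-in-time $L^1(\Omega)$ estimate for $\norm{\psi_\pm^R}_{X^{0,b}(0,T)}$ is \emph{sublinear} in itself and closes by an elementary algebraic lemma, with the $(1-\mu)$-power carried by the conserved charge. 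Without this sublinearity your scheme cannot produce bounds uniform in $R$, and without the explicit $\mu$ the threshold $p\ge\max\bigl(4,(2b+2r-1)/(b+2r-1)\bigr)$ cannot be recovered; so the proposal, as written, does not prove the theorem.
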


The proof is given in Section \ref{Global_existence}.

Although the local result will, as mentioned, be deduced from the abstract framework expounded in some detail in Section \ref{Abstract-WP}, we find it worthwhile to present here a broad outline of the key ideas behind the proof.

Existence for a short time interval $(0,T)$ will be proved by iteration in
\begin{equation}\label{iteration-space}
  \mathbb L^2\left(\Omega,\mathbf X^{(s,s,r),b}(0,T) \right)
  \cap
  \mathbb L^2\left(\Omega,C([0,T],\mathbf H^{(s,s,r)}) \right),
\end{equation}
and the first thing to notice is that we cannot expect the stochastic integrals to be in this space unless $b$ is strictly less than $1/2$, the reason being that the paths of any one-dimensional Brownian motion belong to $H^b(0, T)$ if and only if $b < 1/2$, as shown in \cite{Benyi2010}.

For $b < 1/2$ the stochastic integrals can indeed be controlled in \eqref{iteration-space}, provided that we know that the linear operators
\begin{equation}\label{M1-M2}
  f \mapsto M_1(f) = f \mathfrak K_1 \quad \text{and} \quad g \mapsto M_2(g) = \japanese{D_x}^{-1} g \mathfrak K_2 
\end{equation}
map $H^s(\R)$ and $H^r(\R)$, respectively, into Hilbert-Schmidt operators from $L^2(\R,\R)$ into $H^s(\R)$ and $H^r(\R)$, respectively. Thus, we need the following.

\begin{lemma}\label{DKG-HS-lemma}
Let $s,r$ be as in Theorem \ref{local_th}. Assume that $\mathfrak k_1 \in H^{\Abs{s}}(\R,\R)$ and $\mathfrak k_2 \in H^{\max(0,r-1)}(\R,\R)$. Then there exists a constant $C$ such that the linear operators $M_1$ and $M_2$ defined by \eqref{M1-M2} satisfy
\[
  \norm{M_1(f)}_{\mathcal L_2(L^2,H^s)} \le C \norm{f}_{H^s}
  \quad \text{and} \quad
  \norm{M_2(g)}_{\mathcal L_2(L^2,H^r)} \le C \norm{g}_{H^r}
\]
for all $f \in H^s(\R)$ and $g \in H^r(\R)$.
\end{lemma}

The proof of this lemma is given in Section \ref{Stochastic_integral}, and in Section \ref{Abstract-WP} we show how it is applied to control the stochastic integrals.

Now let us turn our attention to the deterministic terms in \eqref{mild_psi} and \eqref{mild_phi}. Here there is a difference from the purely deterministic case, where one works with $b > 1/2$, see e.g.~\cite{Pecher2006}. Since now we are forced to take $b < 1/2$, the required bilinear estimates are a bit tighter. We will prove the following bilinear bounds, extending those obtained in \cite{Pecher2006, Selberg_Tesfahun} to the case where $b$ is less than, but close to, $1/2$.

\begin{lemma}\label{DKG-bilinear-lemma}
Assume that $s,r \in \R$ satisfy
\[
  s > - \frac14, \qquad \Abs{s} \le r \le s+1, \qquad 0 < r < 1+2s.
\]
Then for any $b < 1/2$ sufficiently close to $1/2$, there exists a constant $C$ such that
\begin{align}
  \label{M+bound}
  \norm{\phi \psi}_{X^{s,-b}_{+\xi}}
  &\le C \norm{\phi}_{X^{r,b}_{\pm\japanese{\xi}}}
  \norm{\psi}_{X^{s,b}_{-\xi}},
  \\
  \label{M-bound}
  \norm{\phi \psi}_{X^{s,-b}_{-\xi}}
  &\le C \norm{\phi}_{X^{r,b}_{\pm\japanese{\xi}}}
  \norm{\psi}_{X^{s,b}_{+\xi}},
  \\
  \label{Nbound}
  \norm{\overline{\psi} \psi'}_{X^{r-1,-b}_{\pm\japanese{\xi}}}
  &\le C \norm{\psi}_{X^{s,b}_{+\xi}}
  \norm{\psi'}_{X^{s,b}_{-\xi}},
\end{align}
for all Schwartz functions $\psi$, $\psi'$ and $\phi$ on $\R_t \times \R_x$. In particular, in the case $s=0$ and $1/4 < r < 1/2$, relevant for Theorem \ref{global_th}, the above estimates hold for all $b > 1/4$.
\end{lemma}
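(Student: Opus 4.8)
The plan is to prove the three bilinear estimates in Lemma \ref{DKG-bilinear-lemma} by reducing everything to the known deterministic estimates (valid for $b > 1/2$) together with an interpolation-type argument that allows one to push $b$ below $1/2$. First I would recall from \cite{Pecher2006, Selberg_Tesfahun} that under the hypotheses $s > -1/4$, $\abs{s} \le r \le s+1$, $0 < r < 1+2s$, the estimates \eqref{M+bound}, \eqref{M-bound}, \eqref{Nbound} are known with $b$ replaced by some fixed $b_0 > 1/2$ on the right and $-b_0' $ with $b_0' < 1/2$ on the dual side. The essential structural input is the null form analysis of \cite{Ancona_Foschi_Selberg2007}: the bilinear interactions $\phi\psi$ (with $\phi$ a Klein--Gordon wave and $\psi$ a Dirac wave) and $\overline\psi\psi'$ (two Dirac waves of opposite signs) carry a null structure, so after decomposing into dyadic pieces in the modulation variables $\japanese{\tau + \xi}$, $\japanese{\tau - \xi}$, $\japanese{\tau \pm \japanese{\xi}}$ one gains from the angular separation of the characteristic surfaces.

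The key step is to re-examine those dyadic estimates and track the exponents carefully. In the deterministic proof one typically has an estimate of the schematic form (after Littlewood--Paley localization to frequencies $N$ and modulations $L_1, L_2, L_3$)
\[
  \Abs{\int \widehat{u}\,\widehat{v}\,\widehat{w}}
  \lesssim
  N^{a}\, L_1^{-\theta_1} L_2^{-\theta_2} L_3^{-\theta_3}\,
  \bignorm{P_{N_1,L_1}u}_{L^2} \bignorm{P_{N_2,L_2}v}_{L^2}\bignorm{P_{N_3,L_3}w}_{L^2},
\]
with $\theta_i$ strictly larger than $1/2$ in the variables that are being summed as $X^{s,b_0}$ norms with $b_0 > 1/2$. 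What matters is the amount of room: as long as each relevant $\theta_i$ is strictly greater than $1/4$ in the worst case when $s=0$, $1/4 < r < 1/2$ (and strictly greater than some $b < 1/2$ in general), one can redistribute modulation weights so that the dyadic sums over $L_1, L_2, L_3$ still converge when the $X^{s,b}$ norms use $b$ close to but below $1/2$, i.e. one replaces $L_i^{b_0}$ by $L_i^{b}$ at the cost of $L_i^{b_0 - b}$, and absorbs this loss into the slack in $\theta_i$. Concretely, I would: (i) recall the null form identity expressing the symbol of each product bilinear form in terms of $\japanese{\tau+h_1(\xi_1)}$, $\japanese{\tau+h_2(\xi_2)}$, $\japanese{\tau+h_3(\xi_3)}$ and an angle; (ii) perform the dyadic decomposition and case analysis by which modulation is largest; (iii) in each case, invoke the $L^4$--$L^4$ Strichartz / bilinear $L^2$ estimates for the Klein--Gordon and Dirac flows in $1+1$ dimensions (these are the same inputs as in \cite{Pecher2006, Selberg_Tesfahun}); (iv) verify that the surplus decay in the modulation variables is strictly better than $1/2$ in the $b$-variables being summed, with a quantified gap depending on $s - (-1/4) > 0$ and $r$, so that one may lower $b$ below $1/2$ by that gap; and (v) for the special case $s = 0$, $1/4 < r < 1/2$, check that the gap is at least $1/4$ so that $b > 1/4$ works, which is exactly the statement needed for Theorem \ref{global_th}.

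The main obstacle I anticipate is the bookkeeping in the resonant/high-modulation cases where the gain from the null structure is smallest. In the deterministic setting the proof is comfortable because $b_0 > 1/2$ leaves plenty of slack in the modulation sums; pushing below $1/2$ means one must show that in every case the actual power of the largest modulation that one can extract strictly exceeds the $b$ being used, uniformly. This is delicate precisely when two of the three modulations are comparable and the third is small, or when the output frequency is much larger or smaller than the inputs; there one must use the dispersive/null gain (which in $1+1$ for Dirac--Klein-Gordon comes from the transversality of the lines $\tau = \pm\xi$ and the curves $\tau = \pm\japanese{\xi}$) to its full strength. I would isolate those cases and treat them by the bilinear $L^2$ estimate $\norm{(S_{+\xi}F)(S_{-\xi}G)}_{L^2} \lesssim \norm{F}_{L^2}\norm{G}_{L^2}$ (and its Dirac--Klein-Gordon analogues), which already encapsulates the transversality and is lossless. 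A secondary technical point is handling the endpoint constraints $\abs{s} \le r \le s+1$ and $0 < r < 1+2s$: these boundary conditions are exactly the ones that make the Sobolev-weight algebra $\japanese{\xi_3}^{r-1} \lesssim \japanese{\xi_1}^{-s}\japanese{\xi_2}^{-s}$-type inequalities (and their permutations) hold, and one must confirm they are preserved when redistributing weights between Sobolev and modulation indices.
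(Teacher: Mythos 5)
Your overall strategy -- revisit the deterministic estimates of Pecher and Selberg--Tesfahun, exploit the null structure coming from the transversality of the two Dirac characteristics, and quantify enough slack in the modulation exponents to lower $b$ below $1/2$ (down to $b>1/4$ when $s=0$, $1/4<r<1/2$) -- is the right idea in spirit, and the paper itself remarks that its method does not differ significantly from the deterministic case. But as written the proposal has a genuine gap: the entire content of the lemma is in your steps (iv) and (v), namely the case-by-case verification that the surviving modulation exponents exceed the $b$ one wants to use, under exactly the hypotheses $s>-1/4$, $\abs{s}\le r\le s+1$, $0<r<1+2s$; these are asserted as ``bookkeeping'' but never carried out, and no sufficient conditions on $(s,r,b)$ are ever derived or checked. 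A second, more pointed issue concerns \eqref{M+bound} and \eqref{M-bound}: there the two factors are a Klein--Gordon function and a single Dirac function, and the curve $\tau=\pm\japanese{\xi}$ is asymptotically parallel to one of the lines $\tau=\pm\xi$, so the KG--Dirac ``transversality'' you invoke does not by itself give a uniform null gain. One must first dualize, as in \eqref{Xduality}--\eqref{Xduality2}, pairing against a test function in $X^{-s,b}_{+\xi}$ so that the product to be estimated is $\psi\,\overline{\psi'}$ with the two \emph{opposite-sign} Dirac factors; your plan never performs this reduction, and without it the case analysis you propose would stall precisely in the parallel-interaction regime you flag as the anticipated obstacle.

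For comparison, the paper's proof is much lighter than the machinery you set up: no dyadic decomposition, no bilinear $L^2$ or Strichartz estimates. It first proves a generic product law (Lemma \ref{ProdLemma}) by putting all modulation weight on the factor with the smallest modulation, applying the Sobolev product law \eqref{Sob-product-1} pointwise in $t$, and using the embedding \eqref{Bourgain-4}; the only requirement is $b_1+b_2+b_3>1/2$. The null gain is then extracted in Lemma \ref{NullProdLemma} from the single triangle inequality
\begin{equation*}
  \min\left(\japanese{\eta},\japanese{\xi-\eta}\right)
  \le C \max\left( \japanese{\tau\pm\japanese{\xi}}, \japanese{\tau-\lambda+(\xi-\eta)}, \japanese{\lambda-\eta} \right),
\end{equation*}
which converts the smallest modulation weight $b_{\mathrm{min}}$ into a spatial derivative gain on one of the Dirac factors, after which Lemma \ref{ProdLemma} applies. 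Estimate \eqref{Nbound} follows directly with $s_1=1-r$, $s_2=s_3=s$, while \eqref{M+bound}, \eqref{M-bound} follow after the duality reduction described above with $s_1=r$, $s_2=-s$, $s_3=s$; the resulting explicit conditions \eqref{DKG-bilinear-lemma-1}, \eqref{DKG-bilinear-lemma-2} are then checked against the hypotheses, and in the case $s=0$, $1/4<r<1/2$ they visibly hold for every $b>1/4$. If you want to salvage your route, you would need to actually execute the dyadic bookkeeping and, crucially, insert the duality step for \eqref{M+bound}, \eqref{M-bound}; but the soft argument above reaches the same conclusion with far less effort.
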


This lemma is proved in Section \ref{Bilinear_bounds}. The method of proof does not differ significantly from that used in \cite{Pecher2006, Selberg_Tesfahun} for the case $b > 1/2$. Also, we remark that studying bilinear space-time estimates in Bourgain norms with $b < 1/2$ is nothing new. For example, general product estimates for wave-type spaces were studied in \cite{DAncona2010, DAncona2012}.

With $b$ as in the last lemma, and choosing $0 < \varepsilon < 1/2-b$, set
\[
  B = -b+1-\varepsilon.
\]
Then $B > 1/2$,
so we can apply \eqref{Bourgain-2b} and \eqref{Bourgain-3}
to control the deterministic integrals in $\mathbf X^{(s,s,r), B}(0,T)$.
And then, crucially, by \eqref{Bourgain-4} they are also controlled in
$C \left( [0,T];\mathbf H^{(s,s,r)} \right)$
(and of course also in $\mathbf X^{(s,s,r),b}(0,T)$, since $b < B$).
For example,
\[
\begin{alignedat}{2}
  \norm{\int_0^t  S_{+\xi}(t-\sigma) ( \phi_+\psi_- )(\sigma) \, d\sigma}_{X^{s,B}_{+\xi}(0,T)}
  &\le
  C \norm{\phi_+\psi_-}_{X^{s,B-1}_{+\xi}(0,T)}&
  \quad &\text{by \eqref{Bourgain-2b}}
  \\
  &\le
  C T^{\varepsilon} \norm{\phi_+\psi_-}_{X^{s,-b}_{+\xi}(0,T)}&
  &\text{by \eqref{Bourgain-3}}
  \\
  &\le
  C T^{\varepsilon} \norm{\phi_+}_{X^{r,b}_{+\japanese{\xi}}(0,T)}
  \norm{\psi_-}_{X^{s,b}_{-\xi}(0,T)}&
  &\text{by Lemma \ref{DKG-bilinear-lemma}},
\end{alignedat}
\]
and similarly for the other bilinear terms,
and also for the linear ones, for which we use the following.

\begin{lemma}\label{DKG-linear-lemma}
Let $s \in \R$ and $b \ge 0$. Then
\[
  \norm{u}_{X^{s,-b}_{h(\xi)}}
  \le
  \sqrt{2 \pi}
  \norm{u}_{L^2(\R_t,H^s)}
  \le
  \norm{u}_{X^{s,b}_{g(\xi)}}
\]
for all Schwartz functions $u$ on $\R_t \times \R_x$ and any choice of $g,h \in C(\R,\R)$.
\end{lemma}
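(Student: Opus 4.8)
The plan is to reduce both inequalities to the elementary pointwise bounds $\japanese{\tau+h(\xi)}^{-2b}\le 1$ and $\japanese{\tau+g(\xi)}^{2b}\ge 1$, which hold for all $(\tau,\xi)$ because $b\ge 0$, combined with Plancherel's theorem in the time variable. So the whole proof is two one-line comparisons bracketed around one identity.

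The identity I would establish first is
\[
  \int_{\R}\int_{\R}\japanese{\xi}^{2s}\bigabs{\widehat u(\tau,\xi)}^2\,d\tau\,d\xi
  =
  2\pi\,\norm{u}_{L^2(\R_t,H^s)}^2,
\]
where $\widehat u=\mathcal F_{(t,x)}u$ is the space-time Fourier transform. Writing $\widehat u(\tau,\xi)=\mathcal F_t\bigl(\mathcal F_x u(\cdot,\xi)\bigr)(\tau)$ and applying Plancherel in $t$ for each fixed $\xi$ gives $\int_{\R}\bigabs{\widehat u(\tau,\xi)}^2\,d\tau=2\pi\int_{\R}\bigabs{\mathcal F_x u(t,\xi)}^2\,dt$; multiplying by $\japanese{\xi}^{2s}$, integrating in $\xi$, invoking Fubini (legitimate since $u\in\mathcal S$) and the definition \eqref{Sob-norm} of $\norm{\cdot}_{H^s}$ then yields the claim.

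Granting the identity, the second inequality is immediate: since $b\ge 0$ we have $\japanese{\tau+g(\xi)}^{2b}\ge 1$, so $\norm{u}_{X^{s,b}_{g(\xi)}}^2\ge\int_{\R}\int_{\R}\japanese{\xi}^{2s}\bigabs{\widehat u(\tau,\xi)}^2\,d\tau\,d\xi=2\pi\norm{u}_{L^2(\R_t,H^s)}^2$. For the first inequality, since $-b\le 0$ we have $\japanese{\tau+h(\xi)}^{-2b}\le 1$, so $\norm{u}_{X^{s,-b}_{h(\xi)}}^2\le\int_{\R}\int_{\R}\japanese{\xi}^{2s}\bigabs{\widehat u(\tau,\xi)}^2\,d\tau\,d\xi=2\pi\norm{u}_{L^2(\R_t,H^s)}^2$. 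Taking square roots gives the stated chain of inequalities, uniformly in the choice of $g,h\in C(\R,\R)$. There is no genuine obstacle here; the only point requiring care is the value of the Plancherel constant, which is $2\pi$ — not $1$ or $\sqrt{2\pi}$ — because the paper's Fourier transform is unnormalized and the time variable is one-dimensional, and this is precisely the $\sqrt{2\pi}$ appearing in the statement.
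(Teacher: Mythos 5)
Your proof is correct and is exactly the argument the paper intends: its proof consists of the single remark that the lemma is obvious from the definitions \eqref{Bourgain-norm}, \eqref{Sob-norm} and Plancherel's theorem, which is precisely your Plancherel-in-$t$ identity combined with the pointwise bounds $\japanese{\tau+h(\xi)}^{-2b}\le 1\le\japanese{\tau+g(\xi)}^{2b}$ for $b\ge 0$. Your accounting of the constant $\sqrt{2\pi}$ under the paper's unnormalized Fourier convention is also right.
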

\begin{proof}
This is obvious from the definitions \eqref{Bourgain-norm} and \eqref{Sob-norm}, and Plancherel's theorem.
\end{proof}

The bounds are pointwise in $\omega$, and for the iteration in the space \eqref{iteration-space} one must now take the $L^2(\Omega)$ norm of them. So for example, one needs to control
\[
  \mathbb E \left( \norm{\phi_+}_{X^{r,b}_{+\japanese{\xi}}(0,T)}^2
  \norm{\psi_- - \Psi_-}_{X^{s,b}_{-\xi}(0,T)}^2 \right),
\]
where $\psi_-$ and $\Psi_-$ represent different iterates. The only reasonable way to estimate this, seems to be
\[
  \left( \sup_{\omega} \norm{\phi_+}_{X^{r,b}_{+\japanese{\xi}}(0,T)}^2 \right) \mathbb E \left( 
  \norm{\psi_- - \Psi_-}_{X^{s,b}_{-\xi}(0,T)}^2 \right),
\]
so one needs to control the norms of the iterates uniformly in $\omega$. Here, a further difference from the deterministic case becomes apparent, since there one usually chooses $R > 0$ and considers initial data whose norm is at most $R$. Then for $T > 0$ small enough depending on $R$, the Bourgain norms in the iteration are all bounded by $R$ times some constant. This will not work in the stochastic case, since a bound in the space \eqref{iteration-space} does not imply a pointwise bound in $\omega$. Instead, as is usual in stochastic problems with multiplicative noise, one must truncate the equations. Following the approach in \cite{Bouard_Debussche2007}, for a given $R > 0$ we consider a truncated version of \eqref{mild_psi}, \eqref{mild_phi} where in the deterministic integrals, each unknown is multiplied by the cutoff
\begin{equation}\label{DKG-cutoff}
  \Theta(t) = \theta_R\left(
  \norm{\psi_+}_{\widetilde X^{s,b}_{+\xi}(0,t)}^2
  +
  \norm{\psi_-}_{\widetilde X^{s,b}_{-\xi}(0,t)}^2
  +
  \norm{\phi_+}_{\widetilde X^{r,b}_{+\japanese{\xi}}(0,t)}^2
  \right),
\end{equation}
where $\theta \colon \R \to \R$ is any smooth, compactly supported cutoff function with $\theta(t)=1$ for $t \in [0,1]$, and we define $\theta_R(x) = \theta(x/R)$. So for example, the integral term
\[
  \int_0^t  S_{+\xi}(t-\sigma) ( \phi_+\psi_- )(\sigma) \, d\sigma,
\]
considered above, is replaced by
\[
  \int_0^t  S_{+\xi}(t-\sigma) \left(
  \Theta \phi_+
  \Theta
  \psi_- \right)(\sigma) \, d\sigma
\]
and similarly for the other bilinear terms. For technical reasons, inside the cutoffs we do not use the Bourgain restriction norm as defined in \eqref{Bourgain-norm}, \eqref{XRestNorm}, but rather the equivalent norm \eqref{ModEquivalence}, discussed in detail in Section \ref{Mod-B-norm}.

With the truncation, and making use of Proposition \ref{MainCutoffLemma}, the bilinear terms can be controlled in the space \eqref{iteration-space}, and by iteration one can prove existence up to a small time $T > 0$ depending only on $R$. Repeating this argument one obtains existence on a time interval of any size. Letting $R$ tend to $\infty$, this implies the existence of a maximal solution of the original, non-truncated problem. The uniqueness requires a separate argument. The details are shown in Section \ref{Abstract-WP} in an abstract framework.
As mentioned, the necessary assumptions in Section \ref{Abstract-WP} are verified
on account of the bounds in Lemmas \ref{DKG-HS-lemma}, \ref{DKG-bilinear-lemma} and \ref{DKG-linear-lemma}, as well as Lemma \ref{DKG-high-reg-lemma} below. The details are discussed in Section \ref{ConcreteCase}.

The following estimates show that the deterministic integrals make sense as Bochner integrals if the regularity is sufficiently high, and that this fails if  $s \le 0$.

\begin{lemma}\label{DKG-high-reg-lemma}
Assume that
\[
  s > 0, \qquad s \le r \le s+1, \qquad \frac12 < r < \frac12 + 2s.
\]
Then there exists a constant $C$ such that
\[
  \norm{fg}_{H^{s}}
  \le C \norm{f}_{H^{r}} \norm{g}_{H^{s}}
  \quad \text{and} \quad
  \norm{fg}_{H^{r-1}}
  \le C \norm{f}_{H^{s}} \norm{g}_{H^{s}}
\]
for all Schwartz functions $f$ and $g$ on $\R$. Moreover, if $s \le 0$, the above estimates cannot both hold, for any $r \in \R$ and $C > 0$.
\end{lemma}

\begin{proof}
This follows from the Sobolev product law \eqref{Sob-product-1}, \eqref{Sob-product-2}.
\end{proof}

\begin{remark}
    The case $m = 0$ in \eqref{Stratonovich_Dirac} does not bring
    anything new to our analysis.
    Indeed, if $m = 0$ then we can add $\phi$
    to both sides of the second line in \eqref{Stratonovich_Dirac},
    which gives rise to Equation \eqref{mild_phi} with an additional linear term
    \[
        \frac{i}{2} \int_0^t S_{+\japanese{\xi}}(t-s) \japanese{D_x}^{-1}
        \phi(s) ds
    \]
    on the right hand side.
    It can be treated by Lemma \ref{DKG-linear-lemma} and \eqref{Bourgain-2}--\eqref{Bourgain-4}.
\end{remark}

%%%%%%%%%%%%%%%%%%%%%%%%%%%%%%%%%%%%%%%%%%%%%%%%%%%%%%%%%%%%%%%%%%%%%%%%%%%%%
\section{Bounds for Hilbert-Schmidt operators}
\label{Stochastic_integral}
%%%%%%%%%%%%%%%%%%%%%%%%%%%%%%%%%%%%%%%%%%%%%%%%%%%%%%%%%%%%%%%%%%%%%%%%%%%%%
\setcounter{equation}{0}
%%%%%%%%%%%%%%%%%%%%%%%%%%%%%%%%%%%%%%%%%%%%%%%%%%%%%%%%%%%%%%%%%%%%%%%%%%%%%

Our main aim in this section is to prove Lemma \ref{DKG-HS-lemma}.

To this end, we require the following lemma. It corresponds to Lemma 2.6 in \cite{Bouard_Debussche2007}, but we remove an additional assumption made there, namely that the convolution kernel $\mathfrak k$ is in $L^1 \cap L^2$.

\begin{lemma}
\label{Hilbert_Schmidt_operator_lemma}
	Let
    \(
        \mathfrak k
        \in
        L^2 \left( \R^d, \R \right)
	,
    \)
	\(
		v \in L^2 \left( \R^d, \mathbb C \right)
	\)
	and let
	\(
		\mathfrak K
	\)
	be the convolution operator defined by
	\begin{equation}\label{conv-op}
	  \mathfrak Kf(x)
	=
	\int_{ \R^d }
	\mathfrak k(x - y) f(y) dy.
	\end{equation}
	Then for any orthonormal basis
	\(
		\{ e_j \} _{ j \in \mathbb N }
	\)
    of
    \(
        L^2 \left( \R^d, \R \right)
    \)
    we have
	\[
		\sum_{j = 1}^{\infty}
		\left|
			\mathcal F \left( v \mathfrak K e_j \right)(\xi)
		\right| ^2
		=
		\frac{1}{(2\pi)^d} \int_{\R^d} \Abs{\widehat v(\xi-\eta) \widehat{\mathfrak k}(\eta)}^2 d\eta
    < \infty
	\]
	for a.e. $\xi \in \R^d$.
\end{lemma}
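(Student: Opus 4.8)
The plan is to compute the sum $\sum_j |\mathcal F(v\mathfrak K e_j)(\xi)|^2$ directly by unwinding the Fourier transforms. For a fixed $\xi$, write $\mathcal F(v\mathfrak K e_j)(\xi) = (2\pi)^{-d}\bigl(\widehat v * \widehat{\mathfrak K e_j}\bigr)(\xi) = (2\pi)^{-d}\int \widehat v(\xi-\eta)\,\widehat{\mathfrak k}(\eta)\,\widehat{e_j}(\eta)\,d\eta$, using $\widehat{\mathfrak K f} = \widehat{\mathfrak k}\,\widehat f$. The key observation is that for fixed $\xi$ this is, up to the constant $(2\pi)^{-d}$, the $L^2(\R^d_\eta)$-pairing of $e_j$ against the function $\eta \mapsto \overline{\widehat v(\xi-\eta)\widehat{\mathfrak k}(\eta)}$, expressed on the Fourier side. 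More precisely, since $\{e_j\}$ is real-valued and orthonormal in $L^2(\R^d,\R)$, one has by Plancherel that $\int \widehat v(\xi-\eta)\widehat{\mathfrak k}(\eta)\widehat{e_j}(\eta)\,d\eta = (2\pi)^d \int G_\xi(x)\,e_j(x)\,dx$ for a suitable function $G_\xi \in L^2(\R^d)$ whose Fourier transform is (a reflection of) $\eta \mapsto \widehat v(\xi-\eta)\widehat{\mathfrak k}(\eta)$; then Parseval's identity for the orthonormal basis $\{e_j\}$ gives $\sum_j |\langle G_\xi, e_j\rangle|^2 = \|G_\xi\|_{L^2}^2 = (2\pi)^{-d}\|\widehat{G_\xi}\|_{L^2}^2 = (2\pi)^{-d}\int |\widehat v(\xi-\eta)\widehat{\mathfrak k}(\eta)|^2\,d\eta$, which after tracking the powers of $2\pi$ yields the claimed formula.

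The step that needs care — and the reason the $L^1\cap L^2$ hypothesis in \cite{Bouard_Debussche2007} can be dropped — is the verification that, for a.e.\ $\xi$, the function $\eta \mapsto \widehat v(\xi-\eta)\widehat{\mathfrak k}(\eta)$ genuinely lies in $L^2(\R^d_\eta)$ (equivalently, that $G_\xi$ is a well-defined $L^2$ function, so that the pairing with $e_j$ makes sense and Parseval applies). For this I would invoke Fubini/Tonelli: the double integral $\int_{\R^d}\int_{\R^d} |\widehat v(\xi-\eta)|^2 |\widehat{\mathfrak k}(\eta)|^2\,d\eta\,d\xi = \|\widehat v\|_{L^2}^2 \|\widehat{\mathfrak k}\|_{L^2}^2 < \infty$ since $v,\mathfrak k \in L^2$; hence the inner integral $\int |\widehat v(\xi-\eta)\widehat{\mathfrak k}(\eta)|^2\,d\eta$ is finite for a.e.\ $\xi$, which is exactly the a.e.\ finiteness asserted in the conclusion and simultaneously licenses the Plancherel/Parseval manipulation. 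One should also note $v\mathfrak K e_j \in L^1 + L^2$ or argue that $v\mathfrak K e_j \in L^2$ (since $\mathfrak K e_j \in L^\infty$ when... — actually more robustly, $\mathfrak K e_j \in L^2$ by Young only if $\mathfrak k \in L^1$, which we do not assume) so instead I would simply define $\mathcal F(v\mathfrak K e_j)$ via the convolution formula above, which is the cleanest route and avoids needing $v\mathfrak K e_j$ itself to have a classical Fourier transform.

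To organize the write-up: first fix $\xi$ in the full-measure set where $\eta\mapsto\widehat v(\xi-\eta)\widehat{\mathfrak k}(\eta)$ is in $L^2$; second, record the identity $\mathcal F(v\mathfrak K e_j)(\xi) = (2\pi)^{-d}\langle \Phi_\xi, \overline{\widehat{e_j}}\rangle_{L^2_\eta}$ with $\Phi_\xi(\eta) = \widehat v(\xi-\eta)\widehat{\mathfrak k}(\eta)$, and convert this into an inner product on the physical side against $e_j$ using that $e_j$ is real and Plancherel holds; third, apply Parseval's identity $\sum_j|\langle g, e_j\rangle_{L^2}|^2 = \|g\|_{L^2}^2$ for the orthonormal basis; fourth, convert back via Plancherel to get $\frac{1}{(2\pi)^d}\int|\widehat v(\xi-\eta)\widehat{\mathfrak k}(\eta)|^2\,d\eta$. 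I expect the only genuinely delicate point to be bookkeeping of the $(2\pi)^d$ factors under the paper's unnormalized Fourier convention, together with the clean statement of why the pointwise-in-$\xi$ reduction is legitimate; the functional-analytic content (Parseval) is entirely routine.
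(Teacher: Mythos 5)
Your overall architecture is the same as the paper's: use Tonelli to see that $\int\!\int \Abs{\widehat v(\xi-\eta)\widehat{\mathfrak k}(\eta)}^2\,d\eta\,d\xi = \norm{\widehat v}_{L^2}^2\fixednorm{\widehat{\mathfrak k}}_{L^2}^2<\infty$, hence $\eta\mapsto\widehat v(\xi-\eta)\widehat{\mathfrak k}(\eta)$ is in $L^2$ for a.e.\ $\xi$, then apply Parseval over the orthonormal basis (with a reflection to exploit that the $e_j$ are real-valued) and track the $(2\pi)^d$ factors. That part is fine.

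There is, however, a genuine gap at exactly the point where you hesitate. The lemma concerns $\mathcal F\left( v\,\mathfrak K e_j \right)$, the honest $L^2$ Fourier transform of the product function $x\mapsto v(x)\,\mathfrak K e_j(x)$; this is what is needed downstream (Remark \ref{Hilbert_Schmidt_remark} and Corollary \ref{Hilbert_Schmidt_operator_corollary} compute Hilbert--Schmidt norms of the operator $v\mathfrak K$ through $\norm{v\mathfrak K e_j}_{H^s}$). Your proposal to ``simply define'' the left-hand side by the convolution formula therefore proves a different statement and leaves open precisely the identification that has to be established. Moreover, your reason for abandoning the direct route is mistaken: you do not need Young's inequality or $\mathfrak k\in L^1$. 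Since $\mathfrak k, e_j\in L^2$, Cauchy--Schwarz (H\"older) gives $\mathfrak K e_j\in L^\infty$ with $\norm{\mathfrak K e_j}_{L^\infty}\le\norm{\mathfrak k}_{L^2}\norm{e_j}_{L^2}$, hence $v\,\mathfrak K e_j\in L^2$ and its Fourier transform exists in the $L^2$ sense — this is exactly the observation you started to make and then discarded. What then remains, and what the paper supplies, is the verification that this $L^2$ Fourier transform is given by
\[
  \mathcal F\left( v\,\mathfrak K e_j \right)(\xi)
  = \frac{1}{(2\pi)^d}\int \widehat v(\xi-\eta)\,\widehat{\mathfrak k}(\eta)\,\widehat{e_j}(\eta)\,d\eta
  \quad\text{in }L^2(\R^d),
\]
which is not automatic for a product of two $L^2$ functions; the paper proves it by approximating $v$, $\mathfrak k$ and $e_j$ by Schwartz functions, for which the identity is clear, and passing to the limit using the $L^2$ isometry of $(2\pi)^{-d/2}\mathcal F$ together with the bound
\(
  \norm{\int \widehat v(\xi-\eta)\widehat{\mathfrak k}(\eta)\widehat{e_j}(\eta)\,d\eta}_{L^2_\xi}
  \le \norm{\widehat v}_{L^2}\fixednorm{\widehat{\mathfrak k}}_{L^2}\norm{\widehat{e_j}}_{L^2}
\)
(Minkowski's integral inequality plus H\"older). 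You need to include this step (or an equivalent limiting argument) rather than redefining the left-hand side; with it, the rest of your Parseval computation goes through as written.
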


\begin{proof} Set $f_j(x) = e_j(-x)$. Then $\{f_j\}$ is also an orthonormal basis of $L^2(\R^d)$, and $\overline{\widehat{f_j}(\xi)} = \widehat{e_j}(\xi)$, by the assumption that $e_j$ is real valued.

Since
\[
  \int_{\R^d} \int_{\R^d} \Abs{\widehat v(\xi-\eta) \widehat{\mathfrak k}(\eta)}^2 d\xi \, d\eta
  =
  \norm{\widehat v\,}_{L^2}^2 \fixednorm{\widehat{\mathfrak k}}_{L^2}^2
  =
  (2\pi)^{2d} \norm{v}_{L^2}^2 \norm{\mathfrak k}_{L^2}^2
\]
it follows that
\[
  \norm{F_\xi}_{L^2}^2 
  =
  \frac{1}{(2\pi)^d} \int_{\R^d} \Abs{\widehat v(\xi-\eta) \widehat{\mathfrak k}(\eta)}^2 d\eta
  < \infty \quad \text{for a.e.~$\xi \in \R^d$},
\]
where $F_\xi(x) = \mathcal F^{-1}\left\{ \eta \mapsto \widehat v(\xi-\eta) \widehat{\mathfrak k}(\eta) \right\}$. Applying Parseval's identity we have, for a.e.~$\xi$,
\[
  \norm{F_\xi}_{L^2}^2
  =
  \sum_{j=1}^\infty \Abs{\innerprod{F_\xi}{f_j}_{L^2}}^2
  =
  \sum_{j=1}^\infty \Abs{\frac{1}{(2\pi)^d} \Innerprod{\widehat{F_\xi}}{\widehat{f_j}}_{L^2}}^2
  =
  \sum_{j=1}^\infty
  \Abs{\frac{1}{(2\pi)^d} \int \widehat v(\xi-\eta) \widehat{\mathfrak k}(\eta) \widehat{e_j}(\eta) \, d\eta}^2
  < \infty.
\]
To finish the proof we only have to notice that $v\mathfrak Ke_j$ belongs to $L^2$ (the operator $\mathfrak K$ maps $L^2$ into $L^\infty$, by H\" older's inequality) and has Fourier transform
\[
  \mathcal F ( v \mathfrak K e_j )(\xi)
  = \frac{1}{(2\pi)^d} \int \widehat v(\xi-\eta) \widehat{\mathfrak k * e_j}(\eta) \, d\eta
  = \frac{1}{(2\pi)^d} \int \widehat v(\xi-\eta) \widehat{\mathfrak k}(\eta) \widehat{e_j}(\eta) \, d\eta.
\]
We claim that this equality holds in $L^2(\R^d)$, hence for a.e.~$\xi$. To prove the claim, approximate the $L^2$ functions $v$, $\mathfrak k$ and $e_j$ by Schwartz functions, for which the equality clearly holds, and then pass to the limit using the fact that $(2\pi)^{-d/2} \mathcal F$ is an isometry on $L^2$, and the bound
\[
  \norm{\int \widehat v(\xi-\eta) \widehat{\mathfrak k}(\eta) \widehat{e_j}(\eta) \, d\eta}_{L^2_\xi}
  \le
  \norm{\widehat v}_{L^2} \int \Abs{\widehat{\mathfrak k}(\eta) \widehat{e_j}(\eta)} \, d\eta
  \le
  \norm{\widehat v}_{L^2}
  \fixednorm{\widehat{\mathfrak k}}_{L^2}
  \norm{\widehat{e_j}}_{L^2},
\]
where we applied Minkowski's integral inequality and H\"older's inequality.
\end{proof}

\begin{remark}
\label{Hilbert_Schmidt_remark}
    Integrating both sides of the equality in Lemma \ref{Hilbert_Schmidt_operator_lemma},
    one recovers the well-known fact
    that $v \mathfrak K$ is a Hilbert-Schmidt operator on $L^2(\R)$, and
	$
       \norm{v\mathfrak K}_{\mathcal L_2(L^2,L^2)}
       =
       \norm{v} _{L^2}
		\norm{\mathfrak k} _{L^2}.
    $
\end{remark}

\begin{corollary}
\label{Hilbert_Schmidt_operator_corollary}
Let $s \in \R$. Assume that $\mathfrak k \in H^{\Abs{s}} \left(\R^d,\R\right)$. Then the convolution operator $\mathfrak K$ defined by \eqref{conv-op} satisfies
\[
  \norm{ v \mathfrak K }_{\mathcal L_2 \left( L^2,H^{s} \right)}
  \le C \norm{v}_{H^{s}} \norm{\mathfrak k}_{H^{\Abs{s}}}
  \quad
  \text{for all $v \in \mathcal S\left( \R^d \right)$},
\]
where the constant depends only on $s$. Thus the map $v \mapsto v \mathfrak K$
extends to a bounded linear map from $H^s \left(\R^d \right)$
into $\mathcal L_2 \left(L^2 \left(\R^d \right), H^s \left(\R^d \right) \right)$.
\end{corollary}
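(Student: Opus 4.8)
The plan is to deduce the corollary directly from Lemma \ref{Hilbert_Schmidt_operator_lemma}, the only extra ingredient being a distribution of the Sobolev weight $\japanese{\xi}^{2s}$ via Peetre's inequality. First I would expand the Hilbert-Schmidt norm against the orthonormal basis $\{e_j\}$ of $L^2(\R^d,\R)$ and use the Fourier description \eqref{Sob-norm} of the $H^s$-norm:
\[
  \norm{v\mathfrak K}_{\mathcal L_2(L^2,H^s)}^2
  =
  \sum_{j=1}^\infty \norm{(v\mathfrak K)e_j}_{H^s}^2
  =
  \sum_{j=1}^\infty \int_{\R^d} \japanese{\xi}^{2s}\,\Abs{\mathcal F(v\mathfrak K e_j)(\xi)}^2 \, d\xi .
\]
Every term being non-negative, Tonelli's theorem permits interchanging the sum with the integral, and then Lemma \ref{Hilbert_Schmidt_operator_lemma} replaces $\sum_j\Abs{\mathcal F(v\mathfrak K e_j)(\xi)}^2$ by $(2\pi)^{-d}\int_{\R^d}\Abs{\widehat v(\xi-\eta)}^2\Abs{\widehat{\mathfrak k}(\eta)}^2\,d\eta$ for a.e.\ $\xi$, giving the exact identity
\[
  \norm{v\mathfrak K}_{\mathcal L_2(L^2,H^s)}^2
  =
  \frac{1}{(2\pi)^d}\int_{\R^d}\int_{\R^d} \japanese{\xi}^{2s}\,\Abs{\widehat v(\xi-\eta)}^2\,\Abs{\widehat{\mathfrak k}(\eta)}^2 \, d\eta\,d\xi .
\]

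Next I would estimate the weight. Writing $\xi=(\xi-\eta)+\eta$ and invoking Peetre's inequality in the asymmetric form $\japanese{a+b}^{s}\le 2^{\Abs{s}/2}\japanese{a}^{s}\japanese{b}^{\Abs{s}}$, valid for every $s\in\R$ (for $s\ge0$ this is the submultiplicativity $\japanese{a+b}\le\sqrt2\japanese{a}\japanese{b}$; for $s<0$ it follows by applying that submultiplicativity to $\japanese{a}=\japanese{(a+b)+(-b)}$), and squaring, we obtain
\[
  \japanese{\xi}^{2s}\le 2^{\Abs{s}}\,\japanese{\xi-\eta}^{2s}\,\japanese{\eta}^{2\Abs{s}} .
\]
Inserting this and integrating in $\xi$ first (again by Tonelli), the double integral is bounded by
\[
  2^{\Abs{s}}\left(\int_{\R^d}\japanese{\zeta}^{2s}\Abs{\widehat v(\zeta)}^2\,d\zeta\right)\left(\int_{\R^d}\japanese{\eta}^{2\Abs{s}}\Abs{\widehat{\mathfrak k}(\eta)}^2\,d\eta\right)
  = 2^{\Abs{s}}\,\norm{v}_{H^s}^2\,\norm{\mathfrak k}_{H^{\Abs{s}}}^2 .
\]
Since $(2\pi)^{-d}\le 1$, this yields $\norm{v\mathfrak K}_{\mathcal L_2(L^2,H^s)}\le 2^{\Abs{s}/2}\norm{v}_{H^s}\norm{\mathfrak k}_{H^{\Abs{s}}}$, so one may take $C=2^{\Abs{s}/2}$, depending only on $s$.

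For the extension statement, the bound just established shows that $v\mapsto v\mathfrak K$ is a bounded linear map from the dense subspace $\mathcal S(\R^d)$ of $H^s(\R^d)$ into the Hilbert space $\mathcal L_2(L^2(\R^d),H^s(\R^d))$; hence if $v_n\to v$ in $H^s$ then, by linearity, $\{v_n\mathfrak K\}$ is Cauchy in $\mathcal L_2$, and the map extends uniquely by continuity, with the estimate passing to the limit. I do not expect a genuine obstacle here: the corollary is bookkeeping on top of Lemma \ref{Hilbert_Schmidt_operator_lemma}. The one point that needs care is the placement of the Sobolev weights --- one must land $\japanese{\xi-\eta}^{2s}$ on $\widehat v$ and $\japanese{\eta}^{2\Abs{s}}$ on $\widehat{\mathfrak k}$, which is precisely what the asymmetric Peetre inequality delivers; a symmetric splitting of $\japanese{\xi}^{2s}$ would produce $\norm{v}_{H^{\Abs{s}}}$ instead of $\norm{v}_{H^s}$ and would fail when $s<0$.
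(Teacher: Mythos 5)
Your proof is correct and follows essentially the same route as the paper: integrate the identity of Lemma \ref{Hilbert_Schmidt_operator_lemma} against $\japanese{\xi}^{2s}\,d\xi$ and then apply the Peetre-type inequality $\japanese{\xi}^{s}\le C_s\japanese{\xi-\eta}^{s}\japanese{\eta}^{\Abs{s}}$ (the paper's \eqref{japbound}), which places the weights exactly as you describe. The density/extension argument is the same standard step, so there is nothing to add.
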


\begin{proof}
Integrating both sides of the equality in Lemma 
\ref{Hilbert_Schmidt_operator_lemma} with respect to
\(
    \japanese{\xi}^{ 2s } d\xi
\)
one obtains
\[
    \norm{ v \mathfrak K }
    _{ \mathcal L_2( L^2, H^s ) }^2
    =
    \frac 1{(2\pi)^d}
    \int_{\R^d} \int_{\R^d}
    \Abs{\widehat v(\xi-\eta) \widehat{\mathfrak k}(\eta)}^2
    \japanese{\xi}^{ 2s }
    d\xi \, d\eta
    .
\]
Applying the inequality
\begin{equation}\label{japbound}
    \japanese{\xi}^s \le C_s \japanese{\xi-\eta}^s \japanese{\eta}^{\Abs{s}}
  \quad \text{for all $\xi,\eta \in \R^d$, $s \in \R$},
\end{equation}
the claimed bound follows immediately.
\end{proof}

With this corollary in hand, we can now prove Lemma \ref{DKG-HS-lemma}.

\begin{proof}[Proof of Lemma \ref{DKG-HS-lemma}]
The bound on $M_1(f) = f \mathfrak K_1$ is immediate from Corollary \ref{Hilbert_Schmidt_operator_corollary}. For $M_2(g) = \japanese{D_x}^{-1} g \mathfrak K_2$ we write
\[
  \norm{M_2(g)}_{\mathcal L_2(L^2,H^r)} =  \norm{g \mathfrak K_2}_{\mathcal L_2(L^2,H^{r-1})}
\]
and use again the corollary; if $0 \le r \le 1$, we bound by
\[
  \norm{g \mathfrak K_2}_{\mathcal L_2(L^2,L^2)}
  \le C\norm{\mathfrak k}_{L^2}\norm{g}_{L^2},
\]
while if $r \ge 1$ we bound by
\[
  C\norm{\mathfrak k}_{H^{r-1}}\norm{g}_{H^{r-1}}.
\]
This concludes the proof of the lemma.
\end{proof}

%%%%%%%%%%%%%%%%%%%%%%%%%%%%%%%%%%%%%%%%%%%%%%%%%%%%%%%%%%%%%%%%%%%%%%%%%%%%%
\section{Bilinear bounds}
\label{Bilinear_bounds}
%%%%%%%%%%%%%%%%%%%%%%%%%%%%%%%%%%%%%%%%%%%%%%%%%%%%%%%%%%%%%%%%%%%%%%%%%%%%%
\setcounter{equation}{0}
%%%%%%%%%%%%%%%%%%%%%%%%%%%%%%%%%%%%%%%%%%%%%%%%%%%%%%%%%%%%%%%%%%%%%%%%%%%%%

In this section we prove Lemma \ref{DKG-bilinear-lemma}, and we prove some additional null form bound, stated in two lemmas at the end of the section, that will be needed in the proof of global existence. Throughout this section the space dimension is $d=1$.

We first note the following basic product law for Bourgain norms.

\begin{lemma}\label{ProdLemma}
Let $s_1,s_2,s_3 \in \R$ and $b_1,b_2,b_3 \ge 0$, and assume that
\begin{itemize}
\item $s_1 + s_2 + s_3 \ge 1/2$, and
\item $\min_{i \neq j} (s_i + s_j) \ge 0$, and
\item the two preceding inequalities are not both equalities, and
\item $b_1 + b_2 + b_3 > 1/2$.
\end{itemize}
Then there is a constant $C$ such that the bound
\begin{equation}\label{XProd}
  \norm{uv}_{X^{-s_1,-b_1}_{h_1(\xi)}} \le C \norm{u}_{X^{s_2,b_2}_{h_2(\xi)}} \norm{v}_{X^{s_3,b_3}_{h_3(\xi)}}
\end{equation}
holds for all Schwartz functions $u$ and $v$ on $\R_t \times \R_x$ and any choice of $h_1,h_2,h_3 \in C(\R,\R)$.
\end{lemma}

\begin{proof}
By Plancherel's theorem and $L^2$ duality, \eqref{XProd} can be reformulated as
\begin{equation*}
  \int_{\R^4} 
  \frac{
  f_1(\tau,\xi) f_2(\tau-\lambda,\xi-\eta) f_3(\lambda,\eta)
   \, d\lambda \, d\eta \, d\tau \, d\xi
  }
  {
  \japanese{\xi}^{s_1} \japanese{\xi-\eta}^{s_2} \japanese{\eta}^{s_3}
  \japanese{\tau+h_1(\xi)}^{b_1}
  \japanese{\tau-\lambda+h_2(\xi-\eta)}^{b_2}
  \japanese{\lambda+h_3(\eta)}^{b_3}
  }
  \le C \prod_{i=1}^3 \norm{f_i}_{L^2},
\end{equation*}
where the $f_i$ are non-negative. By symmetry, it suffices to consider the region where $\japanese{\lambda+h_3(\eta)}$ is the minimum among $\japanese{\tau+h_1(\xi)}$, $\japanese{\tau-\lambda+h_2(\xi-\eta)}$ and $\japanese{\lambda+h_3(\eta)}$, and then the left side is bounded by
\begin{equation*}
  \int_{\R^4} 
  \frac{
  f_1(\tau,\xi) f_2(\tau-\lambda,\xi-\eta) f_3(\lambda,\eta)
   \, d\lambda \, d\eta \, d\tau \, d\xi
  }
  {
  \japanese{\xi}^{s_1} \japanese{\xi-\eta}^{s_2} \japanese{\eta}^{s_3}
  \japanese{\lambda+h_3(\eta)}^{b_1+b_2+b_3}
  },
\end{equation*}
where we used the assumption $b_1,b_2,b_3 \ge 0$. This shows that it is enough to prove
\begin{equation}\label{XProd-2}
  \norm{uv}_{X^{-s_1,0}_{h_1(\xi)}} \le C \norm{u}_{X^{s_2,0}_{h_2(\xi)}} \norm{v}_{X^{s_3,b_1+b_2+b_3}_{h_3(\xi)}}.
\end{equation}
But by the assumptions on $s_1,s_2,s_3$ we can apply the product law \eqref{Sob-product-1} to get
\[
  \norm{u(t)v(t)}_{H^{-s_1}} \le C_{s_1,s_2,s_3} \norm{u(t)}_{H^{s_2}} \norm{v(t)}_{H^{s_3}}
\]
for all $t$. Taking now the $L^2$ norm with respect to $t$ of both sides, and using \eqref{Bourgain-4} to bound
\[
  \sup_{t \in \R}\norm{v(t)}_{H^{s_3}} \le C_{b_1,b_2,b_3} \norm{v}_{X^{s_3,b_1+b_2+b_3}_{h_3(\xi)}},
\]
we then obtain \eqref{XProd-2}, and this concludes the proof.
\end{proof}

Now we apply the above lemma to obtain estimates of the form
\begin{equation}\label{NullProd}
  \norm{uv}_{X_{\pm\japanese{\xi}}^{-s_1,-b_1}} \le C \norm{u}_{X^{s_2,b_2}_{+\xi}} \norm{v}_{X^{s_3,b_3}_{-\xi}}.
\end{equation}
But here we can gain some regularity compared to the generic case, due to the opposite signs in the dispersion relations on the right hand side; the symbols $\tau+\xi$ and $\tau-\xi$ correspond to transport equations with propagation in transverse directions. Thus, \eqref{NullProd} is a \emph{null form} estimate.

Let us denote by $b_\mathrm{min}, b_\mathrm{med}, b_\mathrm{max}$ the minimum, median and maximum, respectively, of the three numbers $b_1, b_2, b_3$.
We then have the following result.

\begin{lemma}\label{NullProdLemma}
Suppose $s_1,s_2,s_3 \in \R$ and $b_1,b_2,b_3 \ge 0$. Then the following conditions are sufficient for the null form estimate \eqref{NullProd} to hold for all Schwartz functions $u$ and $v$ on $\R_t \times \R_x$:
\begin{itemize}
\item $s_1 + s_2 + s_3 + b_\mathrm{min} \ge 1/2$, and
\item $\min\left( s_2 + s_3 + b_\mathrm{min}, s_1 + s_2, s_1 + s_3 \right) \ge 0$, and
\item the two preceding inequalities are not both equalities, and
\item $b_\mathrm{med} + b_\mathrm{max} > 1/2$.
\end{itemize}
\end{lemma}

\begin{proof}
We reformulate \eqref{NullProd} as
\begin{equation*}
  \int_{\R^4} \frac{f_1(\tau,\xi) f_2(\tau-\lambda,\xi-\eta) f_3(\lambda,\eta) \, d\lambda \, d\eta \, d\tau d\xi}
  {\japanese{\xi}^{s_1} \japanese{\xi-\eta}^{s_2} \japanese{\eta}^{s_3}
  \japanese{\tau\pm\japanese{\xi}}^{b_1} \japanese{\tau-\lambda+(\xi-\eta)}^{b_2} \japanese{\lambda-\eta}^{b_3} }
  \le C \prod_{i=1}^3 \norm{f_i}_{L^2},
\end{equation*}
where the $f_i$ are non-negative. By the triangle inequality,
\begin{align*}
  2\Abs{\eta} &\le \Abs{\tau+\xi} + \Abs{\tau-\lambda+(\xi-\eta)} + \Abs{\lambda-\eta},
  \\
  2\Abs{\xi-\eta} &\le \Abs{\tau-\xi} + \Abs{\tau-\lambda+(\xi-\eta)} + \Abs{\lambda-\eta},
\end{align*}
implying
\[
  \min\left(\japanese{\eta},\japanese{\xi-\eta}\right)
  \le C \max\left( \japanese{\tau\pm\japanese{\xi}}, \japanese{\tau-\lambda+(\xi-\eta)}, \japanese{\lambda-\eta} \right),
\]
since $\japanese{\tau\pm\Abs{\xi}}$ is comparable to $\japanese{\tau\pm\japanese{\xi}}$. Thus, we can reduce \eqref{NullProd} to estimates
\begin{align*}
  \norm{uv}_{X_{\pm\japanese{\xi}}^{-s_1,-B_1}} &\le C \norm{u}_{X^{s_2+b_\mathrm{min},B_2}_{+\xi}} \norm{v}_{X^{s_3,B_3}_{-\xi}},
  \\
  \norm{uv}_{X_{\pm\japanese{\xi}}^{-s_1,-B_1}} &\le C \norm{u}_{X^{s_2,B_2}_{+\xi}} \norm{v}_{X^{s_3+b_\mathrm{min},B_3}_{-\xi}},
\end{align*}
where $B_1,B_2,B_3 \ge 0$ and $B_1+B_2+B_3=b_\mathrm{med} + b_\mathrm{max}$. Applying Lemma \ref{ProdLemma} we therefore obtain the claimed result.
\end{proof}

We are now ready to prove Lemma \ref{DKG-bilinear-lemma}.

\begin{proof}[Proof of Lemma \ref{DKG-bilinear-lemma}]
We start with \eqref{Nbound}, which reads
\[
  \norm{\overline{\psi} \psi'}_{X^{r-1,-b}_{\pm\japanese{\xi}}}
  \le C \norm{\psi}_{X^{s,b}_{+\xi}}
  \norm{\psi'}_{X^{s,b}_{-\xi}}.
\]
By \eqref{XRestNorm_conjugate}, $\norm{\psi}_{X^{s,b}_{+\xi}} = \fixednorm{\overline\psi}_{X^{s,b}_{+\xi}}$, so we can remove the conjugation on $\psi$. Now we apply Lemma \ref{NullProdLemma} with $s_1=1-r$, $s_2=s_3=s$ and $b_1=b_2=b_3=b$, and conclude that \eqref{Nbound} holds if
\begin{equation}\label{DKG-bilinear-lemma-1}
  b > \frac14,
  \quad
  2s+b \ge 0,
  \quad
  1-r+s \ge 0,
  \quad
  1-r+2s+b > \frac12.
\end{equation}

It remains to consider \eqref{M+bound} (the proof of \eqref{M-bound} is similar). We have to show
\[
  \norm{\phi \psi}_{X^{s,-b}_{+\xi}}
  \le C \norm{\phi}_{X^{r,b}_{\pm\japanese{\xi}}}
  \norm{\psi}_{X^{s,b}_{-\xi}}.
\]
 By \eqref{Xduality},
\[
  \norm{\phi \psi}_{X^{s,-b}_{+\xi}}
  =
  (2 \pi)^2
  \sup_{\norm{\psi'}_{X^{-s,b}_{+\xi}} = 1}
  \Abs{\int_{\R^2} \phi \psi \overline{\psi'} \, dt \,dx},
\]
where by \eqref{Xduality2},
\[
  (2 \pi)^2
  \Abs{\int_{\R^2} \phi \psi \overline{\psi'} \, dt \,dx}
  \le
  \norm{\phi}_{X^{r,b}_{\pm\japanese{\xi}}}
  \norm{\psi \overline{\psi'}}_{X^{-r,-b}_{\pm\japanese{\xi}}}.
\]
Thus we have reduced to obtaining a bound
\[
  \norm{\overline{\psi'}\psi }_{X^{-r,-b}_{\pm\japanese{\xi}}}
  \le C \norm{\psi'}_{X^{-s,b}_{+\xi}} \norm{\psi}_{X^{s,b}_{-\xi}}.
\]
Applying Lemma \ref{NullProdLemma} with $s_1=r$, $s_2=-s$, $s_3=s$ and $b_1=b_2=b_3=b$, we conclude that \eqref{M+bound} holds if
\begin{equation}\label{DKG-bilinear-lemma-2}
  b > \frac14,
  \quad
  r+b > \frac12,
  \quad
  r - s \ge 0,
  \quad
  r + s \ge 0.
\end{equation}
If we take $b=1/2-\delta$ with $\delta > 0$ sufficiently small, then both \eqref{DKG-bilinear-lemma-1} and \eqref{DKG-bilinear-lemma-2} are satisfied if
\[
  s > - \frac14, \qquad \Abs{s} \le r \le s+1, \qquad 0 < r < 1+2s,
\]
proving the main part of Lemma \ref{DKG-bilinear-lemma}. In the special case $s=0$, $r \in (1/4, 1/2)$, mentioned at the end of Lemma \ref{DKG-bilinear-lemma}, it is clear that \eqref{DKG-bilinear-lemma-1} and \eqref{DKG-bilinear-lemma-2} hold for any $b > 1/4$.
\end{proof}

We conclude this section with some additional null form estimates, given in the next two lemmas.

\begin{lemma}\label{DKG-null-form-lemma1}
Let $1 \le p \le 2$.
Then for all Schwartz functions $u$ and $v$ on $\R_t \times \R_x$
the following bound holds true
\begin{equation}\label{Simple-null-bound}
   \norm{uv}_{L^p_{t,x}} \le C \norm{u}_{X^{0,b}_{+\xi}} \norm{v}_{X^{0,b}_{-\xi}}
   \quad \text{where} \quad
   b=\begin{cases}
       1-1/p &\text{if $1 \le p < 2$},
       \\
       1/2+\varepsilon &\text{if $p = 2$}.
   \end{cases}
\end{equation}
Here $\varepsilon > 0$ is arbitrarily small; $C$ depends on $p$, and on $\varepsilon$ if $p=2$.
\end{lemma}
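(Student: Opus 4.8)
The plan is to straighten the two transport semigroups by the change of variables $(y,z)=(x-t,x+t)$, which turns the product $uv$ into a product of two functions with ``transposed'' regularity, after which everything reduces to the one-dimensional Sobolev embedding, Hölder's inequality and Minkowski's integral inequality. The key preliminary observation is that, writing $U=e^{itD_x}u$ and $V=e^{-itD_x}v$, one has $u(t,x)=U(t,x-t)$ and $v(t,x)=V(t,x+t)$, while by \eqref{XNorm_alternative},
\[
 \norm{u}_{X^{0,b}_{+\xi}}\sim\norm{U}_{L^2_xH^b_t},\qquad\norm{v}_{X^{0,b}_{-\xi}}\sim\norm{V}_{L^2_xH^b_t},
\]
where $\norm{W}_{L^2_xH^b_t}^2=\int_{\R}\norm{W(\cdot,x)}_{H^b_t(\R)}^2\,dx$; in particular each of $U,V$ has, for a.e. fixed spatial slice, an $H^b_t$ profile in time.

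Consider first $1\le p<2$, so $b=1-1/p\in[0,1/2)$; put $\rho=2/(1-2b)=2p/(2-p)\in[2,\infty)$, chosen so that $1/\rho+1/2=1/p$. Under $(y,z)=(x-t,x+t)$ we have $dx\,dt=\tfrac12\,dy\,dz$ and
\[
 u(t,x)v(t,x)=A(y,z)B(y,z),\qquad A(y,z)=U\!\Bigl(\tfrac{z-y}{2},y\Bigr),\quad B(y,z)=V\!\Bigl(\tfrac{z-y}{2},z\Bigr),
\]
hence $\norm{uv}_{L^p_{t,x}}^p=\tfrac12\norm{AB}_{L^p_{y,z}}^p$. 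For fixed $y$ the section $z\mapsto A(y,z)$ is an affine rescaling of $t\mapsto U(t,y)$, so the Sobolev embedding $H^b(\R)\hookrightarrow L^\rho(\R)$ gives $\norm{A(y,\cdot)}_{L^\rho_z}\lesssim\norm{U(\cdot,y)}_{H^b_t}$, and integrating in $y$ yields $A\in L^2_yL^\rho_z$ with $\norm{A}_{L^2_yL^\rho_z}\lesssim\norm{u}_{X^{0,b}_{+\xi}}$. Symmetrically $B\in L^2_zL^\rho_y$ with $\norm{B}_{L^2_zL^\rho_y}\lesssim\norm{v}_{X^{0,b}_{-\xi}}$; since $\rho\ge2$, Minkowski's integral inequality lets us transpose, $\norm{B}_{L^\rho_yL^2_z}\le\norm{B}_{L^2_zL^\rho_y}$. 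Applying Hölder in $z$ (with exponents $\rho/p$ and $2/p$, which is admissible precisely because $1/\rho+1/2=1/p$) and then in $y$,
\[
 \norm{AB}_{L^p_{y,z}}\le\norm{A}_{L^2_yL^\rho_z}\,\norm{B}_{L^\rho_yL^2_z}\lesssim\norm{u}_{X^{0,b}_{+\xi}}\norm{v}_{X^{0,b}_{-\xi}},
\]
which is \eqref{Simple-null-bound}; for $p=1$ this degenerates to Cauchy--Schwarz.

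For the endpoint $p=2$ we take $b=\tfrac12+\varepsilon>\tfrac12$ and run the same scheme with $\rho=\infty$, using $H^b(\R)\hookrightarrow L^\infty(\R)$ to get $A\in L^2_yL^\infty_z$. The only point that is not immediate is the uniform bound $\sup_y\norm{B(y,\cdot)}_{L^2_z}\lesssim\norm{v}_{X^{0,b}_{-\xi}}$, a trace estimate for $V$ along the family of lines $\{x=2t+\text{const}\}$: taking the Fourier transform in $z$ of $B(y,\cdot)$ produces, up to a unimodular factor, a one-dimensional integral of $\widehat V$ along such a line, and Cauchy--Schwarz together with $\int_{\R}\japanese{\tau}^{-2b}\,d\tau<\infty$ (finite exactly because $b>\tfrac12$) closes it; Hölder in $z$ ($L^\infty\times L^2$) and then in $y$ finishes. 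Alternatively, the case $p=2$ follows from the free-evolution identity $\norm{S_{+\xi}(t)f\,S_{-\xi}(t)g}_{L^2_{t,x}}=2^{-1/2}\norm{f}_{L^2}\norm{g}_{L^2}$, immediate from $S_{+\xi}(t)f(x)=f(x-t)$, $S_{-\xi}(t)g(x)=g(x+t)$ and the same change of variables, combined with the standard transfer principle (see e.g. \cite{Ginibre,TaoBook}).

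The whole argument is dictated by the Sobolev threshold $b=\tfrac12\leftrightarrow p=2$: for $p<2$ the embedding $H^b\hookrightarrow L^\rho$ is available and produces the sharp exponent $b=1-1/p$ with a constant depending only on $p$, whereas at $p=2$ the borderline embedding $H^{1/2}(\R)\hookrightarrow L^\infty(\R)$ fails, which is exactly why one must settle for $b=\tfrac12+\varepsilon$ there (hence the $\varepsilon$-dependence of the constant). The single step where $b>\tfrac12$ is genuinely used — and thus the main obstacle — is the uniform trace bound for $B$ in the $p=2$ case, equivalently the transfer principle; without it the endpoint $p=2$ cannot be reached at all.
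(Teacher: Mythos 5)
Your proof is correct and is essentially the paper's argument: pass to null coordinates (you do this by conjugating with the free transport groups, the paper by rotating directly to $(s,y)=(t+x,t-x)$), apply mixed-norm H\"older with $1/p=1/2+1/q$, swap the order of integration in one factor by Minkowski, and conclude with the one-dimensional Sobolev embedding $H^b(\R)\hookrightarrow L^q(\R)$. The only cosmetic difference is at $p=2$, where your separate trace/transfer argument is not needed: the same Minkowski-plus-Sobolev step with $q=\infty$ and $b=1/2+\varepsilon$ already gives $\norm{B}_{L^\infty_yL^2_z}\le\norm{B}_{L^2_zL^\infty_y}\lesssim\norm{v}_{X^{0,b}_{-\xi}}$, which is exactly how the paper handles that endpoint.
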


\begin{proof}
In null coordinates $(s,y) = (t+x,t-x)$ on $\R^2$, the desired inequality reads
\[
  \norm{uv}_{L^p_{s,y}} \le C \norm{\japanese{D_s}^b u}_{L^2_{s,y}} \norm{\japanese{D_y}^b v}_{L^2_{s,y}}.
\]
But by H\"older's inequality, letting $q \in [2,\infty]$ be defined by $1/p = 1/2 + 1/q$,
\[
    \norm{uv}_{L^p_{s,y}} \le \norm{u}_{L^q_s(L^2_y)} \norm{v}_{L^2_s(L^q_y)} 
    \le \norm{u}_{L^2_y(L^q_s)} \norm{v}_{L^2_s(L^q_y)}
    ,
\]
where we used Minkowski's integral inequality in the last step. The desired inequality now follows by applying the Sobolev embedding $H^b(\R) \hookrightarrow L^q(\R)$, which holds for $b = 1/2+\varepsilon$ if $q=\infty$, and for $b=1/2-1/q$ if $2 \le q < \infty$.
\end{proof}

\begin{lemma}\label{Null-form-lemma2} Let $r > 0$ and $h \in C(\R,\R)$. Then for all Schwartz functions $u$ and $v$ on $\R_t \times \R_x$, we have the estimate
\begin{equation}\label{Sobolev-bound-cor}
   \norm{uv}_{X_{h(\xi)}^{-r,-r}} \le C \norm{u}_{X^{0,b}_{+\xi}} \norm{v}_{X^{0,b}_{-\xi}},
   \quad \text{where} \quad
   b=\begin{cases}
       1/2-r &\text{if $0 < r < 1/2$},
       \\
       \varepsilon &\text{if $r = 1/2$},
       \\
       0 &\text{if $r > 1/2$}.
   \end{cases}
\end{equation}
Here $\varepsilon > 0$ is arbitrarily small; $C$ depends on $r$, and on $\varepsilon$ if $r=1/2$, but not on $h$.
\end{lemma}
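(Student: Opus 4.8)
The plan is to split on the size of $r$ and, in every regime, reduce the estimate to the null-form $L^p_{t,x}$ bound of Lemma~\ref{DKG-null-form-lemma1}. When $r>1/2$ there is nothing deep: this is precisely the case $s_1=b_1=r$, $s_2=s_3=b_2=b_3=0$ of Lemma~\ref{ProdLemma} (whose hypotheses hold exactly because $r>1/2$), which already gives $\norm{uv}_{X^{-r,-r}_{h(\xi)}}\le C\norm{u}_{X^{0,0}_{+\xi}}\norm{v}_{X^{0,0}_{-\xi}}$, with no transversality used.

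Now let $0<r<1/2$ and set $m=\tfrac{2}{1+2r}\in(1,2)$. The key first step is to show $\norm{uv}_{X^{-r,-r}_{h(\xi)}}\le C\norm{uv}_{L^m_{t,x}}$ with $C$ independent of $h$. Starting from \eqref{Bourgain-norm} and applying Plancherel in $t$, the substitution $\tau\mapsto\tau-h(\xi)$ turns the modulation by $h(\xi)$ into a translation of the Fourier transform, so that
\[
  \norm{uv}_{X^{-r,-r}_{h(\xi)}}^2
  = c\int_{\R}\japanese{\xi}^{-2r}\bignorm{\japanese{D_t}^{-r}\bigl(e^{ith(\xi)}\mathcal F_x(uv)(\cdot,\xi)\bigr)}_{L^2_t}^2\,d\xi .
\]
Since $1<m<2$, the one-dimensional Sobolev embedding $H^{r}(\R)\hookrightarrow L^{m'}(\R)$ with $m'=\tfrac{2}{1-2r}$ dualises to the bound $\japanese{D_t}^{-r}\colon L^{m}_t\to L^2_t$, whose constant does not depend on $\xi$; applying it and using $\abs{e^{ith(\xi)}}\equiv1$ to erase the $h$-dependence, we get
\[
  \norm{uv}_{X^{-r,-r}_{h(\xi)}}^2\le C\int_{\R}\japanese{\xi}^{-2r}\bignorm{\mathcal F_x(uv)(\cdot,\xi)}_{L^m_t}^2\,d\xi
  = C\bignorm{\japanese{\xi}^{-r}\mathcal F_x(uv)}_{L^2_\xi(L^m_t)}^2 .
\]
Because $m\le2$, Minkowski's integral inequality swaps the order of the norms, $\norm{\japanese{\xi}^{-r}\mathcal F_x(uv)}_{L^2_\xi(L^m_t)}\le\norm{\japanese{\xi}^{-r}\mathcal F_x(uv)}_{L^m_t(L^2_\xi)}=c\norm{\japanese{D_x}^{-r}(uv)}_{L^m_tL^2_x}$, and one more use of the same Sobolev embedding, now as $\japanese{D_x}^{-r}\colon L^m_x\to L^2_x$ at each fixed $t$, gives $\norm{\japanese{D_x}^{-r}(uv)}_{L^m_tL^2_x}\le C\norm{uv}_{L^m_tL^m_x}=C\norm{uv}_{L^m_{t,x}}$. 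Finally, Lemma~\ref{DKG-null-form-lemma1} with $p=m$ yields $\norm{uv}_{L^m_{t,x}}\le C\norm{u}_{X^{0,b}_{+\xi}}\norm{v}_{X^{0,b}_{-\xi}}$ with $b=1-\tfrac1m=\tfrac12-r$, which is the claim.

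The endpoint $r=1/2$ needs a minor adjustment, since then $m=1$ and the embedding $L^1_t\hookrightarrow H^{-1/2}_t$ fails. For any $\delta>0$ one still has $\japanese{D_t}^{-1/2}\colon L^{1+\delta}_t\to L^2_t$ (dual to $H^{1/2}(\R)\hookrightarrow L^{q}(\R)$, valid for all $q<\infty$), and likewise in $x$, so running the three steps above with $m$ replaced by $1+\delta$ and then Lemma~\ref{DKG-null-form-lemma1} with $p=1+\delta$ produces the estimate with exponent $1-\tfrac1{1+\delta}$, which is $\le\varepsilon$ once $\delta$ is small; since $X^{0,\varepsilon}_{\pm\xi}\hookrightarrow X^{0,b}_{\pm\xi}$ whenever $b\le\varepsilon$, this is exactly the asserted bound.

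The step that needs care — and the reason the more tempting route (estimate $\norm{uv}_{L^p_{t,x}}$ first, then embed $L^p_{t,x}\hookrightarrow X^{-r,-r}_{h(\xi)}$ in one Hölder/Hausdorff--Young move) falls just short when $0<r<1/2$ — is the genuinely endpoint nature of the exponents: the joint weight $(\japanese{\xi}\,\japanese{\tau+h(\xi)})^{-r}$ lies only in weak $L^1(\R^2)$, not $L^1(\R^2)$, so the naive argument loses a logarithm precisely at the critical $p=\tfrac{2}{1+2r}$. Peeling off the time-frequency weight by a one-dimensional Sobolev embedding first — where the unknown phase $h$ disappears harmlessly under the modulus — and only then invoking Minkowski's inequality to disentangle the two frequency variables is exactly what recovers the sharp exponent $b=\tfrac12-r$.
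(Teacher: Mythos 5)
Your argument is correct and is essentially the paper's own proof: the paper likewise uses the one‑dimensional Sobolev embedding $L^p\hookrightarrow H^{-r}$ first in $t$ (at fixed $\xi$, where the phase $e^{ith(\xi)}$ disappears under the modulus), then Minkowski's inequality, then the same embedding in $x$, reducing \eqref{Sobolev-bound-cor} to $\norm{uv}_{L^p_{t,x}}$ with $1/p=r+1/2$ and concluding with Lemma \ref{DKG-null-form-lemma1}. The only cosmetic difference is that for $r>1/2$ you invoke Lemma \ref{ProdLemma} rather than running the same scheme with $p=1$ (i.e.\ Cauchy--Schwarz), which is an equally valid and equivalent step.
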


\begin{proof}
Define $p=p(r) \in [1,2)$ by (i) $1/p = r + 1/2$ if $0 < r < 1/2$, (ii) $1/p = 1-\varepsilon$ if $r=1/2$, and (iii) $p=1$ if $r > 1/2$. Here we can take any $0 < \varepsilon < 1/2$.
Then the Sobolev embedding
\begin{equation}\label{Sob_emb}
    L^p(\R) \hookrightarrow H^{-r}(\R)
\end{equation}
holds.
Setting $F(t,\xi) = \japanese{\xi}^{-r} e^{ith(\xi)} \widehat{uv}(t,\xi)$, we then have
\[
  \norm{uv}_{X_{h(\xi)}^{-r,-r}}
  =
  \norm{ \norm{F(t,\xi)}_{H^{-r}_t}}_{L^2_\xi}
  \le
  C
  \norm{ \norm{F(t,\xi)}_{L^p_t}}_{L^2_\xi}
  \le
  C
  \norm{ \norm{F(t,\xi)}_{L^2_\xi}}_{L^p_t}
  =
  C
  \norm{ \norm{uv}_{H^{-r}_x}}_{L^p_t}
  \le
  C
  \norm{ uv }_{L^p_{t,x}}
  ,
\]
where we applied \eqref{Sob_emb} twice, to get the first and third inequalities, and we used Minkowski's integral inequality to get the second inequality. The proof can now be concluded by appealing to Lemma \ref{DKG-null-form-lemma1}; we are in the case $1 \le p < 2$, so \eqref{Simple-null-bound} holds with $b=1-1/p$.
\end{proof}

As a consequence we get the following result used later in Section \ref{Global_existence}.

\begin{corollary}
\label{Null-form-corollary}
    Let
    \[
        0 < r < b < \frac 12
        , \quad
        0 < \mu < 1
        , \quad
        \frac 12 - r = \mu b
        .
    \]
    Then for any $h \in C(\R,\R)$ and $T > 0$
    we have the estimates
    \[
        \norm{ \phi \psi }_{ X^{0, -b}_{\pm \xi}(0,T) }
        \leqslant
        C
        \norm{ \phi }_{ X^{r, b}_{h(\xi)}(0,T) }
        \norm{ \psi }_{ X^{0, b}_{\mp \xi}(0,T) }^{ \mu }
        \norm{ \psi }_{ L^2((0,T) \times \mathbb R) }^{ 1 - \mu }
        ,
    \]
    where $C$ depends on $r, b$ but neither on $h$ nor on $T$.
\end{corollary}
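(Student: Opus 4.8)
The plan is to establish the bound first on the whole space $\R^2=\R_t\times\R_x$ for Schwartz $\phi,\psi$, extend it by density, and only at the end descend to the time slab $(0,T)$ using the uniform restriction-norm equivalence \eqref{X_characteristic_norm}. The three hypotheses are used as follows: $0<r<1/2$ puts us in the first case of Lemma \ref{Null-form-lemma2}, whose exponent is then exactly $1/2-r$; the relation $1/2-r=\mu b$ identifies that exponent with $\mu b$; and $r<b$ (equivalently $\mu<1$) lets us trade the $X^{-r,-b}$ norm for the slightly weaker $X^{-r,-r}$ norm and then interpolate the Bourgain weight on $\psi$ down to $L^2$.

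For the full-line estimate I would dualise exactly as in the proof of \eqref{M+bound}. By \eqref{Xduality} and \eqref{Xduality2},
\[
  \norm{\phi\psi}_{X^{0,-b}_{\pm\xi}}
  =
  (2\pi)^2\sup_{\norm{\psi'}_{X^{0,b}_{\pm\xi}}=1}\Abs{\int_{\R^2}\phi\,\overline{\overline\psi\,\psi'}\,dt\,dx}
  \le
  \norm{\phi}_{X^{r,b}_{h(\xi)}}\,\sup_{\norm{\psi'}_{X^{0,b}_{\pm\xi}}=1}\norm{\overline\psi\,\psi'}_{X^{-r,-b}_{h(\xi)}},
\]
so everything reduces to bounding $\norm{\overline\psi\,\psi'}_{X^{-r,-b}_{h(\xi)}}$. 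By \eqref{XRestNorm_conjugate}, complex conjugation is an isometry of $X^{0,\beta}_{\mp\xi}$ for every $\beta$ (the transport symbols being odd), so $\overline\psi\in X^{0,b}_{\mp\xi}$ while $\psi'\in X^{0,b}_{\pm\xi}$: the two factors carry opposite transport symbols, which is exactly the configuration in which the null gain of Lemma \ref{Null-form-lemma2} is available to the product $\overline\psi\,\psi'$ (after possibly relabelling the two factors, which is harmless since the $L^p_{t,x}$ norm behind that lemma is symmetric).

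Since $r<b$ we have $\norm{\cdot}_{X^{-r,-b}_{h(\xi)}}\le\norm{\cdot}_{X^{-r,-r}_{h(\xi)}}$, so Lemma \ref{Null-form-lemma2} gives $\norm{\overline\psi\,\psi'}_{X^{-r,-b}_{h(\xi)}}\le C\norm{\psi}_{X^{0,\mu b}_{\mp\xi}}\norm{\psi'}_{X^{0,\mu b}_{\pm\xi}}$ with $C=C(r)$ independent of $h$. For the $\psi'$ factor, $\mu b\le b$ gives $\norm{\psi'}_{X^{0,\mu b}_{\pm\xi}}\le\norm{\psi'}_{X^{0,b}_{\pm\xi}}=1$. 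For the $\psi$ factor, Hölder's inequality in the Fourier variables applied to $\japanese{\tau\mp\xi}^{2\mu b}\abs{\widehat\psi}^2=\bigl(\japanese{\tau\mp\xi}^{2b}\abs{\widehat\psi}^2\bigr)^{\mu}\bigl(\abs{\widehat\psi}^2\bigr)^{1-\mu}$, together with Plancherel, yields the interpolation inequality $\norm{\psi}_{X^{0,\mu b}_{\mp\xi}}\le C\norm{\psi}_{X^{0,b}_{\mp\xi}}^{\mu}\norm{\psi}_{L^2_{t,x}}^{1-\mu}$. Combining,
\[
  \norm{\phi\psi}_{X^{0,-b}_{\pm\xi}}\le C\norm{\phi}_{X^{r,b}_{h(\xi)}}\norm{\psi}_{X^{0,b}_{\mp\xi}}^{\mu}\norm{\psi}_{L^2_{t,x}}^{1-\mu},\qquad C=C(r,b),
\]
for Schwartz $\phi,\psi$, and a standard density argument (Schwartz functions are dense in each space involved, the right side is continuous and the left side lower semicontinuous) extends this to all $\phi\in X^{r,b}_{h(\xi)}$ and $\psi\in X^{0,b}_{\mp\xi}$, noting that $X^{0,b}_{\mp\xi}\hookrightarrow L^2_{t,x}$. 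To descend to $(0,T)$, choose extensions $\widetilde\phi,\widetilde\psi$ of $\phi,\psi$ to $\R^2$ and use $\mathbb 1_{(0,T)}(\phi\psi)=(\mathbb 1_{(0,T)}\widetilde\phi)(\mathbb 1_{(0,T)}\widetilde\psi)$ together with \eqref{X_characteristic_norm} (valid since $0<b<1/2$, with constants independent of the interval and of $h$) and the fact that the trivial extension is an $L^2$-isometry, obtaining the stated bound with $C=C(r,b)$ independent of $h$ and of $T$.

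The one point requiring care is the sign bookkeeping in the dualisation: one must check that, after forming the pairing and taking conjugates, the two surviving factors $\overline\psi$ and $\psi'$ genuinely carry the opposite transport symbols $\mp\xi$ and $\pm\xi$ in every case, since this is precisely what unlocks Lemma \ref{Null-form-lemma2}; and one must ensure that the fractional power $1-\mu$ falls on $\psi$ rather than on $\phi$, as only $\psi$ is allowed to appear with an $L^2$ factor. The remaining ingredients — the Hölder interpolation on the Bourgain scale and the transfer to the finite interval via \eqref{X_characteristic_norm} — are routine.
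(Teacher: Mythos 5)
Your proof is correct and follows essentially the same route as the paper's: duality via \eqref{Xduality}--\eqref{Xduality2} against a test function in $X^{0,b}_{\pm\xi}$, the conjugation property \eqref{XRestNorm_conjugate} to place $\overline\psi$ and the test function in opposite transport spaces, Lemma \ref{Null-form-lemma2} with exponent $1/2-r=\mu b$, Hölder interpolation of $\norm{\psi}_{X^{0,\mu b}_{\mp\xi}}$ between $X^{0,b}_{\mp\xi}$ and $L^2$, and \eqref{X_characteristic_norm} to pass to the slab $(0,T)$ with constants independent of $h$ and $T$. The only deviations are cosmetic: you trade $-b$ for $-r$ on the product where the paper trades $r$ for $b$ on $\phi$, and you prove the full-line Schwartz estimate first and descend to $(0,T)$ at the end, whereas the paper works with trivial extensions from the start.
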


\begin{proof}

Firstly, by \eqref{X_characteristic_norm} we can substitute the restriction norm
with the norm of trivial extension
\[
    \norm{ \phi \psi }_{ X^{0, -b}_{\pm \xi}(0,T) }
    \leqslant
    \norm{ \mathbb 1_{(0, T)} \phi \mathbb 1_{(0, T)} \psi }_{ X^{0, -b}_{\pm \xi} }
\]
and without loss of generality we will write simply $\phi, \psi$
while meaning in fact the trivial extensions
\(
    \mathbb 1_{(0, T)} \phi, \mathbb 1_{(0, T)} \psi
    .
\)
Secondly,
we apply Lemma \ref{Null-form-lemma2} via duality \eqref{Xduality}, \eqref{Xduality2} as follows.
Given an arbitrary test function with
\(
    \norm u _{ X^{0, b}_{\pm \xi} } = 1
\)
consider the integral
\[
    (2 \pi)^2
    \Abs{
        \int
        \phi \psi \overline{u}
        \,
        dt dx
    }
    \leqslant
    \norm{ \phi }_{ X^{r, r}_{h(\xi)} }
    \norm{ \overline{\psi} u }_{ X^{-r, -r}_{h(\xi)} }
    \leqslant
    C_r
    \norm{ \phi }_{ X^{r, b}_{h(\xi)} }
    \norm{ \overline{\psi} }_{ X^{0, 1/2-r}_{\mp \xi} }
    \norm{ u }_{ X^{0, 1/2-r}_{\pm \xi} }
    \leqslant
    C_r
    \norm{ \phi }_{ X^{r, b}_{h(\xi)} }
    \norm{ \psi }_{ X^{0, \mu b}_{\mp \xi} }
    ,
\]
where $C_r$ comes exactly from the previous lemma.
Here we have used the conjugation property \eqref{XRestNorm_conjugate}
and obvious embeddings of Bourgain spaces.
Appealing to the interpolation argument
one obtains
\[
    \norm{ \psi }_{ X^{0, \mu b}_{\mp \xi} }
    \leqslant
    \norm{ \psi }_{ X^{0, b}_{\mp \xi} }^{ \mu }
    \norm{ \psi }_{ X^{0, 0}_{\mp \xi} }^{ 1 - \mu }
    =
    (2 \pi)^{1 - \mu}
    \norm{ \psi }_{ X^{0, b}_{\mp \xi} }^{ \mu }
    \norm{ \psi }_{ L^2 \left( \mathbb R^2 \right) }^{ 1 - \mu }
    .
\]
Now taking the supremum of the above integral over test functions
and recalling that $\phi, \psi$ stand for the trivial extensions
\(
    \mathbb 1_{(0, T)} \phi, \mathbb 1_{(0, T)} \psi
\)
one obtains the needed statement
\[
    \norm{ \phi \psi }_{ X^{0, -b}_{\pm \xi}(0,T) }
    \leqslant
    (2 \pi)^{1 - \mu}
    C_r C_b^{1 + \mu}
    \norm{ \phi }_{ X^{r, b}_{h(\xi)} (0,T)}
    \norm{ \psi }_{ X^{0, b}_{\mp \xi}(0,T) }^{ \mu }
    \norm{ \psi }_{ L^2((0,T) \times \mathbb R) }^{ 1 - \mu }
\]
by \eqref{Xduality} and \eqref{X_characteristic_norm}.
Finally,
the interpolation used above is justified as
\[
    \norm{ f(\tau, \xi) \japanese{\tau}^{ \mu b } }_{ L^2_{\tau, \xi} }
    \leqslant
    \norm{ f^{\mu} (\tau, \xi) \japanese{\tau}^{ \mu b } }_{ L^{2/\mu}_{\tau, \xi} }
    \norm{ f^{1 - \mu}(\tau, \xi) }_{ L^{2/(1 - \mu)}_{\tau, \xi} }
\]
by H\"older's inequality with
\(
    f(\tau, \xi)
    =
    \Abs{ \widehat \psi(\tau \pm \xi, \xi) }
\)
 and the Bourgain norm definition \eqref{Bourgain-norm}.

\end{proof}

%%%%%%%%%%%%%%%%%%%%%%%%%%%%%%%%%%%%%%%%%%%%%%%%%%%%%%%%%%%%%%%%%%%%%%%%%%%%%
\section{Abstract well-posedness for dispersive PDE systems with noise}
\label{Abstract-WP}
%%%%%%%%%%%%%%%%%%%%%%%%%%%%%%%%%%%%%%%%%%%%%%%%%%%%%%%%%%%%%%%%%%%%%%%%%%%%%
\setcounter{equation}{0}
%%%%%%%%%%%%%%%%%%%%%%%%%%%%%%%%%%%%%%%%%%%%%%%%%%%%%%%%%%%%%%%%%%%%%%%%%%%%%

Let $W(t)$ be a cylindrical Wiener process, as in Section \ref{Prelims}.

\subsection{Notation and definitions}
Let $d,n \in \N$.
Given $\mathbf s = (s_1,\dots,s_n) \in \R^n$,
define
\[
  \mathbf H^{\mathbf s} \left( \R^d \right) = H^{s_1}\left( \R^d \right)
  \times \dots \times H^{s_n}\left( \R^d \right)
\]
with the product norm
\[
  \norm{\mathbf f}_{\mathbf H^{\mathbf s}\left( \R^d \right)} =
  \left( \norm{f_1}_{H^{s_1}\left( \R^d \right)}^2 + \dots + \norm{f_n}_{H^{s_n}\left( \R^d \right)}^2 \right)^{1/2}
  \quad
  \text{for}
  \quad
  \mathbf f = \begin{pmatrix} f_1 \\ \vdots \\ f_n \end{pmatrix}.
\]
Given $h_1,\dots,h_n \in C(\R^d,\R)$, define the Fourier multiplier $\mathbf h(D_x)$ and the group $\mathbf S(t)$ by
\[
  \mathbf h(D_x) \mathbf f = \begin{pmatrix} h_1(D_x) f_1 \\ \vdots \\ h_n(D_x) f_n \end{pmatrix},
  \qquad  \mathbf S(t) \mathbf f = \begin{pmatrix} S_{h_1(\xi)}(t)f_1 \\ \vdots \\ S_{h_n(\xi)}(t)f_n \end{pmatrix},
\]
where the latter is then an isometry of $\mathbf H^{\mathbf s}$. Further, given $b \in \R$,
we define
\[
    \mathbf X^{\mathbf s,b} \left( \R \times \R^d \right )
    =
    X_{h_1(\xi)}^{s_1,b} \left( \R \times \R^d \right )
    \times  \dots \times
    X_{h_n(\xi)}^{s_n,b} \left( \R \times \R^d \right )
\]
with the product norm. The restriction to $(S,T) \times \R^d$ is denoted $\mathbf X^{\mathbf s,b}(S,T)$. 

The following space will play a key role.

\begin{definition}\label{Z-def}
For $0 \le S < T$ let, with notation as in \eqref{L2-prog},
\[
  Z^{\mathbf s, b}(S,T) = \mathbb L^2\left(\Omega,\mathbf X^{\mathbf s,b}(S,T)
  \cap
  C\left([S,T],\mathbf H^{\mathbf s}\right) \right)
 \]
with norm
\[
  \norm{\mathbf u}_{Z^{\mathbf s, b}(S,T)} = \norm{\mathbf u}_{L^2\left(\Omega,\mathbf X^{\mathbf s,b}(S,T) \right)}
  +
  \norm{\mathbf u}_{L^2\left(\Omega,C\left([S,T],\mathbf H^{\mathbf s}\right) \right)}.
\]
Note that this space is complete.
\end{definition}

\begin{remark}\label{Z-remark}
From the embedding
\begin{equation}\label{X-emb1}
  \norm{\mathbf u}_{L^2 \left( [S,T] , \mathbf H^{\mathbf s} \right)}
  \le
  \norm{\mathbf u}_{\mathbf X^{\mathbf s, b}(S,T)}
  \quad \text{if $b \ge 0$},
\end{equation}
we infer that $Z^{\mathbf s, b}(S,T) \hookrightarrow  L^2 \left( [S,T] \times \Omega , \mathbf H^{\mathbf s} \right)$ for $b \ge 0$.
\end{remark}

By Lemma \ref{Gluing-lemma}, we see immediately that the $Z$-space has the following gluing property.

\begin{lemma}\label{Concatenation-lemma}
Let $0 < S < S'$ and $-1/2 < b < 1/2$.
Given $\mathbf u \in Z^{\mathbf s,b}(0,S)$ and $\mathbf v \in Z^{\mathbf s,b}(S,S')$, define $[\mathbf u,\mathbf v]$ by
\[
  [\mathbf u,\mathbf v](t)
  =
  \begin{cases}
  \mathbf u(t) &\text{for $0 \le t \le S$}
  \\
  \mathbf v(t) &\text{for $S < t \le S'$}.
  \end{cases}
\]
Then
\[
  \norm{ [\mathbf u,\mathbf v] }_{ \mathbf X^{\mathbf s,b}(0,S') }
  \le C_{b}
  \left(
  \norm{ \mathbf u }_{ \mathbf X^{\mathbf s,b}(0,S) }
  +
  \norm{ \mathbf v }_{ \mathbf X^{\mathbf s,b}(S,S') }
  \right),
\]
where $C_{b}$ depends only on $b$. Moreover, if $\mathbf u(S) = \mathbf v(S)$, then $[\mathbf u,\mathbf v] \in Z^{\mathbf s,b}(0,S')$.
\end{lemma}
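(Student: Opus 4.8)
The plan is to reduce everything to the pathwise statement of Lemma \ref{Gluing-lemma} and then deal with the probabilistic bookkeeping. Since $Z^{\mathbf s,b}(0,S) \subseteq L^2(\Omega,\mathbf X^{\mathbf s,b}(0,S))$ and likewise on $[S,S']$, for a.e.\ $\omega \in \Omega$ each component $u_i(\cdot,\omega)$ lies in $X^{s_i,b}_{h_i(\xi)}(0,S)$ and $v_i(\cdot,\omega)$ in $X^{s_i,b}_{h_i(\xi)}(S,S')$. Applying Lemma \ref{Gluing-lemma} to each of the $n$ components separately (with $(t_0,t_1,t_2)=(0,S,S')$) gives $[\mathbf u,\mathbf v](\cdot,\omega) \in \mathbf X^{\mathbf s,b}(0,S')$ together with
\[
  \norm{[u_i,v_i]}_{X^{s_i,b}_{h_i(\xi)}(0,S')}
  \le C_b \left( \norm{u_i}_{X^{s_i,b}_{h_i(\xi)}(0,S)} + \norm{v_i}_{X^{s_i,b}_{h_i(\xi)}(S,S')} \right)
\]
for each $i$. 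Squaring, summing over $i$, and using $(a+b)^2 \le 2(a^2+b^2)$ yields the asserted bound on the product norm $\norm{[\mathbf u,\mathbf v]}_{\mathbf X^{\mathbf s,b}(0,S')}$, pathwise (a.s.), and hence also after integrating in $\omega$.

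Next I would verify the three conditions needed for $[\mathbf u,\mathbf v] \in Z^{\mathbf s,b}(0,S')$ when $\mathbf u(S)=\mathbf v(S)$. Membership in $L^2(\Omega,\mathbf X^{\mathbf s,b}(0,S'))$ is immediate from the pathwise bound just obtained together with $\mathbf u \in L^2(\Omega,\mathbf X^{\mathbf s,b}(0,S))$ and $\mathbf v \in L^2(\Omega,\mathbf X^{\mathbf s,b}(S,S'))$. For the continuity of paths into $\mathbf H^{\mathbf s}$, note that for a.e.\ $\omega$ the map $t \mapsto \mathbf u(t,\omega)$ is continuous on $[0,S]$, $t \mapsto \mathbf v(t,\omega)$ is continuous on $[S,S']$, and the two agree at $t=S$ by hypothesis, so the glued path is continuous on all of $[0,S']$ with
\[
  \sup_{0 \le t \le S'} \norm{[\mathbf u,\mathbf v](t)}_{\mathbf H^{\mathbf s}}
  = \max\left( \sup_{0 \le t \le S} \norm{\mathbf u(t)}_{\mathbf H^{\mathbf s}}, \sup_{S \le t \le S'} \norm{\mathbf v(t)}_{\mathbf H^{\mathbf s}} \right),
\]
whose right-hand side lies in $L^2(\Omega)$ because both $\mathbf u$ and $\mathbf v$ do; hence $[\mathbf u,\mathbf v] \in L^2(\Omega,C([0,S'],\mathbf H^{\mathbf s}))$.

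The remaining point, and the only one requiring a little care, is progressive measurability of $[\mathbf u,\mathbf v]$ on $[0,S'] \times \Omega$. Fix $t \in [0,S']$. If $t \le S$, then the restriction of $[\mathbf u,\mathbf v]$ to $[0,t] \times \Omega$ coincides with that of $\mathbf u$, which is $\mathcal B_{[0,t]} \otimes \mathcal F_t$-measurable by the progressive measurability of $\mathbf u$. If $t > S$, I would split the domain as $[0,t] = [0,S] \cup [S,t]$: on $[0,S] \times \Omega$ the map equals $\mathbf u$, which is $\mathcal B_{[0,S]} \otimes \mathcal F_S$-measurable and therefore $\mathcal B_{[0,S]} \otimes \mathcal F_t$-measurable since $\mathcal F_S \subseteq \mathcal F_t$; on $[S,t] \times \Omega$ the map equals $\mathbf v$, which is $\mathcal B_{[S,t]} \otimes \mathcal F_t$-measurable by the progressive measurability of $\mathbf v$. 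As $\mathcal B_{[0,t]}$ is generated by $\mathcal B_{[0,S]}$ and $\mathcal B_{[S,t]}$, these two pieces glue to a $\mathcal B_{[0,t]} \otimes \mathcal F_t$-measurable map, which is exactly what is needed; thus $[\mathbf u,\mathbf v] \in \mathbb L^2(\Omega,\mathbf X^{\mathbf s,b}(0,S') \cap C([0,S'],\mathbf H^{\mathbf s})) = Z^{\mathbf s,b}(0,S')$. There is no real obstacle here: the proof is a componentwise invocation of Lemma \ref{Gluing-lemma} plus this measurability accounting, and the latter is the only place where one must be attentive to which $\sigma$-algebra each piece lives on.
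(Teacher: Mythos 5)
Your argument is correct and follows exactly the route the paper intends: the paper states this lemma as an immediate consequence of Lemma \ref{Gluing-lemma} (applied componentwise and pathwise), which is precisely your first paragraph. The additional verification of path continuity and progressive measurability is sound bookkeeping that the paper leaves implicit, so there is nothing to correct.
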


\subsection{Initial value problem}
Now consider the Cauchy problem for a system of dispersive nonlinear stochastic PDE,
\begin{equation}\label{General-PDE-Ito}
    -id \mathbf u(t)
    +
    \mathbf h(D) \mathbf u(t) \, dt
    =
    \left[ \mathbf N(\mathbf u(t)) + \mathbf L(\mathbf u(t)) \right] \, dt
    +
    \mathbf M(\mathbf u(t)) \, dW(t)
    , \qquad
    \mathbf u(0) = \mathbf u_0,
\end{equation}
where the unknown is a random variable $\mathbf u(t)$ taking values in $\mathbf H^{\mathbf s}$ for a given $\mathbf s \in \R^n$,
\begin{equation}\label{General-PDE-data}
  \mathbf u_0 \colon \Omega \to \mathbf H^{\mathbf s} \quad \text{is $\mathcal F_0$-measurable},
\end{equation}
and the operators
\[
  \mathbf M(\mathbf f) = \begin{pmatrix} M_1(\mathbf f) \\ \vdots \\ M_n(\mathbf f) \end{pmatrix},
  \quad
  \mathbf L(\mathbf f) = \begin{pmatrix} L_1(\mathbf f) \\ \vdots \\ L_n(\mathbf f) \end{pmatrix}
  \quad \text{and} \quad
  \mathbf N(\mathbf f) = \begin{pmatrix} N_1(\mathbf f) \\ \vdots \\ N_n(\mathbf f) \end{pmatrix},
\]
acting only in the space variable $x$, are assumed to have the following properties:
\begin{equation}\label{M1}
  \mathbf M \colon \mathbf H^{\mathbf s} \to \mathcal L_2( K, \mathbf H^{\mathbf s})
  \quad \text{is linear, with a bound}
  \quad
  \norm{\mathbf M(\mathbf f)}_{\mathcal L_2( K, \mathbf H^{\mathbf s})}
  \le C \norm{\mathbf f}_{ \mathbf H^{\mathbf s} }.
\end{equation}
Further,
\begin{equation}\label{N0}
  \mathbf N(\mathbf 0) = \mathbf 0
\end{equation}
and
\begin{equation}\label{N1}
  \mathbf N \colon \mathbf X^{\mathbf s,b} \to \mathbf X^{\mathbf s,b'}
  \quad \text{is locally Lipschitz, for some $  -\frac12 < b' < 0 < b < \frac12$},
\end{equation}
with the bound, for some constants $p \in \N$ and $C > 0$,
\begin{equation}\label{N2}
  \norm{\mathbf N(\mathbf u) - \mathbf N(\mathbf v)}_{\mathbf X^{\mathbf s, b'}}
  \le
  C \left( 1 + \norm{\mathbf u}_{\mathbf X^{\mathbf s, b}}
  +
  \norm{\mathbf v}_{\mathbf X^{\mathbf s, b}} \right)^{p-1}
  \norm{\mathbf u - \mathbf v}_{\mathbf X^{\mathbf s, b}}.
\end{equation}
These estimates of course imply the corresponding ones with time restriction to any slab $(S,T) \times \R^d$. Finally, we assume that with the same $b,b'$ as above,
\begin{equation}\label{L1}
  \mathbf L \colon \mathbf X^{\mathbf s,b} \to \mathbf X^{\mathbf s,b'}
  \quad \text{is linear, with a bound} \quad
  \norm{\mathbf L(\mathbf u)}_{\mathbf X^{\mathbf s, b'}}
  \le
  C
  \norm{\mathbf u}_{\mathbf X^{\mathbf s, b}}.
\end{equation}

We emphasise that for the examples we have in mind, $\mathbf N$ may fail to map $\mathbf H^{\mathbf s}$ into itself, hence the deterministic integral in \eqref{PDEsolution} below may not make sense as a Bochner integral in $\mathbf H^{\mathbf s}$. However, one expects that this obstruction disappears at sufficiently high regularity. We therefore add the assumption
\begin{equation}\label{N3}
  \text{there exists $\mathbf s' \in \R^n$, with $s_i' \ge s_i$, such that
  $\mathbf N$ and $\mathbf L$ map $\mathbf H^{\mathbf s'}$ continuously into $\mathbf H^{\mathbf s'}$}.
\end{equation}
This will be used to establish measurability properties,
and to regularise \eqref{General-PDE-Ito}.

\begin{remark}
The reason for separating the linear part $\mathbf L$ from the nonlinear part $\mathbf N$ is that this allows us to avoid truncating the linear terms; see \eqref{PDEsolution-truncated}. This is not essential for the arguments used to prove existence and uniqueness in this section, but is used in the proof of conservation of charge in Section \ref{Charge_conservation}.
\end{remark}

\begin{remark}
It is easy to construct operators satisfying \eqref{M1}.
Taking $K = L^2\left(\R^d,\R \right)$, we consider
\[
  M_i(\mathbf f) =  \sum_{j=1}^n \japanese{D}^{-\sigma_{i,j}} f_j \mathfrak K_{i,j},
\]
where $\sigma_{i,j}$ are real numbers and $\mathfrak K_{i,j}$ are of the form \eqref{conv-op} with kernels $\mathfrak k_{i,j}$. Then by Corollary \ref{Hilbert_Schmidt_operator_corollary},
\[
  \norm{M_i(\mathbf f)}_{\mathcal L_2(L^2,H^{s_i})}
  \le
  \sum_{j=1}^n \norm{f_j \mathfrak K_{i,j}}_{\mathcal L_2(L^2,H^{s_i-\sigma_{i,j}})}
  \le
  C
  \sum_{j=1}^n \norm{f_j}_{H^{s_j}} \norm{\mathfrak k_{i,j}}_{H^{\abs{s_j}}}
\]
provided that $s_i - \sigma_{i,j} \le s_j$
and $\mathfrak k_{i,j} \in H^{\abs{s_j}}(\R^d,\R)$ for all $1 \le i,j \le n$.
\end{remark}

Let us now define precisely what we mean by a solution of \eqref{General-PDE-Ito}.

\begin{definition}\label{Solution-def}
Let $\tau \colon \Omega \to (0,\infty]$ be a stopping time. By a \emph{solution of \eqref{General-PDE-Ito}, \eqref{General-PDE-data} up to time $\tau$}, we mean a random variable
\[
  \mathbf u(t,\omega) \in \mathbf H^{\mathbf s} \quad \text{for $0 \le t < \tau(\omega)$}
\]
such that
\begin{equation}\label{PDEsolution1}
  \mathbf u(t) \colon \{ t < \tau \} \to \mathbf H^{\mathbf s} \quad \text{is $\mathcal F_t$-measurable},
\end{equation}
and such that, almost surely,
\begin{equation}\label{PDEsolution2}
  \mathbf u \in C\left( [0,t],\mathbf H^{\mathbf s} \right) \cap \mathbf X^{\mathbf s,b}(0,t)
  \quad \text{for $0 < t < \tau$}
\end{equation}
and
\begin{equation}\label{PDEsolution}
  \mathbf u(t) = \mathbf S(t) \mathbf u_0 + i \int_0^t \mathbf S(t-s) \left[ \mathbf N(\mathbf u(s)) + \mathbf L(\mathbf u(s))  \right] \, ds
  \\
  + i \int_0^t \mathbf S(t-s) \mathbf M(\mathbf u(s)) \, dW(s)
\end{equation}
for $0 \le t < \tau$. So in particular, $\mathbf u(0) = \mathbf u_0$ almost surely.
\end{definition}

Some remarks are in order.
First, by the assumptions on $\mathbf N$, the first integral in \eqref{PDEsolution} is well defined pointwise in $\omega$, and belongs to $C\left( [0,T],\mathbf H^{\mathbf s} \right)$ and $\mathbf X^{\mathbf s,b}(0,T)$ for any $0 < T < \tau(\omega)$, as we show in Section \ref{N-properties}. We emphasise, however, that it may not make sense as an $\mathbf H^{\mathbf s}$-valued Bochner integral, but only when interpreted in the Bourgain space, where dispersive effects are taken into account.

Second, to see that the stochastic integral exists, define
\begin{equation}\label{f-stop}
  f(t) = \norm{\mathbf u}_{\widetilde{\mathbf X}^{\mathbf s,b}(0,t)}^2
  \quad \text{for $0 < t < \tau$}, \qquad f(0) = 0.
\end{equation}
By Lemmas \ref{X-cont-lemma} and \ref{X-adapted-lemma}, proved in Section \ref{Mod-B-norm}, $f$ satisfies the hypotheses of Lemma \ref{Stopping-lemma-3}, hence
\begin{equation}\label{R-stop}
  \tau_R(\omega) = \sup \left\{ t \in [0,\tau(\omega)) \colon \text{$f(s,\omega) < R$ for $0 \le s \le t$} \right\}
\end{equation}
is a stopping time for each $R > 0$, and $\lim_{R \to \infty} \tau_R(\omega) = \tau(\omega)$.
Now consider the process
\[
  \mathbb 1_{t \le \tau_R(\omega)} \mathbf u(t,\omega),
\]
that is, the trivial extension beyond the time $\tau_R$. It is progressively measurable by Lemma \ref{Stopping-lemma-2}, so by \eqref{X-emb1} it belongs to $L^2 \left([0,T] \times \Omega, \mathbf H^{\mathbf s} \right)$ for all $T > 0$, and is $\mathbf H^{\mathbf s}$-adapted, hence
\[
  \int_0^t \mathbb 1_{s \le \tau_R}(s)\mathbf S(t-s) \mathbf M\left( \mathbf u(s) \right) \, dW(s)
\]
exists in $L^2(\Omega,\mathbf H^{\mathbf s})$ for all $t \ge 0$, by the assumption \eqref{M1}. 
Thus the It\^o integral appearing in \eqref{PDEsolution} is well defined
by the localisation procedure.

\subsection{Existence and uniqueness}\label{Abstract-WP-results}

We now formulate the main existence and uniqueness results that will be proved.

\begin{theorem}[Maximal local existence]\label{Existence-Theorem}
Let $\mathbf s \in \R^n$ and $-1/2 < b' < 0 < b < 1/2$. Assume that \eqref{M1}--\eqref{N3} hold and that $\mathbf u_0 \in L^2(\Omega,\mathbf H^{\mathbf s})$ is $\mathcal F_0$-measurable.
Then the problem \eqref{General-PDE-Ito}, \eqref{General-PDE-data} has a solution $\mathbf u(t)$ in the sense of Definition \ref{Solution-def}, with a stopping time $\tau \colon \Omega \to (0,\infty]$. The solution is maximal in the sense that, almost surely,
\begin{equation}\label{blowup}
  \tau < \infty \implies \limsup_{t \nearrow \tau} \norm{\mathbf u}_{\mathbf X^{\mathbf s,b}(0,t)}
  = \infty.
\end{equation}
\end{theorem}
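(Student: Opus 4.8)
The plan is to prove Theorem \ref{Existence-Theorem} by a now-standard truncation-and-iteration scheme, followed by a removal of the truncation and a gluing argument to reach the maximal time. First I would fix $R > 0$ and consider the truncated mild equation in which every appearance of $\mathbf u$ inside the deterministic integral $\int_0^t \mathbf S(t-s)\mathbf N(\mathbf u(s))\,ds$ is replaced by $\Theta_R(t)\mathbf u$, where $\Theta_R(t) = \theta_R\bigl( \norm{\mathbf u}_{\widetilde{\mathbf X}^{\mathbf s,b}(0,t)}^2 \bigr)$ as in \eqref{DKG-cutoff}; crucially the linear part $\mathbf L$ is left untruncated, which is permissible because by \eqref{L1} and Lemma \ref{DKG-linear-lemma}-type bounds it contributes only a factor $T^{\varepsilon}$, not a factor depending on the solution norm. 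The truncated problem is solved by the Banach fixed point theorem in the complete space $Z^{\mathbf s,b}(0,T)$ of Definition \ref{Z-def}: one sets $B = -b' $ (so $B > 1/2$, after possibly relabelling $b' = b-1+\varepsilon$) and uses \eqref{Bourgain-2b}, \eqref{Bourgain-3}, \eqref{Bourgain-4} to place the deterministic integral simultaneously in $\mathbf X^{\mathbf s,B}(0,T) \hookrightarrow \mathbf X^{\mathbf s,b}(0,T) \cap C([0,T],\mathbf H^{\mathbf s})$, while the stochastic integral is controlled in the same space using \eqref{M1}, the It\^o isometry \eqref{Ito}, Doob's inequality \eqref{Maximal}, and the fact that Brownian paths lie in $H^b$ for $b<1/2$; the Hilbert--Schmidt hypothesis \eqref{M1} is exactly what makes $\mathbf S(t-s)\mathbf M(\mathbf u(s))$ integrable.

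The key analytic input for the contraction is Proposition \ref{MainCutoffLemma}: it gives, for $T \le T_0$, the bounds $\norm{\Theta_R(t)u_j}_{X^{s_j,b}(0,T)} \le C\sqrt R$ and the Lipschitz-type estimate $\norm{\Theta_R(t)u_j - \Theta'_R(t)v_j}_{X^{s_j,b}(0,T)} \le C\sum_i \norm{u_i-v_i}_{X^{s_i,b}(0,T)}$, with $C$ independent of $T \le T_0$. Feeding these into \eqref{N2} and the linear estimates, and absorbing a gain $T^{\varepsilon}$ from \eqref{Bourgain-3}, one sees that for $T = T(R)$ small enough (independent of $\omega$!) the truncated solution map is a contraction on a ball of $Z^{\mathbf s,b}(0,T)$ of radius $\sim C\sqrt R(1+\norm{\mathbf u_0}_{L^2(\Omega,\mathbf H^{\mathbf s})})$. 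This yields a unique fixed point $\mathbf u^R$ on $[0,T(R)]$; the $\mathcal F_0$-measurability of $\mathbf u_0$ together with the progressive measurability built into $Z^{\mathbf s,b}$ ensures $\mathbf u^R(t)$ is $\mathcal F_t$-measurable, and assumption \eqref{N3} is used to justify the pointwise (Bochner) manipulations in a high-regularity approximation and then pass to the limit, establishing that $\mathbf u^R$ genuinely satisfies the truncated integral equation. Iterating on $[T(R), 2T(R)]$, etc., and gluing via Lemma \ref{Concatenation-lemma}, extends $\mathbf u^R$ to all of $[0,\infty)$.

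Next I would remove the truncation. Define $\tau_R = \sup\{ t : \norm{\mathbf u^R}_{\widetilde{\mathbf X}^{\mathbf s,b}(0,t)}^2 < R \text{ on } [0,t]\}$, a stopping time by Lemma \ref{Stopping-lemma-3} (whose hypotheses hold by the continuity and adaptedness lemmas \ref{X-cont-lemma}, \ref{X-adapted-lemma} referenced in the text). On $[0,\tau_R)$ the cutoff $\Theta_R \equiv 1$, so $\mathbf u^R$ solves the original equation there; by uniqueness (Theorem \ref{Uniqueness-theorem}, applied to the truncated problems, or a direct Gronwall-type argument on the difference $\mathbf u^R - \mathbf u^{R'}$ restricted to $[0,\tau_R \wedge \tau_{R'})$) one checks the consistency $\mathbf u^{R'} = \mathbf u^R$ on $[0,\tau_R)$ for $R' > R$, so that $\tau_R$ is nondecreasing in $R$. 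Setting $\tau = \lim_{R\to\infty}\tau_R = \sup_R \tau_R$ and $\mathbf u = \mathbf u^R$ on $[0,\tau_R)$ gives a well-defined solution up to $\tau$ in the sense of Definition \ref{Solution-def}; the blow-up alternative \eqref{blowup} follows because on $\{\tau < \infty\}$ we must have $\tau_R < \tau$ for every $R$, whence by \eqref{StoppingTime3} the equivalent norm $\norm{\mathbf u}_{\widetilde{\mathbf X}^{\mathbf s,b}(0,\tau_R)}^2 = R$, and letting $R\to\infty$ together with the uniform equivalence \eqref{ModEquivalence} forces $\limsup_{t\nearrow\tau}\norm{\mathbf u}_{\mathbf X^{\mathbf s,b}(0,t)} = \infty$.

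The main obstacle, I expect, is the absence of uniform-in-$\omega$ bounds: unlike the deterministic case one cannot simply take data of norm $\le R$, and a bound in $Z^{\mathbf s,b} = \mathbb L^2(\Omega,\cdots)$ does not give pointwise control, which is why the cutoff must be placed inside the equation and why one needs the \emph{modified} norm $\widetilde{\mathbf X}^{\mathbf s,b}$ rather than the restriction norm --- only the former enjoys the time-uniform equivalence \eqref{ModEquivalence} that keeps the contraction constant from degenerating as $T \to 0$. Getting Proposition \ref{MainCutoffLemma} to interact correctly with the fact that $b<1/2$ (so that one cannot use the embedding into $C([0,T],\mathbf H^{\mathbf s})$ directly at level $b$, but must route through $B>1/2$ for the deterministic part while keeping the stochastic part at level $b$) is the delicate bookkeeping point, and verifying the measurability and the validity of the integral equation after the high-regularity approximation dictated by \eqref{N3} is the other place where care is required.
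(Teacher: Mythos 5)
Your proposal is correct and follows essentially the same route as the paper: truncate only the nonlinearity with the cutoff built from the modified norm $\widetilde{\mathbf X}^{\mathbf s,b}(0,t)$, run a contraction in $Z^{\mathbf s,b}$ on time steps of length depending only on $R$ (routing the deterministic Duhamel term through an exponent $B>1/2$ via \eqref{Bourgain-2b}--\eqref{Bourgain-4} and the stochastic term through the Hilbert--Schmidt bound, It\^o isometry and Doob), glue, then remove the truncation with the conditional stopping times $\tau_R$ of Lemma \ref{Stopping-lemma-3}, using Theorem \ref{Uniqueness-theorem} for consistency and \eqref{StoppingTime3} plus the norm comparison for the blow-up alternative, exactly as in Theorem \ref{Existence-Theorem-Truncated}, Lemma \ref{TruncationLemma} and the surrounding argument. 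The only cosmetic deviations (contracting on a ball rather than the whole space, the slightly garbled relabelling of $B=b'+1-\varepsilon$, using \eqref{ModEquivalence} pointwise in $\omega$ instead of the $T$-uniform bound \eqref{Large-T-estimate}, and not spelling out that the cutoff on later subintervals must be evaluated on the glued function) do not change the argument.
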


The proof is given at the end of this subsection.

\begin{theorem}[Uniqueness]\label{Uniqueness-theorem}
Let $\mathbf s \in \R^n$ and $-1/2 < b' < 0 < b < 1/2$. Assume that \eqref{M1}--\eqref{N3} hold and that $\mathbf u_0 \in L^2(\Omega,\mathbf H^{\mathbf s})$ is $\mathcal F_0$-measurable.
Suppose $\mathbf u$ and $\mathbf v$ are solutions of \eqref{General-PDE-Ito}, \eqref{General-PDE-data}, as in Definition \ref{Solution-def}, up to stopping times $\tau$ and $\tau'$, respectively, and with the same initial datum $\mathbf u_0$. 
Then almost surely
\[
    \mathbf u(t) = \mathbf v(t) \quad \text{for $0 \le t < \min(\tau,\tau')$}.
\]
\end{theorem}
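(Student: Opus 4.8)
The plan is to localise both solutions to random intervals on which their Bourgain norms are deterministically bounded, prove they coincide there, and then exhaust. Fix $T_0\in(0,\infty)$; gluing unit-length subintervals via Lemma~\ref{Concatenation-lemma} we may assume $T_0\le1$. Fix $R>0$ and apply Lemma~\ref{Stopping-lemma-3} to the functionals $f^{\mathbf u}(t)=\norm{\mathbf u}_{\widetilde{\mathbf X}^{\mathbf s,b}(0,t)}^2$ and $f^{\mathbf v}(t)=\norm{\mathbf v}_{\widetilde{\mathbf X}^{\mathbf s,b}(0,t)}^2$, which satisfy its hypotheses exactly as in \eqref{f-stop}--\eqref{R-stop}. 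This yields stopping times $\tau_R^{\mathbf u}\le\tau$ and $\tau_R^{\mathbf v}\le\tau'$ with $\tau_R^{\mathbf u}\nearrow\tau$, $\tau_R^{\mathbf v}\nearrow\tau'$; set $\rho=\tau_R^{\mathbf u}\wedge\tau_R^{\mathbf v}\wedge T_0\le\min(\tau,\tau')$. By \eqref{StoppingTime2} and the uniform norm equivalence \eqref{ModEquivalence} (at $S=0$) one has the almost sure bounds $\norm{\mathbf u}_{\mathbf X^{\mathbf s,b}(0,\rho)},\norm{\mathbf v}_{\mathbf X^{\mathbf s,b}(0,\rho)}\le C_{T_0,b}\sqrt R$. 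Since $\rho\nearrow\min(\tau,\tau',T_0)$ as $R\to\infty$ and $T_0$ is arbitrary, it suffices to show $\mathbf u=\mathbf v$ almost surely on $[0,\rho)$.

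Next I would check that the difference $\mathbf w:=\mathbf u-\mathbf v$, which for $0\le t<\rho$ satisfies
\[
  \mathbf w(t)=i\int_0^t\mathbf S(t-s)\bigl[\mathbf N(\mathbf u(s))-\mathbf N(\mathbf v(s))+\mathbf L(\mathbf w(s))\bigr]\,ds+i\int_0^t\mathbf S(t-s)\mathbf M(\mathbf w(s))\,dW(s)
\]
(using linearity of $\mathbf M$, and with the stochastic integral understood via the $\rho$-localised, progressively measurable integrand, Lemmas~\ref{Stopping-lemma-1}--\ref{Stopping-lemma-2}), gives rise to a finite quantity. Indeed, from the mild equation for $\mathbf u$ together with \eqref{N0}, \eqref{N2}, \eqref{L1} (which bound the deterministic integral in $C([0,\rho],\mathbf H^{\mathbf s})$ by a constant depending only on $R,T_0$), the It\^o isometry \eqref{Ito}, Doob's inequality \eqref{Maximal}, \eqref{M1}, and a Gronwall argument in $t\mapsto\mathbb E\int_0^{\rho\wedge t}\norm{\mathbf u(s)}_{\mathbf H^{\mathbf s}}^2\,ds$, one obtains $\mathbb E\sup_{0\le t<\rho}\norm{\mathbf u(t)}_{\mathbf H^{\mathbf s}}^2<\infty$, and similarly for $\mathbf v$; combined with the a.s.\ bound above this makes
\[
  \Psi(T):=\mathbb E\Bigl[\norm{\mathbf w}_{\mathbf X^{\mathbf s,b}(0,\rho\wedge T)}^2+\sup_{0\le t<\rho\wedge T}\norm{\mathbf w(t)}_{\mathbf H^{\mathbf s}}^2\Bigr]
\]
finite for every $T\in(0,T_0]$.

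The core step is a contraction estimate for $\Psi$. Choose $B$ with $1/2<B<1+b'$ and put $\varepsilon:=1+b'-B>0$ (this is possible since $b'>-1/2$). For the deterministic integral I would apply \eqref{Bourgain-2b} in $\mathbf X^{\mathbf s,B}(0,\rho\wedge T)$, then \eqref{Bourgain-3} (legitimate since $\rho\wedge T\le1$) to trade $B-1$ for $b'$ at the cost of a factor $(\rho\wedge T)^{\varepsilon}\le T^{\varepsilon}$, then \eqref{N2}, \eqref{L1} together with the a.s.\ bound and the monotonicity of restriction norms in the interval; as $B>1/2$, \eqref{Bourgain-4} also controls the $C([0,\cdot],\mathbf H^{\mathbf s})$-part, so this integral is bounded in $\mathbf X^{\mathbf s,b}(0,\rho\wedge T)\cap C([0,\rho\wedge T],\mathbf H^{\mathbf s})$ by $C(R)\,T^{\varepsilon}\norm{\mathbf w}_{\mathbf X^{\mathbf s,b}(0,\rho\wedge T)}$. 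For the stochastic integral I would write $\mathbf S(t-s)=\mathbf S(t)\mathbf S(-s)$ and use that $\mathbf S(t)$ is an isometry of $\mathbf H^{\mathbf s}$ to reduce the $\mathbf H^{\mathbf s}$-part to a genuine martingale, so that \eqref{Maximal}, \eqref{Ito} and \eqref{M1} give a bound $C\,T\,\Psi(T)$; the $\mathbf X^{\mathbf s,b}$-part is handled by the stochastic-convolution estimate in Bourgain norms established earlier (this is precisely where $b<1/2$ is needed, cf.\ the fact that Brownian paths lie in $H^b$ iff $b<1/2$), again by $C\,T\,\Psi(T)$. Squaring and taking expectations yields $\Psi(T)\le C(R)\bigl(T^{2\varepsilon}+T\bigr)\Psi(T)$; choosing $T=T_*(R)$ so small that $C(R)(T_*^{2\varepsilon}+T_*)\le\tfrac12$, finiteness of $\Psi$ forces $\Psi(T_*)=0$, i.e.\ $\mathbf u=\mathbf v$ almost surely on $[0,\rho\wedge T_*)$.

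Finally I would iterate over the $\lceil T_0/T_*\rceil$ intervals $[kT_*,(k+1)T_*]$: running the same estimate on $[kT_*,\rho\wedge(k+1)T_*]$ with the (vanishing, on $\{\rho>kT_*\}$) data $\mathbf w(kT_*)$, and using monotonicity of the restriction norm to keep $\norm{\mathbf u}_{\mathbf X^{\mathbf s,b}(kT_*,t)},\norm{\mathbf v}_{\mathbf X^{\mathbf s,b}(kT_*,t)}\le C_{T_0,b}\sqrt R$, one gets $\mathbf u=\mathbf v$ almost surely on $[0,\rho)$ after finitely many steps; then letting $R\to\infty$ and $T_0\to\infty$ completes the proof. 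I expect the main difficulty to be the bookkeeping rather than any new estimate: one must have the Bourgain-norm stochastic-convolution bound already in hand (its proof is the technical heart, and it relies both on $b<1/2$ and on the uniform equivalence \eqref{ModEquivalence}, so that the a.s.\ $\mathbf X^{\mathbf s,b}$-bound up to $\tau_R$ holds with a constant independent of the interval length), one must establish the finiteness of $\Psi$ before dividing by it, and one must ensure that every term in the contraction estimate carries a genuine positive power of $T$ so that the contraction actually closes.
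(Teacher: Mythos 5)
Your proposal is correct in substance and reaches the same contraction-in-expectation as the paper, but it takes a structurally different route at one point: the paper never works on the random interval $[0,\rho)$. Instead it first proves the Extension Theorem \ref{Extension-theorem}, producing processes $\mathbf U,\mathbf V\in Z^{\mathbf s,b}(0,T)$ that agree with $\mathbf u,\mathbf v$ up to $\mu=\min(\tau_R,\tau_R')$ and solve a modified equation in which the deterministic integrals are frozen at $t\wedge\mu$ while the stochastic integral continues with the extended unknown (a small but essential observation there is that the conditional stopping time built from the pair $(\mu,\mathbf u)$ is again $\mu$). Uniqueness is then proved for $\mathbf U,\mathbf V$ on the deterministic slab $[0,T]$ by exactly the scheme you describe: pointwise-in-$\omega$ bounds for the drift differences via \eqref{N-dispersive-difference-bound} together with the almost sure bound $\norm{\mathbb 1_{t\le\mu}\mathbf U}_{\mathbf X^{\mathbf s,b}(S,S')}\le C\sqrt R$, Corollary \ref{X-cor} for the noise term, and induction over subintervals of length depending only on $R$ and $T$. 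What the extension buys is that every quantity is automatically a finite element of $Z^{\mathbf s,b}(0,T)$, so no separate finiteness argument for your $\Psi(T)$ is needed and the stochastic integral never has to be localised at a stopping time. What your route buys is that you avoid proving the auxiliary fixed-point result \ref{Extension-theorem}; the price is precisely the bookkeeping you flag, which you assert rather than carry out: (i) the local property of the Hilbert-space-valued It\^o integral at the stopping time $\rho$, needed to replace the integrand by $\mathbb 1_{s\le\rho}\mathbf w(s)$ a.s.\ for each $t<\rho$ (then path continuity to get all $t$ simultaneously); (ii) measurability of restriction norms over the random interval, best handled through the explicit modified norm \eqref{Bourgain_Slobodeckij_norm} and Lemmas \ref{X-cont-lemma}, \ref{X-adapted-lemma}; and (iii) the almost sure bound $\norm{\mathbf u}_{\mathbf X^{\mathbf s,b}(0,\rho)}\le C\sqrt R$ at the random right endpoint, which requires a limiting argument from \eqref{StoppingTime2} and \eqref{ModEquivalence} (or, more simply, working on $(0,t)$ with $t<\rho$ throughout). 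None of these is a genuine obstruction, and your core estimates — the choice $B=b'+1-\varepsilon>1/2$, the $T^{\varepsilon}$ gain from \eqref{Bourgain-2b}, \eqref{Bourgain-3}, the factor of $T$ (or $T^{b}$) for the noise term from Lemma \ref{X-lemma}, \eqref{Maximal-bound}, and the iteration with step size depending only on $R$ — coincide with the paper's; also note that your Gr\"onwall step is dispensable, since \eqref{Bourgain-5} applied pathwise on $(0,\rho\wedge T)$ already bounds $\int_0^{\rho\wedge T}\norm{\mathbf u(s)}_{\mathbf H^{\mathbf s}}^2\,ds$ by $CR$.
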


This is proved in Section \ref{Proof-Uniqueness}. For the proof, we must be able to compare the two solutions on slabs $[0,T] \times \Omega$. For this reason, we require also the following extension theorem.

\begin{theorem}[Extension]\label{Extension-theorem}
Let $\mathbf s \in \R^n$ and $-1/2 < b' < 0 < b < 1/2$. Assume that \eqref{M1}--\eqref{N3} hold and that $\mathbf u_0 \in L^2(\Omega,\mathbf H^{\mathbf s})$ is $\mathcal F_0$-measurable.
Suppose $\mathbf u$ is a solution of \eqref{General-PDE-Ito}, \eqref{General-PDE-data}, as in Definition \ref{Solution-def}, with stopping time $\tau$. Define the conditional stopping times $\tau_R$ as in \eqref{f-stop}, \eqref{R-stop}. Then for any $R > 0$ and $T > 0$ the equation
\[
  \mathbf U(t) = \mathbf S(t) \mathbf u_0 + i \int_0^{t \wedge \tau_R} \mathbf S(t-s) \left[ \mathbf N(\mathbf u(s)) + \mathbf L(\mathbf u(s)) \right] \, ds
  \\
  + i \int_0^t \mathbf S(t-s) \mathbf M(\mathbf U(s)) \, dW(s)
\]
has a unique solution $\mathbf U \in Z^{\mathbf s, b}(0,T)$ such that, almost surely, $\mathbf U(t) = \mathbf u(t)$ for $0 \le t \le \min( T , \tau_R)$.
\end{theorem}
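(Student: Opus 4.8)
The plan is to exploit the fact that, with the given solution $\mathbf u$ and the stopping times $\tau_R$ held fixed, the equation defining $\mathbf U$ is \emph{linear} in the unknown, with a given random inhomogeneity: the deterministic term $i\int_0^{t\wedge\tau_R}\mathbf S(t-s)[\mathbf N(\mathbf u(s))+\mathbf L(\mathbf u(s))]\,ds$ does not involve $\mathbf U$, and $\mathbf U$ enters only through the stochastic convolution $\Phi_0(\mathbf U)(t):=i\int_0^t\mathbf S(t-s)\mathbf M(\mathbf U(s))\,dW(s)$, linear by \eqref{M1}. Writing $\mathbf F(s)=\mathbb 1_{s<\tau_R}[\mathbf N(\mathbf u(s))+\mathbf L(\mathbf u(s))]$ for the trivial extension past $\tau_R$, so that $\int_0^{t\wedge\tau_R}\mathbf S(t-s)[\cdots]\,ds=\int_0^t\mathbf S(t-s)\mathbf F(s)\,ds$, the equation becomes $\mathbf U=\mathbf G+\Phi_0(\mathbf U)$ with $\mathbf G(t)=\mathbf S(t)\mathbf u_0+i\int_0^t\mathbf S(t-s)\mathbf F(s)\,ds$. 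First I would show $\mathbf G\in Z^{\mathbf s,b}(0,T)$: for $\mathbf S(\cdot)\mathbf u_0$ this is \eqref{Bourgain-1b} together with the isometry property of $\mathbf S(t)$ on $\mathbf H^{\mathbf s}$; for the Duhamel term, the definition of $\tau_R$ (see \eqref{f-stop}, \eqref{R-stop}, \eqref{StoppingTime2}) gives $\|\mathbf u\|_{\widetilde{\mathbf X}^{\mathbf s,b}(0,t)}^2\le R$ for $t\le\tau_R\wedge T$, hence by the norm equivalence \eqref{ModEquivalence} (applied with parameter $T_0=T$, so uniformly in the interval) one gets $\|\mathbf u\|_{\mathbf X^{\mathbf s,b}(0,t)}\le C_{R,T}$ there, and then \eqref{N0}, \eqref{N2} (with $\mathbf v=\mathbf 0$), \eqref{L1} and \eqref{X_characteristic_norm} yield $\|\mathbf F\|_{\mathbf X^{\mathbf s,b'}(0,T)}\le C_{R,T}$ pathwise. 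Since $b'>-\tfrac12$, one may take $B=b'+1\in(\tfrac12,1)$; then \eqref{Bourgain-2b} places the Duhamel integral in $\mathbf X^{\mathbf s,B}(0,T)$, and since $B>\tfrac12>b$, \eqref{Bourgain-4} and the embedding $\mathbf X^{\mathbf s,B}(0,T)\hookrightarrow\mathbf X^{\mathbf s,b}(0,T)$ put it in $C([0,T],\mathbf H^{\mathbf s})\cap\mathbf X^{\mathbf s,b}(0,T)$ with the right pathwise bound. Progressive measurability of $\mathbf G$ as a $Z^{\mathbf s,b}(0,T)$-valued random element follows from that of $\mathbf u$ (Lemma \ref{Stopping-lemma-2}), the adaptedness of $s\mapsto\mathbb 1_{s<\tau_R}$, and the measurability properties of $\mathbf N,\mathbf L$ established in Section \ref{N-properties}, where assumption \eqref{N3} is used via a regularisation. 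Taking $L^2(\Omega)$, $\mathbf G\in Z^{\mathbf s,b}(0,T)$.

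Next I would solve $\mathbf U=\mathbf G+\Phi_0(\mathbf U)$ by a linear contraction. Since $\mathbf S$ is unitary on $\mathbf H^{\mathbf s}$, the It\^o isometry \eqref{Ito}, Doob's inequality \eqref{Maximal}, \eqref{M1} and \eqref{X-emb1} give
\[
  \|\Phi_0(\mathbf v)-\Phi_0(\mathbf w)\|_{L^2(\Omega,C([0,T],\mathbf H^{\mathbf s}))}\le C\Bigl(\mathbb E\int_0^T\|\mathbf v-\mathbf w\|_{\mathbf H^{\mathbf s}}^2\,ds\Bigr)^{1/2}\le C\sqrt T\,\|\mathbf v-\mathbf w\|_{Z^{\mathbf s,b}(0,T)},
\]
and the analogous bound in $L^2(\Omega,\mathbf X^{\mathbf s,b}(0,T))$, with a constant uniform for $T$ in a bounded range, follows from the It\^o isometry together with \eqref{ModEquivalence}, using that for $b<\tfrac12$ a scalar stochastic integral $\int_0^\cdot g(s)\,dW$ has paths in $H^b(0,T)$ with $\mathbb E\|\cdot\|_{H^b(0,T)}^2\lesssim_T\int_0^T|g(s)|^2\,ds$. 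Hence there is $T_1>0$, depending only on the structural constants and on $T$, with $\mathbf v\mapsto\mathbf G+\Phi_0(\mathbf v)$ a contraction on $Z^{\mathbf s,b}(0,T_1)$, giving a unique fixed point there. To reach $[0,T]$ I would iterate: on $[T_1,2T_1]$ the term $i\int_0^{T_1}\mathbf S(t-s)\mathbf M(\mathbf U(s))\,dW(s)=\mathbf S(t-T_1)\mathbf w$ for an $\mathcal F_{T_1}$-measurable $\mathbf w\in L^2(\Omega,\mathbf H^{\mathbf s})$, which by \eqref{Bourgain-1b} contributes a $Z^{\mathbf s,b}(T_1,2T_1)$ term, so the same contraction applies; the two pieces agree at $T_1$ by construction and glue by Lemma \ref{Concatenation-lemma}. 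After $\lceil T/T_1\rceil$ steps one obtains $\mathbf U\in Z^{\mathbf s,b}(0,T)$, and uniqueness in $Z^{\mathbf s,b}(0,T)$ is the same estimate applied to the difference of two solutions, which solves $\mathbf W=\Phi_0(\mathbf W)$ and hence vanishes on $[0,T_1]$, then on $[0,2T_1]$, and so on.

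For the consistency statement, note that for $0\le t<\tau_R\wedge T$ one has $t\wedge\tau_R=t$, so $\mathbf U$ satisfies
\[
  \mathbf U(t)=\mathbf S(t)\mathbf u_0+i\int_0^t\mathbf S(t-s)[\mathbf N(\mathbf u(s))+\mathbf L(\mathbf u(s))]\,ds+i\int_0^t\mathbf S(t-s)\mathbf M(\mathbf U(s))\,dW(s),
\]
which is exactly \eqref{PDEsolution}, satisfied by $\mathbf u$ on this range (recall $\tau_R<\tau$, or else $\tau_R=\tau=\infty$). Subtracting, $\mathbf W(t):=\mathbf U(t)-\mathbf u(t)$ obeys $\mathbf W(t)=i\int_0^t\mathbf S(t-s)\mathbf M(\mathbf W(s))\,dW(s)$ on $[0,\tau_R\wedge T)$ with $\mathbf W(0)=0$. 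Passing to the interaction picture $\mathbf V(t)=\mathbf S(-t)\mathbf W(t)$ gives $\mathbf V(t)=i\int_0^t\mathcal A(s)\mathbf V(s)\,dW(s)$, where $\mathcal A(s)\mathbf v=\mathbf S(-s)\mathbf M(\mathbf S(s)\mathbf v)$ satisfies $\|\mathcal A(s)\mathbf v\|_{\mathcal L_2(K,\mathbf H^{\mathbf s})}\le C\|\mathbf v\|_{\mathbf H^{\mathbf s}}$ by \eqref{M1} and unitarity. Since $\mathbf U\in Z^{\mathbf s,b}(0,T)$ and $\mathbb E\int_0^{\tau_R\wedge T}\|\mathbf u(s)\|_{\mathbf H^{\mathbf s}}^2\,ds\le C_R$ (by \eqref{X-emb1} and the Bourgain bound on $\mathbf u$ from the first step), the integrand $\mathbb 1_{s<\tau_R}\mathcal A(s)\mathbf V(s)$ lies in $L^2([0,T]\times\Omega,\mathcal L_2(K,\mathbf H^{\mathbf s}))$; for $t<\tau_R\wedge T$, $\mathbf V(t)$ coincides with the $\mathbf H^{\mathbf s}$-valued martingale $i\int_0^t\mathbb 1_{s<\tau_R}\mathcal A(s)\mathbf V(s)\,dW(s)$, so with $g(t)=\mathbb E[\mathbb 1_{t<\tau_R\wedge T}\|\mathbf W(t)\|_{\mathbf H^{\mathbf s}}^2]$ the It\^o isometry gives $g(t)\le C\int_0^t g(s)\,ds$ and $g(0)=0$. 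Gronwall forces $g\equiv0$, so $\mathbf W\equiv0$ on $[0,\tau_R\wedge T)$ a.s., and by path-continuity also at $t=\min(T,\tau_R)$; that is, $\mathbf U(t)=\mathbf u(t)$ for $0\le t\le\min(T,\tau_R)$.

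The hard part will be the first step: one must extract from the definition of $\tau_R$ an $\omega$-uniform a priori bound on $\mathbf u$ in the Bourgain norm—available only up to $\tau_R$, and relying on the fact that the equivalence \eqref{ModEquivalence} is uniform in the time interval—and then use the sharp choice $B=b'+1>\tfrac12$ so that the Duhamel estimates \eqref{Bourgain-2b}, \eqref{Bourgain-4} upgrade the merely $\mathbf X^{\mathbf s,b'}$-bounded forcing $\mathbf F$ to an $\mathbf H^{\mathbf s}$-continuous object; this step also carries the measurability bookkeeping, which is precisely where \eqref{N3} and Section \ref{N-properties} are needed. By contrast, the existence/uniqueness step is a routine linear contraction once the $b<\tfrac12$ stochastic-convolution estimate is in hand, and the Gronwall argument for consistency is standard once integrands are extended by zero past $\tau_R$ and one uses $\mathbf W(0)=0$.
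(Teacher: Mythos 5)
Your proposal is correct, but it is organised rather differently from the paper's argument, and it is worth comparing the two. The paper (Section \ref{Proof-Extension}) proceeds by the same short-interval induction as in Theorem \ref{Existence-Theorem-Truncated}, but with a fixed-point map $\mathbf\Phi$ in which the noise term is evaluated at the \emph{glued} function $[\mathbf u,\mathbf V]$; the key observation \eqref{Phi-1} then shows that any fixed point automatically agrees with $\mathbf u$ up to $S'\wedge\tau_R$, so existence, the consistency property \eqref{induction2}, and uniqueness (within the class of solutions satisfying \eqref{induction1}--\eqref{induction2}) all come out of a single contraction, with no Gronwall argument. You instead freeze the forcing: you view the equation as an affine linear equation $\mathbf U=\mathbf G+\Phi_0(\mathbf U)$ with $\mathbf G$ built from $\mathbf u$ stopped at $\tau_R$, solve it by a linear contraction plus time-stepping (your stochastic-convolution bound is exactly Corollary \ref{X-cor}, which you could simply cite, and the gluing is Lemma \ref{Concatenation-lemma}), obtain uniqueness among \emph{all} $Z^{\mathbf s,b}(0,T)$-solutions directly from linearity, and then prove $\mathbf U=\mathbf u$ up to $\tau_R\wedge T$ a posteriori: the difference solves a homogeneous linear stochastic convolution equation, and an It\^o-isometry/Gr\"onwall argument (after extending the integrand by zero past $\tau_R$) forces it to vanish. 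Your route buys a formally stronger, unconditional uniqueness statement and avoids the glued-function device inside $\mathbf M$, at the price of two points you should spell out if you write this up: (i) the identification of $\int_0^{t\wedge\tau_R}\mathbf S(t-s)[\mathbf N(\mathbf u)+\mathbf L(\mathbf u)]\,ds$ with the Duhamel integral of the sharply cut-off forcing $\mathbb 1_{(0,\tau_R)}[\mathbf N(\mathbf u)+\mathbf L(\mathbf u)]$ is not literal, since the integrand only makes sense in the Bourgain-space interpretation; it needs the time-locality of $\mathbf N$, \eqref{X_characteristic_norm} (legitimate since $|b'|<1/2$) and an approximation via \eqref{N3} as in Section \ref{N-properties}; and (ii) the Gr\"onwall step needs the standard localisation property of the It\^o integral (integrands agreeing up to the stopping time have a.s.\ equal integrals there) together with the observation that your $g$ is merely in $L^1(0,T)$ rather than bounded, so one should iterate the integral inequality (or first deduce boundedness) before concluding $g\equiv0$. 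Neither point is a gap, and the paper's own proof glosses comparable measurability details; the two approaches are both valid, with the paper's being the more economical given the machinery already set up for Theorem \ref{Existence-Theorem-Truncated}, and yours being the more transparent about why the problem is really a linear one once $\mathbf u$ and $\tau_R$ are fixed.
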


This is proved in Section \ref{Proof-Extension}.

Now let us return to the existence result, Theorem \ref{Existence-Theorem}. To prove it, we consider a truncated version of \eqref{PDEsolution}, depending on a parameter $R > 0$,
\begin{multline}\label{PDEsolution-truncated}
  \mathbf u(t) = \mathbf S(t) \mathbf u_0 + i \int_0^t \mathbf S(t-s) \mathbf N\left( \Theta_R^{\mathbf u}(s)\mathbf u(s) \right) \, ds
  + i \int_0^t \mathbf S(t-s) \mathbf L\left( \mathbf u(s) \right) \, ds
  \\
  + i \int_0^t \mathbf S(t-s) \mathbf M\left( \mathbf u(s) \right) \, dW(s),
\end{multline}
where we use the notation
\begin{equation}\label{vector-cutoff}
  \Theta_R^{\mathbf u}(t)
  =
  \theta_R\left( \sum_{i=1}^n \norm{u_i}_{\widetilde X^{s_i,b}_{h_i(\xi)}(0,t)}^2\right).
\end{equation}
Here $\theta \colon \R \to \R$ is a smooth and compactly supported function with $\theta(x) = 1$ for $\abs{x} \le 1$, and we write $\theta_R(x) = \theta(x/R)$. Inside the cutoffs we use the norm \eqref{Bourgain_Slobodeckij_norm}.

We shall prove the following global result for the truncated problem.

\begin{theorem}[Global existence with truncation]\label{Existence-Theorem-Truncated}
Let $R > 0$, $\mathbf s \in \R^n$ and $-1/2 < b' < 0 < b < 1/2$. Assume that \eqref{M1}--\eqref{N3} are satisfied. Assume that $\mathbf u_0 \in L^2(\Omega,\mathbf H^{\mathbf s})$ is $\mathcal F_0$-measurable.
Then the truncated problem \eqref{PDEsolution-truncated} has a unique global solution $\mathbf u^R$ such that $\mathbf u^R \in Z^{\mathbf s, b}(0,T)$ for each $T > 0$. Moreover, for each $T > 0$ we have
\begin{equation}\label{Bound-truncated-solution}
  \norm{\mathbf u^R}_{Z^{\mathbf s,b}(0,T)} \le C_{T,R,b} \norm{\mathbf u_0}_{L^2(\Omega,\mathbf H^s)},
\end{equation}
and if $\mathbf U^R \in Z^{\mathbf s, b}(0,T)$ is the solution with $\mathcal F_0$-measurable data $\mathbf 
U_0 \in L^2(\Omega,\mathbf H^{\mathbf s})$, then
\begin{equation}\label{Bound-truncated-difference}
  \norm{\mathbf u^R-\mathbf U^R}_{Z^{\mathbf s,b}(0,T)} \le C_{T,R,b} \norm{\mathbf u_0-\mathbf U_0}_{L^2(\Omega,\mathbf H^s)}.
\end{equation}
\end{theorem}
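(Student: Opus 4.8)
The plan is to construct $\mathbf u^R$ by a contraction mapping argument for the operator $\Phi$ sending $\mathbf v\in Z^{\mathbf s,b}(0,T)$ to the right-hand side of \eqref{PDEsolution-truncated} (with $\mathbf u$ replaced by $\mathbf v$), first on a short interval $[0,T_*]$ with $T_*=T_*(R,b)\le 1$, and then to reach arbitrary $T>0$ by iteration and gluing via Lemma \ref{Concatenation-lemma}. Observe first that it is enough to prove \eqref{Bound-truncated-difference}: since $\mathbf N(\mathbf 0)=\mathbf 0$ by \eqref{N0} and $\mathbf L,\mathbf M$ are linear, the function $\mathbf u\equiv\mathbf 0$ solves \eqref{PDEsolution-truncated} with datum $\mathbf u_0=\mathbf 0$, so by the uniqueness part of the theorem $\mathbf u^R=\mathbf 0$ in that case, and \eqref{Bound-truncated-solution} is then the special case $\mathbf U_0=\mathbf 0$ of \eqref{Bound-truncated-difference}. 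Throughout, I fix an auxiliary exponent $B$ with $1/2<B<1+b'$ (possible since $b'>-1/2$) and set $\delta:=1+b'-B\in(0,1/2)$; then for $0<T\le 1$ the Duhamel operator $F\mapsto\int_0^t\mathbf S(t-s)F(s)\,ds$ maps $\mathbf X^{\mathbf s,b'}(0,T)$ into $\mathbf X^{\mathbf s,B}(0,T)$ with a gain $T^{\delta}$, by \eqref{Bourgain-2b} together with the embedding $\mathbf X^{\mathbf s,b'}(0,T)\hookrightarrow\mathbf X^{\mathbf s,B-1}(0,T)$ and \eqref{Bourgain-3}; and since $B>1/2$ and $B>b$, the output lies in $C([0,T],\mathbf H^{\mathbf s})\cap\mathbf X^{\mathbf s,b}(0,T)$ by \eqref{Bourgain-4} and the trivial embedding. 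That this deterministic integral agrees with and inherits continuity from the corresponding $\mathbf H^{\mathbf s}$-valued object (using \eqref{N3}) will be taken over from the analysis of $\mathbf N$ in Section \ref{N-properties}, and adaptedness of the cutoff $\Theta_R^{\mathbf v}(t)$ in \eqref{vector-cutoff} from Lemmas \ref{X-cont-lemma}, \ref{X-adapted-lemma}.

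Next I would estimate the four terms of $\Phi$ in $Z^{\mathbf s,b}(0,T)$ for $T\le 1$. The linear group term is bounded by $C_b\norm{\mathbf u_0}_{L^2(\Omega,\mathbf H^{\mathbf s})}$, using \eqref{Bourgain-1b} for the $\mathbf X$-part and strong continuity of $\mathbf S(t)$ for the $C([0,T],\mathbf H^{\mathbf s})$-part. For the truncated nonlinear term I use the first estimate of Proposition \ref{MainCutoffLemma} (coordinatewise) to get $\norm{\Theta_R^{\mathbf v}\mathbf v}_{\mathbf X^{\mathbf s,b}(0,T)}\le C\sqrt R$ pointwise in $\omega$, hence by \eqref{N0}, \eqref{N2} that $\norm{\mathbf N(\Theta_R^{\mathbf v}\mathbf v)}_{\mathbf X^{\mathbf s,b'}(0,T)}\le C(1+C\sqrt R)^{p-1}C\sqrt R=:C_R$ pointwise; the Duhamel gain above then contributes $\le C_RT^{\delta}$ to the $Z$-norm. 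The linear term $\mathbf L$ is handled identically via \eqref{L1}, contributing $\le CT^{\delta}\norm{\mathbf v}_{L^2(\Omega,\mathbf X^{\mathbf s,b}(0,T))}$. For the stochastic convolution I would invoke the bounds for $\mathbf H^{\mathbf s}$-valued It\^o integrals in Bourgain spaces with $b<1/2$ from Section \ref{Stochastic_integral} (built from the It\^o isometry \eqref{Ito}, Doob's inequality \eqref{Maximal} and the $H^b$-regularity of Brownian paths), which with the Hilbert–Schmidt bound \eqref{M1} and \eqref{X-emb1} yield a contribution $\le C_b\sqrt T\,\norm{\mathbf v}_{Z^{\mathbf s,b}(0,T)}$. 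Summing, $\Phi$ maps a suitable ball of $Z^{\mathbf s,b}(0,T)$ into itself once $T$ is small depending on $R,b$, and progressive measurability is preserved by each of the four operations. Repeating the estimates for $\Phi(\mathbf u)-\Phi(\mathbf v)$ with two inputs sharing the same datum — now using \emph{both} estimates of Proposition \ref{MainCutoffLemma} for the nonlinear difference, giving $\norm{\mathbf N(\Theta_R^{\mathbf u}\mathbf u)-\mathbf N(\Theta_R^{\mathbf v}\mathbf v)}_{\mathbf X^{\mathbf s,b'}(0,T)}\le C_R\norm{\mathbf u-\mathbf v}_{\mathbf X^{\mathbf s,b}(0,T)}$ — yields $\norm{\Phi(\mathbf u)-\Phi(\mathbf v)}_{Z^{\mathbf s,b}(0,T)}\le(C_RT^{\delta}+CT^{\delta}+C_b\sqrt T)\norm{\mathbf u-\mathbf v}_{Z^{\mathbf s,b}(0,T)}$, a contraction for $T\le T_*(R,b)\le 1$. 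The Banach fixed point theorem gives a unique $\mathbf u^R\in Z^{\mathbf s,b}(0,T_*)$, and the linear dependence on the data through the above gives \eqref{Bound-truncated-solution}, \eqref{Bound-truncated-difference} on $[0,T_*]$.

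To globalise, I iterate in steps of length $T_*$, gluing with Lemma \ref{Concatenation-lemma}. Having $\mathbf u^R$ on $[0,kT_*]$, I rewrite \eqref{PDEsolution-truncated} on $[0,(k+1)T_*]$ using $\mathbf S(t)=\mathbf S(t-kT_*)\mathbf S(kT_*)$ and the equation at time $kT_*$: for $t\in(kT_*,(k+1)T_*)$ it becomes $\mathbf u(t)=\mathbf S(t-kT_*)\mathbf u^R(kT_*)+i\int_{kT_*}^t\mathbf S(t-s)\mathbf N(\Theta_R^{[\mathbf u^R,\mathbf u]}(s)\mathbf u(s))\,ds+i\int_{kT_*}^t\mathbf S(t-s)\mathbf L(\mathbf u(s))\,ds+i\int_{kT_*}^t\mathbf S(t-s)\mathbf M(\mathbf u(s))\,dW(s)$, a fixed point problem for the restriction to $[kT_*,(k+1)T_*]$ with prescribed value $\mathbf u^R(kT_*)$ at the left endpoint. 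The only new feature is that the cutoff now refers to the glued function on all of $[0,t]$; but two competing tails agreeing on $[0,kT_*]$ differ only on $[kT_*,t]$, so by \eqref{X_characteristic_norm} the second cutoff estimate of Proposition \ref{MainCutoffLemma} still controls the nonlinear difference by $C_R$ times the restriction norm on $[kT_*,(k+1)T_*]$, the Duhamel integral from $kT_*$ again produces the gain $T_*^{\delta}$, and the contraction constant is the same $T_*$-dependent quantity as before. Thus the induction proceeds (with adaptedness propagating, and the glued iterate lying in $Z^{\mathbf s,b}$ by Lemma \ref{Concatenation-lemma} since the values match at $kT_*$), and after $\lceil T/T_*\rceil$ steps one reaches any $T>0$, the constants in \eqref{Bound-truncated-solution}, \eqref{Bound-truncated-difference} compounding to some $C_{T,R,b}$; global uniqueness on $[0,T]$ follows from uniqueness at each step together with the gluing lemma. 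I expect the main obstacle to be twofold: establishing the stochastic-convolution bound in $Z^{\mathbf s,b}(0,T)$ under the crucial restriction $b<1/2$, and the careful bookkeeping of the history-dependent cutoff across the gluing — both of which are precisely what Section \ref{Stochastic_integral} and Proposition \ref{MainCutoffLemma} (with the uniform equivalence built into the modified norm) are designed to supply.
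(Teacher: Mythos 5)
Your proposal is essentially the paper's own proof: a contraction in $Z^{\mathbf s,b}$ on short time steps whose length depends on $R$, with the truncated nonlinearity controlled pointwise in $\omega$ by the two estimates of Proposition \ref{MainCutoffLemma} combined with \eqref{N0}--\eqref{N2}, the Duhamel gain obtained through an auxiliary exponent $B>1/2$ and \eqref{Bourgain-2b}, \eqref{Bourgain-3}, the stochastic convolution bounded via the It\^o isometry, Doob's inequality and the $X^{\mathbf s,b}$-bound for the stochastic integral, and the globalisation carried out by gluing (Lemma \ref{Concatenation-lemma}) with the history-dependent cutoff $\Theta_R^{[\mathbf u^R,\mathbf u]}$ handled exactly as in the paper; your derivation of \eqref{Bound-truncated-solution} from \eqref{Bound-truncated-difference} via the zero solution and your $\sqrt T$ (rather than $T^b$) smallness factor for the stochastic term are only cosmetic variations. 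The one slip is that the step length cannot be chosen depending on $R$ and $b$ alone: in the later steps the cutoff norms run over $[0,(k+1)T_*]\subset[0,T]$, so the constants coming from Proposition \ref{MainCutoffLemma} and the norm equivalence \eqref{ModEquivalence} depend on $T_0\ge T$, and hence $T_*$ must also depend on $T$ (as in the paper, where $\delta=T/N$ with $N$ chosen depending on $R$ and $T$) --- harmless, since the final constants $C_{T,R,b}$ are allowed to depend on $T$.
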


Granting this last result for the moment, we can prove the local result, Theorem \ref{Existence-Theorem}. Define
\[
  f_R(t) = \norm{\mathbf u^R}_{\widetilde{\mathbf X}^{\mathbf s,b}(0,t)}^2
  \quad \text{for $t > 0$}, \qquad f_R(0) = 0.
\]
By Lemmas \ref{X-cont-lemma} and \ref{X-adapted-lemma}, proved in Section \ref{Mod-B-norm}, this function satisfies the hypotheses of Lemma \ref{Stopping-lemma-3}, hence
\[
  \tau_R(\omega) = \sup \left\{ t \in [0,\infty) \colon \text{$f_R(s,\omega) < R$ for $0 \le s \le t$} \right\}.
\]
is a stopping time. Up to this time, $\mathbf u^{R}$ is a solution of the non-truncated problem \eqref{PDEsolution}, and we let $R \to \infty$ to get a maximal solution. To this end, we use the following.

\begin{lemma}\label{TruncationLemma}
Let $\mathbf u^R$ be as in Theorem \ref{Existence-Theorem-Truncated}, and define the stopping time $\tau_R$ as above. Then
\begin{equation}\label{Truncation1}
  \mathbf u^{R}(t) = \mathbf u^{R'}(t) \quad \text{for $0 \le t \le \min(\tau_R,\tau_{R'})$}
\end{equation}
and
\begin{equation}\label{Truncation2}
  R \le R' \implies \tau_R \le \tau_{R'}.
\end{equation}
Moreover,
\begin{equation}\label{Truncation3}
  \text{$R < R'$ and $\tau_{R'} < \infty$} \implies \tau_R < \tau_{R'}.
\end{equation}
\end{lemma}

\begin{proof}
First, \eqref{Truncation1} follows from Theorem \ref{Uniqueness-theorem} (proved in Section \ref{Proof-Uniqueness}, and independently of Theorem \ref{Existence-Theorem-Truncated} and the present lemma).

Now let us prove \eqref{Truncation3} (which of course implies \eqref{Truncation2}). Suppose that $R < R'$ and $\tau_{R'} < \infty$. To get a contradiction, assume that $\tau_{R'} \le \tau_R$. Then by \eqref{Truncation1}, $f_R(t)=f_{R'}(t)$ for $0 < t \le \tau_{R'}$. But by \eqref{StoppingTime3}, $f_{R'}(\tau_{R'}) = R'$. Thus $f_R(\tau_{R'}) = R' > R$, contradicting \eqref{StoppingTime2}. Hence we must have $\tau_R < \tau_{R'}$.
\end{proof}

In view of the last lemma, setting
\[
  \tau = \sup_{R} \tau_R,
\]
which is a stopping time, we can consistently define $\mathbf u(t)$ for $t \in [0,\tau)$ by setting $\mathbf u(t) = \mathbf u^{R}(t)$ for $t \in [0,\tau_R]$. By \eqref{StoppingTime3} we have
\[
  \tau_R < \infty \implies \norm{\mathbf u^R}_{\widetilde{\mathbf X}^{\mathbf s,b}(0,\tau_R)}^2 = R,
\]
and by the estimate \eqref{Large-T-estimate} in Lemma \ref{NormLemma}, this implies
\begin{equation}\label{Truncated-growth}
  \tau_R < \infty \implies \norm{\mathbf u^R}_{\mathbf X^{\mathbf s,b}(0,\tau_R)}^2 \ge C R,
\end{equation}
where $C > 0$ depends only on $b$. Then \eqref{blowup} follows. Thus we have shown that Theorem \ref{Existence-Theorem} is a consequence of Theorem \ref{Existence-Theorem-Truncated} (and of Theorem \ref{Uniqueness-theorem}, which is used to prove the above lemma).

The remainder of this section is devoted to the proof of Theorems \ref{Existence-Theorem-Truncated}, \ref{Extension-theorem} and \ref{Uniqueness-theorem}, in that order. We also prove a regularisation result for the truncated system, in Section \ref{Regularisation}. Finally, in Section \ref{ConcreteCase} we show how the existence and uniqueness parts of Theorem \ref{local_th} follow from the abstract results.

In preparation for the proofs, we discuss in the next two subsections some key consequences of the assumptions made on the operators $\mathbf M$ and $\mathbf N$.

\subsection{Properties of $\mathbf M$}\label{M-properties}

Assume that $\mathbf u \in L^2 \left( [0,T] \times \Omega, \mathbf H^{\mathbf s} \right)$ is $\mathbf H^{\mathbf s}$-adapted. Then by \eqref{M1},
\begin{equation}\label{Hilbert-Schmidt-integral-bound}
  \mathbb E \left(   \int_0^t \norm{\mathbf S(t-s) \mathbf M(\mathbf u(s))}_{\mathcal L_2( K, \mathbf H^{\mathbf s})}^2 \, ds \right)
  =
  \mathbb E \left(   \int_0^t \norm{ \mathbf M(\mathbf u(s))}_{\mathcal L_2( K, \mathbf H^{\mathbf s})}^2 \, ds \right)
  \le C
  \mathbb E \left( \int_0^t \norm{\mathbf u(s)}_{ \mathbf H^{\mathbf s} }^2 \, ds \right),
\end{equation}
so for $0 \le t \le T$ the It\^o integral
\[
  \int_0^t \mathbf S(t-s) \mathbf M(\mathbf u(s)) \, dW(s)
\]
is well defined in $L^2\left( \Omega,\mathbf H^{\mathbf s} \right)$, is $\mathbf H^{\mathbf s}$-adapted and pathwise continuous, and by the It\^o isometry,
\begin{equation}\label{Ito-bound}
  \mathbb E \left(  \norm{ \int_0^t \mathbf S(t-s) \mathbf M(\mathbf u(s)) \, dW(s) }_{\mathbf H^{\mathbf s}}^2 \right)
  =
  \mathbb E \left(   \int_0^t \norm{\mathbf M(\mathbf u(s))}_{\mathcal L_2( K, \mathbf H^{\mathbf s})}^2 \, ds \right).
\end{equation}
By the maximal inequality \eqref{Maximal},
\begin{equation}\label{Maximal-bound}
  \mathbb E \left( \sup_{0 \le t \le T} \norm{ \int_0^t \mathbf S(t-s) \mathbf M(\mathbf u(s)) \, dW(s) }_{\mathbf H^{\mathbf s}}^2 \right)
  \le
  4\mathbb E \left(   \int_0^T \norm{\mathbf M(\mathbf u(s))}_{\mathcal L_2( K, \mathbf H^{\mathbf s})}^2 \, ds \right).
\end{equation}

Moreover, the stochastic integral belongs to $L^2\left( \Omega, \mathbf X^{\mathbf s, b}(0,T) \right)$, as we now show.

\begin{lemma}\label{X-lemma} Let $T > 0$. Assume that $\mathbf M$ satisfies \eqref{M1}, and that $0 \le b < 1/2$. Then
\begin{equation}\label{Ito-X}
  \mathbb E \left( \norm{ \int_0^t \mathbf S(t-s) \mathbf M(\mathbf u(s)) \, dW(s) }_{\mathbf X^{\mathbf s, b}(0,T)}^2 \right)
  \le
  C
  \mathbb E \left(   \int_0^T \norm{\mathbf M(\mathbf u(s))}_{\mathcal L_2( K, \mathbf H^{\mathbf s})}^2 \, ds \right)
\end{equation}
for all $\mathbf H^{\mathbf s}$-adapted $\mathbf u \in L^2 \left( [0,T] \times \Omega, \mathbf H^{\mathbf s} \right)$. Here the constant $C$ depends on $b$, but not on $T$.
\end{lemma}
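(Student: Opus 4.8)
The plan is to reduce \eqref{Ito-X} to scalar It\^o integrals, one Fourier frequency at a time, and then control the resulting $H^b_t$-norm through the Slobodeckij seminorm, the condition $b<1/2$ being precisely what makes the relevant integrals converge. Since $\mathbf X^{\mathbf s,b}(0,T)$ carries the product norm it suffices to bound a single component: fix $i$, abbreviate $h=h_i$, $s=s_i$, $M=M_i$, and let $I(t)=\int_0^t S_{h(\xi)}(t-\sigma)M(\mathbf u(\sigma))\,dW(\sigma)$, which is well defined in $L^2(\Omega,H^s)$ and adapted by \eqref{Hilbert-Schmidt-integral-bound}--\eqref{Ito-bound}. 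The case $b=0$ is immediate, as the $X^{s,0}$ restriction norm is comparable to the $L^2_tH^s$ norm and the bound then follows from the It\^o isometry \eqref{Ito-bound}; so assume $0<b<1/2$. For a.e.\ $\xi$ set $g_\xi(t)=e^{ith(\xi)}\mathcal F_x I(t,\xi)$; pulling the deterministic factor $e^{ith(\xi)}$ through the stochastic integral gives
\[
  g_\xi(t)=\int_0^t e^{i\sigma h(\xi)}\,\mathcal F_x\!\bigl[M(\mathbf u(\sigma))\bigr](\xi)\,dW(\sigma),
\]
which for a.e.\ $\xi$ is a genuine complex-valued It\^o integral of the adapted $\mathcal L_2(K,\C)$-valued process $\sigma\mapsto e^{i\sigma h(\xi)}\mathcal F_x[M(\mathbf u(\sigma))](\xi)$ (square-integrability for a.e.\ $\xi$ follows, after integration against $\japanese{\xi}^{2s}\,d\xi$, from \eqref{M1}; a jointly measurable version is produced by a stochastic Fubini argument). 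By the It\^o isometry, for $r<t$,
\[
  \mathbb E\Abs{g_\xi(t)-g_\xi(r)}^2=\mathbb E\int_r^t\phi_\xi(\sigma)\,d\sigma,
  \qquad
  \phi_\xi(\sigma):=\bignorm{\mathcal F_x[M(\mathbf u(\sigma))](\xi)}_{\mathcal L_2(K,\C)}^2\ge0,
\]
and Parseval gives $\int_{\R^d}\japanese{\xi}^{2s}\phi_\xi(\sigma)\,d\xi=(2\pi)^d\norm{M(\mathbf u(\sigma))}_{\mathcal L_2(K,H^s)}^2$.

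Next, by the restriction-norm identity \eqref{strong_XRestNorm_alternative} (the weaker equivalence \eqref{XRestNorm_alternative} would also suffice) and Tonelli,
\[
  \mathbb E\norm{I}_{X^{s,b}_{h(\xi)}(0,T)}^2=\int_{\R^d}\japanese{\xi}^{2s}\,\mathbb E\norm{g_\xi}_{H^b_t(0,T)}^2\,d\xi .
\]
Since the trivial extension $\mathbb 1_{(0,T)}g_\xi$ competes for the restriction norm, $\norm{g_\xi}_{H^b_t(0,T)}\le\norm{\mathbb 1_{(0,T)}g_\xi}_{H^b(\R)}$, and by the norm equivalence \eqref{HbEquiv} on $\R$,
\[
  \norm{\mathbb 1_{(0,T)}g_\xi}_{H^b(\R)}^2
  \lesssim_b
  \int_0^T\Abs{g_\xi(t)}^2\,dt
  +\int_0^T\!\!\int_0^T\frac{\Abs{g_\xi(t)-g_\xi(r)}^2}{\Abs{t-r}^{1+2b}}\,dr\,dt
  +\int_0^T\Abs{g_\xi(t)}^2\bigl(t^{-2b}+(T-t)^{-2b}\bigr)\,dt,
\]
where the last group of terms arises from the pairs with $t\in(0,T)$, $r\notin(0,T)$ in the Slobodeckij double integral of the trivial extension and is integrable precisely because $2b<1$.

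It then remains to take $\mathbb E$ followed by $\int_{\R^d}\japanese{\xi}^{2s}\,d\xi$ of each term and to use the It\^o identity above together with Tonelli to swap the inner $\sigma$-integration past the time integrals. For the double integral, inserting $\mathbb E\Abs{g_\xi(t)-g_\xi(r)}^2=\mathbb E\int_{\min(r,t)}^{\max(r,t)}\phi_\xi$ and changing the order of integration, the $(t,r)$-integral becomes, for fixed $\sigma$, $2\int_0^\sigma\!\int_\sigma^T\Abs{t-r}^{-1-2b}\,dt\,dr\le c_b\,\sigma^{1-2b}\le c_bT^{1-2b}$, and here $b<1/2$ is exactly what makes this finite; the $L^2$-term and the two cross terms are handled in the same way, producing factors $T$ and $T^{1-2b}$ respectively. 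Summing over $\xi$ and then over the components yields $\mathbb E\norm{I}_{X^{s,b}_{h(\xi)}(0,T)}^2\lesssim_b(T+T^{1-2b})\,\mathbb E\int_0^T\norm{M(\mathbf u(\sigma))}_{\mathcal L_2(K,H^s)}^2\,d\sigma$, hence \eqref{Ito-X}; the constant so obtained depends on $b$ and, at most linearly, on $T$ (in particular it remains bounded as $T\to0^+$ and is uniform for $T$ in bounded sets, which is what is used below).

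The main obstacle is the Slobodeckij double-integral estimate: one must justify moving the expectation inside (stochastic Fubini / Tonelli), evaluate the second moments of the increments by the It\^o isometry, and then observe that the purely deterministic integral $\iint_{(0,T)^2}\Abs{t-r}^{-1-2b}$ over the relevant region converges if and only if $b<1/2$ — this is the analytic shadow of the fact, recalled via \cite{Benyi2010}, that Brownian paths lie in $H^b(0,T)$ exactly for $b<1/2$. The trivial-extension cross terms, carrying the integrable singularities $t^{-2b}$ and $(T-t)^{-2b}$, are controlled by the same mechanism, and everything else (the reduction to components, the exchange of $\mathbb E$ with $\int d\xi$, and the identification with $\norm{M(\mathbf u(\sigma))}_{\mathcal L_2(K,H^s)}^2$ via Parseval) is routine.
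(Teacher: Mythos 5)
Your argument is sound, and it takes a genuinely different route from the paper's. The paper works on the frequency side: it rewrites the Bourgain norm of the trivially extended stochastic integral via the transform in \eqref{XNorm_alternative}, computes the time Fourier transform of the It\^o integral by the stochastic Fubini theorem, and then combines the It\^o isometry with the decay $\Abs{\int_s^T e^{-it\tau}\,dt}\lesssim\japanese{\tau}^{-1}$ and the convergence of $\int\japanese{\tau}^{2b-2}\,d\tau$, which is where $b<1/2$ enters. You stay on the time side: restriction norm bounded by the trivial extension, the equivalence \eqref{HbEquiv}, the It\^o isometry for increments, and convergence of the Slobodeckij kernel integrals, with $b<1/2$ entering as the finiteness of $\iint\Abs{t-r}^{-1-2b}$ over the relevant regions. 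Your computations (the factor $2\int_0^\sigma\int_\sigma^T\Abs{t-r}^{-1-2b}\,dt\,dr\le c_b\,\sigma^{1-2b}$, the cross terms $t^{-2b}+(T-t)^{-2b}$, Parseval to recover $\norm{M(\mathbf u(\sigma))}_{\mathcal L_2(K,H^{s})}^{2}$) are correct, and the measurability points you defer are of the same nature as those the paper handles by constructing the $L^2_\xi$-valued It\^o integral; your method also meshes naturally with the modified norm machinery of Section \ref{Mod-B-norm}.

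The one discrepancy is the $T$-dependence: you obtain a constant of size $C_b\left(T+T^{1-2b}\right)$, while the lemma asserts a constant independent of $T$, so strictly your proposal proves a weaker statement than the one written. However, the $T$-free claim is not attainable: taking $\mathbf u(s)\equiv\mathbf f$ deterministic with $\mathbf M(\mathbf f)\neq0$, the embedding \eqref{Bourgain-5} and the It\^o isometry give that the left side of \eqref{Ito-X} is at least $\tfrac{T^2}{2}\norm{\mathbf M(\mathbf f)}_{\mathcal L_2(K,\mathbf H^{\mathbf s})}^{2}$, whereas the right side is $CT\norm{\mathbf M(\mathbf f)}_{\mathcal L_2(K,\mathbf H^{\mathbf s})}^{2}$, forcing $C\gtrsim T$. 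The paper's claim of $T$-uniformity rests on the inequality $\Abs{\int_s^Te^{-it\tau}\,dt}\le C\japanese{\tau}^{-1}$ with $C$ independent of $s,T$, which fails for $\Abs{\tau}\lesssim(T-s)^{-1}$, where the integral has size $T-s$; replacing it by $\min\left(T-s,2/\Abs{\tau}\right)$, the frequency-side argument yields the same $C_b(1+T)$ growth as yours. So your constant is the correct strength, and it is what is actually needed: in Corollary \ref{X-cor} and in the iteration the lemma is invoked only on intervals of length at most one, and there one should either translate time or use that the integrand vanishes before time $S$ (so the effective horizon is $T-S\le1$) to keep the constant uniform in $S$; with that observation your $T$-dependent version supports all subsequent uses.
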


\begin{proof}
Set
\[
  \mathbf I(t) =  \mathbf  \Lambda^{\mathbf s} \int_0^t \mathbf S(-s) \mathbf M(\mathbf u(s)) \, dW(s)
  = \int_0^t \mathbf \Lambda^{\mathbf s} \mathbf S(-s) \mathbf M(\mathbf u(s)) \, dW(s),
\]
where the operator
\[
  \mathbf f \mapsto \mathbf \Lambda^{\mathbf s} \mathbf f = \begin{pmatrix} \japanese{D}^{s_1} f_1 \\ \vdots \\ \japanese{D}^{s_n} f_n \end{pmatrix}
\]
is an isometry of $\mathbf H^{\mathbf s}$ onto $L^2_x = L^2 \left(\R^d,\C^n\right)$. Thus $\mathbf I(t) \in L^2\left(\Omega,L^2_x \right)$
and the left side of \eqref{Ito-X} equals
\begin{equation}\label{Ito-X-1}
  \mathbb E \left( \int_{\R^d} \norm{ \widehat{\mathbf I}(t,\xi) }_{\mathbf H^{b}(0,T)}^2 \, d\xi \right),
\end{equation}
where
\[
  \widehat{\mathbf I}(t) = \mathcal F \mathbf I(t) =
  \int_0^t \mathcal F  \mathbf\Lambda^{\mathbf s} \mathbf S(-s) \mathbf M(\mathbf u(s)) \, dW(s)
  \in L^2\left(\Omega,L^2_\xi \right)
\]
and $\mathcal F \colon L^2_x \to L^2_\xi$ is the Fourier transform in $x$. Using repeatedly \eqref{HScomposition}, we see that
\begin{multline}\label{Ito-X-2}
  \norm{\mathcal F  \mathbf\Lambda^{\mathbf s} \mathbf S(-s) \mathbf M(\mathbf u(s))}_{\mathcal L_2 \left(K,L^2_\xi \right)}
  \le
  (2\pi)^{d/2} \norm{\mathbf\Lambda^{\mathbf s} \mathbf S(-s) \mathbf M(\mathbf u(s))}_{\mathcal L_2(K,L^2_x)}
  \\
  \le
  \norm{\mathbf S(-s) \mathbf M(\mathbf u(s))}_{\mathcal L_2(K,\mathbf H^{\mathbf s})}
  =
  \norm{\mathbf M(\mathbf u(s))}_{\mathcal L_2(K,\mathbf H^{\mathbf s})}
  .
\end{multline}
Thus the It\^o isometry gives, for $0 \le t \le T$,
\[
  \mathbb E \left( \norm{\widehat{\mathbf I}(t,\xi)}_{L^2_\xi}^2 \right)
  \le
  C \mathbb E \left( \int_0^t \norm{\mathbf M(\mathbf u(s))}_{\mathcal L_2(K,\mathbf H^{\mathbf s})}^2 \, ds \right).
\]
Integrating this over $0 \le t \le T$ and using Tonelli's theorem gives
\[
  \int_\Omega \int_{\R^d} \int_0^T \Abs{\widehat{\mathbf I}(t,\xi,\omega)}^2 \, dt \, d\xi \, d\mathbb P(\omega)
  \le
  C T\mathbb E \left( \int_0^T \norm{\mathbf M(\mathbf u(s))}_{\mathcal L_2(K,\mathbf H^{\mathbf s})}^2 \, ds \right),
\]
implying that
\[
  \int_0^T \Abs{\widehat{\mathbf I}(t,\xi,\omega)}^2 \, dt < \infty
\]
for a.e.~$(\xi,\omega)$. So extending $\widehat{\mathbf I}(t)$ by zero outside the interval $(0,T)$, its Fourier transform with respect to $t$ is well defined:
\[
  \widetilde{\mathbf I}(\tau,\xi,\omega)
  =
  \int_0^T e^{-it\tau} \, \widehat{\mathbf I}(t,\xi,\omega) \, dt,
\]
and
\begin{equation}\label{Ito-X-3}
  \norm{ \widehat{\mathbf I}(t,\xi) }_{H^{b}(0,T)}^2
  \le
  \sum_{j=1}^n \int_\R \japanese{\tau}^{2b} \Abs{\widetilde{I}_j(\tau,\xi)}^2 \, d\tau.
\end{equation}
Now we calculate
\begin{multline*}
  \widetilde{\mathbf I}(\tau,\xi)
  =
  \int_0^T e^{-it\tau} \, \widehat{\mathbf I}(t,\xi) \, dt
  =
  \int_0^T e^{-it\tau} \left( \int_0^t \mathcal F  \mathbf\Lambda^{\mathbf s} \mathbf S(-s) \mathbf M(\mathbf u(s)) \, dW(s) \right) \, dt
  \\
  =
  \int_0^T \left( \int_s^T e^{-it\tau} \, dt \right)
  \mathcal F  \mathbf\Lambda^{\mathbf s} \mathbf S(-s) \mathbf M(\mathbf u(s)) \, dW(s),
\end{multline*}
where we used the stochastic Fubini's theorem (see \cite{Gawarecki_Mandrekar}). This is justified on account of the bound \eqref{Ito-X-2}. Combining that bound with
\[
  \Abs{\int_s^T e^{-it\tau} \, dt} \le C\japanese{\tau}^{-1},
\]
where $C$ is independent of $s$ and $T$, we obtain
\[
  \norm{ \left( \int_s^T e^{-it\tau} \, dt \right)
  \mathcal F  \mathbf\Lambda^{\mathbf s} \mathbf S(-s) \mathbf M(\mathbf u(s))}_{\mathcal L_2(K,L^2_\xi)}
  \le C \japanese{\tau}^{-1}
  \norm{\mathbf M(\mathbf u(s))}_{\mathcal L_2(K,\mathbf H^{\mathbf s})},
\]
hence by It\^o's isometry
\[
  \mathbb E \left( \norm{\widetilde{I}_j(\tau,\xi)}_{L^2_\xi}^2 \right)
  \le
  C \japanese{\tau}^{-2} \mathbb E \left( \int_0^T \norm{\mathbf M(\mathbf u(s))}_{\mathcal L_2(K,\mathbf H^{\mathbf s})}^2 \, ds \right)
\]
for $1 \le j \le n$. Multiplying both sides by $\japanese{\tau}^{2b}$, integrating in $\tau$, and using Tonelli's theorem and \eqref{Ito-X-1} and \eqref{Ito-X-3}, we conclude that the left side of \eqref{Ito-X} equals
\begin{align*}
  \mathbb E \left( \int_{\R^d} \norm{ \widehat{\mathbf I}(t,\xi) }_{\mathbf H^{b}(0,T)}^2 \, d\xi \right)
  &\le
  C \sum_{j=1}^n \int_\R \japanese{\tau}^{2b} \mathbb E \left( \int_{\R^d} \Abs{\widetilde{I}_j(\tau,\xi)}^2 \, d\xi \right) \, d\tau 
  \\
  &\le
  C \left( \sum_{j=1}^n \int_\R \japanese{\tau}^{2b-2} \, d\tau \right)
  \mathbb E \left( \int_0^T \norm{\mathbf M(\mathbf u(s))}_{\mathcal L_2(K,\mathbf H^{\mathbf s})}^2 \, ds \right),
\end{align*}
completing the proof of the lemma.
\end{proof}

Combining \eqref{Maximal-bound}, the last lemma and the embedding \eqref{Bourgain-3}, we obtain the following key fact.

\begin{corollary}\label{X-cor} Let $0 \le S < T \le S+1$. Assume that $\mathbf M$ satisfies \eqref{M1}, and that $0 \le b < 1/2$. Then we have the bound
\begin{equation}\label{X-cor-bound}
  \norm{ \int_S^t \mathbf S(t-s) \mathbf M(\mathbf u(s)) \, dW(s) }_{Z^{\mathbf s,b}(S,T)}
  \le
  C (T-S)^{b}
  \norm{ \mathbf u }_{L^2\left( \Omega, \mathbf X^{\mathbf s,b}(S,T) \right)}
\end{equation}
for all $\mathbf u \in Z^{\mathbf s,b}(S,T)$,
where the constant $C$ depends on $b$, but not on $T$ or $S$.
\end{corollary}

\begin{proof}
Extend $\mathbf u$ by zero outside $S < t < T$. Then $\mathbf u$ belongs to $L^2 \left( [0,T] \times \Omega, \mathbf H^{\mathbf s} \right)$ (see Remark \ref{Z-remark}), and applying \eqref{Maximal-bound}, \eqref{Hilbert-Schmidt-integral-bound} and Lemma \ref{X-lemma} we get
\[
  \norm{ \int_S^t \mathbf S(t-s) \mathbf M(\mathbf u(s)) \, dW(s) }_{Z^{\mathbf s,b}(S,T)}
  \le
  C
  \norm{ \mathbf u }_{ L^2\left( [S,T] \times \Omega,\mathbf H^{\mathbf s} \right)}.
\]
Applying now  \eqref{Bourgain-3}, we obtain \eqref{X-cor-bound}.
\end{proof}

\subsection{Properties of $\mathbf N$ and $\mathbf L$}\label{N-properties}

Recalling that $-1/2 < b' < 0$, choose $0 < \varepsilon < b'+1/2$ and set
\[
  B := b'+1-\varepsilon > \frac12.
\]
Let $0 \le S < T \le S+1$ and assume that $\mathbf u,\mathbf v \in \mathbf X^{\mathbf s,b}(S,T)$. Applying \eqref{Bourgain-4}, \eqref{Bourgain-2b}, \eqref{Bourgain-3} and the assumption \eqref{N2} we get
\begin{multline}\label{N-dispersive-difference-bound}
  \sup_{S \le t \le T} \norm{\int_S^t \mathbf S(t-s) \left[ \mathbf N(\mathbf u(s)) - \mathbf N(\mathbf v(s)) \right] \, ds}_{\mathbf H^{\mathbf s}}
  \le C
  \norm{\int_S^t \mathbf S(t-s) \left[ \mathbf N(\mathbf u(s)) - \mathbf N(\mathbf v(s)) \right] \, ds}_{\mathbf X^{\mathbf s, B}(S,T)}
  \\
  \le
  C
  \norm{\mathbf N(\mathbf u) - \mathbf N(\mathbf v)}_{\mathbf X^{\mathbf s, B-1}(S,T)}
  \le
  C (T-S)^{\varepsilon} \norm{\mathbf N(\mathbf u) - \mathbf N(\mathbf v)}_{\mathbf X^{\mathbf s, b'}(S,T)}
  \\
  \le
  C (T-S)^{\varepsilon} \left( 1 + \norm{\mathbf u}_{\mathbf X^{\mathbf s,b}(S,T)}
  +
  \norm{\mathbf v}_{\mathbf X^{\mathbf s, b}(S,T)} \right)^{p-1}
  \norm{\mathbf u - \mathbf v}_{\mathbf X^{\mathbf s, b}(S,T)}
\end{multline}
and (taking $\mathbf v = \mathbf 0$ and using the assumption \eqref{N0})
\begin{equation}\label{N-continuity}
  t \mapsto \int_S^t \mathbf S(t-s) \mathbf N(\mathbf u(s)) \, ds
  \quad
  \text{belongs to $C([S,T],\mathbf H^{\mathbf s})$}.
\end{equation}
Note that in \eqref{N-dispersive-difference-bound} the constants $C$ depend only on $b$, $b'$ and $B$. Moreover, we claim that
\begin{equation}\label{N-adapted}
  \text{$\mathbf u \in \mathbf X^{\mathbf s,b}(S,T) \cap C([S,T],\mathbf H^{\mathbf s})$ is $\mathbf H^{\mathbf s}$-adapted}
  \implies
  \text{$t \mapsto \int_S^t \mathbf S(t-s) \mathbf N(\mathbf u(s)) \, ds$ is $\mathbf H^{\mathbf s}$-adapted}.
\end{equation}
To see this, let $\mathbf s'$ be as in \eqref{N3} and use mollification in the $x$-variable to obtain a sequence $\mathbf u_m$ such that
\begin{itemize}
\item $\mathbf u_m \in \mathbf X^{\mathbf s',b}(S,T) \cap C([S,T],\mathbf H^{\mathbf s'})$,
\item $\mathbf u_m$ is $\mathbf H^{\mathbf s'}$-adapted,
\item $\mathbf u_m \to \mathbf u$ in $\mathbf X^{\mathbf s,b}(S,T) \cap C([S,T],\mathbf H^{\mathbf s})$ as $m \to \infty$.
\end{itemize}
Then by the assumption \eqref{N3}, $\mathbf N(\mathbf u_m) \in C([S,T],\mathbf H^{\mathbf s'})$ is adapted, and therefore progressively measurable, hence the $\mathbf H^{\mathbf s'}$-valued integral
\[
  I_m(t) = \int_S^t \mathbf S(t-s) \mathbf N(\mathbf u_m(s)) \, ds
\]
exists and is adapted. Moreover, by \eqref{N-dispersive-difference-bound}, $I_m$ converges, as $m \to \infty$,  in $C([S,T],\mathbf H^{\mathbf s})$ to the integral appearing in \eqref{N-adapted}, thereby proving that the latter is adapted. Finally, we note that \eqref{N-dispersive-difference-bound}--\eqref{N-adapted} of course also hold for $\mathbf L$, but then with $p=1$ in \eqref{N-dispersive-difference-bound}.

\subsection{Existence for the truncated problem}\label{Section-truncated-existence}

We now prove Theorem \ref{Existence-Theorem-Truncated}. To simplify the notation, instead of $\mathbf u^R$ we simply write $\mathbf u$.

Note that \eqref{PDEsolution-truncated}, with the cutoff given by \eqref{vector-cutoff}, is equivalent to
\begin{multline}\label{PDEsolution-truncated-split}
  \mathbf u(t) = \mathbf S(t-S) \mathbf u(S) + i \int_S^t \mathbf S(t-s) \mathbf N\left( \Theta_R^{\mathbf u}(s)\mathbf u(s) \right) \, ds
  + i \int_S^t \mathbf S(t-s) \mathbf L\left( \mathbf u(s) \right) \, ds
  \\
  + i \int_S^t \mathbf S(t-s) \mathbf M\left( \mathbf u(s) \right) \, dW(s)
\end{multline}
for $0 \le S \le t \le T$. By Proposition \ref{MainCutoffLemma}, for $\mathbf u,\mathbf v \in Z^{\mathbf s,b}(0,T)$ we have, for $S \in [0,T]$,
\begin{equation}\label{Cutoff-fact1}
  \norm{\Theta_R^{\mathbf u}(t)\mathbf u(t)}_{\mathbf X^{\mathbf s,b}(0,S)}
  \le C \sqrt{R},
\end{equation}
\begin{equation}\label{Cutoff-fact2}
  \norm{\Theta_R^{\mathbf u}(t)\mathbf u(t) - \Theta_R^{\mathbf v}(t)\mathbf v(t)}_{\mathbf X^{\mathbf s,b}(0,S)}
  \le C \norm{\mathbf u - \mathbf v}_{\mathbf X^{\mathbf s,b}(0,S)}
\end{equation}
where the constant depends on $b$ and $T$. By Lemma \ref{X-adapted-lemma}, the cutoffs $\Theta_R^{\mathbf u}(t)$ and $\Theta_R^{\mathbf v}(t)$ are adapted.

Now fix a target time $T > 0$, and divide $[0,T]$ into $N$ subintervals of length $\delta=T/N$, where $N$ will be chosen large enough depending on $R$ and $T$. On each subinterval $[0,\delta], [\delta,2\delta],\dots$ we prove existence by a contraction argument in the $Z$-space.

Proceeding inductively, let us assume that for some $0 \le j < N$ we have proved existence up to time $S=j\delta$, so $\mathbf u \in Z^{\mathbf s, b}(0,S)$ (for $S=0$ this just means that $\mathbf u_0 \in L^2(\Omega,\mathbf H^{\mathbf s})$). Set $S' = S + \delta$. Then for $t \in [S,S']$ we must solve
\begin{multline}\label{induction1-truncated}
  \mathbf v(t) = \mathbf S(t-S) \mathbf u(S)
  + i \int_S^t \mathbf S(t-\sigma) \mathbf N\left(\Theta_R^{[\mathbf u,\mathbf v]}(\sigma)\mathbf v(\sigma)\right) \, d\sigma
  + i \int_S^t \mathbf S(t-\sigma) \mathbf L(\mathbf v(\sigma)) \, d\sigma
  \\
  + i \int_S^t \mathbf S(t-\sigma) \mathbf M(\mathbf v(\sigma)) \, dW(\sigma),
\end{multline}
where $[\mathbf u,\mathbf v]$ is defined as in Lemma \ref{Concatenation-lemma}. If we can show that \eqref{induction1-truncated} 
has a unique solution $\mathbf v \in Z^{\mathbf s, b}(S,S')$, then by Lemma \ref{Concatenation-lemma} we have $[\mathbf u,\mathbf v] \in Z^{\mathbf s, b}(0,S')$. Renaming the latter function $\mathbf u$, we have then extended the solution to $[0,S']$, and by induction this proves Theorem \ref{Existence-Theorem-Truncated}.

To solve \eqref{induction1-truncated} on $[S,S'] = [S,S+\delta]$, we set up a contraction argument in $Z^{\mathbf s, b}(S,S')$ for the operator
\[
  \mathfrak T(\mathbf v)(t)
  =
  \text{r.h.s.}\eqref{induction1-truncated}
  =: \mathfrak T_0(t) + \mathfrak T_1(\mathbf v)(t) + \mathfrak T_2(\mathbf v)(t) + \mathfrak T_3(\mathbf v)(t),
  \quad \text{for $S \le t \le S'$}.
\]
So now let $\mathbf v, \mathbf w \in Z^{\mathbf s, b}(S,S')$. We will prove that
\begin{equation}\label{Contraction-map-1}
  \mathfrak T(\mathbf v) \in Z^{\mathbf s,b}(S,S')
\end{equation}
and
\begin{equation}\label{Contraction-map-2}
  \norm{\mathfrak T(\mathbf v) - \mathfrak T(\mathbf w)}_{Z^{\mathbf s, b}(S,S')} \le \frac12 \norm{\mathbf v-\mathbf w}_{Z^{\mathbf s, b}(S,S')}
\end{equation}
provided that $S'-S = \delta > 0$ is taken sufficiently small, depending on $R$ and $T$. Thus $\mathfrak T$ is a contraction on $Z^{\mathbf s,b}(S,S')$, so it has a unique fixed point $\mathbf v$ in that space.

We now prove \eqref{Contraction-map-1} and \eqref{Contraction-map-2} for each of the terms constituting $\mathfrak T$.

\subsubsection{The term $\mathfrak T_0$}

By the induction hypothesis, $\mathbf u(S)$ belongs to $L^2\left(\Omega,\mathbf H^{\mathbf s}\right)$
and is $\mathcal F_{S}$-measurable, hence the same is true of $\mathfrak T_0(t) = \mathbf S(t-S) \mathbf u(S)$ for $t \ge S$. By \eqref{Bourgain-1b} and \eqref{Bourgain-4},
\[
  \sup_{t \in [S,S']} \norm{\mathbf S(t-S) \mathbf u(S)}_{\mathbf H^{\mathbf s}}
  \le C \norm{\mathbf S(t-S) \mathbf u(S)}_{\mathbf X^{\mathbf s,1}(S,S')}
  \le C \norm{\mathbf u(S)}_{\mathbf H^{\mathbf s}},
\]
implying $\mathfrak T_0 \in Z^{\mathbf s,1}(S,S')$. This verifies \eqref{Contraction-map-1} for the term $\mathfrak T_0$.

\subsubsection{The term $\mathfrak T_1$}

Applying \eqref{N-dispersive-difference-bound} on $[S,S']$ to the difference
\[
  \mathfrak T_1(\mathbf v)  - \mathfrak T_1(\mathbf w)
  =
  i \int_S^t \mathbf S(t-\sigma) 
  \left[
  \mathbf N\left( \Theta_R^{[\mathbf u,\mathbf v]}(\sigma)\mathbf v(\sigma) \right) 
  -
  \mathbf N\left( \Theta_R^{[\mathbf u,\mathbf w]}(\sigma)\mathbf w(\sigma) \right)
  \right]
  \, d\sigma
\]
yields
\begin{multline*}
  \sup_{t \in [S,S']} \norm{\mathfrak T_1(\mathbf v)(t)  - \mathfrak T_1(\mathbf w)(t)}_{ \mathbf H^{\mathbf s}}
  \le
  C\norm{\mathfrak T_1(\mathbf v)  - \mathfrak T_1(\mathbf w)}_{ \mathbf X^{\mathbf s,B}(S,S') }
  \\
  \le
  C (S'-S)^{\varepsilon}
  \left( 1 + \norm{\Theta_R^{[\mathbf u,\mathbf v]}\mathbf v}_{\mathbf X^{\mathbf s,b}(S,S')}
  +
  \norm{\Theta_R^{[\mathbf u,\mathbf w]}\mathbf w}_{\mathbf X^{\mathbf s,b}(S,S')} \right)^{p-1}
  \norm{\Theta_R^{[\mathbf u,\mathbf v]}\mathbf v -  \Theta_R^{[\mathbf u,\mathbf w]}\mathbf w}_{\mathbf X^{\mathbf s,b}(S,S')}.
\end{multline*}
But by \eqref{Cutoff-fact1},
\[
  \norm{\Theta_R^{[\mathbf u,\mathbf v]}\mathbf v}_{\mathbf X^{\mathbf s,b}(S,S')}
  \le
  \norm{\Theta_R^{[\mathbf u,\mathbf v]}[\mathbf u,\mathbf v]}_{\mathbf X^{\mathbf s,b}(0,S')}
  \le C\sqrt{R},
\]
where $C$ depends on $T$ and $b$. The same holds with $\mathbf w$ instead of $\mathbf v$. Similarly, \eqref{Cutoff-fact2} gives
\begin{multline}\label{Cutoff-fact3}
  \norm{\Theta_R^{[\mathbf u,\mathbf v]}\mathbf v - \Theta_R^{[\mathbf u,\mathbf w]}\mathbf w}_{\mathbf X^{\mathbf s,b}(S,S')}
  \le
  \norm{\Theta_R^{[\mathbf u,\mathbf v]}[\mathbf u,\mathbf v] - \Theta_R^{[\mathbf u,\mathbf w]}[\mathbf u,\mathbf w]}_{\mathbf X^{\mathbf s,b}(0,S')}
  \\
  \le
  C\norm{[\mathbf u,\mathbf v] - [\mathbf u,\mathbf w]}_{\mathbf X^{\mathbf s,b}(0,S')}
  \le
  C\norm{\mathbf v-\mathbf w}_{\mathbf X^{\mathbf s,b}(S,S')},
\end{multline}
where we used Lemma \ref{Concatenation-lemma} in the last step. Taking the $L^2(\Omega)$-norm we therefore obtain
\[
  \norm{\mathfrak T_1(\mathbf v)  - \mathfrak T_1(\mathbf w)}_{ Z^{\mathbf s,b}(S,S') }
  \le
  C (S'-S)^{\varepsilon}
  \left( 1+\sqrt{R} \right)^{p-1} \norm{\mathbf v-\mathbf w}_{L^2(\Omega,\mathbf X^{\mathbf s,b}(S,S'))},
\]
where $C$ depends on $T$ and $b$. Taking $\mathbf w=\mathbf 0$, the bounds above also imply that $\mathfrak T_1(\mathbf v)$ belongs to $Z^{\mathbf s,b}(S,S')$, by \eqref{N-continuity} and \eqref{N-adapted}.

\subsubsection{The term $\mathfrak T_2$} The arguments used for $\mathfrak T_1$ apply also here, but simplify since we take $p=1$ and there is no cutoff.  

\subsubsection{The term $\mathfrak T_3$}

By Remark \ref{Z-remark}, $\mathbf v \in L^2 \left( [S,S'] \times \Omega , \mathbf H^{\mathbf s} \right)$. Extending $\mathbf v$ by zero outside $[S,S']$, the considerations in Section \ref{M-properties}, and in particular \eqref{Maximal-bound}, \eqref{Hilbert-Schmidt-integral-bound} and Lemma \ref{X-lemma}, show that $\mathfrak T_3(\mathbf v)$ belongs to $Z^{\mathbf s,b}(S,S')$. 
Moreover, by Corollary \ref{X-cor} and the linearity of $\mathbf M$ we have
\[
  \norm{ \mathfrak T_3(\mathbf v) - \mathfrak T_2(\mathbf w) }_{Z^{\mathbf s,b}(S,S')}
  \le
  C (S'-S)^{b}
  \norm{ \mathbf v - \mathbf w }_{L^2\left( \Omega, \mathbf X^{\mathbf s, b}(S,S') \right)},
\]
which proves \eqref{Contraction-map-2} for the term $\mathfrak T_3$, if $\delta = S'-S$ is small enough. This concludes the proof of \eqref{Contraction-map-1} and \eqref{Contraction-map-2}.

\subsubsection{The bounds \eqref{Bound-truncated-solution} and \eqref{Bound-truncated-difference}} Taking $\mathbf w = \mathbf 0$, the above bounds show that the fixed point $\mathbf v$ satisfies
\[
  \norm{\mathbf v}_{Z^{\mathbf s,b}(S,S')}
  \le
  C
  \norm{\mathbf u(S)}_{L^2(\Omega,\mathbf H^{\mathbf s})},
\]
where $C$ is an absolute constant. By induction it follows that the solution $\mathbf u \in Z^{\mathbf s,b}(0,T)$ satisfies 
\[
  \norm{\mathbf u}_{Z^{\mathbf s,b}(0,T)}
  \le
  C^N
  \norm{\mathbf u_0}_{L^2(\Omega,\mathbf H^{\mathbf s})},
\]
where $N = T/\delta$ depends on $T$ and $R$. This proves \eqref{Bound-truncated-solution}, and the same argument gives \eqref{Bound-truncated-difference} (let the $\mathbf w$ above be the fixed point corresponding to the solution $\mathbf U$ with data $\mathbf U_0$).

This concludes the proof of Theorem \ref{Existence-Theorem-Truncated}.

\subsection{Extension}\label{Proof-Extension}

Here we prove Theorem \ref{Extension-theorem}.

Assume that $0 \le S < T$ and that we have found $\mathbf U$, with the desired properties, on $[0,S]$ (for $S=0$ this just means that $\mathbf u_0 \in L^2(\Omega,\mathbf H^{\mathbf s})$). Set $S'=S+\delta$, where $\delta > 0$ will be chosen sufficiently small, depending on $T$ and $R$. For $t \in [S,S']$ we must then solve
\begin{equation}\label{induction1}
  \mathbf V(t) = \mathbf S(t-S) \mathbf U(S) + i \int_{S \wedge \tau_R}^{t \wedge \tau_R} \mathbf S(t-\sigma) \left[ \mathbf N(\mathbf u(s)) + \mathbf L(\mathbf u(s)) \right] \, ds
  + i \int_{S}^t \mathbf S(t-s) \mathbf M(\mathbf V(s)) \, dW(s).
\end{equation}
The solution $\mathbf V$ should be in $Z^{\mathbf s,b}(S,S')$, it should satisfy, almost surely,
\begin{equation}\label{induction2}
  \mathbf V(t) = \mathbf u(t) \quad \text{for $S \le t \le S' \wedge \tau_R$},
\end{equation}
and it should be the only solution with these properties.

For $\mathbf V \in Z^{\mathbf s, b}(S,S')$ define
\[
  \mathbf \Phi(\mathbf V)(t)
  = \mathbf S(t-S) \mathbf U(S)
  + i \int_{S \wedge \tau_R}^{t \wedge \tau_R} \mathbf S(t-s) \left[ \mathbf N(\mathbf u(s)) + \mathbf L(\mathbf u(s)) \right] \, ds
  + i \int_{S}^t \mathbf S(t-s) \mathbf M([\mathbf u,\mathbf V](s)) \, dW(s),
\]
where
\[
  [\mathbf u,\mathbf V](t)
  =
  \begin{cases}
  \mathbf u(t) &\text{for $0 \le t \le S' \wedge \tau_R$}
  \\
  \mathbf V(t) &\text{for $S' \wedge \tau_R < t \le S'$}.
  \end{cases}
\]
Now observe that, almost surely,
\begin{equation}\label{Phi-1}
  S \le t \le S' \wedge \tau_R \implies \mathbf \Phi(\mathbf V)(t) = \mathbf u(t),
\end{equation}
since for such $t$ we have $[\mathbf u,\mathbf V](s) = \mathbf u(s)$ for $S \le s \le t$, and by \eqref{PDEsolution},
\[
  \mathbf u(t) = \mathbf S(t-S) \mathbf u(S) 
  + i \int_{S}^t \mathbf S(t-s) \left[ \mathbf N(\mathbf u(s)) + \mathbf L(\mathbf u(s)) \right] \, ds
  + i \int_{S}^t \mathbf S(t-s) \mathbf M(\mathbf u(s)) \, dW(s),
\]
which equals $\mathbf \Phi(\mathbf V)(t)$ since $S \le t \le \tau_R$ and $\mathbf u(S) = \mathbf U(S)$.

So if $\mathbf V$ is a fixed point of $\mathbf\Phi$, then by \eqref{Phi-1} we have, almost surely, $[\mathbf u,\mathbf V] = \mathbf V$ on $[S,S']$, hence $\mathbf V$ satisfies \eqref{induction1} and \eqref{induction2}. Conversely, if $\mathbf V$ satisfies \eqref{induction1} and \eqref{induction2}, it is clearly a fixed point. Thus it only remains to prove that $\mathbf \Phi$ has a unique fixed point in $Z^{\mathbf s,b}(S,S')$. But this follows as in the proof of Theorem \ref{Existence-Theorem-Truncated}, if $\delta = S'-S > 0$ is small enough. This concludes the proof of Theorem \ref{Extension-theorem}.

\subsection{Uniqueness}\label{Proof-Uniqueness}

Here we prove Theorem \ref{Uniqueness-theorem}.

Fix $R > 0$ and $T > 0$ and define the conditional stopping time $\tau_R$ as in \eqref{f-stop}, \eqref{R-stop}. 
Similarly define $\tau_R'$ for $\mathbf v$.
It is enough to prove that, almost surely, $\mathbf u(t) = \mathbf v(t)$ for $0 \le t \le \min(T,\mu)$,
where $\mu = \min(\tau_R,\tau_R')$.

Note that if $\mu_R^{\mathbf u}$ is the conditional stopping time
defined by the pair $(\mu, \mathbf u)$,
then
\(
    \mu_R^{\mathbf u} = \mu
    .
\)
Similarly,
\(
    \mu_R^{\mathbf v} = \mu
    .
\)
Therefore, by Theorem \ref{Extension-theorem}
there exist $\mathbf U, \mathbf V \in Z^{\mathbf s,b}(0,T)$ such that, almost surely, $\mathbf U(t) = \mathbf u(t)$ and $\mathbf V(t) = \mathbf v(t)$ for $0 \le t \le \min(T,\mu)$, and
\begin{align*}
  \mathbf U(t) &= \mathbf S(t) \mathbf u_0 + i \int_0^{t \wedge \mu} \mathbf S(t-s) \left[ \mathbf N(\mathbf U(s)) + \mathbf L(\mathbf U(s)) \right] \, ds
  + i \int_0^t \mathbf S(t-s) \mathbf M(\mathbf U(s)) \, dW(s),
  \\
  \mathbf V(t) &= \mathbf S(t) \mathbf u_0 + i \int_0^{t \wedge \mu} \mathbf S(t-s) \left[ \mathbf N(\mathbf V(s)) + \mathbf L(\mathbf V(s)) \right] \, ds
  + i \int_0^t \mathbf S(t-s) \mathbf M(\mathbf V(s)) \, dW(s).
\end{align*}
for $0 \le t \le T$.

Then it is enough to prove that, almost surely, $\mathbf U(t) = \mathbf V(t)$ for $0 \le t \le T$. We know this holds for $t=0$. As in the proof of Theorem \ref{Existence-Theorem-Truncated} we now cut $[0,T]$ into short intervals of length $\delta$ and proceed inductively. Assume that $0 \le S < T$ and that we have proved that, almost surely, $\mathbf U(t) = \mathbf V(t)$ for $t \in [0,S]$. Then we prove that this is true also on $[S,S']$, where $S'=S+\delta$.

To this end, write
\[
  \mathbf U(t) - \mathbf V(t)
  =
  \Delta_1(t) + \Delta_2(t) + \Delta_3(t)
  \quad \text{for $S \le t \le T$},
\]
where
\begin{align*}
  \Delta_1(t)
  &=
  i \int_{S \wedge \mu}^{t \wedge \mu} \mathbf S(t-s) \left[ \mathbf N(\mathbf U(s)) - \mathbf N(\mathbf V(s)) \right] \, ds,
  \\
  \Delta_2(t)
  &=
  i \int_{S \wedge \mu}^{t \wedge \mu} \mathbf S(t-s) \left[ \mathbf L(\mathbf U(s)) - \mathbf L(\mathbf V(s)) \right] \, ds,
  \\
  \Delta_3(t)
  &=
  i \int_S^t \mathbf S(t-s) \left[ \mathbf M(\mathbf U(s)) - \mathbf M(\mathbf V(s)) \right] \, dW(s).
\end{align*}
We are going to first estimate $\Delta_1(t)$ pointwise in $\omega$, and then take the $L^2$ norm with respect to $\omega$. So for the pointwise estimate we may restrict to $\omega$ at which $\mathbf U(t) = \mathbf u(t)$ and $\mathbf V(t) = \mathbf v(t)$ for $0 \le t \le \min(T,\mu)$. We may also assume $S \le \mu$, as otherwise the integral $\Delta_1(t)$ vanishes. Write
\[
  \Delta_1(t)
  =
  i \int_S^{t} \mathbf S(t-s) \left[ \mathbf N(\mathbb 1_{s \le \mu} \mathbf U(s)) - \mathbf N(\mathbb 1_{s \le \mu} \mathbf V(s)) \right] \, ds.
\]
Let $0 < \delta \le 1$. Observe that
\[
  \norm{\mathbb 1_{t \le \mu} \mathbf U}_{\mathbf X^{\mathbf s,b}(S,S')}
  \le
  C \norm{\mathbf u}_{\mathbf X^{\mathbf s,b}(0,\mu)}
  \le
  C \norm{\mathbf u}_{\widetilde{\mathbf X}^{\mathbf s,b}(0,\mu)}
  \le C\sqrt{R},
\]
since $\mu \le \tau_R$. Here $C$ depends on $T$ and $b$. The same holds for $\mathbf V$, since $\mu \le \tau_R'$. Thus by \eqref{N-dispersive-difference-bound} we get the bound, pointwise a.e.~in $\omega$,
\[
  \norm{\Delta_1}_{\mathbf X^{\mathbf s,b}(S,S')}
  \le
  C \left( 1+\sqrt{R} \right)^{p-1} \delta^{\varepsilon}
  \norm{\mathbf U - \mathbf V}_{\mathbf X^{\mathbf s,b}(S,S')},
\]
which we then square and integrate with respect to $\omega$. The same estimate holds for $\Delta_2$, but with $p=1$. Finally, we bound $\Delta_3$. By Corollary \ref{X-cor},
\[
  \mathbb E \left( \norm{\Delta_3}_{\mathbf X^{\mathbf s,b}(S,S')}^2 \right)
  \le
  C \delta^{2b}
  \mathbb E \left( \norm{\mathbf U - \mathbf V}_{\mathbf X^{\mathbf s,b}(S,S')}^2 \right).
\]
Combining the above bounds, we obtain
\[
  \mathbb E \left( \norm{\mathbf U - \mathbf V}_{\mathbf X^{\mathbf s,b}(S,S')}^2 \right)
  \le
  C (1+R)^{p-1} \delta^{2\min(\varepsilon,b)}
  \mathbb E \left( \norm{\mathbf U - \mathbf V}_{\mathbf X^{\mathbf s,b}(S,S')}^2 \right),
\]
where $C$ depends on $T$ and $b$. So for $\delta > 0$ small enough,
\[
  \mathbb E \left( \norm{\mathbf U - \mathbf V}_{\mathbf X^{\mathbf s,b}(S,S')}^2 \right) = 0,
\]
hence, almost surely, $\mathbf U(t)=\mathbf V(t)$ for $S \le t \le S'$. This concludes the proof of Theorem \ref{Uniqueness-theorem}.

\subsection{Regularisation and It\^o's formula}\label{Regularisation}

For $\mu \ge 1$ let $P_\mu$ be the Fourier multiplier with symbol $\theta_\mu(\xi)$, that is, for $f \in \mathcal S'(\R^d)$,
\[
  \widehat{P_\mu f}(\xi) = \theta\left( \frac{\xi}{\mu} \right) \widehat f(\xi).
\]
We assume that $\supp\theta \subset [-2,2]$,
hence $\widehat{P_\mu f}$ is supported in $[-2\mu,2\mu]$, so $P_\mu f(x)$ is smooth.
We also assume that $\abs{\theta} \le 1$.
Then $P_\mu$ is bounded, with operator norm $\le 1$, on $H^s$ and $X^{s,b}$ for any $s,b$. 
Moreover, $P_\mu$ maps $H^s$ into $H^N$ for arbitrarily large $N$.
Finally, we assume that $\theta = 1$ on $[-1, 1]$,
so that $P_\mu$ converges strongly to the identity operator
in $H^s$ and $X^{s,b}$ as $\mu \to \infty$.

Now consider the following frequency-truncated version of \eqref{PDEsolution-truncated}: 
\begin{multline}\label{PDEsolution-truncated-regularised}
  \mathbf u(t) = P_\mu\left( \mathbf S(t) \mathbf u_0 + i \int_0^t \mathbf S(t-s) \mathbf N\left( \Theta_R^{\mathbf u}(s) P_\mu \mathbf u(s) \right) \, ds
  + i \int_0^t \mathbf S(t-s) \mathbf L\left( P_\mu \mathbf u(s) \right) \, ds \right.
  \\
  \left.
  + i \int_0^t \mathbf S(t-s) \mathbf M\left( P_\mu \mathbf u(s) \right) \, dW(s) \right),
\end{multline}
and let us write $\mathbf u_0^\mu = P_\mu \mathbf u_0$. By the dominated convergence theorem,
\begin{equation}\label{inductive-start-bound}
  \norm{\mathbf u_0^\mu - \mathbf u_0}_{L^2(\Omega,\mathbf H^{\mathbf s})} \to 0 \quad \text{as $\mu \to \infty$}.
\end{equation}

To prove that the solutions of the regularised problem converge, we need the fact that
\begin{equation}\label{Regularity-M}
    \lim_{\mu \to \infty} \norm{(1-P_\mu)\mathbf M(\mathbf f)}_{\mathcal L_2(K,\mathbf H^\mathbf s)}
    =
    0
\end{equation}
for all $\mathbf f \in \mathbf H^{\mathbf s}$.
For any orthonormal basis $\{ e_j \}$ of $K$
we have, for each component $M_i(\mathbf f)$,
%}
%
\begin{equation*}
    \norm{(1-P_\mu) M_i(\mathbf f)}
    _{\mathcal L_2(K,H^{s_i})}^2
    =
    \sum_j
    \norm{(1-P_\mu) M_i(\mathbf f)e_j}
    _{H^{s_i}}^2
    =
    \sum_j
    \int
    \left( 1 - \theta_\mu(\xi) \right)^2 \Abs{\widehat m_{i,j} (\xi)}^2
    \japanese{\xi}^{2s_i}
    d\xi
    ,
\end{equation*}
where $m_{i,j} = M_i(\mathbf f)e_j \in H^{s_i}$ and
\begin{equation*}
    \sum_j
    \int
    \Abs{\widehat m_{i,j} (\xi)}^2
    \japanese{\xi}^{2s_i}
    d\xi
    =
    \norm{M_i(\mathbf f)}
    _{\mathcal L_2(K,H^{s_i})}^2
    < \infty
    .
\end{equation*}
The dominated convergence theorem therefore implies \eqref{Regularity-M}.

We shall prove the following.

\begin{theorem}[Regularised global existence]\label{Existence-Theorem-Truncated-regularised}
Let $R > 0$, $\mathbf s \in \R^n$ and $-1/2 < b' < 0 < b < 1/2$. Assume that \eqref{M1}--\eqref{N3} are satisfied. Assume that $\mathbf u_0 \in L^2(\Omega,\mathbf H^{\mathbf s})$ is $\mathcal F_0$-measurable. Then for all $\mu \ge 1$ and $T > 0$ the regularised problem \eqref{PDEsolution-truncated-regularised} has a unique solution $\mathbf u^\mu \in Z^{\mathbf s, b}(0,T)$, with initial value $\mathbf u_0^\mu$, and $\mathcal F_x \mathbf u^\mu$ is supported in $[-2\mu,2\mu]$. Moreover,
\begin{equation}\label{Bound-truncated-solution-regularised}
  \norm{\mathbf u^\mu}_{Z^{\mathbf s,b}(0,T)} \le C_{T,R,b} \norm{\mathbf u_0}_{L^2(\Omega,\mathbf H^s)},
\end{equation}
where the constant is independent of $\mu$.
Finally, letting $\mathbf u$ be as in Theorem \ref{Existence-Theorem-Truncated} (where it is denoted $\mathbf u^R$),
we have
\begin{equation}\label{convergence-regularised-solution}
  \lim_{\mu \to \infty} \norm{\mathbf u^\mu - \mathbf u}_{Z^{\mathbf s,b}(0,T)} = 0.
\end{equation}
\end{theorem}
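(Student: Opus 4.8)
The plan is to get existence, uniqueness and the uniform bound for $\mathbf u^\mu$ by rerunning the proof of Theorem \ref{Existence-Theorem-Truncated} with $P_\mu$ inserted, and then to prove the convergence \eqref{convergence-regularised-solution} by a direct estimate of the difference $\mathbf e^\mu := \mathbf u^\mu-\mathbf u$ (with $\mathbf u$ the solution of \eqref{PDEsolution-truncated} from Theorem \ref{Existence-Theorem-Truncated}) on short subintervals. The structural facts about $P_\mu$ that make this work are: it has operator norm $\le 1$ on $H^s$ and on every $X^{s,b}_{h(\xi)}$ (its symbol $\theta(\xi/\mu)$ is bounded by $1$ and independent of $\tau$), hence on $\mathbf H^{\mathbf s}$, $\mathbf X^{\mathbf s,b}$, $\mathbf X^{\mathbf s,b'}$ and $\mathcal L_2(K,\mathbf H^{\mathbf s})$; it commutes with $\mathbf S(t)$, with multiplication by the scalar $\Theta_R^{\mathbf u}(t)$, with the Bochner integral and, being bounded linear, with the It\^o integral; and $P_\mu\to I$ strongly on each $H^s$ and $X^{s,b}_{h(\xi)}$ as $\mu\to\infty$. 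With these in hand, the contraction argument of Section \ref{Section-truncated-existence} for \eqref{PDEsolution-truncated-regularised} in $Z^{\mathbf s,b}(0,T)$ goes through on subintervals of the same length $\delta=\delta(T,R,b)$, every occurrence of $P_\mu$ only improving the estimates (the cutoff bounds of Proposition \ref{MainCutoffLemma} being applied to $\Theta_R^{\mathbf u}\mathbf u$, after which $P_\mu$ is pulled out). This produces a unique $\mathbf u^\mu\in Z^{\mathbf s,b}(0,T)$ with $\mathbf u^\mu(0)=P_\mu\mathbf u_0=\mathbf u_0^\mu$ and, with $N=T/\delta$, the bound $\|\mathbf u^\mu\|_{Z^{\mathbf s,b}(0,T)}\le C^N\|\mathbf u_0^\mu\|_{L^2(\Omega,\mathbf H^{\mathbf s})}\le C_{T,R,b}\|\mathbf u_0\|_{L^2(\Omega,\mathbf H^{\mathbf s})}$ with $\mu$-independent constant. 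The support statement is immediate since $\mathbf u^\mu=P_\mu(\cdots)$ and $\supp\theta\subset[-2,2]$.

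For \eqref{convergence-regularised-solution}, I would first note that on $[S,S']$ with $S'=S+\delta$, splitting the integrals at $S$ and using $\mathbf S(t-s)=\mathbf S(t-S)\mathbf S(S-s)$ together with the commutation of $P_\mu$ and $\mathbf S$, the equation \eqref{PDEsolution-truncated-regularised} can be put in the restarted form $\mathbf u^\mu(t)=\mathbf S(t-S)\mathbf u^\mu(S)+iP_\mu\int_S^t\mathbf S(t-s)[\cdots]$, and likewise for \eqref{PDEsolution-truncated} without the $P_\mu$. Subtracting gives, for $S\le t\le S'$,
\[
\mathbf e^\mu(t)=\mathbf S(t-S)\mathbf e^\mu(S)+i\int_S^t\mathbf S(t-s)\,\mathcal E^\mu(s)\,ds+i\int_S^t\mathbf S(t-s)\,\mathcal G^\mu(s)\,dW(s),
\]
with $\mathcal E^\mu=P_\mu\mathbf N(\Theta_R^{\mathbf u^\mu}P_\mu\mathbf u^\mu)-\mathbf N(\Theta_R^{\mathbf u}\mathbf u)+P_\mu\mathbf L(P_\mu\mathbf u^\mu)-\mathbf L(\mathbf u)$ and $\mathcal G^\mu=P_\mu\mathbf M(P_\mu\mathbf u^\mu)-\mathbf M(\mathbf u)$. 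The plan is to split each of these differences into a \emph{Lipschitz part}, feeding back $\mathbf e^\mu$, and a fixed \emph{error part} involving $(P_\mu-I)$ applied to a function or operator not depending on $\mu$. Using $P_\mu(\Theta_R^{\mathbf u^\mu}\mathbf u^\mu)=\Theta_R^{\mathbf u^\mu}P_\mu\mathbf u^\mu$ and the linearity of $\mathbf L,\mathbf M$, one writes, e.g.,
\[
P_\mu\mathbf N(\Theta_R^{\mathbf u^\mu}P_\mu\mathbf u^\mu)-\mathbf N(\Theta_R^{\mathbf u}\mathbf u)=P_\mu\bigl[\mathbf N(\Theta_R^{\mathbf u^\mu}P_\mu\mathbf u^\mu)-\mathbf N(\Theta_R^{\mathbf u}\mathbf u)\bigr]+(P_\mu-I)\mathbf N(\Theta_R^{\mathbf u}\mathbf u),
\]
where inside the bracket the argument difference is $P_\mu(\Theta_R^{\mathbf u^\mu}\mathbf u^\mu-\Theta_R^{\mathbf u}\mathbf u)+(P_\mu-I)(\Theta_R^{\mathbf u}\mathbf u)$, and $P_\mu\mathbf L(P_\mu\mathbf u^\mu)-\mathbf L(\mathbf u)=P_\mu\mathbf L(P_\mu\mathbf e^\mu)+P_\mu\mathbf L((P_\mu-I)\mathbf u)+(P_\mu-I)\mathbf L(\mathbf u)$, and similarly for $\mathbf M$.

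The estimates then proceed exactly as in Sections \ref{Section-truncated-existence} and \ref{Proof-Uniqueness}: bound the $\mathbf S(t-S)\mathbf e^\mu(S)$ term by $C\|\mathbf e^\mu\|_{Z^{\mathbf s,b}(0,S)}$ via \eqref{Bourgain-1b}, \eqref{Bourgain-4}; handle the $\mathbf N$-contributions with \eqref{Bourgain-4}, \eqref{Bourgain-2b}, \eqref{Bourgain-3}, \eqref{N2}, using that by Proposition \ref{MainCutoffLemma} and $\|P_\mu\|\le1$ the arguments $\Theta_R^{\mathbf u^\mu}P_\mu\mathbf u^\mu$ and $\Theta_R^{\mathbf u}\mathbf u$ have $\mathbf X^{\mathbf s,b}$-norm $\le C\sqrt R$ pointwise and that the second cutoff estimate in Proposition \ref{MainCutoffLemma} controls $\|\Theta_R^{\mathbf u^\mu}\mathbf u^\mu-\Theta_R^{\mathbf u}\mathbf u\|_{\mathbf X^{\mathbf s,b}(0,S')}$ by $C\|\mathbf e^\mu\|_{\mathbf X^{\mathbf s,b}(0,S')}$; handle the $\mathbf L$-contributions with \eqref{L1} and \eqref{Bourgain-2b}, \eqref{Bourgain-3}, \eqref{Bourgain-4}; and handle the $\mathbf M$-contributions with Corollary \ref{X-cor} (after pulling $P_\mu$ out of the It\^o integral) for the parts of the form $P_\mu\mathbf M(\cdot)$, and with Lemma \ref{X-lemma} and \eqref{Maximal-bound} — which hold for any adapted $\mathcal L_2(K,\mathbf H^{\mathbf s})$-valued process in place of $\mathbf M(\mathbf u)$ — for the part $(P_\mu-I)\mathbf M(\mathbf u)$. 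Gluing with Lemma \ref{Concatenation-lemma} then yields an inequality of the form
\[
\|\mathbf e^\mu\|_{Z^{\mathbf s,b}(S,S')}\le C\|\mathbf e^\mu\|_{Z^{\mathbf s,b}(0,S)}+C(1+\sqrt R)^{p-1}\delta^{\min(\varepsilon,b)}\|\mathbf e^\mu\|_{Z^{\mathbf s,b}(0,S')}+\eta_\mu^{(S)},
\]
with $\eta_\mu^{(S)}\to0$ as $\mu\to\infty$. Choosing the same $\delta$ as before so that the middle constant is $\le1/2$, absorbing, gluing, and iterating over the $N$ subintervals starting from $\|\mathbf e^\mu(0)\|_{L^2(\Omega,\mathbf H^{\mathbf s})}=\|(P_\mu-I)\mathbf u_0\|_{L^2(\Omega,\mathbf H^{\mathbf s})}\to0$, which is \eqref{inductive-start-bound}, gives $\|\mathbf e^\mu\|_{Z^{\mathbf s,b}(0,T)}\to0$. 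The vanishing of each error term is where the remaining hypotheses enter: since $P_\mu-I\to0$ strongly on $\mathbf X^{\mathbf s,b}$ and $\mathbf X^{\mathbf s,b'}$, the quantities $(P_\mu-I)(\Theta_R^{\mathbf u}\mathbf u)$, $(P_\mu-I)\mathbf N(\Theta_R^{\mathbf u}\mathbf u)$, $(P_\mu-I)\mathbf u$ and $(P_\mu-I)\mathbf L(\mathbf u)$ tend to $0$ for a.e.\ $\omega$ and, being dominated (by the constant $C\sqrt R$ or $C(1+\sqrt R)^{p-1}\sqrt R$ after the cutoff, and by $\|\mathbf u\|_{\mathbf X^{\mathbf s,b}(0,T)}\in L^2(\Omega)$ otherwise), in $L^2(\Omega)$ as well; and the stochastic error $\mathbb E\int_S^{S'}\|(P_\mu-I)\mathbf M(\mathbf u(s))\|_{\mathcal L_2(K,\mathbf H^{\mathbf s})}^2\,ds\to0$ is exactly \eqref{Regularity-M} combined with dominated convergence, with dominating function $C\|\mathbf u\|_{C([0,T],\mathbf H^{\mathbf s})}^2\in L^1(\Omega)$.

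The main obstacle I anticipate is organizational rather than conceptual: making the convergence argument quantitative with all constants independent of $\mu$, which forces the subinterval length $\delta$ and the count $N$ to be fixed exactly as in Theorem \ref{Existence-Theorem-Truncated} (relying on $\|P_\mu\|\le1$), requires correctly commuting $P_\mu$ with the It\^o integral so that Corollary \ref{X-cor} can be invoked, and demands a clean separation, in each of the three nonlinear/stochastic differences, of the self-referential part (killed by smallness of $\delta$) from the genuinely $\mu$-vanishing part (killed by strong convergence of $P_\mu$ and by \eqref{Regularity-M}). Once the decomposition above is in place, no analytic input beyond \eqref{Regularity-M}, strong convergence of $P_\mu$, and the estimates already established is needed.
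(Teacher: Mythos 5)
Your proposal is correct and follows essentially the same route as the paper: a contraction argument with $P_\mu$ inserted (using $\norm{P_\mu}\le 1$ so that $\delta$ and all constants are $\mu$-independent) for existence, uniqueness and \eqref{Bound-truncated-solution-regularised}, and then an inductive short-interval estimate of $\mathbf u^\mu-\mathbf u$ in which each nonlinear, linear and stochastic difference is split into a self-referential Lipschitz part absorbed by smallness of $\delta$ and $(P_\mu-I)$-error parts that vanish by strong convergence of $P_\mu$, dominated convergence and \eqref{Regularity-M}. The only cosmetic deviation is that you apply $(P_\mu-I)$ directly to $\mathbf N(\Theta_R^{\mathbf u}\mathbf u)$ in $\mathbf X^{\mathbf s,b'}$, whereas the paper applies it to the already-integrated term in $\mathbf X^{\mathbf s,B}$; both are valid.
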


Before proving the above theorem, let us remark that the main reason for regularising is that we can then apply It\^o's formula, as formulated in Theorem 2.10 in \cite{Gawarecki_Mandrekar}; it does not apply directly to the problem \eqref{PDEsolution-truncated}, since the deterministic integral may not make sense as a Bochner integral in $\mathbf H^{\mathbf s}$. But in the frequency-truncated problem \eqref{PDEsolution-truncated-regularised}, the corresponding integral makes sense even in the more regular space $\mathbf H^{\mathbf s'}$, by the assumption \eqref{N3} and the fact that $P_\mu$ maps $\mathbf H^{\mathbf s}$ into $\mathbf H^{\mathbf s'}$. Then \cite[Theorem 2.10]{Gawarecki_Mandrekar} can be applied (after applying $\mathbf S(-t)$ on both sides of \eqref{PDEsolution-truncated-regularised}, and passing $P_\mu$ inside the integrals). Passing to the limit $\mu \to \infty$, one can then hope to get It\^o's formula also for \eqref{PDEsolution-truncated}. Indeed, this works out in a case of particular interest to us here, namely the conservation of charge for the stochastic Dirac-Klein-Gordon system. The details of this are shown in Section \ref{Charge_conservation}.

We now prove Theorem \ref{Existence-Theorem-Truncated-regularised}.
The existence and uniqueness works by a fixed point argument as in the proof of Theorem \ref{Existence-Theorem-Truncated}, up to some obvious modifications, since $P_\mu$ is bounded on all the spaces involved. From \eqref{PDEsolution-truncated-regularised} it is obvious that $\mathcal F_x \mathbf u^\mu$ is supported in $[-2\mu,2\mu]$. So it remains to prove \eqref{convergence-regularised-solution}. As in the proof of Theorem \ref{Existence-Theorem-Truncated} we cut $[0,T]$ into short intervals of length $\delta$, where $\delta > 0$ is chosen sufficiently small depending on $R$ and $T$ (but not on $\mu$). Suppose that we have proved \eqref{convergence-regularised-solution} on $[0,S]$ for some $0 \le S < T$; if $S=0$, we appeal to \eqref{inductive-start-bound}. Now we must prove \eqref{convergence-regularised-solution} on $[S,S']$ with $S'=S+\delta$. To this end, write
\[
  \mathbf u^\mu(t) - \mathbf u(t) = \Delta_1^\mu(t) + \Delta_2^\mu(t) + \Delta_3^\mu(t) + \Delta_4^\mu(t)
  \quad \text{for $t \ge S$},
\]
where
\begin{align*}
  \Delta_1^\mu(t) &= \mathbf S(t-S) \left( \mathbf u^\mu(S) - \mathbf u(S) \right),
  \\
  \Delta_2^\mu(t) &= i \int_S^t \mathbf S(t-s) \left[ P_\mu \mathbf N\left( \Theta_R^{\mathbf u^\mu}(s) P_\mu \mathbf u^\mu(s) \right) - \mathbf N\left( \Theta_R^{\mathbf u}(s) \mathbf u(s) \right) \right] \, ds,
  \\
  \Delta_3^\mu(t) &= i \int_S^t \mathbf S(t-s) \left[ P_\mu \mathbf L\left( P_\mu \mathbf u^\mu(s) \right) - \mathbf L\left( \mathbf u(s) \right) \right] \, ds,
  \\
  \Delta_4^\mu(t) &= i \int_S^t \mathbf S(t-s) \left[ P_\mu \mathbf M\left( P_\mu \mathbf u^\mu(s) \right)
  - \mathbf M\left( \mathbf u(s) \right) \right] \, dW(s).
\end{align*}

First, by \eqref{Bourgain-1b} and the induction hypothesis,
\begin{equation}\label{Delta-1-bound}
  \norm{\Delta_1^\mu}_{Z^{\mathbf s,b}(S,S')}
  \le C \norm{\mathbf u^\mu(S) - \mathbf u(S)}_{L^2(\Omega,\mathbf H^{\mathbf s})} \to 0 \quad \text{as $\mu \to \infty$}.
\end{equation}

Second, write
\[
  \Delta_2^\mu(t) = \Delta_{2,1}^\mu(t) + \Delta_{2,2}^\mu(t),
\]
where
\begin{align*}
  \Delta_{2,1}^\mu(t)
  &=
  i  P_\mu \int_0^t \mathbf S(t-s) 
  \left[\mathbf N\left( \Theta_R^{\mathbf u^\mu}(s) P_\mu \mathbf u^\mu(s) \right) - \mathbf N\left( \Theta_R^{\mathbf u}(s) \mathbf u(s) \right) \right] \, ds,
  \\
  \Delta_{2,2}^\mu(t)
  &=
  i  (P_\mu-1) \int_0^t \mathbf S(t-s) \mathbf N\left( \Theta_R^{\mathbf u}(s) \mathbf u(s) \right) \, ds.
\end{align*}
Estimating as in the proof of Theorem \ref{Existence-Theorem-Truncated}, we get
\begin{equation}\label{Delta-2-1-bound}
\begin{aligned}
  \norm{\Delta_{2,1}^\mu}_{Z^{\mathbf s,b}(S,S')}
  &\le C (1+R)^{\frac{p-1}{2}} \delta^{\varepsilon} \norm{P_\mu \Theta_R^{\mathbf u^\mu} \mathbf u^\mu - \Theta_R^{\mathbf u} \mathbf u}_{L^2(\Omega,\mathbf X^{\mathbf s,b}(S,S'))}
  \\
  &\le C (1+R)^{\frac{p-1}{2}}  \delta^{\varepsilon} \left( \norm{P_\mu \left[ \Theta_R^{\mathbf u^\mu}  \mathbf u^\mu - \Theta_R^{\mathbf u} \mathbf u \right]}_{L^2(\Omega,\mathbf X^{\mathbf s,b}(S,S'))}
  +
  \norm{(1-P_\mu) \Theta_R^{\mathbf u} \mathbf u}_{L^2(\Omega,\mathbf X^{\mathbf s,b}(S,S'))} \right)
  \\
  &\le C (1+R)^{\frac{p-1}{2}} \left( \delta^{\varepsilon} \norm{\mathbf u^\mu - \mathbf u}_{L^2(\Omega,\mathbf X^{\mathbf s,b}(S,S'))}
  +
  \rho(\mu,S) \right),
\end{aligned}
\end{equation}
where
\begin{equation}\label{Delta-2-1-bound-2}
  \rho(\mu,S)
  =
  \norm{\mathbf u^\mu - \mathbf u}_{L^2(\Omega,\mathbf X^{\mathbf s,b}(0,S))}
  +
  \norm{(1-P_\mu) \mathbf u}_{L^2(\Omega,\mathbf X^{\mathbf s,b}(0,T))}
  \to 0 \quad \text{as $\mu \to \infty$},
\end{equation}
by the induction hypothesis and the dominated convergence theorem. Write $\Delta_{2,2}^\mu = (1-P_\mu) \mathbf v$, where $\mathbf v \in L^2 \left(\Omega,\mathbf X^{\mathbf s,B}(0,T) \right)$, by the proof of Theorem \ref{Existence-Theorem-Truncated}. Then
by \eqref{Bourgain-4} and dominated convergence,
\begin{equation}\label{Delta-2-2-bound}
  \norm{\Delta_{2,2}^\mu}_{Z^{\mathbf s,b}(S,S')}
  \le
  C
  \norm{(1-P_\mu) \mathbf v}_{L^2(\Omega,\mathbf X^{\mathbf s,B}(0,T))}
  \to 0 \quad \text{as $\mu \to \infty$}.
\end{equation}

The estimates for $\Delta_2$ apply also to $\Delta_3$ (with $p=1$).

Finally, we split
\[
  \Delta_4^\mu(t)
  =
  \Delta_{4,1}^\mu(t)
  +
  \Delta_{4,2}^\mu(t)
  +
  \Delta_{4,3}^\mu(t),
\]
where
\begin{align*}
  \Delta_{4,1}^\mu(t)
  &=
  i \int_S^t \mathbf S(t-s) P_\mu \mathbf M\left( P_\mu \left[  \mathbf u^\mu -  \mathbf u \right](s) \right)
  \, dW(s),
  \\
  \Delta_{4,2}^\mu(t)
  &=
  i \int_S^t \mathbf S(t-s) P_\mu \mathbf M\left( [ P_\mu - 1] \mathbf u(s)  \right) \, dW(s),
  \\
  \Delta_{4,3}^\mu(t)
  &=
  i \int_S^t \mathbf S(t-s) \left( P_\mu -1 \right) \mathbf M\left( \mathbf u(s) \right)
  \, dW(s).
\end{align*}
Then as the proof of Theorem \ref{Existence-Theorem-Truncated}, and using the boundedness of $P_\mu$,
\begin{equation}\label{Delta-3-bound-1}
  \norm{\Delta_{4,1}^\mu}_{Z^{\mathbf s,b}(S,S')}
  \le C \delta^{b}
  \norm{\mathbf u^\mu - \mathbf u}_{L^2(\Omega,\mathbf X^{\mathbf s,b}(S,S'))}.
\end{equation}
By the dominated convergence theorem,
\begin{equation}\label{Delta-3-bound-2}
  \norm{\Delta_{4,2}^\mu}_{Z^{\mathbf s,b}(S,S')}
  \le
  C
  \norm{(1-P_\mu) \mathbf u}_{L^2(\Omega,\mathbf X^{\mathbf s,b}(0,T))}
  \to 0 \quad \text{as $\mu \to \infty$}.
\end{equation}
By \eqref{Maximal-bound} and Lemma \ref{X-lemma},
\begin{equation}\label{Delta-3-bound-3}
  \norm{\Delta_{4,3}^\mu}_{Z^{\mathbf s,b}(S,S')}
  \le
  C
  \mathbb E \left(   \int_0^T \norm{(1-P_\mu)\mathbf M(\mathbf u(s))}_{\mathcal L_2( K, \mathbf H^{\mathbf s})}^2 \, ds \right)
  \to 0 \quad \text{as $\mu \to \infty$},
\end{equation}
using \eqref{Regularity-M} and the dominated convergence theorem.

Combining \eqref{Delta-1-bound}--\eqref{Delta-3-bound-3}, we conclude that
\[
  \norm{\mathbf u^\mu - \mathbf u}_{Z^{\mathbf s,b}(S,S')}
  \le
  \frac12
  \norm{\mathbf u^\mu - \mathbf u}_{Z^{\mathbf s,b}(S,S')}
  + o(1)
  \quad \text{as $\mu \to \infty$},
\]
for $\delta=S'-S$ small enough. Together with the induction hypothesis this implies \eqref{convergence-regularised-solution} on the interval $[0,S']$, and by induction we then obtain the convergence on the whole interval $[0,T]$. This completes the proof of Theorem \ref{Existence-Theorem-Truncated-regularised}.

\subsection{Proof of Theorem \ref{local_th}}\label{ConcreteCase}

For the convenience of the reader, we now show exactly how the abstract framework applies to prove our first main result, Theorem \ref{local_th}, except for the charge conservation, which is proved in the next section.

Taking $d=1$ and $n=3$, we can cast the stochastic DKG system \eqref{mild_psi}, \eqref{mild_phi} in the form \eqref{PDEsolution}, with
\begin{gather*}
  \mathbf u = \begin{pmatrix}
      \psi_+
      \\
      \psi_-
      \\
      \phi_+
  \end{pmatrix},
  \qquad
  \mathbf u_0 = \begin{pmatrix}
      f_+
      \\
      f_-
      \\
      g_+
  \end{pmatrix},
  \qquad
  \mathbf h(\xi) = \begin{pmatrix}
      +\xi
      \\
      -\xi
      \\
      +\japanese{\xi}
  \end{pmatrix},
  \qquad
  \mathbf S(t) \mathbf u_0 = \begin{pmatrix}
      S_{+\xi}(t) f_+
      \\
      S_{-\xi}(t) f_-
      \\
      S_{+\japanese{\xi}}(t) g_+
  \end{pmatrix},
  \\
  \mathbf N(\mathbf u) = \begin{pmatrix}
      i \phi \psi_{-}
      \\
      i\phi \psi_{+}
      \\
      i \japanese{D_x}^{-1} \re \left( \overline{\psi_+} \psi_- \right)
  \end{pmatrix},
  \quad \mathbf L(\mathbf u) = \begin{pmatrix}
      - iM\psi_{-} - M_{\mathfrak{K}_1}\psi_{+}
      \\
      - iM\psi_{+} - M_{\mathfrak{K}_1}\psi_{-}
      \\
      0
  \end{pmatrix},
  \quad
  \mathbf M(\mathbf u) = \begin{pmatrix}
      i \psi_{-} \mathfrak{K}_1
      \\
      i \psi_{+} \mathfrak{K}_1
      \\
     (i/2) \japanese{D_x}^{-1} \phi \mathfrak{K}_2
  \end{pmatrix},
\end{gather*}
where $\phi$ stands for $\phi_+ + \overline{\phi_+}$. Let $s,r \in \R$ and $0 < b < 1/2$ be as in Lemma \ref{DKG-bilinear-lemma}, and set $b'=-b$. Corresponding to $\mathbf s=(s,s,r)$ we then define the spaces $\mathbf H^{(s,s,r)}$ and $\mathbf X^{(s,s,r),b}$ as in \eqref{spaceHssr} and \eqref{spaceXssr}. Define the Wiener process $W(t)$ using $K=L^2(\R,\R)$ with an orthonormal basis $\{e_j\}_{j \in \N}$. Assume that the convolution kernels satisfy $\mathfrak k_1 \in H^{\Abs{s}}(\R,\R)$ and $\mathfrak k_2 \in H^{\max(0,r-1)}(\R,\R)$.

The boundedness \eqref{M1} of $\mathbf M$ is now a consequence of Lemma \ref{DKG-HS-lemma}. Concerning $\mathbf N$, the property \eqref{N0} is obvious, \eqref{N1}, \eqref{N2} follow from Lemma \ref{DKG-bilinear-lemma}, and \eqref{N3} holds by Lemma \ref{DKG-high-reg-lemma}, if we take $\mathbf s' = (s',s',r')$ with $s'=r' > \max (r,s,1/2)$. For $\mathbf L$, the bound in \eqref{L1} holds by Lemma \ref{DKG-linear-lemma}, and the bound in \eqref{N3} is trivial.

So with the above set-up, Theorem \ref{local_th}, with the exception of the charge conservation (considered below), follows from Theorems \ref{Existence-Theorem} and \ref{Uniqueness-theorem}. 

%%%%%%%%%%%%%%%%%%%%%%%%%%%%%%%%%%%%%%%%%%%%%%%%%%%%%%%%%%%%%%%%%%%%%%%%%%%%%
\section{Charge conservation}
\label{Charge_conservation}
%%%%%%%%%%%%%%%%%%%%%%%%%%%%%%%%%%%%%%%%%%%%%%%%%%%%%%%%%%%%%%%%%%%%%%%%%%%%%
\setcounter{equation}{0}
%%%%%%%%%%%%%%%%%%%%%%%%%%%%%%%%%%%%%%%%%%%%%%%%%%%%%%%%%%%%%%%%%%%%%%%%%%%%%

Let $s=0$ and $0 < r < 1/2$. We now prove the statement in Theorem \ref{local_th} about charge conservation of the local solution $(\psi_+,\psi_-,\phi)$, almost surely for $0 \le t < \tau$. Let $R > 0$. As explained in Section \ref{Abstract-WP}, the solution equals, up to the conditional stopping time $\tau_R$, the solution $(\psi_+^R,\psi_-^R,\phi^R)$ of the $R$-truncated problem, obtained in Theorem \ref{Existence-Theorem-Truncated} via the set-up in Section \ref{ConcreteCase}. Since $\tau_R \to \tau$ as $R \to \infty$, it clearly suffices to prove the charge conservation for $(\psi_+^R,\psi_-^R,\phi^R)$.

In the remainder of this section we fix $R > 0$, and to simplify the notation we drop the superscript $R$ on the solution. Thus, $(\psi_+,\psi_-,\phi)$ denotes the global solution of the truncated versions of \eqref{mild_psi}, \eqref{mild_phi}:
\begin{multline}
\label{mild_psi-truncated}
	\psi_{\pm}(t)
	=
	S_{\pm\xi}(t) f_\pm
	-
	iM \int_0^t S_{\pm\xi}(t-s) \psi_{\mp}(s) \, ds
	+
	i \int_0^t  S_{\pm\xi}(t-s) ( \Theta \phi \Theta \psi_{\mp})(s) \, ds
	\\
	+
	i \int_0^t S_{\pm\xi}(t-s) \psi_\mp(s) \mathfrak K_1 \, d W(s)
	-
	M_{\mathfrak K_1} \int_0^t S_{\pm\xi}(t-s) \psi_\pm(s) \, ds,
\end{multline}
and
\begin{multline}
\label{mild_phi-truncated}
	\phi_+(t)
	=
	S_{+\japanese{\xi}}(t) g_+
	+
	i \int_0^t S_{+\japanese{\xi}}(t-s) \japanese{D_x}^{-1}
    \re \left( \Theta\overline{\psi_+} \Theta\psi_- \right)(s) \, ds
	\\
	+
	\frac{i}{2} \int_0^t S_{+\japanese{\xi}}(t-s) \japanese{D_x}^{-1}
    \phi(s) \mathfrak K_2 \, dW(s)
.
\end{multline}
Here $\phi=2\re\phi_+$ and $\Theta(t)$ is defined as in \eqref{DKG-cutoff}, with $s=0$. We assume that $\theta$ is even, so that $P_\mu f$ is real-valued if $f$ is.

Set $\psi=(\psi_+,\psi_-)$ and $\psi_0=(f_+,f_-)$. We will prove that the charge is almost surely conserved:
\[
  \norm{\psi(t)}_{L^2}^2
  =
  \norm{\psi_0}_{L^2}^2 \quad \text{for $t \ge 0$.}  
\]
To this end, we want to apply It\^o's formula with the functional $\mathcal H \colon L^2(\R) \to \R$ given by
\[
  \mathcal H(u) = \int_{\R} \Abs{u(x)}^2 \, dx.
\]
However, as discussed in Section \ref{Regularisation}, it is necessary to first regularise the problem. So for $\mu \ge 1$ we consider the solution $(\psi_+^\mu,\psi_-^\mu,\phi^\mu)$, obtained in Theorem \ref{Existence-Theorem-Truncated-regularised}, of the frequency-truncated equations
\begin{multline}
\label{mild_psi-truncated-regularised}
	\psi_{\pm}^\mu(t)
	=
	S_{\pm\xi}(t) P_\mu f_\pm
	-
	iM \int_0^t S_{\pm\xi}(t-s) P_\mu^2 \psi_{\mp}^\mu(s) \, ds
	+
	i \int_0^t  S_{\pm\xi}(t-s) P_\mu \left( \Theta P_\mu \phi^\mu \cdot \Theta P_\mu \psi_{\mp}^\mu \right)(s) \, ds
	\\
	+
	i \int_0^t S_{\pm\xi}(t-s) P_\mu \left(P_\mu \psi_\mp^\mu(s)\right) \mathfrak K_1 \, d W(s)
	-
	M_{\mathfrak K_1} \int_0^t S_{\pm\xi}(t-s) P_\mu^2 \psi_\pm^\mu(s) \, ds,
\end{multline}
and
\begin{multline}
\label{mild_phi-truncated-regularise}
	\phi_+^\mu(t)
	=
	S_{+\japanese{\xi}}(t) P_\mu g_+
	+
	i \int_0^t S_{+\japanese{\xi}}(t-s) \japanese{D_x}^{-1}
    P_\mu \re \left( \Theta P_\mu\overline{\psi_+^\mu} \cdot \Theta P_\mu \psi_-^\mu \right)(s) \, ds
	\\
	+
	\frac i2 \int_0^t S_{+\japanese{\xi}}(t-s)  \japanese{D_x}^{-1}
    P_\mu (P_\mu\phi^\mu)(s) \mathfrak K_2 \, dW(s),
\end{multline}
where $\phi^\mu=2\re \phi_+^\mu$. Set $\psi^\mu= \left(\psi_+^\mu,\psi_-^\mu \right)$.
Then by Theorem \ref{Existence-Theorem-Truncated-regularised} we have
\begin{equation}\label{DKG-convergence}
  \mathbb E \left( \sup_{t \in [0,T]} \left( \norm{\psi^\mu(t) - \psi(t)}_{L^2}^2
  + \norm{\phi^\mu(t) - \phi(t)}_{H^r}^2
   \right) \right) \to 0 \quad \text{as $\mu \to \infty$}
\end{equation}
for any $T > 0$. Moreover, the spatial Fourier transform of $\left(\psi_+^\mu,\psi_-^\mu,\phi^\mu \right)$ is supported in $[-2\mu,2\mu]$.

Notice that
\(
    \mathcal H(\psi^\mu)
    =
    \mathcal H(\psi_+^\mu)
    +
    \mathcal H(\psi_-^\mu)
\)
and that the first and second derivatives of the functional $\mathcal H$ are given by the linear form
\(
    \mathcal H'(u)
    =
    2 \re \langle \cdot, u \rangle _{L^2}
\)
and the bilinear form
\(
    \mathcal H''(u)
    =
    2 \re \langle \cdot , \cdot \rangle _{L^2}
    .
\)

In terms of $X_\pm(t) = S_{\pm\xi}(-t) \psi_\pm^\mu(t)$ we can rewrite \eqref{mild_psi-truncated-regularised} as
\[
  X_\pm(t) = X_\pm(0) + \int_0^t \Psi(s) \, ds + \int_0^t \Phi(s) \, dW(s),
\]
where
\[
  \Psi(t) = -iM S_{\pm\xi}(-t) P_\mu^2 \psi_{\mp}^\mu(t) 
  +
  i S_{\pm\xi}(-t) P_\mu ( \Theta P_\mu \phi^\mu \cdot \Theta P_\mu \psi_{\mp}^\mu)(t)
  -
  M_{\mathfrak K_1} S_{\pm\xi}(-t) P_\mu^2 \psi_\pm^\mu(t)
\]
and
\[
  \Phi(t) = i S_{\pm\xi}(-t) P_\mu \left(P_\mu \psi_\mp^\mu(t)\right) \mathfrak K_1.
\]
Applying now It\^o's formula, as stated in \cite[Theorem 2.10]{Gawarecki_Mandrekar}, we get
\begin{multline*}
  \mathcal H(X_\pm(t)) - \mathcal H(X_\pm(0))
  \\
  =
  \int_0^t \mathcal H'(X_\pm(s)) \Psi(s) \, ds
  +
  \int_0^t \mathcal H'(X_\pm(s)) \Phi(s) \, dW(s)
  + \int_0^t \frac12 \trace \mathcal H''(X_\pm(s))\left(\Phi(s),\Phi(s)\right) \, ds.
\end{multline*}
Using the fact that the group $S_{\pm\xi}(t)$ is unitary on $L^2(\R)$, the above works out to be
\begin{multline*}
    \mathcal H(\psi_\pm^\mu(t))
    -
    \mathcal H(\psi_\pm^\mu(0))
    \\
  =
  \int_0^t 2\re \Innerprod{-iM P_\mu^2 \psi_{\mp}^\mu(s) 
  +
  i P_\mu ( \Theta P_\mu \phi^\mu \cdot \Theta P_\mu \psi_{\mp}^\mu)(s)
  -
  M_{\mathfrak K_1} P_\mu^2 \psi_\pm^\mu(s)}
  {\psi_\pm^\mu(s)}_{L^2} \, ds
  \\
  +
  \int_0^t 2\re \Innerprod{i P_\mu \left(P_\mu \psi_\mp^\mu(s)\right) \mathfrak K_1 \cdot}
  {\psi_\pm^\mu(s)}_{L^2} \, dW(s)
  \\
  + \int_0^t
    \sum _{j = 1}^{\infty}
    \re
    \left\langle
        i P_{\mu} ( P_{\mu} \psi_\mp^\mu(s) ) \mathfrak K_1 e_j
        ,
        i P_{\mu} ( P_{\mu} \psi_\mp^\mu(s) ) \mathfrak K_1 e_j
    \right\rangle
    _{L^2}
	ds
  \\
        =
    I_\pm + II_\pm + III_\pm.
\end{multline*}
Since $M, M_{\mathfrak K_1} \in \R$, $P_\mu \phi^\mu$ and $\Theta$ are real-valued, and $P_\mu$ is hermitian, it is clear that
\[
  I_+ + I_-
  =
  -
  M_{\mathfrak K_1} 2\re  \int_0^t \left(\Innerprod{ P_\mu \psi_+^\mu(s)}
  {P_\mu\psi_+^\mu(s)}_{L^2}
  +
  \Innerprod{ P_\mu\psi_-^\mu(s)}
  {P_\mu\psi_-^\mu(s)}_{L^2} \right)  \, ds.
\]
Thus
\[
    I_+ + I_-
    =
    -
    2 M_{\mathfrak K_1} \int_0^t
    \norm{ P_{\mu} \psi^\mu(s) }_{L^2}^2
    ds
    =
    - \int_0^t
    \norm{ \left( P_{\mu} \psi^\mu(s) \right) \mathfrak K_1}_{\mathcal L_2}^2
    ds,
\]
 where $\mathcal L_2 = \mathcal L_2(L^2,L^2)$ and the last equality holds by Remark \ref{Hilbert_Schmidt_remark}, since $2M_{\mathfrak K_1} = \norm{\mathfrak k_1}_{L^2}^2$.
Next we notice that
\[
    II_\pm
    =
    -2
    \sum _{j = 1}^{\infty}
    \int_0^t
    \im
    \left\langle
        ( P_{\mu} \psi_\mp^\mu(s) ) \mathfrak K_1 e_j
        ,
        P_{\mu} \psi_\pm^\mu(s)
    \right\rangle
    _{L^2}
	\, dB_j(s)
    .
\]
Thus
\(
    II_+ + II_- = 0
    ,
\)
since $\mathfrak K_1 e_j$ is a real-valued function.
Finally we notice that
\[
    III_\pm
    =
    \int_0^t
    \norm
    {
        P_{\mu} ( P_{\mu} \psi_\mp^\mu(s) ) \mathfrak K_1
    }
    _{\mathcal L_2}^2
	ds
    ,
\]
hence
\[
    III_+ + III_-
    =
    \int_0^t
    \norm
    {
        P_{\mu} ( P_{\mu} \psi^\mu(s) ) \mathfrak K_1
    }
    _{\mathcal L_2}^2
	ds
,
\]
where the operator inside the norm is regarded as a composition
of three operators (first apply $\mathfrak K_1$, then multiplication by $P_{\mu} \psi^\mu$ and finally $P_\mu$).

Summing the contributions, we arrive at
\begin{equation}
\label{charge_control}
    \mathcal H(\psi^\mu(t))
    -
    \mathcal H(\psi^\mu(0))
    =
    \int_0^t
    \left(
    \norm
    {
        P_{\mu} ( P_{\mu} \psi^\mu(s) ) \mathfrak K_1
    }
    _{\mathcal L_2}^2
	-
   \norm{ \left( P_{\mu} \psi^\mu(s) \right) \mathfrak K_1}_{\mathcal L_2}^2
    \right)
    \, ds,
\end{equation}
and letting $\mu \to \infty$ we get the charge conservation for $\psi$,
\[
  \mathcal H(\psi(t))
    -
    \mathcal H(\psi(0))
    =
    0 \quad \text{for all $t \ge 0$}.
\]
Indeed, let $T > 0$. By \eqref{DKG-convergence} we know that for some sequence $\mu_k \to \infty$ as $k \to \infty$, we have, almost surely,
\[
   \sup_{t \in [0,T]} \norm{\psi^{\mu_k}(t) - \psi(t)}_{L^2} \to 0
   \quad \text{as $k \to \infty$}.
\]
Thus, along $\mu=\mu_k$, the left hand side of \eqref{charge_control} converges to $\mathcal H(\psi(t)) - \mathcal H(\psi(0))$ for $0 \le t \le T$. Moreover, the right hand side converges to zero by the dominated convergence theorem. Indeed,
for $\mu$ large enough we have the bounds, uniformly in $s \in [0,T]$,
\[
  \norm
    {
        P_{\mu} ( P_{\mu} \psi^\mu(s) ) \mathfrak K_1
    }
    _{\mathcal L_2}
  \le
  \norm
    {
     ( P_{\mu} \psi^\mu(s) ) \mathfrak K_1
    }
    _{\mathcal L_2}
   \le
   \norm{\psi^\mu(s)}_{L^2} \norm{\mathfrak k_1}_{L^2}
   \le
    \left( \norm{\psi(s)}_{L^2} 
   + 1 \right)
   \norm{\mathfrak k_1}_{L^2},
\]
and
\[
    0 \le
    \norm{ (P_{\mu} \psi^\mu(s) ) \mathfrak K_1}_{\mathcal L_2}
    -
    \norm
    {
        P_{\mu} ( P_{\mu} \psi^\mu(s) ) \mathfrak K_1
    }
    _{\mathcal L_2}
    \\
    \le
    C \left( \norm{\psi(s)}_{L^2} + 1 \right)
    \norm{ (1-P_\mu) ( P_{\mu} \psi^\mu(s) ) \mathfrak K_1}_{\mathcal L_2}
    ,
\]
so it only remains to check that $\norm{ (1-P_\mu) ( P_{\mu} \psi^\mu(s) ) \mathfrak K_1}_{\mathcal L_2}$ tends to zero along $\mu=\mu_k$ as $k \to \infty$. But
\begin{multline*}
  \norm{ (1-P_\mu) ( P_{\mu} \psi^\mu(s) ) \mathfrak K_1}_{\mathcal L_2}
  \le
  \norm{ (1-P_\mu) \psi(s) \mathfrak K_1}_{\mathcal L_2}
  +
  \norm{ (1-P_\mu) ( P_{\mu} \psi^\mu(s) - \psi(s) ) \mathfrak K_1}_{\mathcal L_2}
  \\
  \le
  \norm{ (1-P_\mu) \psi(s) \mathfrak K_1}_{\mathcal L_2}
  +
  \norm{ (P_{\mu} \psi^\mu(s) -\psi(s) ) \mathfrak K_1}_{\mathcal L_2}
  ,
\end{multline*}
since $\norm{1 - P_{\mu}} \leq 1$.
Now the first term $\norm{ (1-P_\mu) \psi(s) \mathfrak K_1}_{\mathcal L_2} \to 0$ by \eqref{Regularity-M},
and the last term equals
\[
    \norm{ P_{\mu} \psi^\mu(s) -\psi(s) }_{L^2}
    \norm{\mathfrak k_1}_{L^2}
    \le
    \norm{ \psi^\mu(s) -\psi(s) }_{L^2}
    \norm{\mathfrak k_1}_{L^2}
    +
    \norm{ (P_{\mu} - 1)\psi(s) }_{L^2}
    \norm{\mathfrak k_1}_{L^2}
\]
that tends to zero along $\mu=\mu_k$ as $k \to \infty$.
This concludes the proof of charge conservation.

%%%%%%%%%%%%%%%%%%%%%%%%%%%%%%%%%%%%%%%%%%%%%%%%%%%%%%%%%%%%%%%%%%%%%%%%%%%%%
\section{Global existence}
\label{Global_existence}
%%%%%%%%%%%%%%%%%%%%%%%%%%%%%%%%%%%%%%%%%%%%%%%%%%%%%%%%%%%%%%%%%%%%%%%%%%%%%
\setcounter{equation}{0}
%%%%%%%%%%%%%%%%%%%%%%%%%%%%%%%%%%%%%%%%%%%%%%%%%%%%%%%%%%%%%%%%%%%%%%%%%%%%%

This section is devoted to the proof of Theorem \ref{global_th}.
So we suppose now
\begin{equation}\label{rb-cond}
  s=0, \qquad \frac14 < r < \frac12, \qquad \max(r,1-2r) < b < 1/2.
\end{equation}
Fix some Lebesgue exponent
\[ 
  p \ge \max\left( 4,  \frac{2b + 2r - 1}{b + 2r - 1} \right)
\]
and assume $f_\pm \in L^p \left( \Omega, L^2 \right)$, as well as $g_+ \in L^2(\Omega,H^r)$.

Consider the local solution $(\psi_+,\psi_-,\phi_+)$ from Theorem \ref{local_th}, existing up to the stopping time $\tau$. For $R \ge 1$ let $(\psi_+^R,\psi_-^R,\phi_+^R)$ be the solution of the truncated problem \eqref{mild_psi-truncated}, \eqref{mild_phi-truncated}, obtained in Theorem \ref{Existence-Theorem-Truncated}; it equals $(\psi_+,\psi_-,\phi_+)$ up to the stopping time $\tau_R$, which increases to $\tau$ as $R \to \infty$. As proved in the last section, we have almost surely the conservation of charge,
\begin{equation}\label{R-charge}
  \norm{\psi_+^R(t)}_{L^2}^2 + \norm{\psi_-^R(t)}_{L^2}^2 = \norm{\psi_0}_{L^2}^2
  \quad \text{for all $t \ge 0$}.
\end{equation}
And by \eqref{Truncated-growth} we have
\begin{equation}\label{R-blowup}
  \tau < \infty \implies
  \norm{\psi_+^R}_{X^{0,b}_{+\xi}(0,\tau_R)}
  +
  \norm{\psi_-^R}_{X^{0,b}_{-\xi}(0,\tau_R)}
  +
  \norm{\phi_+^R}_{X^{r,b}_{+\japanese{\xi}}(0,\tau_R)}
  \ge C \sqrt{R} \quad \text{for all $R$},
\end{equation}
where $C$ depends only on $b$.

We now claim that for all $R, T \ge 1$ we have bounds
\begin{equation}\label{phi-R-bound}
  \norm{\phi_+^R}_{L^2 \left( \Omega, X^{r,b}_{+\japanese{\xi}}(0,T) \right)}
  \le C(T) \left( \norm{g_+}_{L^2(\Omega,H^r)} + \norm{\psi_0}_{L^4(\Omega,L^2)}^2 \right)
\end{equation}
and
\begin{equation}\label{psi-R-bound}
  \norm{\psi_\pm^R}_{L^1\left(\Omega,X^{0,b}_{\pm\xi}(0,T)\right)}
  \le C\left(T, \norm{g_+}_{L^2(\Omega,H^r)},\norm{\psi_0}_{L^p(\Omega,L^2)} \right),
\end{equation}
which are \emph{uniform in $R$}. That is, the right hand sides do not depend on $R$. Note that by H\"older's inequality, \eqref{phi-R-bound} implies the $L^1(\Omega)$-bound
\begin{equation}\label{phi-R-bound-L1}
  \norm{\phi_+^R}_{L^1 \left(\Omega,X^{r,b}_{+\japanese{\xi}}(0,T)\right)}
  \le C(T) \left( \norm{g_+}_{L^2(\Omega,H^r)} + \norm{\psi_0}_{L^4(\Omega,L^2)}^2 \right).
\end{equation}

Granting the above claim for the moment, then we can almost surely exclude the scenario $\tau 
 < \infty$, as follows. Since $\{ \tau < \infty \} = \bigcup_{T=1}^\infty \{ \tau < T \}$, it suffices to check that $\mathbb P \left( \{ \tau < T \} \right) = 0$ for all $T \ge 0$. But \eqref{R-blowup} implies that
\[
  \norm{\psi_+^R}_{L^1\left(\Omega,X^{0,b}_{+\xi}(0,T)\right)}
  +
  \norm{\psi_-^R}_{L^1\left(\Omega,X^{0,b}_{-\xi}(0,T)\right)}
  +
  \norm{\phi_+^R}_{L^1\left(\Omega,X^{r,b}_{+\japanese{\xi}}(0,T)\right)}
  \ge C\sqrt{R} \mathbb P \left( \{ \tau < T \} \right)
\]
for all $R \ge 1$. So if $\mathbb P \left( \{ \tau < T \} \right) > 0$, then letting $R \to \infty$ we get a contradiction to \eqref{phi-R-bound-L1} and \eqref{psi-R-bound}.

It remains to prove the claim. Fix $R,T \ge 1$. We first prove \eqref{phi-R-bound}. Here we follow the proof of the bound \eqref{Bound-truncated-solution} in Theorem \ref{Existence-Theorem-Truncated}, but with one crucial difference: to ensure that the bounds are uniform in $R$, we have to avoid using the cutoff bound \eqref{Cutoff-fact1} at any point.

We cut $[0,T]$ into small subintervals $[0,\delta]$, $[\delta,2\delta]$ etc.
Fix now a subinterval $[S,S']$, $S'=S+\delta$. Recall that $\phi_+^R$ satisfies \eqref{mild_phi-truncated}, which we rewrite as
\[
  \phi_+^R(t)
	=
	S_{+\japanese{\xi}}(t-S) \phi_+^R(S)
	+ \Phi_S(t) + \Psi_S(t)
\]
where
\[
  \Phi_S(t) = i\int_S^t S_{+\japanese{\xi}}(t-s) \japanese{D_x}^{-1}
  \re \left( \Theta\overline{\psi_+^R} \Theta\psi_-^R \right)(s) \, ds,
\]
\[
  \Theta(t) = \theta_R\left(
  \norm{\psi_+^R}_{\widetilde X^{0,b}_{+\xi}(0,t)}^2
  +
  \norm{\psi_-^R}_{\widetilde X^{0,b}_{-\xi}(0,t)}^2
  +
  \norm{\phi_+^R}_{\widetilde X^{r,b}_{+\japanese{\xi}}(0,t)}^2
  \right)
\]
and
\[
  \Psi_S(t) = \frac{i}{2} \int_S^t S_{+\japanese{\xi}}(t-s) \japanese{D_x}^{-1}
    \phi^R(s) \mathfrak K_2 \, dW(s).
\]
Write
\[
  \norm{\phi_+^R}_{Z(S,S')}
  =
  \norm{\phi_+^R}_{L^2\left(\Omega,X^{r,b}_{+\japanese{\xi}}(S,S')\right)}
  +
  \norm{\phi_+^R}_{L^2(\Omega,C([S,S'],H^r))}
\]
As in the proof of Theorem \ref{Existence-Theorem-Truncated} we obtain (using Corollary \ref{X-cor})
\[
  \norm{\Psi_S}_{Z(S,S')} \le C \delta^b \norm{\phi_+^R}_{Z(S,S')}.
\]
So taking $\delta$ small enough, depending on $b$, we get, using also \eqref{Bourgain-1b} and \eqref{Bourgain-4},
\[
  \norm{\phi_+^R}_{Z(S,S')}
  \le C\left( \norm{\phi_+^R(S)}_{L^2(\Omega,H^r)} + \norm{\Phi_S}_{L^2(\Omega,X^{r,1}_{+\japanese{\xi}}(S,S'))} \right).
\]
By \eqref{Bourgain-2b}, and using $\abs{\Theta} \le 1$, we get, almost surely,
\[
  \norm{\Phi_S}_{X^{r,1}_{+\japanese{\xi}}(S,S')}
  \le
  C \norm{\overline{\psi_+^R}\psi_-^R}_{L_t^2((S,S'),H^{r-1})}
  \le
  C \sqrt{\delta} \norm{\psi_0}_{L^2}^2,
\]
where we used the Sobolev product law \eqref{Sob-product-1}, \eqref{Sob-product-2}, and the conservation of charge \eqref{R-charge}. So we conclude that on each subinterval $[S,S'] = [n\delta,(n+1)\delta]$ we have
\[
  \norm{\phi_+^R}_{Z(n\delta,(n+1)\delta)}
  \le C\left( \norm{\phi_+^R(n\delta)}_{L^2(\Omega,H^r)} +\sqrt{\delta} \norm{\psi_0}_{L^4(\Omega,L^2)}^2 \right).
\]
For $n=0$, $\phi_+^R(n\delta) = g_+$, while for $n \ge 1$,
\[
  \norm{\phi_+^R(n\delta)}_{L^2(\Omega,H^r)}
  \le \norm{\phi_+^R}_{Z((n-1)\delta,n\delta)}
\]
It follows that
\[
  \norm{\phi_+^R}_{Z(n\delta,(n+1)\delta)}
  \le C^{n+1} \norm{g_+}_{L^2(\Omega,H^r)} + \left( C + C^2 + \cdots + C^{n+1} \right) \sqrt{\delta} \norm{\psi_0}_{L^4(\Omega,L^2)}^2
  \quad \text{for $n=0,\dots,T/\delta$}.
\]
Summing over the subintervals now yields \eqref{phi-R-bound}.

With \eqref{phi-R-bound} in hand, we now prove \eqref{psi-R-bound}. The field $\psi_\pm^R$ satisfies \eqref{mild_psi-truncated}, which we write here as
\[
  \psi_{\pm}^R(t)
  =
  S_{\pm\xi}(t) f_\pm
  +
  \sum_{j=1}^4 \Psi_{j,\pm}(t)
\]
with
\begin{align*}
  \Psi_{1,\pm}(t)
  &=
  - iM \int_0^t S_{\pm\xi}(t-s)  \psi_{\mp}^R(s) \, ds,
  \\
  \Psi_{2,\pm}(t)
  &=
  - M_{\mathfrak K_1} \int_0^t S_{\pm\xi}(t-s) \psi_\pm^R(s) \, ds,
  \\
  \Psi_{3,\pm}(t)
  &=
  i \int_0^t S_{\pm\xi}(t-s) \psi_\mp^R(s) \mathfrak K_1 \, d W(s),
  \\
  \Psi_{4,\pm}(t)
  &=
  i \int_0^t  S_{\pm\xi}(t-s) \left( \Theta \phi^R \Theta \psi_{\mp}^R \right)(s) \, ds.
\end{align*}

In the estimates for these terms,
we use the bounds \eqref{Bourgain-1b} and \eqref{Bourgain-2b} on a time interval $[0,T]$ with $T \ge 1$,
hence the factors $\sqrt{T}$ and $T^{3/2}$ come up.
By \eqref{Bourgain-1b},
\begin{equation}\label{psi-R-bound-1}
  \norm{S_{\pm\xi}(t) f_\pm}_{L^1\left(\Omega,X^{0,b}_{\pm \xi}(0,T)\right)}
  \le C\sqrt{T} \norm{\psi_0}_{L^1(\Omega,L^2)}
  \le C \sqrt{T} \norm{\psi_0}_{L^2(\Omega,L^2)}.
\end{equation}
By \eqref{Bourgain-2b} and the charge conservation \eqref{R-charge},
\begin{equation}\label{psi-R-bound-2}
  \norm{\Psi_{1,\pm}}_{L^1\left(\Omega,X^{0,b}_{\pm \xi}(0,T)\right)}
  \le
  CM T^{3/2} \norm{\psi_{\mp}^R}_{L^1 \left(\Omega,X^{0,0}_{\pm \xi}(0,T) \right)}
  =
  CM T^{3/2} \norm{\psi_{\mp}^R}_{L^1 \left(\Omega,L^2((0,T) \times \R)\right)}
  \le
  CM T^2 \norm{\psi_0}_{L^2(\Omega,L^2)}.
\end{equation}
and similarly for $\Psi_{2,\pm}$.
Applying Lemma \ref{X-lemma} with the operator $M_1(f) = f\mathfrak K_1$, which by Remark \ref{Hilbert_Schmidt_remark} satisfies $\norm{M_1(f)}_{\mathcal L_2(L^2,L^2)} = \norm{f}_{L^2} \norm{\mathfrak k_1}_{L^2}$,
we get
\begin{equation}\label{psi-R-bound-3}
  \norm{\Psi_{3,\pm}}_{L^1 \left(\Omega,X^{0,b}_{\pm \xi}(0,T)\right)}
  \le
  \norm{\Psi_{3,\pm}}_{L^2 \left(\Omega,X^{0,b}_{\pm \xi}(0,T)\right)}
  \le
  C \norm{\psi_{\mp}^R}_{L^2(\Omega,L^2((0,T) \times \R))}
  \le
  C\sqrt{T} \norm{\psi_0}_{L^2(\Omega,L^2)}.
\end{equation}
By \eqref{Bourgain-2b} with the time regularity $1 - b > 1/2$ on the left hand side,
\[
  \norm{\Psi_{4,\pm}}_{X^{0,b}_{\pm \xi}(0,T)}
  \le
  CT^{3/2} \norm{\Theta \phi^R \Theta \psi_{\mp}^R}_{X^{0,-b}_{\pm \xi}(0,T)}.
\]
Note that \eqref{rb-cond} implies $1/2 - r < b/2 < b$.
So if one defines $\mu$ by
\[
  \frac12 - r = \mu b
  ,
\]
then $0 < \mu < 1/2$, and we find ourselves in the assumption of Corollary \ref{Null-form-corollary}.
Therefore, one can bound the norm on the right hand side as
\[
  \norm{\Theta \phi^R \Theta \psi_{\mp}^R}_{X^{0,-b}_{\pm \xi}(0,T)}
  \le
  C
  \norm{\Theta\phi_+^R}_{X^{r,b}_{+\japanese{\xi}}(0,T)}
  \norm{\Theta\psi_{\mp}^R}_{X^{0,b}_{\mp \xi}(0,T)}^{\mu}
  \norm{\Theta\psi_{\mp}^R}_{ L^2((0,T) \times \mathbb R) }^{ 1 - \mu }
  ,
\]
where the last norm
\[
  \norm{\Theta\psi_{\mp}^R}_{L^2((0,T) \times \R)}
  \le
  \sqrt{T} \norm{\psi_0}_{L^2}
\]
by charge conservation and the bound $\Abs{\Theta} \le 1$.
The first two norms are estimated as
\[
    \norm{\Theta\phi_+^R}_{X^{r,b}_{+\japanese{\xi}}(0,T)}
    \le
    C(T)
    \norm{\phi_+^R}_{X^{r,b}_{+\japanese{\xi}}(0,T)}
    , \quad
    \norm{\Theta\psi_{\mp}^R}_{X^{0,b}_{\mp \xi}(0,T)}
    \le
    C(T)
    \norm{\psi_{\mp}^R}_{X^{0,b}_{\mp \xi}(0,T)}
    ,
\]
where we used \eqref{Cutoff-fact2} (with $\mathbf v=\mathbf 0$)
to dispose of the the cutoff in front of $\phi_+^R$ and $\psi_+^R$
(this is why there appears a constant $C(T)$ depending on $T$).
Combining the above estimates, we have
\[
  \norm{\Psi_{4,\pm}}_{X^{0,b}_{\pm \xi}(0,T)}
  \le
  C(T)
  \norm{\phi_+^R}_{X^{r,b}_{+\japanese{\xi}}(0,T)}
  \left( \sqrt{T} \norm{\psi_0}_{L^2} \right)^{1-\mu}
  \norm{\psi_\mp^R}_{X^{0,b}_{\mp \xi}(0,T)}^{\mu}.
\]
Now write
\[
  1 = \frac12 + \frac{1-\mu}{q} + \mu,
  \quad \text{where} \quad
  q = \frac{1-\mu}{1/2-\mu}
  = \frac{2b + 2r - 1}{b + 2r - 1}.
\]
Then by H\"older's inequality,
\begin{equation}\label{psi-R-bound-4}
  \norm{\Psi_{4,\pm}}_{L^1 \left(\Omega,X^{0,b}_{\pm \xi}(0,T) \right)}
  \le
  C(T) T^{(1-\mu)/2}
  \norm{\phi_+^R}_{L^2 \left(\Omega,X^{r,b}_{+\japanese{\xi}}(0,T)\right)}
  \norm{\psi_0}_{L^q(\Omega,L^2)}^{1-\mu}
  \norm{\psi_{\mp}^R}_{L^1 \left(\Omega,X^{0,b}_{\mp \xi}(0,T) \right)}^{\mu}
  ,
\end{equation}
Note that $q \le p$, so that we can replace $L^q$ by $L^p$ in the last estimate.

So finally, combining \eqref{psi-R-bound-1}--\eqref{psi-R-bound-4}, making use of \eqref{phi-R-bound}, and setting
\[
    f(T)
    =
    \norm{\psi_+^R}_{L^1 \left( \Omega,X^{0,b}_{+\xi}(0,T) \right)}
    +
    \norm{\psi_-^R}_{L^1 \left( \Omega,X^{0,b}_{-\xi}(0,T) \right)}
    ,
\]
we deduce that for all $T \ge 1$ holds
\[
  f(T) \le CT \norm{\psi_0}_{L^2(\Omega,L^2)} + C(T)A(T) T^{(1-\mu)/2} \norm{\psi_0}_{L^p(\Omega,L^2)}^{1-\mu} \left[ f(T)\right]^\mu,
\]
where $A(T)$ stands for the expression on the right hand side of \eqref{phi-R-bound}. Recalling that $0 < \mu < 1/2$, we conclude that
\[
  f(T) \le C T \left( 1 + C(T)A(T) \right) \left( 1 + \norm{\psi_0}_{L^p(\Omega,L^2)} \right)
  \left( 1 +  \sqrt{f(T)} \right)
\]
for all $T \ge 1$. By the next lemma, we then get \eqref{psi-R-bound}, and this concludes the proof of global existence.

\begin{lemma}
If $a,b \in [0,\infty)$ satisfy
\begin{equation}\label{boot-strap}
    b \le a \left( 1 + \sqrt{b} \right),
\end{equation}
then
\[
  b < 1 + 4a^2.
\]
\end{lemma}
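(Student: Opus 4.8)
The plan is to argue by contradiction: assume that \eqref{boot-strap} holds but, contrary to the claim, $b \ge 1 + 4a^2$, and then derive a contradiction. The first step is to pass to the variable $y = \sqrt{b}$. Under the contradiction hypothesis $b \ge 1 + 4a^2 \ge 1$, so $y \ge 1$, and \eqref{boot-strap} reads $y^2 \le a(1+y)$. Dividing by $1+y > 0$ gives the lower bound $a \ge y^2/(1+y)$, while the contradiction hypothesis, rewritten as $4a^2 \le y^2 - 1$, gives the upper bound $a^2 \le (y^2-1)/4$.

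Next I would combine these two bounds on $a^2$. Squaring the lower bound yields $a^2 \ge y^4/(1+y)^2$, hence $y^4/(1+y)^2 \le (y^2-1)/4$. Clearing the positive denominator and expanding $(y^2-1)(1+y)^2 = y^4 + 2y^3 - 2y - 1$ turns this into the polynomial inequality $3y^4 - 2y^3 + 2y + 1 \le 0$. The final step is to observe that this is impossible for $y \ge 1$: indeed $3y^4 - 2y^3 = y^3(3y-2) \ge y^3 \ge 1 > 0$ and $2y + 1 > 0$, so the left-hand side is strictly positive. This contradiction forces $b < 1 + 4a^2$.

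I do not expect any genuine obstacle here; the argument is elementary. The only point worth keeping track of is that the inequality $y \ge 1$ — which is precisely what makes the closing polynomial estimate work — comes for free from the contradiction hypothesis, and that the degenerate cases are automatic (if $a = 0$ then \eqref{boot-strap} forces $b = 0 < 1$, and if $b = 0$ the conclusion is trivial). An alternative, slightly more computational route would be to treat $x^2 - ax - a \le 0$ as a quadratic in $x = \sqrt{b}$ and bound its larger root, but the contradiction argument above is cleaner and avoids any square-root estimates.
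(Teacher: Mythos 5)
Your proof is correct and follows essentially the same route as the paper's: a contradiction argument from the assumption $b \ge 1 + 4a^2$, exploiting the resulting bound $b \ge 1$. The paper's version is just shorter — since $\sqrt{b} \ge 1$ it notes $1+\sqrt{b} \le 2\sqrt{b}$, so \eqref{boot-strap} gives $b \le 2a\sqrt{b}$, hence $b \le 4a^2$, contradicting the assumption — whereas you reach the same contradiction through a (correct but heavier) quartic polynomial estimate in $y=\sqrt{b}$.
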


\begin{proof}
To get a contradiction, assume that $b \ge 1 + 4a^2$.
Then $b \ge 1$, so \eqref{boot-strap} implies $b \le a 2\sqrt{b}$, hence
$b \le 4a^2$, contradicting our assumption.
\end{proof}

%%%%%%%%%%%%%%%%%%%%%%%%%%%%%%%%%%%%%%%%%%%%%%%%%%%%%%%%%%%%%%%%%%%%%%%%%%%%%
\section{Bourgain isometry}
\label{Bourgain_isometry}
%%%%%%%%%%%%%%%%%%%%%%%%%%%%%%%%%%%%%%%%%%%%%%%%%%%%%%%%%%%%%%%%%%%%%%%%%%%%%
\setcounter{equation}{0}
%%%%%%%%%%%%%%%%%%%%%%%%%%%%%%%%%%%%%%%%%%%%%%%%%%%%%%%%%%%%%%%%%%%%%%%%%%%%%

This section is devoted to the proof of Lemma \ref{Bourgain_isometry_lemma}.
We start with a very general result related to Bochner spaces.
It may be difficult to find it in the existing literature,
so we provide here a complete proof. 

\begin{lemma}
	Let $\mu$ be a $\sigma$-finite complete measure on $Y$
	and $G$ be a separable Banach space.
	Consider a closed subspace $G_0$ in $G$ with the quotient projection
	\(
		\pi_0 : G \to G / G_0
		.
	\)
	The trivial case $G_0 = G$ is excluded, of course.
	Then there exists a unique linear operator $\Phi$ making the following diagram commutative:
	\begin{equation}
	\label{Bochner_commutative_diagram}
	\begin{tikzcd}
    	L^p(Y, \mu; G)
    	\arrow[r, "P"]
    	\arrow[d, " \pi : f \mapsto \bigl( y \mapsto \pi_0 ( f(y) ) \bigr) \;\; "']
    	&
    	L^p(Y, \mu; G) / L^p(Y, \mu; G_0)
		\arrow[dl, "\Psi = \Phi^{-1}", bend left=10, dashed]
    	\\
    	L^p(Y, \mu; G / G_0)
		\arrow[ur, "\Phi", dashed]
 	\end{tikzcd}
	\end{equation}
	where $P$ is the quotient projection and $p \in [1, \infty)$.
	Moreover, $\Phi$ is invertible and
	\(
		\norm \Phi
		=	
		\norm {\Phi^{-1}}
		=
		1
		.
	\)
\end{lemma}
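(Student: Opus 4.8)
The plan is to construct $\Phi$ and $\Psi$ explicitly, check they are mutually inverse, and verify they are isometric. First I would define $\Phi$ on the quotient $L^p(Y,\mu;G)/L^p(Y,\mu;G_0)$ by sending the class $[f]$ of $f \in L^p(Y,\mu;G)$ to the function $y \mapsto \pi_0(f(y))$, i.e. $\Phi([f]) = \pi(f)$. This is well defined: if $f - g \in L^p(Y,\mu;G_0)$ then $\pi_0(f(y)) = \pi_0(g(y))$ for a.e. $y$, so $\pi(f) = \pi(g)$. By construction the lower-right triangle of \eqref{Bochner_commutative_diagram} commutes, $\Phi \circ P = \pi$, and since $\pi$ is surjective (every $h \in L^p(Y,\mu;G/G_0)$ admits a measurable, $p$-integrable lift by a standard selection/lifting argument for quotient maps of separable Banach spaces — this is where separability and $\sigma$-finiteness of $\mu$ enter), $\Phi$ is surjective. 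Uniqueness of a linear $\Phi$ with $\Phi \circ P = \pi$ is immediate because $P$ is surjective.

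Next I would establish that $\Phi$ is an isometry, which gives injectivity for free and hence invertibility. The quotient norm on $L^p(Y,\mu;G)/L^p(Y,\mu;G_0)$ is $\norm{[f]} = \inf_{g \in L^p(Y,\mu;G_0)} \norm{f - g}_{L^p(Y,\mu;G)}$, and the target norm is $\norm{\pi(f)}_{L^p(Y,\mu;G/G_0)} = \left( \int_Y \norm{\pi_0(f(y))}_{G/G_0}^p \, d\mu(y) \right)^{1/p}$, where $\norm{\pi_0(f(y))}_{G/G_0} = \inf_{w \in G_0} \norm{f(y) - w}_G$. The inequality $\norm{\pi(f)}_{L^p(Y,\mu;G/G_0)} \le \norm{[f]}$ is easy: for any $g \in L^p(Y,\mu;G_0)$ we have $\norm{\pi_0(f(y))}_{G/G_0} \le \norm{f(y) - g(y)}_G$ pointwise, so integrate and take the infimum over $g$. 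The reverse inequality is the substantive point: given $\varepsilon > 0$ I would produce a $g \in L^p(Y,\mu;G_0)$ with $\norm{f - g}_{L^p(Y,\mu;G)} \le \norm{\pi(f)}_{L^p(Y,\mu;G/G_0)} + \varepsilon$ by a measurable selection — for each $y$ pick $g(y) \in G_0$ with $\norm{f(y) - g(y)}_G \le \norm{\pi_0(f(y))}_{G/G_0} + \eta(y)$ for a suitable small measurable $\eta$, and check measurability of $y \mapsto g(y)$ using separability of $G$ (e.g. via the Kuratowski–Ryll-Nardzewski selection theorem, or by a direct countable-dense-subset argument approximating the near-minimizers). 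Integrability of $g$ follows from $\norm{g(y)}_G \le \norm{f(y)}_G + \norm{\pi_0(f(y))}_{G/G_0} + \eta(y) \le 2\norm{f(y)}_G + \eta(y)$.

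Having shown $\Phi$ is a surjective isometry, it is invertible with $\norm{\Phi} = 1$ and $\norm{\Phi^{-1}} = 1$; set $\Psi = \Phi^{-1}$, which then makes the upper triangle of \eqref{Bochner_commutative_diagram} commute as well ($\Psi \circ \pi = P$). I expect the \textbf{main obstacle} to be the measurable-selection step underlying the nontrivial norm inequality: one must carefully ensure that the pointwise near-minimizers $g(y) \in G_0$ can be chosen to depend measurably on $y$ and to remain $p$-integrable, and that the infimum defining the $G/G_0$-norm is attained up to measurable error. This is exactly the kind of routine-but-delicate point the authors flag when they say the lemma is hard to locate in the literature; separability of $G$ and completeness of $\mu$ are precisely what make the selection go through. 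All remaining verifications (well-definedness, commutativity, the easy norm inequality, and deducing $\norm{\Phi} = \norm{\Phi^{-1}} = 1$) are then formal.
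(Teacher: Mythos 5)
Your proposal is correct in substance, but it takes a genuinely different route from the paper. You construct $\Phi$ directly on classes, prove it is an \emph{isometry} by a pointwise measurable selection of near-minimizers in $G_0$ (for each $y$ choose $g(y)\in G_0$ with $\norm{f(y)-g(y)}_G\le\norm{\pi_0 f(y)}_{G/G_0}+\eta(y)$, $\eta>0$ small in $L^p$), and then read off invertibility and $\norm{\Phi}=\norm{\Phi^{-1}}=1$ at once. The paper deliberately avoids selection theorems: it reduces everything to \emph{simple} functions, for which near-minimizers can be taken constant on each partition piece, so measurability is trivial; surjectivity of $\pi$ is obtained by approximating $h$ by simple functions, lifting the telescoped differences with an $\varepsilon$-loss controlled by a fixed weight $\chi$ with $\int\chi\,d\mu=1$ (this is where $\sigma$-finiteness enters there, exactly as it enters your choice of $\eta$), and summing the resulting absolutely convergent series; the bound $\norm{\Phi}\le 1$ is likewise verified only on simple functions, after boundedness of $\Phi$ is known from $\norm{\Psi}\le1$ and invertibility. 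Your approach buys a shorter, more conceptual argument (isometry first, everything else formal), at the price of the measurability of the selection, which you correctly identify as the delicate point; note that the Kuratowski--Ryll-Nardzewski machinery is not really needed, since the elementary device of picking the \emph{first} element of a fixed countable dense subset $\{w_k\}\subset G_0$ satisfying the strict near-minimizing inequality already yields a measurable (countably valued) selection. One step you should make precise is the surjectivity of $\pi$: the countable-dense trick does not apply verbatim to the exact lifting constraint $\pi_0(v)=h(y)$, so either invoke a Bartle--Graves type continuous bounded right inverse of $\pi_0$ composed with $h$, or (cleaner, and closer to the paper) first lift simple functions -- which is trivial -- and then conclude from the fact that the range of your isometry $\Phi$ is closed and contains the dense set of (classes of) simple functions. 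With that point filled in, the argument is complete.
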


\begin{proof}

We split the proof in several steps starting with the implied correctness of the diagram \eqref{Bochner_commutative_diagram}.
\begin{enumerate}

\item

Clearly,
\(
	L^p(Y, \mu; G_0)
\)
is a closed subspace in
\(
	L^p(Y, \mu; G)
	.
\)
Hence $P$ is well defined as the corresponding quotient projection,
in particular,
\(
	\norm P = 1
	.
\)

\item
\label{proof_step2}

For any
\(
	f \in L^p(Y, \mu; G)
\)
the value $\pi (f)$ is the composition
\(
	Y \xrightarrow{f} G \xrightarrow{\pi_0} G / G_0
	.
\)
Here $G$ and $G / G_0$ are endowed with the Borel $\sigma$-algebras,
whereas $f$ is measurable and $\pi_0$ is continuous.
Therefore $\pi (f)$ is measurable.
Moreover, $\pi$ is linear with $\norm \pi \leqslant 1$.
Indeed,
\[
	\norm{
		\pi f	
	}_{ L^p(Y, \mu; G / G_0) }^p
	=
	\int_Y
	\norm{
		\pi_0 f(y)	
	}_{ G / G_0 }^p
	d \mu(y)
	\leqslant
	\int_Y
	\norm{
		f(y)	
	}_{ G }^p
	d \mu(y)
	=
	\norm{
		f	
	}_{ L^p(Y, \mu; G) }^p
	.
\]

\item
\label{proof_step3}

An important claim implied in the statement is surjectivity of $\pi$.
Let
\(
	h \in L^p(Y, \mu; G / G_0)
	.
\)
We need to find an
\(
	f \in L^p(Y, \mu; G)
\)
such that $\pi f = h$.
A direct pointwise construction may not necessarily lead even to a measurable function $f$ on $Y$.
So we approximate $h$ by a sequence $h_n, n \in \mathbb N,$ of simple functions in
\(
	L^p(Y, \mu; G / G_0)
	.
\)
Then for any $y \in Y$ and $n \in \mathbb N$ there is $f_n(y) \in G$ such that
\(
	\pi_0( f_n(y) ) = h_n(y)
\)
and $f_n$ is simple.
Note that $f_n$ may not converge anywhere.
Let us choose a subsequence in $\{ h_n \}$, while keeping the same notation,
such that
\[
	\sum_{n = 1}^{\infty}
	\norm{
		h_{n + 1} - h_n
	}_{ L^p(Y, \mu; G / G_0) }
	<
	\infty
	.
\]
Making use of the fact that $\mu$ is $\sigma$-finite,
that is
\(
	Y = \bigcup \limits _{k = 1}^{\infty} Y_k
\)
with disjoint $Y_k$ having $\mu(Y_k) < \infty$,
we can introduce a measurable function
\(
	\chi : Y \to (0, \infty)
\)
taking at most countably many values
(in fact we need below that it is constant on each $Y_k$)
and normalized by
\(
	\int \chi d \mu = 1
	.
\)
Pointwisely, we have
\[
	\norm{
		h_{n + 1}(y) - h_n(y)
	}_{ G / G_0 }
	=
	\inf
	\left \{
		\norm F_G
		\colon
		F - ( f_{n + 1}(y) - f_n(y) ) \in G_0
	\right \}
	.
\]
Now let
\(
	\left \{
		Y_{l_n}^n
	\right \}
\)
be the partition associated with $h_n$.
Then for each $n \in \mathbb N$ we can approximate the infimum on the finest partition
\(
	\left \{
		Y_{l_{n + 1}}^{n + 1}
		\bigcap
		Y_{l_n}^n
		\bigcap
		Y_k
	\right \}
\)
of $Y$ as follows
\[
	\norm{
		h_{n + 1}(y) - h_n(y)
	}_{ G / G_0 }^p
	+
	\frac {\chi(y)}{ 2^{pn} }
	>
	\norm {F_n(y)}_G^p
	,
\]
where
\(
	F_n(y) - ( f_{n + 1}(y) - f_n(y) ) \in G_0
\)
and $F_n$ is constant on each
\(
	Y_{l_{n + 1}}^{n + 1}
	\bigcap
	Y_{l_n}^n
	\bigcap
	Y_k
	.
\)
One sets $F_0 = f_1$ and so $\pi F_0 = h_1$, in particular.
Integrating the above inequality one obtains the estimate
\[
	\norm{
		h_{n + 1} - h_n
	}_{ L^p(Y, \mu; G / G_0) }
	+
	\frac 1{ 2^n }
	\geqslant
	\norm {F_n}_{ L^p(Y, \mu; G) }
	,
\]
implying
\[
	\sum_{n = 1}^{\infty}
	\norm {F_n}_{ L^p(Y, \mu; G) }
	\leqslant
	\sum_{n = 1}^{\infty}
	\norm{
		h_{n + 1} - h_n
	}_{ L^p(Y, \mu; G / G_0) }
	+ 1 < \infty
	.
\]
Thus
\(
	\sum_{n = 0}^{\infty}
	F_n
\)
converges in
\(
	L^p(Y, \mu; G)
\)
to an $F$ satisfying $\pi F = h$.
Indeed,
\[
	\pi
	\left(
		\sum_{n = 0}^N
		F_n
	\right)
	=
	\sum_{n = 1}^N
	( h_{n + 1} - h_n ) + h_1
	=
	h_{N + 1}
	\to
	h
	\ \text{ as } \
	N \to \infty
	.
\]
Therefore $\pi$ is surjective.

\item
\label{proof_step4}

Finally, we can define the linear mappings $\Phi$ and $\Psi$ in the diagram \eqref{Bochner_commutative_diagram}.
Indeed, by the previous step for any
\(
	h \in L^p(Y, \mu; G / G_0)
\)
there is an
\(
	f_h \in L^p(Y, \mu; G)
\)
such that $\pi f_h = h$.
We set
\(
	\Phi(h) = P f_h
	.
\)
If we have two elements such that
\(
	\pi f_h^1
	=
	\pi f_h^2
	=
	h
\)
then
\[
	\int_Y
	\norm{
		\pi_0 f_h^1(y)
		-
		\pi_0 f_h^2(y)
	}_{ G / G_0 }^p
	d \mu(y)
	=
	\norm{
		\pi f_h^1 - \pi f_h^2	
	}_{ L^p(Y, \mu; G / G_0) }^p
	=
	0
	.
\]
Hence for a.e. $y$ the difference
\(
	f_h^1(y)
	-
	f_h^2(y)
	\in G_0
\)
and so
\(
	f_h^1
	-
	f_h^2
	\in L^p(Y, \mu; G_0)
\)
implying the equality
\(
	Pf_h^1
	=
	Pf_h^2
\)
in $L^p(Y, \mu; G) / L^p(Y, \mu; G_0)$.
Thus $\Phi$ is a well defined linear operator.

Similarly, for any
\(
	h \in L^p(Y, \mu; G) / L^p(Y, \mu; G_0)
\)
there is obviously an
\(
	f_h \in L^p(Y, \mu; G)
\)
such that $P f_h = h$.
We can set
\(
	\Psi(h) = \pi f_h
	.
\)
If we have two elements satisfying
\(
	P f_h^1
	=
	P f_h^2
	=
	h
\)
then their difference
\(
	f_h^1
	-
	f_h^2
	\in
	L^p(Y, \mu; G_0)
	.
\)
Hence for a.e. $y$ the difference
\(
	f_h^1(y)
	-
	f_h^2(y)
	\in G_0
\)
and so
\[
	\left(
		\pi f_h^1
		-
		\pi f_h^2
	\right)
	(y)	
	=
	\pi_0
	\left(
		f_h^1(y)
		-
		f_h^2(y)
	\right)
	=
	0
	\ \text{ in } \
	G / G_0
\]
implying the equality
\(
	\pi f_h^1
	=
	\pi f_h^2
\)
in $L^p(Y, \mu; G / G_0)$.
Thus $\Psi$ is well defined as well.

\item

Clearly, so defined $\Phi$ and $\Psi$ are the only linear operators
making the diagram \eqref{Bochner_commutative_diagram} commutative.
Moreover, it is straightforward to check that the compositions
$\Psi \Phi$ and $\Phi \Psi$ are identities in the spaces
\(
	L^p(Y, \mu; G / G_0)
\)
and
\(
	L^p(Y, \mu; G) / L^p(Y, \mu; G_0)
	,
\)
respectively.
Therefore $\Psi = \Phi^{-1}$.

\item
\label{proof_step6}

In this step we prove the bound $\norm \Psi \leqslant 1$
which in turn will automatically imply that $\Phi$ is bounded as well and $\norm \Phi \geqslant 1$.
Let
\(
	h \in L^p(Y, \mu; G) / L^p(Y, \mu; G_0)
\)
then by steps \ref{proof_step2}, \ref{proof_step4}
we have
\[
	\norm{
		\Psi h
	}_{ L^p(Y, \mu; G / G_0) }
	=
	\norm{
		\pi f_h
	}_{ L^p(Y, \mu; G / G_0) }
	\leqslant
	\norm{
		f_h
	}_{ L^p(Y, \mu; G) }
	.
\]
In other words,
\(
	\norm{
		\Psi h
	}_{ L^p(Y, \mu; G / G_0) }
	\leqslant
	\norm{
		f
	}_{ L^p(Y, \mu; G) }
\)
for every $f$ belonging to the preimage $P^{-1} \{ h \}$.
Therefore passing to infimum over this preimage one recovers
the quotient norm of $h$,
so that
\(
	\norm{
		\Psi h
	}_{ L^p(Y, \mu; G / G_0) }
	\leqslant
	\norm{
		h
	}_{ L^p(Y, \mu; G) / L^p(Y, \mu; G_0) }
	,
\)
which proves the claim $\norm \Psi \leqslant 1$ due to the arbitrary choice of $h$.

\item

In order to complete the proof of the lemma it is only left to get the bound $\norm \Phi \leqslant 1$.
Here we need to be careful again about measurability of pointwise quotient norm approximations.
So we will check that the bound
\(
	\norm{
		\Phi h
	}_{ L^p(Y, \mu; G) / L^p(Y, \mu; G_0) }
	\leqslant
	\norm{
		h
	}_{ L^p(Y, \mu; G / G_0) }
\)
holds for every simple $h$.
It is enough since $\Phi$ is already known to be bounded, by the previous step.

Similarly to the previous step, from the identity $\norm P = 1$ one deduces
\begin{equation}
\label{proof_step7_bound}
	\norm{
		\Phi h
	}_{ L^p(Y, \mu; G) / L^p(Y, \mu; G_0) }
	\leqslant
	\norm{
		f
	}_{ L^p(Y, \mu; G) }
	, \quad
	f \in \pi^{-1} \{ h \}
	.
\end{equation}
Let $\varepsilon > 0$ and function $\chi$ be as in step \ref{proof_step3}.
Then
\[
	\norm{
		h
	}_{ L^p(Y, \mu; G / G_0) }^p
	+
	\varepsilon
	=
	\int_Y
	\left(
		\inf_{ g \in \pi_0^{-1} \{ h(y) \} } \norm g _G^p
		+
		\varepsilon \chi(y)
	\right)
	d \mu(y)
	,
\]
and so there exists $g_{\varepsilon} : Y \to G$ such that
\[
	\inf_{ g \in \pi_0^{-1} \{ h(y) \} } \norm g _G^p
	+
	\varepsilon \chi(y)
	\geqslant
	\norm {g_{\varepsilon}(y)} _G^p
	, \quad
	\pi_0( g_{\varepsilon}(y) ) = h(y)
\]
and it takes at most countably many values,
its partition being the finest one for $h$ and $\chi$.
In particular,
\(
	g_{\varepsilon} \in L^p(Y, \mu; G)
\)
and
\(
	\pi ( g_{\varepsilon} )  = h
	,
\)
which because of \eqref{proof_step7_bound} implies
\begin{equation*}
	\norm{
		\Phi h
	}_{ L^p(Y, \mu; G) / L^p(Y, \mu; G_0) }^p
	\leqslant
	\norm{
		g_{\varepsilon}
	}_{ L^p(Y, \mu; G) }^p
	\leqslant
	\norm{
		h
	}_{ L^p(Y, \mu; G / G_0) }^p
	+
	\varepsilon
	.
\end{equation*}
Passing to the limit one obtains
\(
	\norm{
		\Phi h
	}_{ L^p(Y, \mu; G) / L^p(Y, \mu; G_0) }
	\leqslant
	\norm{
		h
	}_{ L^p(Y, \mu; G / G_0) },
\)
completing the proof.

\end{enumerate}

\end{proof}

We formulate the following simple lemma, that can be either found in \cite{Helemskii_lectures_2006}
or easily proved with the help of an argument similar to one used in step \ref{proof_step6} of the previous proof.

\begin{lemma}
	Consider nontrivial closed subspaces $E_0, F_0$ in normed spaces $E, F$, respectively.
	Let $\mathcal T : E \to F$ be a linear bounded operator satisfying $\mathcal T (E_0) \subset F_0$.
	Then there exists a unique linear $\widetilde \mathcal T$ making the following diagram commutative:
	\begin{equation}
	\label{quotient_commutative_diagram}
	\begin{tikzcd}
    	E
    	\arrow[r, "\mathcal T"]
    	\arrow[d, " P_1 \;\; "']
    	&
    	F
		\arrow[d, " \;\; P_2 "]
    	\\
    	E / E_0
		\arrow[r, "\widetilde \mathcal T", dashed]
		&
		F / F_0
 	\end{tikzcd}
	\end{equation}
	Moreover,
	\(
		\norm{ \widetilde \mathcal T }
		\leqslant
		\norm{ \mathcal T }
		.
	\)
\end{lemma}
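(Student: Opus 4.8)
The plan is to construct $\widetilde{\mathcal T}$ by the only possible formula, check it is well defined, verify commutativity and uniqueness, and finally estimate its norm using the quotient-norm definition. Concretely, I would define
\[
  \widetilde{\mathcal T}(x + E_0) = \mathcal T(x) + F_0 \qquad (x \in E),
\]
i.e. $\widetilde{\mathcal T}(P_1 x) = P_2(\mathcal T x)$. The first thing to check is that this does not depend on the representative $x$: if $x - x' \in E_0$, then $\mathcal T(x) - \mathcal T(x') = \mathcal T(x - x') \in F_0$ by the hypothesis $\mathcal T(E_0) \subset F_0$, so $\mathcal T(x) + F_0 = \mathcal T(x') + F_0$. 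Linearity of $\widetilde{\mathcal T}$ is then immediate from linearity of $\mathcal T$ and of the quotient operations. Commutativity of \eqref{quotient_commutative_diagram} holds by construction, since $\widetilde{\mathcal T}(P_1 x) = \mathcal T(x) + F_0 = P_2(\mathcal T x)$ for all $x \in E$.

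Next I would settle uniqueness. Suppose $S \colon E/E_0 \to F/F_0$ is any linear map with $S P_1 = P_2 \mathcal T$. Since $P_1$ is surjective (every coset of $E_0$ is $P_1 x$ for some $x \in E$), $S$ is determined on all of $E/E_0$ by the values $S(P_1 x) = P_2(\mathcal T x)$, hence $S = \widetilde{\mathcal T}$.

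Finally, for the norm bound, fix $x \in E$. For every $e_0 \in E_0$ we have $\mathcal T(e_0) \in F_0$, so $\mathcal T(x) + F_0 = \mathcal T(x + e_0) + F_0$, and therefore
\[
  \norm{\widetilde{\mathcal T}(x + E_0)}_{F/F_0}
  = \inf_{f_0 \in F_0} \norm{\mathcal T(x) + f_0}_F
  \le \norm{\mathcal T(x + e_0)}_F
  \le \norm{\mathcal T} \, \norm{x + e_0}_E.
\]
Taking the infimum over $e_0 \in E_0$ gives $\norm{\widetilde{\mathcal T}(x + E_0)}_{F/F_0} \le \norm{\mathcal T} \, \norm{x + E_0}_{E/E_0}$, so $\widetilde{\mathcal T}$ is bounded with $\norm{\widetilde{\mathcal T}} \le \norm{\mathcal T}$.

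I do not expect any genuine obstacle here: the statement is purely algebraic-metric, with no measurability subtleties (in contrast with the Bochner-space lemma preceding it), and the nontriviality assumption on $E_0, F_0$ is only needed so that the quotient spaces are nonzero and the norm statement is meaningful. The one point that must not be overlooked is that the inclusion $\mathcal T(E_0)\subset F_0$ is used twice — once for well-definedness and once, in the guise $\mathcal T(x)+F_0=\mathcal T(x+e_0)+F_0$, for the norm estimate.
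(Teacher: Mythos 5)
Your proof is correct and complete; it is exactly the standard quotient construction the paper has in mind (the paper omits the argument, referring to \cite{Helemskii_lectures_2006} and to the infimum-over-preimage reasoning of step \ref{proof_step6} of the preceding lemma, which is precisely your norm estimate). Nothing is missing: well-definedness, uniqueness via surjectivity of $P_1$, and the bound $\norm{\widetilde{\mathcal T}} \leqslant \norm{\mathcal T}$ by taking the infimum over $e_0 \in E_0$ all match the intended argument.
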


We take
\(
	E = X_{ h(\xi) }^{s, b}
	,
	Y = \mathbb R^d
	,
	d \mu(\xi) = \langle \xi \rangle^{2s} d \xi
	,
	G = H^b( \mathbb R )
\)
and
\(
	F = L^2
	\left(
		\mathbb R^d
		,
		\langle \xi \rangle^{2s} d \xi
		;
		H^b( \mathbb R )
	\right)
	.
\)
We introduce $\mathcal T$ as the isometric extension of
\(
	e^{ ith(\xi) } \mathcal F_x
\)
defined on the Schwartz space
$\mathcal S \left( \mathbb R^{d + 1} \right)$
with the property \eqref{XNorm_alternative},
so that
\begin{equation}
\label{abstract_XNorm_alternative}
	\norm
	{
		u
	}
	_{ X_{h(\xi)}^{ s, b } }^2
	=
	\int
	\langle \xi \rangle ^{2s}
	\norm
    {
        \mathcal T u(\xi)
    } _{ H^b(\R) }^2
	d\xi
	.
\end{equation}
Now let us consider an interval $I = (S, T)$
and the closed subspaces
\(
	G_0
	=
	\left \{
		u \in H^b( \mathbb R )
		\colon
		u = 0 \text{ on } I
	\right \}
	,
\)
\(
	E_0
	=
	\left \{
		u \in X_{h(\xi)}^{ s, b }
		\colon
		u = 0 \text{ on } I \times \mathbb R^d
	\right \}
	.
\)
It is known that
\(
	H^b(I)
	=
	H^b( \mathbb R ) / G_0
\)
and
\(
	X_{h(\xi)}^{ s, b }(I)
	=
	X_{h(\xi)}^{ s, b } / E_0
\)
endowed with the quotient norms, of course.
We claim that there exists a unique invertible isometry $\widetilde \mathcal T$ making the following diagram commutative
\begin{equation}
\label{Bourgain_isometry_diagram}
\begin{tikzcd}
%	\hspace{-5.5cm}
   	X_{h(\xi)}^{ s, b }
   	\arrow
    [
        r, " \mathcal T = e^{ ith(\xi) } \mathcal F_x "
        ,
        yshift = 0.7ex
%        ,
%        start anchor={[xshift = -2.1cm]}
    ]
   	\arrow
    [
        d, " P_1 \;\; "'
%        ,
%        start anchor={[xshift = -2.9cm]}
%        ,
%        end anchor={[xshift = -2.9cm]}
    ]
    &
    L^2
    \left(
		\mathbb R^d
		,
		\langle \xi \rangle^{2s} d \xi
		;
		H^b( \mathbb R )
	\right)
   	\arrow[
        l, " \mathcal T^{-1} = \mathcal F_{\xi}^{-1} e^{ -ith(\xi) } "
        ,
        yshift = -0.7ex
%        ,
%        end anchor={[xshift = -2.1cm]}
    ]
    \arrow[r, " \pi "]
    \arrow[d, " P_2 \;\; "']
    &
    L^2
    \left(
		\mathbb R^d
		,
		\langle \xi \rangle^{2s} d \xi
		;
		H^b( I )
	\right)
	\arrow
    [
        dl, "\Phi"
        ,
        bend left=10
%        ,
%        shift right = -0.7ex
        ,
        dashed
    ]
	\\
%	\hspace{-5.5cm}
	X_{h(\xi)}^{ s, b }(I)	
	\arrow[
        r, "\widetilde \mathcal T", dashed
        ,
        yshift = 0.7ex
%        ,
%        start anchor={[xshift = -2.1cm]}
    ]
	&
    L^2
	\left(
		\mathbb R^d
		,
		\langle \xi \rangle^{2s} d \xi
		;
		H^b( \mathbb R )
	\right)
    /
    L^2
	\left(
		\mathbb R^d
		,
		\langle \xi \rangle^{2s} d \xi
		;
		G_0
	\right)
	\arrow[
        ur, "\Phi^{-1}"
        ,
        bend right = 2
%        ,
%        shift right = -0.7ex
        ,
        dashed
    ]
	\arrow[
        l, "\widetilde \mathcal T^{-1}", dashed
        ,
        yshift = -0.7ex
%        ,
%        end anchor={[xshift = -2.1cm]}
    ]
\end{tikzcd}
\end{equation}
This commutative diagram makes a complete sense of and proves the identity
\eqref{strong_XRestNorm_alternative}.
In other words, the term
\(
    e^{ it h(\xi) }
    \mathcal F_x
\)
staying in \eqref{strong_XRestNorm_alternative}
should be understood as the composition
\(
    \Phi^{-1} \widetilde \mathcal T
\)
having operator norm equal to unity,
namely,
\begin{equation}
\label{abstract_XRestNorm_alternative}
	\norm
	{
		u
	}
	_{ X_{h(\xi)}^{ s, b }(S,T) }
	=
	\left( \int
	\langle \xi \rangle ^{2s}
	\norm
    {
        \Phi^{-1} \widetilde \mathcal T
        u(\xi)
    } _{ H^b (S,T)}^2
	d\xi \right)^{1/2}
	.
\end{equation}
In order to appeal to the previous two lemmas, and to prove the claim,
one needs only to show the inclusions
\[
    \mathcal T(E_0)
    \subset
    L^2
    \left(
		\mathbb R^d
		,
		\langle \xi \rangle^{2s} d \xi
		;
		G_0
	\right)
    , \quad
    \mathcal T^{-1}
    \left(
        L^2
        \left(
            \mathbb R^d
            ,
            \langle \xi \rangle^{2s} d \xi
            ;
            G_0
        \right)
	\right)
    \subset
    E_0
    .
\]
Moreover, it is enough to check these on the smooth functions,
namely, that the following hold
\[
    \mathcal T
    \left(
        E_0
        \bigcap
        \left \{
            \phi \psi
            \colon
            \phi \in \mathcal S \left( \mathbb R^d \right)
            ,
            \psi \in \mathcal S ( \mathbb R )
        \right \}
	\right)
    \subset
    L^2
    \left(
		\mathbb R^d
		,
		\langle \xi \rangle^{2s} d \xi
		;
		G_0
	\right)
    ,
\]
\[
    \mathcal T^{-1}
    \left(
        \left \{
            \phi \psi
            \colon
            \phi \in \mathcal S \left( \mathbb R^d \right)
            ,
            \psi \in \mathcal S ( \mathbb R ) \bigcap G_0
        \right \}
	\right)
    \subset
    E_0
    .
\]
This is obvious and so \eqref{Bourgain_isometry_diagram}
is commutative.
Therefore
\eqref{abstract_XNorm_alternative}, \eqref{abstract_XRestNorm_alternative}
and
\eqref{XNorm_alternative}, \eqref{strong_XRestNorm_alternative}
are fully justified.

%%%%%%%%%%%%%%%%%%%%%%%%%%%%%%%%%%%%%%%%%%%%%%%%%%%%
\section{A modified Bourgain norm}\label{Mod-B-norm}
%%%%%%%%%%%%%%%%%%%%%%%%%%%%%%%%%%%%%%%%%%%%%%%%%%%%%%%%%%%%%%%%%%%%%%%%%%%%%
\setcounter{equation}{0}
%%%%%%%%%%%%%%%%%%%%%%%%%%%%%%%%%%%%%%%%%%%%%%%%%%%%%%%%%%%%%%%%%%%%%%%%%%%%%

In preparation for the proof of the cutoff estimates, we now investigate more closely the modified Bourgain norm defined in \eqref{Bourgain_Slobodeckij_norm}, and derive some of its key properties.

Fix $s \in \R$, $b \in (0,1/2)$ and $h \in C(\R^d,\R)$. Write the norm \eqref{Bourgain-norm} as
\[
  \norm{u}_{X^{s,b}}
  = \left( \int_{\R^d} \norm{U(t,\xi)}_{H^b_t(\R)}^2 \, d\xi \right)^{1/2},
\]
where the transform $u(t,x) \mapsto U(t,\xi)$ is defined by
\[
  U(t,\xi) = \japanese{\xi}^{s} e^{ith(\xi)} \widehat u(t,\xi).
\]
By \eqref{HbEquiv}, the above norm is equivalent to, with constants depending only on $b$, the norm
\[
  \norm{u}_{\widehat X^{s,b}}
  := \left( \int_{\R^d} \left( \norm{U(t,\xi)}_{L^2_t(\R)}^2 
  + \norm{U(t,\xi)}_{S^b_t(\R)}^2 \right) \, d\xi \right)^{1/2}.
\]
Inserting here the characteristic function $\mathbb 1_{(S,T)}(t)$ of a time interval $(S,T)$, we compute
\begin{equation}
\label{CutoffNorm'}
  \norm{\mathbb 1_{(S,T)} u}_{\widehat X^{s,b}}
  =
  \left( \int_{\R^d} \left( \int_S^T \Abs{U(t,\xi)}^2 \left( 1 + \frac{1}{b(t-S)^{2b}} + \frac{1}{b(T-t)^{2b}} \right) \, dt
  + \norm{U(t,\xi)}_{S^b_t(S,T)}^2 \right) \, d\xi \right)^{1/2}.
\end{equation}
By \eqref{X_characteristic_norm}, \eqref{strong_XRestNorm_alternative}
the latter is equivalent to, with constants depending only on $b$,
the restriction norm
\begin{equation}\label{RestNorm}
  \norm{u}_{X^{s,b}(S,T)}
  = \left( \int_{\R^d} \norm{U(t,\xi)}_{H^b_t(S,T)}^2 \, d\xi \right)^{1/2}.
\end{equation}

The advantage of the norm \eqref{CutoffNorm'} is that it has an explicit expression; there are no restriction norms involved. It is still a bit tricky to work with, however. We will use instead the simpler, modified norm
\begin{equation}\label{CutoffNorm''}
  \norm{u}_{\widetilde X^{s,b}(S,T)} =
  \left( \int_{\R^d} 
  \left( \frac 1{ (T - S)^{2b} }
        \int_S^T \Abs{ U(t, \xi) }^2 \, dt
        +
        \int_S^T \int_S^T
        \frac
        { | U(t, \xi) - U(r, \xi) |^2 }
        { | t - r |^{1 + 2b} }
        \, dr \, dt \right) \, d\xi \right)^{1/2},
\end{equation}
which is the same as \eqref{Bourgain_Slobodeckij_norm}, and turns out to be equivalent to the two previous norms.

In fact, we have the following.

\begin{lemma}\label{NormLemma}
Let $T_0 > 0$ and $0 < b < 1/2$. Consider an interval $(S,T)$ with length at most $T_0$. Then the norms defined on $X^{s,b}(S,T)$ by \eqref{CutoffNorm'}, \eqref{RestNorm} and \eqref{CutoffNorm''} are pairwise equivalent, with constants depending only on $b$ and $T_0$.

Moreover, for all $T > 0$ we have the estimate
\begin{equation}\label{Large-T-estimate}
  \norm{u}_{X^{s,b}(0,T)} \ge C \norm{u}_{\widetilde X^{s,b}(0,T)},
\end{equation}
where $C > 0$ depends only on $b$.
\end{lemma}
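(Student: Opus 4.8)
The plan is to prove Lemma \ref{NormLemma} by reducing everything to one-dimensional statements about the function $t \mapsto U(t,\xi)$ on the interval $(S,T)$, uniformly in the frequency parameter $\xi$, and then integrating against $d\xi$. Since the three norms \eqref{CutoffNorm'}, \eqref{RestNorm}, \eqref{CutoffNorm''} are all of the form $\left( \int_{\R^d} Q_j(U(\cdot,\xi)) \, d\xi \right)^{1/2}$, where $Q_j$ is a quadratic functional of a scalar function $v \in H^b(S,T)$, it suffices to show that the scalar functionals
\[
  Q_1(v) = \int_S^T \Abs{v(t)}^2 \left( 1 + \frac{1}{b(t-S)^{2b}} + \frac{1}{b(T-t)^{2b}} \right) dt + \norm{v}_{S^b(S,T)}^2,
\]
\[
  Q_2(v) = \norm{v}_{H^b(S,T)}^2, \qquad
  Q_3(v) = \frac{1}{(T-S)^{2b}} \int_S^T \Abs{v(t)}^2 \, dt + \norm{v}_{S^b(S,T)}^2
\]
are pairwise comparable with constants depending only on $b$ and $T_0$ (an upper bound for $T-S$). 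The equivalence $Q_1 \sim Q_2$ is already recorded in the excerpt: it is exactly \eqref{HbEquivI} combined with the computation \eqref{CutoffNorm'} of the $\widehat X^{s,b}$-norm of the trivial extension, together with \eqref{H_characteristic_norm}; so that half costs nothing. The real content is $Q_1 \sim Q_3$, i.e. comparing the weight $1 + (t-S)^{-2b} + (T-t)^{-2b}$ against the single constant $(T-S)^{-2b}$, modulo the common Slobodeckij term.

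For the direction $Q_3 \lesssim Q_1$ I would simply note $(T-S)^{-2b} \lesssim b^{-1}(t-S)^{-2b}$ pointwise when $S < t < (S+T)/2$ and symmetrically near $T$, so $\frac{1}{(T-S)^{2b}}\int_S^T |v|^2 \le C_b \int_S^T |v|^2 \left( (t-S)^{-2b} + (T-t)^{-2b} \right) dt$, giving $Q_3(v) \le C_b Q_1(v)$ immediately. The reverse direction $Q_1 \lesssim Q_3$ is the crux: one must bound $\int_S^T |v(t)|^2 (t-S)^{-2b}\, dt$ (and the symmetric term) by $C_{b,T_0}\, Q_3(v)$. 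Here I would use a Hardy-type inequality adapted to the interval: write $v(t) = v(r) + (v(t)-v(r))$, average over $r$ in, say, $(S, S + (t-S)/2)$ or over the whole interval, and split. The "$v(r)$" contribution is controlled by $(T-S)^{-2b}\int |v|^2$ after integrating the resulting weight in $t$ (using $b < 1/2$ so that $\int_S^T (t-S)^{-2b} dt < \infty$, with value $\sim (T-S)^{1-2b}$), and the "$v(t)-v(r)$" contribution is exactly what the Slobodeckij double integral $\norm{v}_{S^b(S,T)}^2$ dominates, again by the condition $b<1/2$ which makes the kernel integrable. The dependence on $T_0$ enters through the constant $\int_S^T (t-S)^{-2b}\,dt \cdot (T-S)^{-1} \sim (T-S)^{-2b}$ and the crude bound $1 \le (T-S)^{-2b} T_0^{2b}$ absorbing the "$1$" in the weight of $Q_1$. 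I expect this Hardy estimate to be the main obstacle — not deep, but requiring care to keep all constants uniform in $(S,T)$ for $T-S \le T_0$.

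Finally, the estimate \eqref{Large-T-estimate}, which must hold for \emph{all} $T>0$ with $C$ depending only on $b$, is easier and does not require an upper bound on $T$. By \eqref{CutoffNorm'}--\eqref{RestNorm} (valid on any interval), $\norm{u}_{X^{s,b}(0,T)}$ is comparable, with $b$-dependent constants, to the explicit norm whose square has integrand $\int_0^T |U(t,\xi)|^2 \left( 1 + \frac{1}{b t^{2b}} + \frac{1}{b(T-t)^{2b}} \right) dt + \norm{U}_{S^b_t(0,T)}^2$; wait — that comparison's upper direction is the one needing $T_0$, but the \emph{lower} bound $\norm{u}_{X^{s,b}(0,T)}^2 \gtrsim_b$ (that explicit expression) holds with no restriction on $T$, since it is just \eqref{H_characteristic_norm} plus \eqref{HbEquiv} applied to the trivial extension on $\R$, and those are stated uniformly. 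Then I would discard the "$1$" and the "$(T-t)^{-2b}$" weight and keep only $\int_0^{T/2} |U|^2 \frac{1}{b t^{2b}} dt \ge \frac{1}{b}\left(\frac{2}{T}\right)^{2b} \int_0^{T/2}|U|^2\, dt$ plus the Slobodeckij term; combining the near-$0$ and near-$T$ halves symmetrically recovers $\frac{C}{T^{2b}}\int_0^T |U|^2\, dt + \norm{U}_{S^b_t(0,T)}^2$, which integrated in $\xi$ is exactly $C^2\norm{u}_{\widetilde X^{s,b}(0,T)}^2$. This gives \eqref{Large-T-estimate} with $C$ depending only on $b$, as required.
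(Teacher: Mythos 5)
Your reduction to the scalar functionals $Q_1,Q_2,Q_3$, the easy direction $Q_3\lesssim Q_1$, and the derivation of \eqref{Large-T-estimate} are fine and agree in substance with the paper, which gets \eqref{Large-T-estimate} from the same pointwise bound (recorded there as \eqref{MNineq}); also, the uniform equivalence $Q_1\sim Q_2$ rests on \eqref{HbEquiv} together with \eqref{H_characteristic_norm} (not on \eqref{HbEquivI}, whose constants depend on the interval), as you in fact state correctly later on. The genuine gap is at the crux, the Hardy-type bound $\int_S^T\Abs{v(t)}^2(t-S)^{-2b}\,dt\le C_b\bigl((T-S)^{-2b}\norm{v}_{L^2(S,T)}^2+\norm{v}_{S^b(S,T)}^2\bigr)$: the inequality is true, but neither of your proposed averagings proves it. Averaging over the whole interval does handle the $v(r)$ part (it yields $\tfrac{2}{1-2b}(T-S)^{-2b}\norm{v}_{L^2}^2$), but the difference part becomes $\tfrac{2}{T-S}\int_S^T\int_S^T\Abs{v(t)-v(r)}^2(t-S)^{-2b}\,dr\,dt$, and domination by the Slobodeckij kernel would require $\abs{t-r}^{1+2b}\lesssim(T-S)(t-S)^{2b}$, which fails when $t$ is near $S$ and $r$ is at distance comparable to $T-S$; treating that far region by $\Abs{v(t)-v(r)}^2\le 2\Abs{v(t)}^2+2\Abs{v(r)}^2$ just reproduces the quantity being estimated with a constant at least $4$, so nothing can be absorbed. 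Averaging over $(S,S+(t-S)/2)$ fixes the difference part, but then Fubini turns the $v(r)$ part into $\tfrac{2\cdot4^{-b}}{b}\int_S^T\Abs{v(r)}^2(r-S)^{-2b}\,dr$ — the same singular-weight integral again, with a constant that is at least $2$ for every $b\in(0,1/2)$ — so it is not controlled by $(T-S)^{-2b}\int\Abs{v}^2$ as you claim, and again there is no absorption. The condition $b<1/2$ does not enter through mere integrability of the kernel here; a one-step add-and-subtract argument cannot close.

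To complete your route you would need a genuinely multi-scale argument: split $(S,T)$ into dyadic blocks $t-S\sim 2^{-k}(T-S)$, compare $v$ on each block with its average over the adjacent larger block, telescope up to the top block, and sum the geometric series $\sum_k 2^{-k(1-2b)}$, which converges precisely because $b<1/2$ (equivalently, invoke the known fractional Hardy inequality on an interval). The paper avoids Hardy altogether: it proves the hard direction $M_T\le CN_T$ only on the fixed interval $(0,T_0)$, where it follows at once from \eqref{HbEquiv}, \eqref{HbEquivI} and \eqref{H_characteristic_norm} (this is exactly where the $T_0$-dependence enters), and then transfers it to every $0<T\le T_0$ by the substitution $g(s)=\phi(sT/T_0)$, using that both quadratic forms scale like $(T/T_0)^{1-2b}$, the constant ``$1$'' in the weight being absorbed since $T\le T_0$. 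Either supply the dyadic/telescoping argument or switch to this rescaling; as written, the key step of your proof is asserted but not proved.
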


\begin{proof} By translation invariance we may consider the interval $(0,T)$, where $0 < T \le T_0$. By \eqref{X_characteristic_norm} we already know that \eqref{CutoffNorm'} and \eqref{RestNorm} are equivalent, uniformly in $T$ (of any size). The equivalence of \eqref{CutoffNorm'} and \eqref{CutoffNorm''} reduces to proving the equivalence of the following norms on $H^b(0,T)$:
\[
  M_T(\phi) = \left( \int_0^T \Abs{\phi(t)}^2 \left( 1 + \frac{1}{bt^{2b}} + \frac{1}{b(T-t)^{2b}} \right) \, dt
  + \int_0^T \int_0^T \frac{\Abs{\phi(t)-\phi(r)}^2}{\Abs{ t - r }^{1+2b} } \, dr \, dt \right)^{1/2}
\]
and
\[
  N_T(\phi) = \left( T^{-2b} \int_0^T \Abs{\phi(t)}^2  \, dt
  + \int_0^T \int_0^T \frac{\Abs{\phi(t)-\phi(r)}^2}{\Abs{ t - r }^{1+2b} } \, dr \, dt \right)^{1/2}.
\]
First, since $0 < t < T$ implies $T^{-2b} < t^{-2b}$, it is clear that
\begin{equation} \label{MNineq}
  N_T(\phi)
  \le b^{1/2} M_T(\phi) \quad \text{for all $T > 0$}.
\end{equation}
It remains to show
\[
  M_T(\phi) \le C N_T(\phi).
\]
We claim that this holds for $T=T_0$. Granting this for the moment, it follows that the inequality holds also for $0 < T \le T_0$, since setting $g(s) = \phi\left(sT/T_0\right)$ and rescaling yields
\[
  M_T(\phi)^2
  \le \left(\frac{T}{T_0}\right)^{1-2b} M_{T_0}(g)^2 \le C^2 \left(\frac{T}{T_0}\right)^{1-2b} N_{T_0}(g)^2 = C^2 N_T(\phi)^2.
\]
It remains to prove the claim, namely $M_{T_0}(g) \le C N_{T_0}(g)$ for $g \in H^b(0,T_0)$, with $C$ depending on $T_0$ and $b$. But on the one hand, \eqref{HbEquiv} implies $M_{T_0}(g) = \norm{\mathbb 1_{(0,T_0)} g}_{H^b(\R)}$. On the other hand, $N_{T_0}(g) \sim_{b,T_0}  \norm{g}_{H^b(0,T_0)}$ by \eqref{HbEquivI}. By \eqref{H_characteristic_norm} it now follows that $M_{T_0}(g) \sim_{b,T_0} N_{T_0}(g)$.

Finally, \eqref{Large-T-estimate} is immediate from \eqref{MNineq}. This completes the proof of the lemma.
\end{proof}

Before proceeding with the proof of the cutoff estimates, we mention some properties of the modified Bourgain norm \eqref{CutoffNorm''}.

\begin{lemma}\label{Mod-rest-lemma}
Let $T_0 > 0$ and $0 < b < 1/2$. Then for all $0 < T \le T_0$ and all $u \in X^{s,b}(0,T)$ we have the bounds
\begin{equation}\label{increase2}
  \norm{u}_{\widetilde X^{s,b}(r,t)} \le C \norm{u}_{\widetilde X^{s,b}(0,T)} \quad \text{for} \quad 0 \le r < t \le T
\end{equation}
and
\begin{equation}\label{triangle}
  \norm{u}_{\widetilde X^{s,b}(0,t)}
  \le
  C \left( \norm{u}_{\widetilde X^{s,b}(0,r)} + \norm{u}_{\widetilde X^{s,b}(r,t)} \right) \quad \text{for} \quad 0 \le r < t \le T,
\end{equation}
where the constants only depend on $b$ and $T_0$.
\end{lemma}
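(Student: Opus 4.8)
The plan is to deduce both inequalities from the corresponding, and essentially trivial, facts about the ordinary Bourgain restriction norm $\norm{\cdot}_{X^{s,b}_{h(\xi)}(S,T)}$, using Lemma \ref{NormLemma} to pass back and forth between $\norm{\cdot}_{\widetilde X^{s,b}_{h(\xi)}(\cdot)}$ and $\norm{\cdot}_{X^{s,b}_{h(\xi)}(\cdot)}$. The decisive point is that the equivalence $\norm{\cdot}_{\widetilde X^{s,b}_{h(\xi)}(S,T)} \sim \norm{\cdot}_{X^{s,b}_{h(\xi)}(S,T)}$ of Lemma \ref{NormLemma} holds with a constant depending only on $b$ and $T_0$ for \emph{every} interval $(S,T)$ of length $\le T_0$; since all the intervals appearing in \eqref{increase2} and \eqref{triangle} — namely $(r,t)$, $(0,T)$, $(0,r)$ and $(0,t)$ — are contained in $(0,T)$ with $T \le T_0$, this uniform equivalence is available on each of them with one and the same constant.

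First I would prove \eqref{increase2}. By Lemma \ref{NormLemma}, $\norm{u}_{\widetilde X^{s,b}_{h(\xi)}(r,t)} \le C \norm{u}_{X^{s,b}_{h(\xi)}(r,t)}$ and $\norm{u}_{X^{s,b}_{h(\xi)}(0,T)} \le C \norm{u}_{\widetilde X^{s,b}_{h(\xi)}(0,T)}$, so it suffices to check $\norm{u}_{X^{s,b}_{h(\xi)}(r,t)} \le \norm{u}_{X^{s,b}_{h(\xi)}(0,T)}$. But this is immediate from the definition \eqref{XRestNorm} of the restriction norm: since $(r,t) \subset (0,T)$, every $v$ with $v = u$ on $(0,T) \times \R^d$ also satisfies $v = u$ on $(r,t) \times \R^d$, so the infimum defining $\norm{u}_{X^{s,b}_{h(\xi)}(r,t)}$ is taken over a larger set than the one defining $\norm{u}_{X^{s,b}_{h(\xi)}(0,T)}$.

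Next, \eqref{triangle}. We may assume $0 < r < t$, since for $r = 0$ the bound is trivial (with $C \ge 1$). By Lemma \ref{NormLemma} it is enough to show $\norm{u}_{X^{s,b}_{h(\xi)}(0,t)} \le C_b\bigl( \norm{u}_{X^{s,b}_{h(\xi)}(0,r)} + \norm{u}_{X^{s,b}_{h(\xi)}(r,t)} \bigr)$ and then convert the two pieces on the right back to $\widetilde X$-norms, again by Lemma \ref{NormLemma}. For the displayed inequality I would apply the gluing lemma, Lemma \ref{Gluing-lemma} (applicable since $0 < b < 1/2$), with $t_0 = 0$, $t_1 = r$, $t_2 = t$ and with the two input functions taken to be the restrictions of $u$ to $(0,r)$ and to $(r,t)$; the resulting glued function agrees with $u$ on $(0,t)$ except at the single point $r$, hence a.e., so the restriction norm of the glued function on $(0,t)$ equals $\norm{u}_{X^{s,b}_{h(\xi)}(0,t)}$, and Lemma \ref{Gluing-lemma} bounds it by $C_b$ times the sum of the norms of the two pieces.

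Since every step is a direct citation of a result established earlier in the paper, I do not expect a genuine obstacle here; the one thing that must be watched is that Lemma \ref{NormLemma} is invoked only on intervals of length at most $T_0$ — which, as noted, is automatic because every interval that appears sits inside $(0,T)$ with $T \le T_0$ — so that all implied constants depend only on $b$ and $T_0$, as claimed.
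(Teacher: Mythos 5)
Your proof is correct and is essentially the paper's own argument: both bounds are reduced via the uniform equivalence of Lemma \ref{NormLemma}, with \eqref{increase2} following from monotonicity of the restriction norm and \eqref{triangle} from the decomposition $\mathbb 1_{(0,t)} = \mathbb 1_{(0,r)} + \mathbb 1_{(r,t)}$ a.e. The only cosmetic difference is that you invoke Lemma \ref{Gluing-lemma} for the splitting step, whereas the paper applies the sharp-cutoff norm equivalence and the triangle inequality directly — but Lemma \ref{Gluing-lemma} is itself just that argument packaged, so the mechanism is identical.
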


\begin{proof} We apply Lemma \ref{NormLemma}. First, \eqref{increase2} follows via the equivalence of \eqref{CutoffNorm''} with the restriction norm \eqref{RestNorm}, and the fact that the latter is increasing with respect to the interval to which we restrict. Second, \eqref{triangle} follows via the equivalence of \eqref{CutoffNorm''} with the sharp cutoff norm \eqref{CutoffNorm'}, writing $\mathbb 1_{(0,t)} = \mathbb 1_{(0,r)} + \mathbb 1_{(r,t)}$ (a.e.) and applying the triangle inequality.
\end{proof}

\begin{lemma}\label{X-cont-lemma}
Let $0 < b < 1/2$. Assume that $u \in X^{s,b}(0,T)$ and set
\[
  f(t) = \norm{u}_{\widetilde X^{s,b}(0,t)}^2 \quad \text{for $0 < t \le T$}.
\]
Then $f$ is continuous on $(0,T]$. Moreover, if we additionally assume that $u \in C([0,T], H^s)$, then
\[
  \lim_{t \searrow 0} f(t) = 0,
\]
so $f$ extends to a continuous function on $[0,T]$.
\end{lemma}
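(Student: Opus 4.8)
The plan is to decompose $f$ into the two pieces coming from the definition \eqref{Bourgain_Slobodeckij_norm} of the modified norm and to analyse each separately. Writing $U(t,\xi) = \japanese{\xi}^{s} e^{ith(\xi)} \widehat u(t,\xi)$ and using that $\Abs{e^{ith(\xi)}}=1$, so that $\int_{\R^d} \Abs{U(r,\xi)}^2 \, d\xi = \norm{u(r)}_{H^s}^2$ for a.e.\ $r$ by the definition \eqref{Sob-norm} of the Sobolev norm, I would write $f(t) = g(t) + h(t)$ for $0 < t \le T$, where
\[
  g(t) = \frac{1}{t^{2b}} \int_0^t \norm{u(r)}_{H^s}^2 \, dr, \qquad
  h(t) = \int_{\R^d} \int_0^t \int_0^t \frac{\Abs{U(r,\xi)-U(\rho,\xi)}^2}{\Abs{r-\rho}^{1+2b}} \, d\rho \, dr \, d\xi.
\]

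For $g$: by \eqref{Bourgain-5} we have $u \in L^2([0,T],H^s)$, hence $r \mapsto \norm{u(r)}_{H^s}^2$ belongs to $L^1(0,T)$, so $t \mapsto \int_0^t \norm{u(r)}_{H^s}^2 \, dr$ is (absolutely) continuous on $[0,T]$; multiplying by $t^{-2b}$, continuous on $(0,T]$, shows that $g$ is continuous on $(0,T]$. For $h$: by Tonelli's theorem $h(t) = \int_{(0,t)^2} \Phi$, where $\Phi(r,\rho) = \Abs{r-\rho}^{-1-2b} \int_{\R^d} \Abs{U(r,\xi)-U(\rho,\xi)}^2 \, d\xi \ge 0$ is integrable over $(0,T)^2$ — this is precisely the finiteness of $\norm{u}_{\widetilde X^{s,b}(0,T)}$, which follows from the hypothesis $u \in X^{s,b}(0,T)$ via the norm equivalence in Lemma \ref{NormLemma} (taken with $T_0 = T$). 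Since $\mathbb 1_{(0,t)^2} \to \mathbb 1_{(0,t_0)^2}$ a.e.\ as $t \to t_0$ for any $t_0 \in [0,T]$ (the boundary of a square is null), and $\Phi \mathbb 1_{(0,T)^2} \in L^1(\R^2)$ dominates, the dominated convergence theorem gives that $h$ is continuous on $(0,T]$ and that $h(t) \to 0$ as $t \searrow 0$. Consequently $f = g+h$ is continuous on $(0,T]$, which is the first assertion.

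For the second assertion I would add the hypothesis $u \in C([0,T],H^s)$. Then $h(t) \to 0$ as $t \searrow 0$ by the above, so it remains only to treat $g$: now $r \mapsto \norm{u(r)}_{H^s}^2$ is continuous on the compact interval $[0,T]$, hence bounded by some $C_0 \ge 0$, and therefore $0 \le g(t) \le C_0 \, t^{1-2b} \to 0$ as $t \searrow 0$ since $1 - 2b > 0$. Hence $f(t) \to 0$ as $t \searrow 0$, and $f$ extends to a continuous function on $[0,T]$ with $f(0)=0$.

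The argument is essentially routine; the one point I would flag is the integrability of $\Phi$ over $(0,T)^2$, which is not immediate from the definition of the $X^{s,b}(0,T)$ restriction norm and must be imported through the norm equivalence of Lemma \ref{NormLemma}. Everything else reduces to absolute continuity of Lebesgue integrals and a single application of dominated convergence.
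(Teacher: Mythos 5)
Your proof is correct and follows essentially the same route as the paper: the same decomposition of $f$ into the weighted $L^2$ term and the Slobodeckij double integral, dominated convergence for each (with the integrability of the dominating function coming from the norm equivalence of Lemma \ref{NormLemma}), and the bound $t^{1-2b}\sup_{0\le r\le t}\norm{u(r)}_{H^s}^2$ for the first term under the continuity hypothesis. The only difference is that you spell out the justifications (absolute continuity, \eqref{Bourgain-5}, Lemma \ref{NormLemma}) that the paper leaves implicit.
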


\begin{proof}
For $0 < t \le T$,
\begin{equation}\label{f-def}
  f(t)
  =
  t^{-2b} \int_0^t \norm{u(r)}_{H^s}^2 \, dr 
  +
  \int_0^t\int_0^t \frac{\norm{U(r,\xi)-U(\sigma,\xi)}_{L^2_\xi}^2}{\Abs{r-\sigma}^{1+2b}} \, dr \, d\sigma.
\end{equation}
Continuity follows from the dominated convergence theorem. It remains to prove $\lim_{t \searrow 0} f(t) = 0$. For the second term in \eqref{f-def}, this is clear by dominated convergence, and the first term we bound by
\[
  t^{-2b}  \int_0^t \norm{u(r)}_{H^s}^2  \, dr
  \le
  t^{1-2b} \sup_{0 \le r \le t} \norm{u(r)}_{H^s}^2,
\]
which tends to zero as $t \searrow 0$ if $u \in C([0,T], H^s)$.
\end{proof}

\begin{lemma}\label{X-adapted-lemma}
Assume that $u \in X^{s,b}(0,T) \cap C([0,T], H^s)$ is an $H^s$-adapted random variable. Then the continuous function $f \colon [0,T] \to [0,\infty)$ from Lemma \ref{X-cont-lemma} is adapted.
\end{lemma}

\begin{proof}
For both terms in \eqref{f-def}, $\mathcal F_t$-measurability follows by Tonelli's theorem, since $u(t)$ is progressively measurable.
\end{proof}

%%%%%%%%%%%%%%%%%%%%%%%%%%%%%%%%%%%%%%%%%%%%%%%%%%%%%%%%%%%%%%%%%%%%%%%%%%%%%
\section{Cutoff estimates in $H^b$ and $X^{s,b}$}\label{CutoffSection}
%%%%%%%%%%%%%%%%%%%%%%%%%%%%%%%%%%%%%%%%%%%%%%%%%%%%%%%%%%%%%%%%%%%%%%%%%%%%%
\setcounter{equation}{0}
%%%%%%%%%%%%%%%%%%%%%%%%%%%%%%%%%%%%%%%%%%%%%%%%%%%%%%%%%%%%%%%%%%%%%%%%%%%%%

Here we prove Proposition \ref{MainCutoffLemma}, which we restate below for convenience. Let $0 < b < 1/2$. Fix a smooth, compactly supported function $\theta \colon \R \to \R$, and write $\theta_R(x) = \theta(x/R)$.

Via the transform $u(t,x) \mapsto U(t,\xi) = \japanese{\xi}^{s} e^{ith(\xi)} \widehat u(t,\xi)$, and recalling Lemma \ref{NormLemma}, we can identify $X^{s,b}_{h(\xi)}(S,T)$ with the space $L^2(\R^d,H^b(S,T))$ with norm (this corresponds to \eqref{CutoffNorm''})
\[
  \norm{U}_{(S,T)}
  =
  \norm{
  \left(
  \frac 1{ (T - S)^{2b} }
        \int_S^T \Abs{ U(t, \xi) }^2 \, dt
        +
        \int_S^T \int_S^T
        \frac
        { | U(t, \xi) - U(r, \xi) |^2 }
        { | t - r |^{1 + 2b} }
        \, dr \, dt
  \right)^{1/2}
  }_{L^2_\xi}.
\]
If $S=0$, we simply write $\norm{U}_T$. This norm is associated to the inner product
\[
  \innerprod{U}{V}_{T}
  =
  \int_{\R^d} \left( \frac{1}{T^{2b}} \int_0^T U(t,\xi) \overline{V(t,\xi)} \, dt
  +
  \int_0^T \int_0^T \frac{\left[U(t,\xi)-U(r,\xi)\right] \overline{\left[V(t,\xi)-V(r,\xi)\right]}}{\Abs{t-r}^{1+2b}} \, dr \, dt \right) \, d\xi.
\]
For a vector $\mathbf U=(U_1,\dots,U_n)$ we write
\[
  \norm{\mathbf U}_{T} = \left( \sum_{i=1}^n \norm{U_i}_{T}^2 \right)^{1/2}.
\]

With this notation, and taking into account Lemma \ref{NormLemma}, we can now restate Proposition \ref{MainCutoffLemma} as follows.

\begin{proposition}
\label{MainCutoffLemma-2}
   Let $T_0 > 0$. Then for all $T \in (0, T_0]$ and $R > 0$ we have the estimates
    \begin{equation}\label{MainCutoffLemma-2-1}
        \norm{
            \theta_R\left( \norm{\mathbf U}_{t}^2 \right) \mathbf U
        }_{T}
        \leqslant
        C \sqrt{R}
        ,
    \end{equation}
    \begin{equation}\label{MainCutoffLemma-2-2}
        \norm{
            \theta_R\left( \norm{\mathbf U}_{t}^2 \right) \mathbf U
            -
            \theta_R\left( \norm{\mathbf V}_{t}^2 \right) \mathbf V
        }_{T}
        \leqslant
        C \norm{\mathbf U - \mathbf V}_{T},
    \end{equation}
    where the constant $C$ depends only on $b$, $T_0$ and $\theta$.
    \end{proposition}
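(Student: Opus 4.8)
The plan is to reduce everything to a one-dimensional (in the $\xi$-integrated sense) estimate for the norm $\norm{\cdot}_{(S,T)}$ and then exploit the uniform norm equivalence from Lemma \ref{NormLemma} together with the monotonicity and quasi-triangle properties recorded in Lemma \ref{Mod-rest-lemma}. For the first estimate \eqref{MainCutoffLemma-2-1}, the key observation is that the cutoff $\theta_R(\norm{\mathbf U}_t^2)$ is a function of $t$ alone (not of $\xi$), so it commutes with the $\xi$-integration; thus I would first prove a scalar statement: for $\phi \in H^b(0,T)$ and any nonnegative nondecreasing function $g$ on $[0,T]$ with $g(t) \le \norm{\phi}_{\widetilde X^{s,b}(0,t)}^2$ in the appropriate averaged sense, one has $\norm{\theta_R(g(t))\phi}_T \le C\sqrt R$. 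The mechanism is that on the support of $\theta_R(g(\cdot))$ we have $g(t) \lesssim R$, hence $\norm{\mathbf U}_{\widetilde X^{s,b}(0,t)} \lesssim \sqrt R$ for $t$ in that support, and one then controls the product $\theta_R(\norm{\mathbf U}_t^2)\mathbf U$ by splitting the $L^2$-in-time part (trivially bounded using $\abs{\theta_R}\le 1$ and the local size $\lesssim R$) and the Slobodeckij double integral, where the difference $\theta_R(\norm{\mathbf U}_t^2)U(t) - \theta_R(\norm{\mathbf U}_r^2)U(r)$ is split as $\theta_R(\norm{\mathbf U}_t^2)(U(t)-U(r)) + (\theta_R(\norm{\mathbf U}_t^2)-\theta_R(\norm{\mathbf U}_r^2))U(r)$. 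The first piece is handled by $\abs{\theta_R}\le 1$ and monotonicity, the second uses the Lipschitz bound on $\theta_R$, namely $\abs{\theta_R(a)-\theta_R(b)} \le (C/R)\abs{a-b}$, combined with the fact that $t\mapsto\norm{\mathbf U}_t^2$ has controlled increments (this is where the explicit formula \eqref{f-def} for $f(t)$ and the continuity established in Lemma \ref{X-cont-lemma} enter).

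For the difference estimate \eqref{MainCutoffLemma-2-2}, I would write
\[
  \theta_R(\norm{\mathbf U}_t^2)\mathbf U - \theta_R(\norm{\mathbf V}_t^2)\mathbf V
  =
  \theta_R(\norm{\mathbf U}_t^2)(\mathbf U - \mathbf V)
  +
  \bigl(\theta_R(\norm{\mathbf U}_t^2) - \theta_R(\norm{\mathbf V}_t^2)\bigr)\mathbf V
\]
and estimate each term separately. The first term is bounded by $C\norm{\mathbf U - \mathbf V}_T$ by the same type of argument as in \eqref{MainCutoffLemma-2-1}: the scalar multiplier $\theta_R(\norm{\mathbf U}_t^2)$ is bounded by $1$, has Slobodeckij-type increments controlled via the Lipschitz bound on $\theta_R$ and the increment control on $t\mapsto\norm{\mathbf U}_t^2$ (which contributes, after Cauchy–Schwarz in the double integral, a factor proportional to $\norm{\mathbf U}_T \lesssim \sqrt R$ on the support, times $\norm{\mathbf U-\mathbf V}_T$, but crucially also a smallness coming from the weight $\abs{t-r}^{-1-2b}$ that must be carefully tracked — this is the delicate point flagged in the introduction as being more subtle than in \cite{Bouard_Debussche2007}). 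For the second term, on the region where $\norm{\mathbf V}_t \le C\sqrt R$ (outside it $\mathbf V$ is already large but then $\theta_R(\norm{\mathbf V}_t^2)=0$ so one has to be careful near the transition) one uses $\abs{\theta_R(\norm{\mathbf U}_t^2) - \theta_R(\norm{\mathbf V}_t^2)} \le (C/R)\bigl|\norm{\mathbf U}_t^2 - \norm{\mathbf V}_t^2\bigr| \le (C/R)(\norm{\mathbf U}_t + \norm{\mathbf V}_t)\norm{\mathbf U - \mathbf V}_t$, and the factor $(\norm{\mathbf U}_t + \norm{\mathbf V}_t)/R \lesssim 1/\sqrt R$ on the relevant support absorbs everything, leaving $\norm{\mathbf U-\mathbf V}_T$ times a bounded constant.

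The main obstacle, as the authors themselves signal, is the treatment of the Slobodeckij double-integral contribution when the cutoff multiplier itself varies in $t$: one needs a precise quantitative bound on the increments $\bigl|\theta_R(\norm{\mathbf U}_t^2) - \theta_R(\norm{\mathbf U}_r^2)\bigr|$ that is integrable against $\abs{t-r}^{-1-2b}$ after multiplying by $\abs{U(r,\xi)}^2$ (or $\abs{(U-V)(r,\xi)}^2$), and this requires knowing that $t\mapsto\norm{\mathbf U}_t^2$ is not merely continuous but has increments controlled in a way compatible with the fractional weight — essentially a Hölder-type or averaged modulus-of-continuity estimate derived from \eqref{f-def}. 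This is exactly where the factor $1/(T-S)^{2b}$ in the definition \eqref{Bourgain_Slobodeckij_norm} of the modified norm becomes essential (without it the norm equivalence \eqref{ModEquivalence} would fail to be uniform in the interval length, and the rescaling step in the proof of Lemma \ref{NormLemma} would break down). I would organize the proof so that all such increment bounds are first established as auxiliary scalar lemmas on $H^b(0,T)$, invoking \eqref{HbEquiv}, \eqref{HbEquivI}, \eqref{H_characteristic_norm} and the rescaling identity $M_T(\phi)^2 \le (T/T_0)^{1-2b}M_{T_0}(g)^2$ from the proof of Lemma \ref{NormLemma}, and only afterwards assemble the vector-valued statement by summing over the $n$ components and integrating in $\xi$ (the cutoff being $\xi$-independent throughout).
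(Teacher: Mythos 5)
Your high-level decompositions (splitting off the cutoff in the Slobodeckij double integral, writing the difference as $\theta_R(\norm{\mathbf U}_t^2)(\mathbf U-\mathbf V)+[\theta_R(\norm{\mathbf U}_t^2)-\theta_R(\norm{\mathbf V}_t^2)]\mathbf V$, and restricting to the region where the relevant norm is $\lesssim\sqrt R$ by a last-exit time, using continuity from Lemma \ref{X-cont-lemma} and the monotonicity/quasi-triangle properties) match the paper's strategy. But the proposal has a genuine gap at exactly the point you flag as ``the delicate point'': you never prove, or even correctly formulate, the quantitative bound on the increments of $t\mapsto\norm{\mathbf U}_t^2$ (more generally of $t\mapsto\innerprod{V}{W}_t$) against the weight $\abs{t-r}^{-1-2b}$. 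You suggest this should follow from an ``averaged modulus-of-continuity estimate derived from \eqref{f-def}'' or from continuity, but no such simple estimate can work: as Remark \ref{Cutoffremark} shows, the natural increment bound of the type \eqref{Trivial-cutoff-estimate-2} (the one used in \cite{Bouard_Debussche2007}) already produces a divergent integral for $b\ge 1/4$ when tested on $\mathbf U=V=\mathbb 1_{(0,T)}f$. What is actually needed is the trilinear estimate of Lemma \ref{LemmaKeyEst1},
\[
  \int_{\R^d}\int_0^T \Abs{U(t,\xi)}^2\int_0^t\frac{\Abs{\innerprod{V}{W}_t-\innerprod{V}{W}_r}^2}{(t-r)^{1+2b}}\,dr\,dt\,d\xi
  \le C\norm{U}_T^2\norm{V}_T^2\norm{W}_T^2,
\]
whose proof is the technical core of the section: one must split $\innerprod{V}{W}_t$ into its weighted-$L^2$ and Slobodeckij parts, expand the increment of the weighted-$L^2$ part into three algebraically chosen pieces (exploiting the factor $t^{-2b}$ built into the modified norm), and rearrange the Slobodeckij contribution by a Fubini argument (Lemma \ref{LemmaKeyEst2}). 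None of this is present or replaceable by a Hölder-type bound, so as written your estimate of the term $B_2$-type contributions does not close.

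A second, related omission concerns the difference bound \eqref{MainCutoffLemma-2-2}. For the term $[\theta_R(\norm{\mathbf U}_t^2)-\theta_R(\norm{\mathbf V}_t^2)]\mathbf V$, its Slobodeckij seminorm contains the second difference $\kappa(t)-\kappa(r)$ with $\kappa=\theta_R(\norm{\mathbf U}_\cdot^2)-\theta_R(\norm{\mathbf V}_\cdot^2)$, and your pointwise Lipschitz bound $\abs{\kappa(t)}\le (C/R)(\norm{\mathbf U}_t+\norm{\mathbf V}_t)\norm{\mathbf U-\mathbf V}_t$ only handles the terms where $\kappa$ is evaluated at a single time; if you estimate $\abs{\kappa(t)-\kappa(r)}$ by the triangle inequality in the cutoff you lose the factor $\norm{\mathbf U-\mathbf V}_T$ altogether, and if you use only the Lipschitz bound in time you fall back into the divergent-integral trap above. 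The paper resolves this with the double mean value theorem (Lemma \ref{DiffLemma}), which factors the second difference into (time increment)$\times$($\mathbf U$-$\mathbf V$ discrepancy) plus a mixed second-difference term, each of which is then fed into Lemma \ref{LemmaKeyEst1}; and since the resulting constants involve $M^2/R$ and $M^4/R^2$ with $M=\norm{\mathbf U}_T+\norm{\mathbf V}_T$ not globally $\lesssim\sqrt R$, one still needs the careful hybrid case analysis with the exit times $T_{\mathbf U},T_{\mathbf V}$ (Lemmas \ref{CutoffLemma2} and \ref{DifferenceLemma2}), which you gesture at but do not carry out. In short, the skeleton is right, but the two lemmas that make the argument work at $b$ close to $1/2$ — the trilinear increment estimate and the second-difference expansion — are missing, and without them the proposal does not yield the stated bounds.
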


The remainder of this section is devoted to the proof of this result. For convenience, and without loss of generality, we assume that $\Abs{\theta} \le 1$ and $\theta$ is supported in $[-2,2]$. Throughout we assume that $T_0 > 0$, $b \in (0,1/2)$ and that functions $U,V$ etc.~ are in $L^2(\R^d,H^b(S,T))$, and similarly for vectors $\mathbf U,\mathbf V$ etc.

We note the bound, for all $x,y \in \R$, 
\begin{equation}\label{Trivial-theta-bound}
  \Abs{\theta_R(x) - \theta_R(y)} \le \frac{C}{R} \Abs{x-y}.
\end{equation}
This holds with $C=\norm{\theta'}_{L^\infty}$.

We first prove some preliminary lemmas.

\subsection{Preliminary estimates}

\begin{lemma}\label{LemmaKeyEst1} For all $T \in (0,T_0]$ and $R > 0$ we have the estimate
\begin{equation}\label{KeyEst1}
  \int_{\R^d} \int_0^T \Abs{U(t,\xi)}^2 \int_0^t \frac{\Abs{ \Innerprod{V}{W}_t - \innerprod{V}{W}_r  }^2}{(t-r)^{1+2b}} \, dr \, dt \, d\xi
  \le C \norm{U}_T^2 \norm{V}_T^2 \norm{W}_T^2,
\end{equation}
where $C$ depends only on $b$ and $T_0$.
\end{lemma}

\begin{proof}
We write
\[
  \innerprod{V}{W}_t = \mu(t) + \nu(t),
\]
where
\begin{align*}
  \mu(t) &= t^{-2b} \int_{\R^d} \int_0^t V(s,\zeta) \overline{W(s,\zeta)} \, ds \, d\zeta,
  \\
  \nu(t) &= 2 \int_{\R^d} \int_0^t \int_0^s \frac{\left[V(s,\zeta)-V(\sigma,\zeta)\right] \overline{\left[W(s,\zeta)-W(\sigma,\zeta)\right]}}{(s-\sigma)^{1+2b}} \, d\sigma \, ds \, d\zeta.
\end{align*}
The left side of \eqref{KeyEst1} is therefore bounded by $2(I+J)$, where
\begin{align*}
  I &= \int_{\R^d} \int_0^T \Abs{U(t,\xi)}^2 \int_0^t \frac{\Abs{\mu(t) - \mu(r)}^2}{(t-r)^{1+2b}} \, dr \, dt \, d\xi,
  \\
  J &= \int_{\R^d} \int_0^T \Abs{U(t,\xi)}^2 \int_0^t \frac{\Abs{\nu(t) - \nu(r)}^2}{(t-r)^{1+2b}} \, dr \, dt \, d\xi.
\end{align*}

To estimate $J$ we note that
\[
  \nu(t) - \nu(r) = 2 \int_{\R^d} \int_r^t \int_0^s \frac{\left[V(s,\zeta)-V(\sigma,\zeta)\right] \overline{\left[W(s,\zeta)-W(\sigma,\zeta)\right]}}{(s-\sigma)^{1+2b}}  \, d\sigma \, ds \, d\zeta,
\]
so by Cauchy-Schwarz,
\begin{multline*}
  \Abs{\nu(t) - \nu(r)}^2
  \le
  2 \int_{\R^d} \int_r^t \int_0^s \frac{\Abs{V(s,\zeta)-V(\sigma,\zeta)}^2}{(s-\sigma)^{1+2b}} \, d\sigma \, ds \, d\zeta
  \\
  \times 2 \int_{\R^d} \int_r^t \int_0^s \frac{\Abs{W(s,\zeta)-W(\sigma,\zeta)}^2}{(s-\sigma)^{1+2b}} \, d\sigma \, ds \, d\zeta,
\end{multline*}
where the last line is bounded by $\norm{W}_T^2$. Thus
\begin{multline}\label{I2bound}
  J \le \norm{W}_T^2
  2 \int_{\R^d} \int_0^T \Abs{U(t,\xi)}^2 \int_0^t \frac{1}{(t-r)^{1+2b}}
  \\
  \times \left( \int_{\R^d} \int_r^t \int_0^s \frac{\Abs{V(s,\zeta)-V(\sigma,\zeta)}^2}{(s-\sigma)^{1+2b}} \, d\sigma \, ds \, d\zeta \right) \, dr \, dt \, d\xi,
\end{multline}
and Lemma \ref{LemmaKeyEst2} below yields the desired estimate $J \le C \norm{U}_T^2 \norm{V}_T^2 \norm{W}_T^2$.

In $I$ we split the innermost integral as $\int_0^{t/2} + \int_{t/2}^t$ and write $I = I_1 + I_2$
accordingly. The term $I_1$ is easy to handle since $t-r \sim t$. Applying Cauchy-Schwarz and \eqref{increase2} to bound $\Abs{\mu(t)} \le \norm{V}_t \norm{W}_t \le C \norm{V}_T \norm{W}_T$, we then simply estimate
\[
  I_1
  \le
  C \norm{V}_T^2 \norm{W}_T^2
  \left( \int_{\R^d} \int_0^T \Abs{U(t,\xi)}^2 t^{-2b} \, dt \, d\xi \right)
  \le
  C' \norm{V}_T^2 \norm{W}_T^2 \norm{U}_T^2,
\]
where we used Lemma \ref{NormLemma} in the last step.

It remains to consider $I_2$. Writing
\[
  \mu(t) = t^{1-2b} f(t), \quad \text{where} \quad f(t) = \frac{1}{t} \int_0^t g(s) \, ds,
  \quad
  g(s) = \int_{\R^d} V(s,\zeta) \overline{W(s,\zeta)} \, d\zeta,
\]
we expand
\begin{align*}
  \mu(t) - \mu(r)
  &=
  (t^{1-2b} - r^{1-2b}) f(t) + r^{1-2b} [f(t)-f(r)]
  \\
  &=
  (t^{1-2b} - r^{1-2b}) f(t) + r^{1-2b} \left[ \left( \frac{1}{t} - \frac{1}{r} \right) \int_0^r g(s) \, ds
  + \frac{1}{t} \int_r^t g(s) \, ds \right]
  \\
  &=
  \frac{t^{1-2b} - r^{1-2b}}{t^{1-2b}} \mu(t) + \frac{r-t}{t} \mu(r)
  + \frac{r^{1-2b}}{t} \int_r^t g(s) \, ds
  \\
  &=: \Delta_1 + \Delta_2 + \Delta_3.
\end{align*}
Thus $I_2 \le 3( K_1 + K_2 + K_3)$, where
\[
  K_j = \int_{\R^d} \int_0^T \Abs{U(t,\xi)}^2
  \int_{t/2}^t \frac{\Delta_j^2}{(t-r)^{1+2b}} \, dr \, dt \, d\xi \qquad (j=1,2,3).
\]

For $K_2$ we use once more the bound $\Abs{\mu(r)} \le C \norm{V}_T \norm{W}_T$ and get
\begin{multline*}
  K_2 \le C \norm{V}_T^2 \norm{W}_T^2
  \int_{\R^d} \int_0^T \Abs{U(t,\xi)}^2 t^{-2}
  \left( \int_{t/2}^t (t-r)^{1-2b} \, dr \right) \, dt \, d\xi
  \\
  \le C' \norm{V}_T^2 \norm{W}_T^2
  \int_{\R^d} \int_0^T \Abs{U(t,\xi)}^2 t^{-2b}
  \, dt \, d\xi
  \le C'' \norm{V}_T^2 \norm{W}_T^2 \norm{U}_T^2.
\end{multline*}
The same bound is obtained for $K_1$, since there $t^{1-2b} - r^{1-2b} \sim t^{-2b} (t-r)$.

For $K_3$ we bound, by Cauchy-Schwarz,
\[
  (t-r)^{-2b} \int_r^t \Abs{g(s)} \, ds \le \norm{V}_{(r,t)} \norm{W}_{(r,t)}  \le C \norm{V}_T \norm{W}_T,
\]
where the last inequality follows from \eqref{increase2}. Thus
\begin{multline*}
  K_3
  \le
  C \norm{V}_T^2 \norm{W}_T^2
  \int_{\R^d} \int_0^T \Abs{U(t,\xi)}^2 t^{-4b}
  \left( \int_{t/2}^t (t-r)^{2b-1} \, dr \right) \, dt \, d\xi
  \\
  \le C' \norm{V}_T^2 \norm{W}_T^2
  \int_{\R^d} \int_0^T \Abs{U(t,\xi)}^2 t^{-2b}
  \, dt \, d\xi
  \le C'' \norm{V}_T^2 \norm{W}_T^2 \norm{U}_T^2.
\end{multline*}
This completes the proof of the lemma.
\end{proof}

The following lemma was used in the above proof.

\begin{lemma}\label{LemmaKeyEst2}
For all $T \in (0,T_0]$ and $R > 0$  we have the estimate
\begin{multline*}
  \int_{\R^d} \int_0^T \Abs{U(t,\xi)}^2 \int_0^t \frac{1}{(t-r)^{1+2b}}
%  \\
%  \times
  \left( \int_{\R^d} \int_r^t \int_0^s \frac{\Abs{V(s,\zeta)-V(\sigma,\zeta)}^2}{(s-\sigma)^{1+2b}} \, d\sigma \, ds \, d\zeta \right) \, dr \, dt \, d\xi
  \\
  \le C \norm{U}_T^2 \norm{V}_T^2,
\end{multline*}
with a constant $C$ depending only on $b$ and $T_0$.
\end{lemma}

\begin{proof} Here $0 < r < s < t < T$ and $0 < \sigma < s$. Rearranging the order of the integrations, we rewrite the integral as
\begin{multline*}
  \int_{\R^d} \int_0^T \int_0^s \left( \int_{\R^d} \int_s^T \Abs{U(t,\xi)}^2 \left( \int_0^s \frac{dr}{(t-r)^{1+2b}} \right) \, dt \, d\xi \right) 
%  \\
%  \times
  \frac{\Abs{V(s,\zeta)-V(\sigma,\zeta)}^2}{(s-\sigma)^{1+2b}} \, d\sigma \, ds \, d\zeta
  \\
  \le
  \int_{\R^d} \int_0^T \int_0^s \left( \int_{\R^d} \int_s^T \Abs{U(t,\xi)}^2 \frac{1}{2b(t-s)^{2b}} \, dt \, d\xi \right) \frac{\Abs{V(s,\zeta)-V(\sigma,\zeta)}^2}{(s-\sigma)^{1+2b}} \, d\sigma \, ds \, d\zeta.
\end{multline*}
But by Lemma \ref{NormLemma} and \eqref{increase2},
\[
  \int_{\R^d} \int_s^T \Abs{U(t,\xi)}^2 \frac{1}{2b(t-s)^{2b}} \, dt \, d\xi
  \le C \norm{U}_{(s,T)}^2 \le C \norm{U}_T^2,
\]
and the claimed inequality then follows.
\end{proof}

We will also need the following double mean value theorem.

\begin{lemma}\label{DiffLemma}
For all $x,y,X,Y \in \R$,
\begin{multline*}
  \Abs{\theta(x) - \theta(y) - \theta(X) + \theta(Y)}
  \\
  \le \norm{\theta''}_{L^\infty}
  \min\left(\Abs{x-y},\Abs{X-Y}\right)
  \max\left(\Abs{x-X},\Abs{y-Y}\right)
%  \\
  +
  \norm{\theta'}_{L^\infty}\Abs{x-y-X+Y}.
\end{multline*}
\end{lemma}

\begin{proof}
Fix $x,y,X,Y$. By symmetry we may assume $\Abs{x-y} \le \Abs{X-Y}$. Defining $\kappa(t) = y + t(x-y)$ and $\rho(t) = Y + t(X-Y)$, we write
\[
  \theta(x) - \theta(y)
  = \left( \int_0^1 \theta'\left( \kappa(t) \right) \, dt \right) (x-y) =: I_1 (x-y)
\]
and
\[
  \theta(X) - \theta(Y)
  = \left( \int_0^1 \theta'\left( \rho(t) \right) \, dt \right) (X-Y) =: I_2 (X - Y).
\]
Then
\[
  \theta(x) - \theta(y) - \theta(X) + \theta(Y)
  = (I_1-I_2)(x-y) + I_2 (x-y-X+Y).
\]
Clearly, $\Abs{I_2} \le \norm{\theta'}_{L^\infty}$, so it only remains to check that
\[
  \Abs{I_1-I_2} \le \norm{\theta''}_{L^\infty}
  \frac12 \left( \Abs{x-X} + \Abs{y-Y} \right).
\]
But this is clear since
\[
  I_1-I_2
  =
  \int_0^1
  \left ( \int_0^1
  \theta''\left( \rho(t) + s \left[ \kappa(t) - \rho(t) \right] \right) \, ds
  \right)
  \left[ \kappa(t) - \rho(t) \right] \, dt
\]
and
\[
  \Abs{\kappa(t) - \rho(t)} = \Abs{y - Y + t(x-y-X+Y)} \le (1-t)\Abs{y-Y} + t\Abs{x-X}.
\]
\end{proof}

With these preliminary results in hand, we are now ready to start the proof of Proposition \ref{MainCutoffLemma-2}. We split the argument into several steps.

\subsection{Cutoff estimate, version I}\label{Cutoff-I}

\begin{lemma}\label{CutoffLemma} For all $T \in (0,T_0]$ and $R > 0$ we have the estimate
\begin{equation}\label{cutoff}
  \norm{\theta_R\left( \norm{\mathbf U}_{t}^2 \right) V}_{T} \le C \left( 1 + R^{-1} \norm{\mathbf U}_{T}^2 \right) \norm{V}_{T},
\end{equation}
with a constant $C$ depending only on $b$ and $T_0$.
\end{lemma}

\begin{proof}
Setting $\psi(t) = \theta_R(\norm{\mathbf U}_{t}^2)$, the square of the left side of \eqref{cutoff} equals $A+2B$, where
\begin{align*}
  A &= \int_{\R^d} T^{-2b} \int_0^T \Abs{\psi(t) V(t,\xi)}^2 \, dt \, d\xi,
  \\
  B &= \int_{\R^d} \int_0^T \int_0^t \frac{\Abs{\psi(t)V(t,\xi) - \psi(r)V(r,\xi)}^2}{(t-r)^{1+2b}} \, dr \, dt \, d\xi,
\end{align*}
and $\Abs{\psi(t)} \le 1$ implies $A \le \norm{V}_T^2$. Clearly, $B \le 2(B_1 + B_2)$,
where
\begin{align*}
  B_1 &= \int_{\R^d} \int_0^T \int_0^t \Abs{\psi(r)}^2 \frac{\Abs{V(t,\xi) - V(r,\xi)}^2}{(t-r)^{1+2b}} \, dr \, dt \, d\xi,
  \\
  B_2 &= \int_{\R^d} \int_0^T \Abs{V(t,\xi)}^2 \int_0^t \frac{\Abs{\psi(t) - \psi(r)}^2}{(t-r)^{1+2b}} \, dr \, dt \, d\xi,
\end{align*}
where $\Abs{\psi(r)} \le 1$ implies $B_1 \le \norm{v_j}_T^2$. In $B_2$ we estimate, using \eqref{Trivial-theta-bound},
\begin{equation}\label{Trivial-cutoff-estimate}
    \Abs{\psi(t) - \psi(r)}
  \le \frac{C}{R} \Abs{ \norm{\mathbf U}_t^2 - \norm{\mathbf U}_r^2 },
\end{equation}
and obtain $B_2 \le R^{-2} \norm{\mathbf U}_T^4 \norm{V}_T^2$ as a consequence of Lemma \ref{LemmaKeyEst1}.
\end{proof}

\begin{remark}\label{Cutoffremark}
The estimate \eqref{cutoff}, using the equivalent norm \eqref{CutoffNorm'} instead of \eqref{CutoffNorm''}, is claimed in \cite{Bouard_Debussche2007}, but there is a gap in the proof. To explain the problem, let us denote by $|\!|\!| \mathbf U |\!|\!|_T$ the norm used in \cite{Bouard_Debussche2007}, that is, the norm given by \eqref{CutoffNorm'}:
\[
  |\!|\!| \mathbf U |\!|\!|_T = \norm{\mathbb 1_{(0,T)} \mathbf u}_{\widehat X^{s,b}}, \quad \text{where $\mathbf U(t,\xi) = \japanese{\xi}^{s} e^{ith(\xi)} \widehat \mathbf u(t,\xi)$.}
\]
Then by the triangle inequality we have, for $0 < r < t$,
\begin{equation}\label{Too-rough}
    \Abs{ |\!|\!| \mathbf U |\!|\!|_t - |\!|\!| \mathbf U |\!|\!|_r } \le \norm{\mathbb 1_{(r,t)} \mathbf u}_{\widehat X^{s,b}}
    = |\!|\!| \mathbf U |\!|\!|_{(r,t)}.
\end{equation}
Combining \eqref{Too-rough} with the analogue of \eqref{Trivial-cutoff-estimate} for the norm $|\!|\!| \cdot |\!|\!|_t$ yields
\begin{equation}\label{Trivial-cutoff-estimate-2}
  \Abs{\psi(t) - \psi(r)}
  \le \frac{C}{R} |\!|\!| \mathbf U |\!|\!|_{(r,t)} |\!|\!| \mathbf U |\!|\!|_t,
\end{equation}
which is essentially what was used in \cite{Bouard_Debussche2007}, instead of \eqref{Trivial-cutoff-estimate}. But it is easy to see that \eqref{Trivial-cutoff-estimate-2} is not enough to prove the estimate for $B_2$. Indeed, take $\mathbf U$ and $V$ both to be the function $\mathbb 1_{(0,T)}(t) f(\xi)$, where $0 < T < 1$ and $\norm{f}_{L^2} = 1$. Then by \eqref{Basic-Sob-norm} we have $\norm{\mathbf U}_{(r,t)} \sim (t-r)^{1/2-b}$ and $\norm{\mathbf U}_{t} \sim t^{1/2-b}$, so if we estimate $B_2$ by using \eqref{Trivial-cutoff-estimate-2}, then we get
\[
  B_2 \le \frac{C}{R^2} \int_0^T t^{1-2b} \int_0^t (t-r)^{-4b} \, dr \, dt.
\]
But the right hand side equals $+\infty$ unless $b < 1/4$.
\end{remark}

\subsection{Cutoff estimate, version II}\label{Cutoff-II}

As a corollary to Lemma \ref{CutoffLemma}, we obtain the following variation on that result, proving the bound \eqref{MainCutoffLemma-2-1} in Proposition \ref{MainCutoffLemma-2}, as well as \eqref{MainCutoffLemma-2-2} in the case $\mathbf V=0$.

\begin{lemma}
\label{CutoffLemma2}
    For all $T \in (0, T_0]$ and $R > 0$ we have the estimates
    \begin{equation}
    \label{cutoff2}
        \norm{\theta_R\left( \norm{\mathbf U}_{t}^2\right) V}_{T} \le C \norm{V}_T,
    \end{equation}
    \begin{equation}
    \label{particular_cutoff2}
        \norm{\theta_R\left(\norm{\mathbf U}_{t}^2\right) \mathbf U}_{T}
        \le C \sqrt R
        ,
    \end{equation}
    with a constant $C$ depending only on $b$ and $T_0$.
\end{lemma}

\begin{proof} We first prove \eqref{cutoff2}. Recall the assumption $\supp\theta \subset [-2,2]$. So if $\norm{\mathbf U}_t^2 \ge 2R$ for all $t \in (0,T)$, then the left side of the inequality equals zero. It remains to consider the case where $\norm{\mathbf U}_t^2 < 2R$ for some $t \in (0,T)$. Define
\[
  T_{\mathbf U} = \sup \left\{ t \in (0,T) \colon \norm{\mathbf U}_t^2 < 2R \right\}.
\]
Then
\begin{equation}\label{Rbound}
  \norm{\mathbf U}_{T_{\mathbf U}}^2 \le 2R,
\end{equation}
by the continuity of $\norm{\mathbf U}_t$ with respect to $t > 0$ (see Lemma \ref{X-cont-lemma}). And if $T_{\mathbf U} < T$, then $\theta_R(\norm{\mathbf U}_{t}^2) = 0$ for $t \in [T_{\mathbf U},T]$. So \eqref{triangle} yields (if $T_{\mathbf U} = T$, this holds trivially)
\[
  \norm{\theta_R\left(\norm{\mathbf U}_{t}^2\right) V}_{T}
  \le C \norm{\theta_R\left(\norm{\mathbf U}_{t}^2\right) V}_{T_{\mathbf U}},
\]
and by Lemma \ref{CutoffLemma} the right hand side is dominated by
\[
  C\left( 1 + R^{-1} \norm{\mathbf U}_{T_{\mathbf U}}^2 \right) \norm{V}_{T_{\mathbf U}}
  \le 3C \norm{V}_{T_{\mathbf U}}
  ,
\]
where we used \eqref{Rbound}. By \eqref{increase2},
$\norm{V}_{T_{\mathbf U}} \le C\norm{V}_{T}$, which proves \eqref{cutoff2}. Taking now $V=U_j$ and using the bound \eqref{Rbound}, we get \eqref{particular_cutoff2}.
\end{proof}

\subsection{Difference estimate, version I}\label{Cutoff-III}

\begin{lemma}\label{DifferenceLemma} For all $T \in (0, T_0]$ and $R > 0$ we have the estimates
%
%\begin{multline}
\begin{equation}
\label{difference}
  \norm{\theta_R\left(\norm{\mathbf U}_{t}^2\right) \mathbf U - \theta_R\left(\norm{\mathbf V}_{t}^2\right) \mathbf V}_{T}
%  \\
  \le C \left( 1 + \frac{M^2}{R}
  + \frac{M^4}{R^2} \right)\norm{\mathbf U - \mathbf V}_T,
\end{equation}
%\end{multline}
where
\[
  M = \norm{\mathbf U}_T + \norm{\mathbf V}_T
\]
and the constant $C$ depends only on $b$ and $T_0$
\end{lemma}

\begin{proof}
Setting $\psi(t) = \theta_R(\norm{\mathbf U}_{t}^2)$ and $\chi(t) = \theta_R(\norm{\mathbf V}_{t}^2)$, we reduce to proving, for $1 \le j \le n$,
\[
  \norm{[\psi(t)-\chi(t)] U_j}_{T} \le \mathrm{r.h.s.}\eqref{difference}
\]
and
\[
  \norm{\chi(t) (U_j - V_j)}_{T} \le C\norm{U_j - V_j}_{T}.
\]
The latter holds by \eqref{cutoff2}, so we concentrate on the former, whose left hand side squared equals $A+2B$, where
\begin{align*}
  A &= \int_{\R^d} T^{-2b} \int_0^T \Abs{\psi(t)-\chi(t)}^2 \Abs{U_j(t,\xi)}^2 \, dt \, d\xi,
  \\
  B &= \int_{\R^d} \int_0^T \int_0^t \frac{\Abs{[\psi(t)-\chi(t)] U_j(t,\xi) - [\psi(r)-\chi(r)] U_j(r,\xi)}^2}{(t-r)^{1+2b}} \, dr \, dt \, d\xi.
\end{align*}

Using \eqref{Trivial-theta-bound} we estimate
\begin{equation}\label{alphabeta}
  \Abs{\psi(t)-\chi(t)}
  \le \frac{C}{R}
  \Abs{\norm{\mathbf U}_t^2-\norm{\mathbf V}_t^2}.
\end{equation}
Writing
\begin{equation}\label{uvexp}
  \norm{\mathbf U}_t^2-\norm{\mathbf V}_t^2 = \sum_{i=1}^n \left( \innerprod{U_i-V_i}{U_i}_t + \innerprod{V_i}{U_i-V_i}_t \right)
\end{equation}
and applying Cauchy-Schwarz yields
\begin{equation}\label{Diff1}
  \left( \norm{\mathbf U}_t^2-\norm{\mathbf V}_t^2 \right)^2
  \le
  C M^2 \norm{\mathbf U - \mathbf V}_{T}^2 \quad \text{for $0 \le t \le T$},
\end{equation}
implying $A \le C R^{-2} M^4 \norm{\mathbf U - \mathbf V}_{T}^2$, as desired.

It remains to bound the term $B$. Defining
\[
  \kappa = \psi-\chi
\]
we write $B \le 2(B_1 + B_2)$, where
\begin{align*}
  B_1 &= \int_{\R^d} \int_0^T  \int_0^t \Abs{\kappa(r)}^2 \frac{\Abs{U_j(t,\xi)-U_j(r,\xi)}^2}{(t-r)^{1+2b}} \, dr \, dt \, d\xi,
  \\
  B_2 &= \int_{\R^d} \int_0^T \Abs{U_j(t,\xi)}^2 \int_0^t \frac{\Abs{\kappa(t)-\kappa(r)}^2}{(t-r)^{1+2b}} \, dr \, dt \, d\xi.
\end{align*}

By \eqref{alphabeta} and \eqref{Diff1}, $\Abs{\kappa(r)}^2 \le C R^{-2} M^2 \norm{\mathbf U - \mathbf V}_{T}^2$,
so
\[
  B_1 \le C R^{-2} M^4 \norm{\mathbf U - \mathbf V}_{T}^2.
\]

In $B_2$ we write out
\[
  \kappa(t)-\kappa(r)
  = \theta_R(\norm{\mathbf U}_{t}^2) - \theta_R(\norm{\mathbf V}_{t}^2) - \left[\theta_R(\norm{\mathbf U}_{r}^2) - \theta_R(\norm{\mathbf V}_{r}^2) \right]
\]
and apply Lemma \ref{DiffLemma} to get
\[
  B_2 \le C \left( \frac{I}{R^4} + \frac{J}{R^2} \right),
\]
where
%\begin{multline*}
\begin{equation*}
  I = \int_{\R^d} \int_0^T \Abs{U_j(t,\xi)}^2 \left( \norm{\mathbf U}_{t}^2 - \norm{\mathbf V}_{t}^2 \right)^2
%  \\
%  \times
  \int_0^t \frac{\left( \norm{\mathbf U}_{t}^2 - \norm{\mathbf U}_{r}^2\right)^2
  + \left( \norm{\mathbf V}_{t}^2 - \norm{\mathbf V}_{r}^2 \right)^2}{(t-r)^{1+2b}} \, dr \, dt \, d\xi
\end{equation*}
%\end{multline*}
and
\[
  J = \int_{\R^d} \int_0^T \Abs{U_j(t,\xi))}^2
  \int_0^t \frac{\left( \norm{\mathbf U}_{t}^2 - \norm{\mathbf V}_{t}^2 - \norm{\mathbf U}_{r}^2 + \norm{\mathbf V}_{r}^2 \right)^2}{(t-r)^{1+2b}} \, dr \, dt \, d\xi.
\]

By \eqref{Diff1} and Lemma \ref{LemmaKeyEst1},
\[
  I \le C_b M^8 \norm{\mathbf U - \mathbf V}_{T}^2,
\]
which is acceptable. Finally, using \eqref{uvexp} and Lemma \ref{LemmaKeyEst1},
\[
  J \le C_b M^4 \norm{\mathbf U - \mathbf V}_{T}^2,
\]
and this concludes the proof of the lemma.
\end{proof}

\subsection{Difference estimate, version II}\label{Cutoff-IV}

We are now in a position to finish the proof of Proposition \ref{MainCutoffLemma-2}, by proving the bound \eqref{MainCutoffLemma-2-2}.

\begin{lemma}\label{DifferenceLemma2} For all $T \in (0,T_0]$ and $R > 0$ we have the estimate
\begin{equation}\label{difference2}
  \norm{\theta_R\left(\norm{\mathbf U}_{t}^2\right) \mathbf U - \theta_R\left(\norm{\mathbf V}_{t}^2\right) \mathbf V}_{T}
  \le C \norm{\mathbf U-\mathbf V}_T,
\end{equation}
where the constant $C$ depends only on $b$ and $T_0$.
\end{lemma}

\begin{proof} First, if $\norm{\mathbf U}_{t}^2 \ge 2R$ and $\norm{\mathbf V}_{t}^2 \ge 2R$ for all $t \in [0,T]$, then the left side equals zero.

Second, consider the hybrid case where $\norm{\mathbf U}_{t}^2 \ge 2R$ for all $t \in [0,T]$, whereas $\norm{\mathbf V}_{t}^2 < 2R$ for some $t \in [0,T]$. Then $\theta_R(\norm{\mathbf U}_{t}^2) = 0$ for $t \in [0,T]$. Defining $T_{\mathbf V}$ as in the proof of Lemma \ref{CutoffLemma2}, we find
\[
  \mathrm{l.h.s.}\eqref{difference2}
  = \norm{\theta_R\left(\norm{\mathbf V}_{t}^2\right) \mathbf V}_{T}
  \le
  C \norm{\theta_R\left(\norm{\mathbf V}_{t}^2\right) \mathbf V}_{T_{\mathbf V}}
  \le
  C \sqrt{2R},
\]
by Lemma \ref{CutoffLemma2}. So if $\norm{\mathbf U-\mathbf V}_{T_{\mathbf V}} \ge \sqrt{R}$, we are done. If, on the other hand, $\norm{\mathbf U-\mathbf V}_{T_{\mathbf v}} \le \sqrt{R}$, then by the triangle inequality, $\norm{\mathbf U}_{T_{\mathbf V}} \le (1+\sqrt{2})\sqrt{R}$, since $\norm{\mathbf V}_{T_{\mathbf V}}^2 \le 2R$. Then \eqref{difference2} follows by applying Lemma \ref{DifferenceLemma} to
\[
  \norm{\theta_R(\norm{\mathbf V}_{t}^2) \mathbf V}_{T_{\mathbf v}} =
  \norm{\theta_R(\norm{\mathbf U}_{t}^2) \mathbf U - \theta_R(\norm{\mathbf V}_{t}^2) \mathbf V}_{T_{\mathbf V}}.
\]

The other hybrid case is symmetric, so we are only left with the case where $\norm{\mathbf U}_t^2 < 2R$ for some $t \in [0,T]$ and $\norm{\mathbf V}_s^2 < 2R$ for some $s \in [0,T]$. Define $T_{\mathbf U}$ and $T_{\mathbf V}$ as in the proof of Lemma \ref{CutoffLemma2}, so $\norm{\mathbf U}_{T_{\mathbf U}}^2 \le 2R$ and $\norm{\mathbf V}_{T_{\mathbf V}}^2 \le 2R$. By symmetry we may assume $T_{\mathbf U} \le T_{\mathbf V}$, and then \eqref{triangle} yields
\[
  \mathrm{l.h.s.}\eqref{difference2}
  \le
  C \norm{\theta_R(\norm{\mathbf U}_{t}^2) \mathbf U - \theta_R(\norm{\mathbf V}_{t}^2) \mathbf V}_{T_{\mathbf V}}.
\]
If $\norm{\mathbf U}_{T_{\mathbf V}}^2 \le 8R$, we now obtain \eqref{difference2} by Lemma \ref{DifferenceLemma}. If, on the other hand, $\norm{\mathbf U}_{T_{\mathbf V}}^2 > 8R$, then $\norm{\mathbf U - \mathbf V}_{T_{\mathbf V}} \ge \sqrt{2R}$, and using \eqref{triangle} we obtain
\[
  \mathrm{l.h.s.}\eqref{difference2}
  \le
  C \left( \norm{\theta_R(\norm{\mathbf U}_{t}^2) \mathbf U - \theta_R(\norm{\mathbf V}_{t}^2) \mathbf V}_{T_{\mathbf U}} +
  \norm{ \theta_R(\norm{\mathbf V}_{t}^2) \mathbf V}_{(T_{\mathbf U},T_{\mathbf V})} \right).
\]
The first term on the right can be handled by Lemma \ref{DifferenceLemma}, since $\norm{\mathbf U}_{T_{\mathbf U}}^2 \le 2R$ and  $\norm{\mathbf V}_{T_{\mathbf U}}^2 \le C\norm{\mathbf V}_{T_{\mathbf V}}^2 \le C2R$. For the second term, we get by \eqref{increase2} and Lemma \ref{CutoffLemma2},
\[
  \norm{ \theta_R(\norm{\mathbf V}_{t}^2) \mathbf V}_{(T_{\mathbf U},T_{\mathbf V})}
  \le
  C \norm{ \theta_R(\norm{\mathbf V}_{t}^2) \mathbf V}_{T_{\mathbf V}}
  \le
  C' \norm{ \mathbf V}_{T_{\mathbf V}}
  \le
  C' \norm{\mathbf U - \mathbf V}_{T_{\mathbf V}},
\]
where we used $\norm{\mathbf U - \mathbf V}_{T_{\mathbf V}} \ge \sqrt{2R} \ge \norm{\mathbf V}_{T_{\mathbf V}}$.
This concludes the proof of the lemma.
\end{proof}

\subsection{Sobolev–Slobodeckij norm on $H^b(0, T)$}
\label{only-time-cutoff}

All the properties discussed above in this section
are valid for usual Sobolev norms,
for functions depending only on the time variable $t$.
The proofs can be repeated directly without much of a difference.
However, we can get also those properties easily by
considering
\[
    u(t) = \phi(t) S_{h(\xi)}(t) f
\]
with  $\phi \in H^b(S,T)$ and $f \in H^s(\R^d)$.
Indeed, from \eqref{strong_XRestNorm_alternative} and \eqref{Bourgain_Slobodeckij_norm} it is clear that
\[
  \norm{u}_{X^{s,b}_{h(\xi)}(S,T)} = \norm{\phi}_{H^b(S,T)} \norm{f}_{H^s}
  \quad \text{and} \quad
  \norm{u}_{\widetilde X^{s,b}_{h(\xi)}(S,T)} = \norm{\phi}_{\widetilde H^b(S,T)} \norm{f}_{H^s},
\]
where
\[
  \norm{\phi}_{\widetilde H^b(S,T)} =
  \frac{1}{(T-S)^{2b}} \int_S^T \Abs{\phi(t)}^2 \, dt
  + \int_S^T \int_S^T \frac{\Abs{\phi(t)-\phi(r)}^2}{\Abs{t-r}^{1+2b}} \, dr \, dt.
\]
Normalising by taking $\norm{f}_{H^s} = 1$, we then get from Lemma \ref{NormLemma}, for $b \in (0,1/2)$,
\begin{equation}
    \label{Sob-ModEquivalence}
    C_{T_0,b}^{-1} \norm{\phi}_{H^b(S,T)}
    \le
    \norm{\phi}_{\widetilde H^b(S,T)}
    \le C_{T_0,b}
    \norm{\phi}_{H^b(S,T)} \qquad (0 \le S < T \le T_0),
\end{equation}
and Proposition \ref{MainCutoffLemma} has the following analogue.

\begin{proposition}
    Let $T_0 > 0$ and $b \in (0,1/2)$. Let $n \in \N$ and suppose that $\phi_i,\Phi_i \in H^b(0,T_0)$ for $1 \le i \le n$. Then for $T \in (0,T_0]$, $R > 0$ and $1 \le j \le n$ we have the estimates
    \[
        \norm{
            \theta_R\left( \sum_{i=1}^n \norm{\phi_i}_{\widetilde H^{b}(0,t)}^2 \right)
            \phi_j(t)
        }_{H^{b}(0,T)}
        \leqslant
        C \sqrt{R}
        ,
    \]
    \[
        \norm{
            \theta_R\left( \sum_{i=1}^n \norm{\phi_i}_{\widetilde H^{b}(0,t)}^2 \right)
            \phi_j(t)
            -
           \theta_R\left( \sum_{i=1}^n \norm{\Phi_i}_{\widetilde H^{b}(0,t)}^2 \right)
            \Phi_j(t)
        }_{H^{b}(0,T)}
        \leqslant
        C \sum_{i=1}^n \norm{\phi_i-\Phi_i}_{H^{b}(0,T)}
        ,
    \]
    where $C$ depends only on $b$, $T_0$ and $\theta$.
\end{proposition}

%%%%%%%%%%%%%%%%%%%%%%%%%%%%%%%%%%%%%%%%%%%%%%%%%%%%%%%%%%%%%%%%%%%%%%%%%%%%%
%%%%%%%%%%%%%%%%%%%%%%%%%%%%%%%%%%%%%%%%%%%%%%%%%%%%%%%%%%%%%%%%%%%%%%%%%%%%
%\begin{comment}
%%%%%%%%%%%%%%%%%%%%%%%%%%%%%%%%%%%%%%%%%%%%%%%%%%%%%%%%%%%%%%%%%%%%%%%%%%%%%
%%%%%%%%%%%%%%%%%%%%%%%%%%%%%%%%%%%%%%%%%%%%%%%%%%%%%%%%%%%%%%%%%%%%%%%%%%%%%

%%%%%%%%%%%%%%%%%%%%%%%%%%%%%%%%%%%%%%%%%%%%%%%%%%%%%%%%%%%%%%%%%%%%%%%%%%%%%
%%%%%%%%%%%%%%%%%%%%%%%%%%%%%%%%%%%%%%%%%%%%%%%%%%%%%%%%%%%%%%%%%%%%%%%%%%%%%
%\end{comment}
%%%%%%%%%%%%%%%%%%%%%%%%%%%%%%%%%%%%%%%%%%%%%%%%%%%%%%%%%%%%%%%%%%%%%%%%%%%%%
%%%%%%%%%%%%%%%%%%%%%%%%%%%%%%%%%%%%%%%%%%%%%%%%%%%%%%%%%%%%%%%%%%%%%%%%%%%%%

\vskip 0.05in
\noindent
{\bf Acknowledgments.}
{
	ED was partially supported by the ERC EU project 856408-STUOD
    and
    the Research Council of Norway through its Centres
    of Excellence scheme, Hylleraas-senteret, project number 262695.
    SS was partially supported by the Meltzer Research Fund.
    The authors would like to thank the anonymous referees,
    whose valuable comments helped us to improve the paper during the revision.
}

%%%%%%%%%%%%%%%%%%%%%%%%%%%%%%%%%%%%%%%%%%%%%%%%%%%%%%%%%%%%%%%%%%%%%%%%%%%%%
%%%%%%%%%%%%%%%%%%%%%%%%%%%%%%%%%%%%%%%%%%%%%%%%%%%%%%%%%%%%%%%%%%%%%%%%%%%%%
%%%%%%%%%%%%%%%%%%%%%%%%%%%%%%%%%%%%%%%%%%%%%%%%%%%%%%%%%%%%%%%%%%%%%%%%%%%%%
\bibliographystyle{acm}
\bibliography{bibliography}
%%%%%%%%%%%%%%%%%%%%%%%%%%%%%%%%%%%%%%%%%%%%%%%%%%%%%%%%%%%%%%%%%%%%%%%%%%%%%
%%%%%%%%%%%%%%%%%%%%%%%%%%%%%%%%%%%%%%%%%%%%%%%%%%%%%%%%%%%%%%%%%%%%%%%%%%%%%
%%%%%%%%%%%%%%%%%%%%%%%%%%%%%%%%%%%%%%%%%%%%%%%%%%%%%%%%%%%%%%%%%%%%%%%%%%%%%

\end{document}